\documentclass[10pt,reqno, twoside]{article}
\usepackage[letterpaper,margin=1in,footskip=0.30in]{geometry}
\usepackage{microtype}
\usepackage{amsmath,amssymb,amsthm}
\usepackage{mathtools}
\makeatletter
\makeatother
\usepackage[dvipsnames]{xcolor}
\usepackage{enumitem}
\setlist{noitemsep}
\usepackage{mathrsfs}
\usepackage{tikz-cd}
\usepackage{caption}
\usepackage{array}
\usepackage[pdfusetitle,colorlinks]{hyperref}
\hypersetup{citecolor=black,linkcolor=black,urlcolor=black}
\usepackage[capitalise,noabbrev]{cleveref}
\crefformat{equation}{\ensuremath{(#2#1#3)}}
\crefmultiformat{equation}{\ensuremath{(#2#1#3)}}{ and~\ensuremath{(#2#1#3)}}{, \ensuremath{(#2#1#3)}}{, and~\ensuremath{(#2#1#3)}}
\numberwithin{figure}{subsection}
\numberwithin{equation}{subsection}
\newtheorem{theorem}[figure]{Theorem}
\newtheorem{lemma}[figure]{Lemma}

\newtheorem{proposition}[figure]{Proposition}
\theoremstyle{definition}
\newtheorem{definition}[figure]{Definition}
\newtheorem{notation}[figure]{Notation}
\theoremstyle{definition}
\newtheorem{remark}[figure]{Remark}
\theoremstyle{definition}
\newtheorem{example}[figure]{Example}
\theoremstyle{definition}
\newtheorem{construction}[figure]{Construction}
\theoremstyle{cited}

\usepackage{lipsum}
\usepackage{fancyhdr}
\fancyhf{}
\fancypagestyle{mypagestyle}{
\fancyhead[OC]{\footnotesize SHUBHODIP MONDAL}
\fancyhead[EC]{\footnotesize  $\mathbb G _a^{\mathrm{perf}}$-MODULES AND DE RHAM COHOMOLOGY}
\fancyfoot[C]{\thepage}

\vspace{-10mm}
}



\newcommand{\w}{\text}



\newcolumntype{C}[1]{>{\centering\arraybackslash}p{#1}}

\pagestyle{mypagestyle}
\title{\large \textbf{$\mathbb G _a^{\mathrm{perf}}$-MODULES AND DE RHAM COHOMOLOGY}}
\author{\small SHUBHODIP MONDAL}
\date{}

\begin{document}

\maketitle
\begin{abstract}
We prove that algebraic de Rham cohomology as a functor defined on smooth $\mathbb{F}_p$-algebras is formally \'etale in a precise sense. This result shows that given de Rham cohomology, one automatically obtains the theory of crystalline cohomology as its \textit{unique} functorial deformation. To prove this, we define and study the notion of a pointed $\mathbb{G}_a^{\mathrm{perf}}$-module and its refinement which we call a quasi-ideal in $\mathbb{G}_a^{\mathrm{perf}}$ -- following Drinfeld's terminology. Our main constructions show that there is a way to ``unwind" \textit{any} pointed $\mathbb{G}_a^{\text{perf}}$-module and define a notion of a cohomology theory for algebraic varieties. We use this machine to redefine de Rham cohomology theory and deduce its formal \'etalness and a few other properties.
\end{abstract}
\tableofcontents
\clearpage
\section{Introduction}
\subsection{Overview of the results}
Let $X$ be a scheme over a field $k.$ Grothendieck defined the algebraic de Rham cohomology of $X$ to be the hypercohomology of the algebraic de Rham complex $\Omega_X^*$ \cite{Gro66}. When $k$ is a field of characteristic zero, de Rham cohomology forms a Weil cohomology theory for smooth proper varieties over $k.$ But when $k$ has positive characteristic, for example $k = \mathbb{F}_p$, then the theory of de Rham cohomology does not form a Weil cohomology theory. In particular, the de Rham cohomology groups are killed by $p$. To rectify this situation, Grothendieck \cite{Gro68} and Berthelot \cite{Ber74} devised the theory of crystalline cohomology. For a smooth algebraic variety $X$ over $\mathbb{F}_p,$ its ($n$-truncated) crystalline cohomology $R \Gamma_{\mathrm{crys}}(X/\mathbb{Z}/p^n)$ is a deformation of de Rham cohomology; in the sense that $R \Gamma_{\mathrm{crys}}(X/\mathbb{Z}/p^n) \otimes ^{L}_{\mathbb{Z}/p^n \mathbb{Z}} \mathbb{F}_p \simeq R \Gamma(X,\Omega_X^*).$ However, potentially there could exist some other cohomology theory which is also a deformation of de Rham cohomology. Our goal is to show that this does not happen. In particular, we show that de Rham cohomology theory for varieties over $\mathbb{F}_p$ is \textit{formally \'etale}. Thus, given the theory of de Rham cohomology, one can realize crystalline cohomology as its \textit{unique} deformation. To make this precise, we fix some notations. We let $\mathrm{Alg}^{\mathrm{sm}}_{\mathbb{F}_p}$ denote the category of smooth $\mathbb{F}_p$-algebras and $\mathrm{CAlg}(D(A))$ denote the $\infty$-category of commutative algebra objects (in the sense of \cite[2.1.3]{Lur17}) in the derived $\infty$-category $D(A)$ of an Artinian local ring $A$ with residue field $\mathbb{F}_p.$ In other words, $\w{CAlg}(D(A))$ is the $\infty$-category of $\mathbb{E}_{\infty}$-algebras over $A.$ We show the following 

\begin{theorem}[\cref{mainthm}]\label{inhosp3}Let $$\mathrm{dR} : \mathrm{Alg}^{\mathrm{sm}}_{\mathbb F _p} \to \mathrm{CAlg}(D(\mathbb{F}_p))$$
be the algebraic de Rham cohomology functor. Given an Artinian local ring $(A, \mathfrak{m})$ with residue field $\mathbb{F}_p$, the functor $\mathrm{dR}$ admits a unique deformation $$\mathrm{dR}':  \mathrm{Alg}^{\mathrm{sm}}_{\mathbb F _p} \to \mathrm{CAlg}(D(A)) .$$ Further, the deformation $\mathrm{dR}'$ is unique up to unique isomorphism (More precisely, the space of deformations of $\mathrm{dR}$ is contractible, see \cref{bbleave}). \end{theorem}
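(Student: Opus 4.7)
The plan is to reinterpret the de Rham functor as arising by an ``unwinding'' construction from a single algebraic gadget---a pointed $\mathbb{G}_a^{\mathrm{perf}}$-module---and then reduce the uniqueness of deformations of $\mathrm{dR}$ to a rigidity statement for that gadget.

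First, I would develop the unwinding machine advertised in the abstract: to each pointed $\mathbb{G}_a^{\mathrm{perf}}$-module (or, better, quasi-ideal) $M$ over a base ring $R$, functorially attach a cohomology theory $\mathrm{dR}_M\colon \mathrm{Alg}^{\mathrm{sm}}_{\mathbb{F}_p} \to \mathrm{CAlg}(D(R))$. Heuristically, one views $M$ as a stacky object via Drinfeld's quasi-ideal picture, and for a smooth $\mathbb{F}_p$-algebra $X$ defines $\mathrm{dR}_M(X)$ as the cohomology of the stack obtained by spreading $M$ out along $X$. Next I would isolate the specific quasi-ideal $M_0$ over $\mathbb{F}_p$ whose unwinding recovers the classical de Rham functor, so that $\mathrm{dR}_{M_0}\simeq \mathrm{dR}$; morally this is ``the de Rham stack $X_{\mathrm{dR}}$'' repackaged in quasi-ideal language.

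Second, I would prove a classification theorem asserting that cohomology theories of the relevant shape arise exactly as unwindings of pointed $\mathbb{G}_a^{\mathrm{perf}}$-modules, and that this assignment is (at least) fully faithful on deformations. This converts the functorial deformation problem for $\mathrm{dR}$ along $\mathbb{F}_p\hookrightarrow A$ into the concrete deformation problem for the single object $M_0$: deformations $\mathrm{dR}'$ of $\mathrm{dR}$ to $\mathrm{CAlg}(D(A))$ are classified by lifts of $M_0$ to a pointed $\mathbb{G}_a^{\mathrm{perf}}$-module $\widetilde M$ over $A$.

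Third, I would establish rigidity: the space of such lifts $\widetilde M$ is contractible for every Artinian local $(A,\mathfrak m)$ with residue field $\mathbb{F}_p$. The key structural input is that $\mathbb{G}_a^{\mathrm{perf}}$ is perfect, so Frobenius acts invertibly; this should force the relevant moduli of quasi-ideals to be formally \'etale over $\mathrm{Spec}\,\mathbb{F}_p$, with its unique lift producing (via the $p$-typical Witt vectors) precisely the crystalline theory. The main obstacle, and the heart of the proof, will be this last step: setting up the obstruction complex controlling deformations of $M_0$ and showing it vanishes. Everything upstream is largely formal once the unwinding and classification theorems are in place, but the rigidity statement requires a genuine computation, and it is here that Frobenius-perfectness of $\mathbb{G}_a^{\mathrm{perf}}$ must be exploited in an essential way.
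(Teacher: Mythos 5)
Your high-level plan---unwind $\mathrm{dR}$ from a single quasi-ideal $M_0$, show deformations of the functor are controlled by deformations of $M_0$, and then prove rigidity of $M_0$---matches the paper's strategy, with $M_0 = u^*W[F]$. But you are vague at exactly the two places where the paper has to work hardest. First, your version of unwinding is essentially the stacky one (``cohomology of the stack obtained by spreading $M$ out''), which the paper deliberately sidesteps: defining the unwinding directly on $\mathrm{Alg}^{\mathrm{sm}}_{\mathbb{F}_p}$ forces you into deformation theory of coconnective $\mathbb{E}_\infty$-rings or $\mathbb{F}_p$-algebra stacks, which the author flags as an alternative route they did not pursue. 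The paper instead restricts to $\mathrm{QRSP}$ algebras (where $\mathrm{dR}(S)$ is a \emph{discrete} ring, so that deformations are automatically flat discrete $A$-algebras), builds the unwinding and its adjoint $r$ in $1$-categorical terms there, and then uses descendability of $\mathrm{dR}(P)\to\mathrm{dR}(P_{\mathrm{perf}})$ to pass back to polynomial algebras; your ``classification theorem'' would need some substitute for this. There is also a reduction to the square-zero case $A=\mathbb{F}_p[\epsilon]/\epsilon^2$ via Illusie's deformation theory of ring objects in a topos before the quasi-ideal reduction even enters, which your proposal skips.

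Second, and more seriously, your proposed method for the rigidity of $M_0$---``set up the obstruction complex controlling deformations of $M_0$ and show it vanishes, exploiting that Frobenius acts invertibly''---is not what the paper does, and a naive attempt would hit precisely the obstacle the paper warns about: the Hopf algebra of $u^*W[F]$ is $\mathbb{N}[1/p]$-graded, and the degree-$1$ piece of $B\otimes_{\mathbb{F}_p}B$ is infinite-dimensional, so the usual graded duality/finiteness arguments (and, in the same spirit, a naive graded cotangent-complex argument) break down. The paper's actual proof of \cref{mainthm1} proceeds by Cartier duality and universal properties: $W[F]$ is shown to be the initial object of the category $\mathcal{P}^1_*$ of graded pointed group schemes with free-of-finite-type Hopf algebras, giving rigidity of $W[F]$ and its truncations $W_{n+1}[F]$ essentially for free; the $\mathbb{N}[1/p]$-graded case is then handled by reducing to the $u^*W_{n+1}[F]$ truncations and passing to a colimit. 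Your Frobenius-invertibility heuristic is correct in spirit (it enters in \cref{cot}, where perfectness forces maps to be graded), but you would need to replace the vague obstruction-complex hope with either the paper's Cartier-duality argument or the stacky argument of \cref{lastprop} (deforming $u^*X$ is reduced to deforming $X$ by forming cones and using faithful flatness of $\mathbb{G}_a^{\mathrm{perf}}\to\mathbb{G}_a$).
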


\begin{remark}\label{hun}
Roughly speaking, \cref{inhosp3} proves that the theory of crystalline cohomology is the \textit{unique} functorial deformation of de Rham cohomology theory. Thus, it offers a simple new characterization of crystalline cohomology. More precisely, when $A= \mathbb{Z}/p^n$, the ($n$-truncated) crystalline cohomology functor $R\Gamma_{\w{crys}}((\,\cdot\,)/\mathbb{Z}/p^n)$ is uniquely isomorphic to $\w{dR}',$ where the latter is as characterized by \cref{inhosp3}.
\end{remark}{}

 The special analogue of \cref{inhosp3} for $A = \mathbb{Z}_p$ (instead of an arbitrary Artinian local ring) was a result of Bhatt, Lurie and Mathew \cite[Thm. 10.1.2]{BLM20}. Our \cref{inhosp3} gives a generalization of their result, which works for arbitrary Artinian local rings $A;$ the case when $A = \mathbb{Z}_p = \varprojlim \mathbb{Z}/p^n$ can now be deduced via a limit argument. Since \cref{inhosp3} works with arbitrary Artinian local rings, it establishes that the de Rham cohomology functor for smooth varieties over $\mathbb{F}_p$ is ``formally \'etale." The proof of \cite[Thm. 10.1.2]{BLM20} due to Bhatt, Lurie and Mathew crucially uses that $\mathbb{Z}_p$ and other relevant rings appearing in their work are $p$-torsion free. However, Artinian local rings with residue field $\mathbb{F}_p$ are always $p$-torsion, which presents major difficulties in approaching \cref{inhosp3} in a similar fashion.

\vspace{2mm}
We use a very different approach to prove \cref{inhosp3}. In fact, we develop a new approach to the theory of algebraic de Rham cohomology, by compressing its ``essence" in a simpler algebro-geometric structure that we introduce, which we call a \textit{pointed} $\mathbb{G}_a^{\w{perf}}$-\textit{module} (\cref{def11}); these objects are closely related to some classical constructions in $p$-adic Hodge theory (see \cref{perfect}). We study properties of pointed $\mathbb{G}_a^{\w{perf}}$-modules and its closely related variant called pointed $\mathbb{G}_a$-modules in detail. Then we develop a machine called \textit{unwinding}: for \textit{any} pointed $\mathbb{G}_a^{\w{perf}}$-module $X,$ we build a functor $\w{Un}(X)$ by unwinding $X,$ which can be regarded as a cohomology theory for algebraic varieties. We show that de Rham and crystalline cohomology theory can be rebuilt by unwinding specific pointed $\mathbb{G}_a^{\w{perf}}$-modules (see \cref{though}). After establishing good formal properties of the unwinding construction, we use it to approach \cref{inhosp3}. An outline of the proof of \cref{inhosp3} will be explained following the statement of \cref{def12} below.\vspace{2mm}

Having briefly mentioned the key new players in the proof of \cref{inhosp3}, let us now take a slightly more technical perspective and explain some of the relevant definitions and how they enter the picture. The de Rham cohomology functor takes values in coconnective commutative algebra objects
in the derived category $D(\mathbb{F}_p).$  In order to avoid talking about deformation theory in such a context, it would be convenient for us if we could work with discrete rings instead. In order to do that, instead of working with de Rham cohomology theory, we work with \textit{derived} de Rham cohomology theory as defined and studied in \cite{Ill72} and \cite{Bha12}. We will write $\mathrm{dR}$ to denote derived de Rham cohomology as well; it agrees with the usual algebraic de Rham cohomology for smooth schemes, so the notation is consistent. For our purposes, it has the technical advantage that derived de Rham cohomology theory can be completely understood by its values on a certain class of rings introduced by Bhatt, Morrow and Scholze \cite{BMS19} called \textit{quasiregular semiperfect} (QRSP) algebras. We point out that somewhat similar class of rings appeared in the work of Fontaine and Jannsen as well \cite{FJ13}. If $S$ is a QRSP algebra, its derived de Rham cohomology $\mathrm{dR}(S)$ is then a discrete ring. Therefore, we are equivalently led to the study of $\mathrm{dR}$ as a functor from QRSP algebras to discrete $\mathbb{F}_p$-algebras. In fact, after some reductions that are carried out in \cref{section5.1}, \cref{inhosp3} follows from the following statement formulated in purely $1$-categorical language. Below, $\mathrm{QRSP}$ denotes the category of QRSP algebras and $\mathrm{Alg}_A$ denotes the category of discrete $A$-algebras. We show the following

\begin{theorem}\label{why?}Let $\mathrm{dR} :\mathrm{QRSP} \to \mathrm{Alg}_{\mathbb{F}_p}$ be the derived de Rham cohomology functor. Given an Artinian local ring $(A, \mathfrak{m})$ with residue field $\mathbb{F}_p,$ the functor $\mathrm{dR}$ admits a deformation $\mathrm{dR}':\mathrm{QRSP} \to \mathrm{Alg}_A$ which is unique up to unique isomorphism (\textit{cf. \cref{section5.1}}).
\end{theorem}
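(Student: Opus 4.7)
My plan is to convert \cref{why?} into a statement about deformations of an algebro-geometric object using the unwinding machine developed in this paper. By the theorem referenced in \cref{though}, there is a specific quasi-ideal $X_{\mathrm{dR}}$ in $\mathbb{G}_a^{\mathrm{perf}}$ over $\mathbb{F}_p$ whose unwinding $\mathrm{Un}(X_{\mathrm{dR}})$ agrees with $\mathrm{dR}\colon \mathrm{QRSP}\to \mathrm{Alg}_{\mathbb{F}_p}$. The first step is to show that the unwinding operation is sufficiently faithful that deformations of the functor $\mathrm{dR}$ to $\mathrm{Alg}_A$ are in bijective correspondence (as a groupoid) with deformations of $X_{\mathrm{dR}}$ to a pointed $\mathbb{G}_a^{\mathrm{perf}}$-module (or quasi-ideal) over $A$. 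This reduction turns a question about functors between large categories into an obstruction-theoretic calculation about a single algebro-geometric object.

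Next, I would attack the deformations of $X_{\mathrm{dR}}$ by devissage along the maximal ideal of $A$. Filter $A$ by $A \twoheadrightarrow A/\mathfrak{m}^n \twoheadrightarrow \cdots \twoheadrightarrow A/\mathfrak{m} = \mathbb{F}_p$, so that each successive quotient is a square-zero surjection whose kernel $I$ is an $\mathbb{F}_p$-vector space (since $\mathfrak{m}\cdot I = 0$). It then suffices to show that at every step the space of lifts is a torsor under some abelian group which is itself trivial, and that the obstruction to lifting vanishes. By a standard deformation theory for pointed $\mathbb{G}_a^{\mathrm{perf}}$-modules (which I would develop carefully before this point), these torsor and obstruction classes live in $\mathrm{Ext}^1$ and $\mathrm{Ext}^2$, respectively, of $X_{\mathrm{dR}}$ with coefficients in $I\otimes_{\mathbb{F}_p} \mathbb{G}_a^{\mathrm{perf}}$; the entire theorem thus reduces to the vanishing of these Ext-groups.

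The hardest step, and the one where I would expect to spend most of the effort, is proving this Ext vanishing. The essential feature that should make it work is that $\mathbb{G}_a^{\mathrm{perf}}$ is perfect: Frobenius acts invertibly, so the object is extremely rigid and Hom/Ext groups between pointed $\mathbb{G}_a^{\mathrm{perf}}$-modules collapse dramatically compared to the analogous computations with $\mathbb{G}_a$. Since $I$ is a direct sum of copies of $\mathbb{F}_p$, the computation reduces to bounding $\mathrm{Ext}^i(X_{\mathrm{dR}}, \mathbb{G}_a^{\mathrm{perf}})$ for $i=1,2$, which I would attack via an explicit quasi-ideal presentation of $X_{\mathrm{dR}}$ combined with Frobenius-invertibility arguments. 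Once vanishing is in hand, induction on the length of $A$ upgrades the $\mathbb{F}_p$-level datum to a unique $A$-level deformation $X_A$, and pushing this through the unwinding equivalence yields the desired unique functorial deformation $\mathrm{dR}'$.
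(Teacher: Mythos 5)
Your high-level strategy — trade the deformation problem for the functor $\mathrm{dR}$ against a deformation problem for the single quasi-ideal $u^*W[F]$, then kill the latter — is the right one, and the equivalence of deformation groupoids you want in step one is indeed provided by the paper's unwinding machinery (via \cref{la}, \cref{stu1}, and \cref{new2}). But the two steps after that, as you outline them, have gaps that would be hard to fill.

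First, you propose to do the devissage along $A \twoheadrightarrow A/\mathfrak{m}^{n-1} \twoheadrightarrow \cdots \twoheadrightarrow \mathbb{F}_p$ directly in the category of pointed $\mathbb{G}_a^{\mathrm{perf}}$-modules (or quasi-ideals), with obstructions and lifts classified by $\mathrm{Ext}^2$ and $\mathrm{Ext}^1$ with coefficients in $I\otimes \mathbb{G}_a^{\mathrm{perf}}$. There is no such deformation theory available off the shelf: the category of pointed $\mathbb{G}_a^{\mathrm{perf}}$-modules is not abelian (the data of a module scheme structure plus a point is not an exact structure), so the $\mathrm{Ext}$-groups you need are not even defined a priori, and your proposed coefficient object is a guess. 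The paper sidesteps this entirely by running the devissage at the level of the \emph{functor} $\mathrm{dR}$, viewed as a ring object in a presheaf topos, where Illusie's cotangent complex theory \cite{Ill71} applies verbatim. Because $\mathfrak{m}^r/\mathfrak{m}^{r+1}$ is a finite-dimensional $\mathbb{F}_p$-vector space, the Illusie-type $\mathrm{Ext}^i_{\mathrm{dR}}(\mathbb{L}_{\mathrm{dR}}, -)$ vanishing reduces to the single case $A = \mathbb{F}_p[\epsilon]/\epsilon^2$, and only \emph{after} this reduction does the paper switch to the quasi-ideal picture (\cref{new4}). Your plan would need you to rebuild an analogue of Illusie's theory for pointed $\mathbb{G}_a^{\mathrm{perf}}$-modules from scratch; this is nontrivial and not obviously cheaper than the concrete argument the paper gives.

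Second, the heuristic you lean on — ``$\mathbb{G}_a^{\mathrm{perf}}$ is perfect, Frobenius is invertible, so everything is rigid'' — does not transfer to $u^*W[F]$. The underlying scheme of $u^*W[F]$ is $\mathrm{Spec}\,\mathbb{F}_p[x_0^{1/p^\infty},x_1,\dots]/(x_i^p)$, which is badly non-reduced; Frobenius is nowhere near invertible on it. The rigidity that is actually used in the paper is the $\mathbb{N}[1/p]$-grading inherited from the $\mathbb{G}_a^{\mathrm{perf}}$-action (\cref{gradingdet}), and even with that, \cref{mainthm1} (the statement that $u^*W[F]$ has no nontrivial deformations over $\mathbb{F}_p[\epsilon]/\epsilon^2$) is proved by a delicate hands-on argument involving truncations $u^*W_{n+1}[F]$, a passage to finitely generated Hopf algebras where Cartier duality is available (\cref{cart}), and the universal property of $W_{k+1}[F]$ as initial object in $\mathcal{P}^1_{*_{<n}}$ (\cref{rem7}). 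There is no ``dramatic collapse'' of $\mathrm{Ext}$ groups; the result is true but requires real work. If you want to make your outline precise you will either have to (i) reconstruct the Illusie-style devissage at the quasi-ideal level, which is a substantial project, or (ii) adopt the paper's split: Illusie for the functor-level devissage down to $\mathbb{F}_p[\epsilon]/\epsilon^2$, then the explicit Hopf-algebraic / Cartier duality argument of \cref{mainthm1} for the base case.
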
{}

In \cref{section3}, more generally, we study the category $\mathrm{Fun}(\mathrm{QRSP}, \mathrm{Alg}_{\mathbb{F}_p}),$ where $\mathrm{QRSP}$ denotes the category of QRSP algebras. We show that a certain class of functors, which includes the de Rham cohomology functor, can be realized as some kind of ``unwinding" (\textit{cf.} \cref{athosp}) of a much smaller and more tractable structure which we call a \textit{pointed $\mathbb{G}_a^{\mathrm{perf}}$-module.} In order to make sure that the process of ``unwinding" is well-behaved, we will need to study a special class of pointed $\mathbb{G}_a^{\mathrm{perf}}$-modules, which we call \textit{quasi-ideals} following Drinfeld \cite[Def. 3.1.3]{Dri20}.

\begin{definition}The functor $\mathrm{Alg}_{\mathbb{F}_p} \to \mathrm{Alg}_{\mathbb{F}_p}$ that sends $S \mapsto S^\flat:= \varprojlim_{x \mapsto x^p } S$ can be represented by an affine ring scheme which we denote as $\mathbb{G}_a^{\mathrm{perf}}.$ The underlying affine scheme is given by $\mathrm{Spec}\, \mathbb{F}_p [x^{1/p^\infty}].$
\end{definition}{}

\begin{definition}\label{def11}A pointed $\mathbb{G}_a^{\mathrm{perf}}$-module is the data of a $\mathbb{G}_a^{\mathrm{perf}}$-module scheme $X$ equipped with a map of $\mathbb{G}_a^{\mathrm{perf}}$-module schemes $X \to \mathbb{G}_a^{\mathrm{perf}}.$ The data of the map $X \to \mathbb{G}_a^{\mathrm{perf}}$ will be referred to as a \textit{point} (\textit{cf}. \cref{sec2.2}). In \cref{deg1}, we give some justifications for the terminology ``point" in this context.
\end{definition}{}

\begin{definition}A pointed $\mathbb{G}_a^{\mathrm{perf}}$-module is called a quasi-ideal in 
$\mathbb{G}_a^{\mathrm{perf}}$ if the data of the point denoted as $d: X \to \mathbb{G}_a^{\mathrm{perf}}$ sits in a commutative diagram as below (\textit{cf.} \cref{quasi-ideal}).

\begin{center}
\begin{tikzcd}
X \times X \arrow[r, "\mathrm{id}\times d "] \arrow[d, "d \times \mathrm{id} "] & X \times \mathbb G_a^{\mathrm{perf}} \arrow[d, "\mathrm{action}"] \\
\mathbb G_a^{\mathrm{perf}} \times X \arrow[r, "\mathrm{action}"] & X                             
\end{tikzcd}
\end{center}
The commutativity of the above diagram ensures that for an algebra $R,$ the map $X(R) \to \mathbb{G}_a^{\mathrm{perf}} (R)$ viewed as a complex, where $\mathbb{G}_a^{\mathrm{perf}} (R)$ sits in degree zero, has the structure of a differentially graded algebra \cite[Remark 3.1.2]{Dri20}.
\end{definition}{}

\begin{remark}Later on, we will need to work with $\mathbb{G}_a^{\mathrm{perf}}$-module schemes defined over an Artinian local base ring $A$ with residue field $\mathbb{F}_p$. Most of our constructions are also defined in this generality. However, for the overview, we assume that $A$ is always $\mathbb{F}_p.$
\end{remark}{}

\begin{example}\label{introex1}Let $W$ denote the ring scheme of $p$-typical Witt vectors. Then $W$ has an endomorphism $F$ which is called the Frobenius on $W.$ Note that for any algebra $S,$ the ring of Witt vectors $W(S)$ has an additive endomorphism $V$ (called the Verschiebung), which induces an operator also denoted as $V$ on the group scheme underlying $W.$ For $x, y \in W(S),$ one has $V(x) \cdot y= V(x \cdot F(y)).$ Therefore, if $F(y)=0,$ we must have $V(x) \cdot y=0.$ Further, note that $VW(S)$ is an ideal of the ring $W(S)$ and there is a natural isomorphism $W(S)/ VW(S) \simeq S \simeq \mathbb{G}_a(S).$ These observations imply that the group scheme underlying the kernel of $F$ on the ring scheme $W,$ written as $W[F]$ naturally has the structure of 
a $\mathbb{G}_a$-module (see \cref{g_amod}).\vspace{2mm}

We note that there is also a natural map $W[F] \to \mathbb G_a$ of $\mathbb G_a$-module schemes. Pulling $W[F]$ back along the map $u:\mathbb{G}_a^{\mathrm{perf}} \to \mathbb{G}_a$ of ring schemes (\cref{pullback}) produces a $\mathbb{G}_a^{\mathrm{perf}}$-module scheme which we call $u^* W[F].$ Then $u^* W[F]$ can be equipped with the structure of a pointed $\mathbb{G}_a^{\mathrm{perf}}$-module scheme which is further also a quasi-ideal in $\mathbb{G}_a^{\mathrm{perf}}.$ We point out that the group scheme $W[F]$ is isomorphic to the divided power completion of the additive group scheme $\mathbb{G}_a$, which is denoted as $\mathbb{G}_a^\sharp$ in \cite{Dri20}. This isomorphism is also proven in \cite[Lemma 3.2.6]{Dri20}.
\end{example}{}

Our goal is to use the data of a pointed $\mathbb{G}_a^{\mathrm{perf}}$-module to produce a functor such as de Rham cohomology in a lossless manner. Note that there is a natural functor $\mathfrak{G}: \mathrm{QRSP} \to \mathrm{Alg}_{\mathbb{F}_p}$ which sends $S \mapsto S^\flat$, where $S^\flat$ denotes the tilt of $S$ defined as $S^\flat := \varprojlim_{x \mapsto x^p} S.$ In \cref{athosp}, we construct the (contravariant) unwinding functor denoted by ${\mathrm{Un}}$ which takes in the data of a pointed $\mathbb{G}_a^{\mathrm{perf}}$-module as input and produces a functor from $\mathrm{QRSP} \to \mathrm{Alg}_{\mathbb{F}_p}.$ As a basic example, we note that the functor $\mathfrak{G}$ is the unwinding of the pointed $\mathbb{G}_a^{\mathrm{perf}}$-module given by $\mathbb{G}_a^{\mathrm{perf}}$ itself. Other examples are noted in \cref{quiz} and \cref{though} below. Restricting our attention to quasi-ideals satisfying a particular property, which we call \textit{nilpotent quasi-ideals} (\cref{nilpotent}), we obtain the following.

\begin{theorem}[\cref{nil}]There is a fully faithful (contravariant) embedding of the category of nilpotent quasi-ideals in $\mathbb{G}_a^{\mathrm{perf}}$ inside $\mathrm{Fun}(\mathrm{QRSP}, \mathrm{Alg}_{\mathbb{F}_p})_{\mathfrak{G}/}$ given by the unwinding functor ${\mathrm{Un}}.$
\end{theorem}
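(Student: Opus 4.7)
The plan is to prove fully faithfulness of the contravariant embedding by exhibiting, for any pair $X, Y$ of nilpotent quasi-ideals in $\mathbb{G}_a^{\mathrm{perf}}$, a two-sided inverse to the unwinding map on morphism sets
\[
\mathrm{Un}\colon \mathrm{Hom}(X, Y) \longrightarrow \mathrm{Hom}_{\mathrm{Fun}(\mathrm{QRSP},\mathrm{Alg}_{\mathbb{F}_p})_{\mathfrak{G}/}}\bigl(\mathrm{Un}(Y),\mathrm{Un}(X)\bigr).
\]
The essential input is the explicit construction of $\mathrm{Un}$ from \cref{athosp}: for a QRSP algebra $S$, the ring $\mathrm{Un}(X)(S)$ is built functorially from the data $(S^\flat, d_X\colon X(S^\flat) \to S^\flat)$ together with the $\mathbb{G}_a^{\mathrm{perf}}$-module structure on $X$, and the canonical structural map $\mathfrak{G}(S) = S^\flat \to \mathrm{Un}(X)(S)$ encodes precisely the point $d_X$.

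For injectivity, suppose two morphisms $f, g\colon X \to Y$ of quasi-ideals induce the same natural transformation $\eta\colon \mathrm{Un}(Y) \to \mathrm{Un}(X)$ under $\mathfrak{G}$. Morphisms of $\mathbb{G}_a^{\mathrm{perf}}$-module schemes are determined by their values on perfect $\mathbb{F}_p$-algebras, so it suffices to verify $f(R) = g(R)$ on $R$-points for every perfect $R$. I would test $\mathrm{Un}(f) = \eta = \mathrm{Un}(g)$ on the QRSP algebras $S = R[t^{1/p^\infty}]/(t)$, whose tilts are the flat perfections $R[t^{1/p^\infty}]$, large enough to realize arbitrary $R$-points of $X$ inside $X(S^\flat)$ via the quasi-ideal structure. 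Comparing the induced maps on $\mathrm{Un}$-values, and using the $\mathfrak{G}$-compatibility to separate the contribution of $S^\flat$ from that of the nilpotent parameter $t$, one recovers the $R$-component of $f$ and $g$, forcing $f = g$.

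For surjectivity, given $\eta$ as above, I would construct $f\colon X \to Y$ by reversing this recipe: for each perfect $R$ and each $\alpha \in X(R)$, exhibit $\alpha$ inside $X(S^\flat)$ for a suitable QRSP thickening $S$, apply $\eta(S)$, and extract the corresponding $R$-point of $Y$ using the structural map from $\mathfrak{G}$. The main obstacle will be verifying that $f$ is genuinely a morphism of pointed $\mathbb{G}_a^{\mathrm{perf}}$-modules — i.e., $\mathbb{G}_a^{\mathrm{perf}}$-equivariant, compatible with the points $d_X, d_Y$, and respectful of the quasi-ideal diagram of \cref{def11} — rather than merely a set-theoretic assignment. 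This is where the \emph{nilpotence} hypothesis is essential: it guarantees that the construction $X \mapsto \mathrm{Un}(X)$ retains the full datum of $d_X$ rather than only its cokernel, so that $\mathbb{G}_a^{\mathrm{perf}}$-equivariance of $\eta$ on test QRSP's propagates, via naturality in $R$ and the $\mathbb{G}_a^{\mathrm{perf}}$-action on the test algebras, to $\mathbb{G}_a^{\mathrm{perf}}$-equivariance of $f$ itself. Once these compatibilities are checked by a diagram chase on the test rings and their fibre products, the identity $\mathrm{Un}(f) = \eta$ is immediate from the construction, yielding the desired inverse and completing the proof.
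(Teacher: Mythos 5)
Your overall strategy is the same one the paper packages into the functor $r$ of \cref{smoke1}: reconstruct a morphism of quasi-ideals by evaluating the natural transformation on the test QRSP algebras $\frac{R[t^{1/p^\infty}]}{(t)}$ and invoking naturality over perfect rings $R$ to recover module-scheme structure, pointing, and the quasi-ideal diagram. The paper then concludes via the identity $r\,\mathrm{Un}(X)\simeq X$ (\cref{smoke4}, \cref{smoke5}) and either a naturality square or the adjunction of \cref{last}.

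There is, however, a genuine error with a downstream gap. The tilt of $R[t^{1/p^\infty}]/(t)$ is \emph{not} $R[t^{1/p^\infty}]$; it is the $t$-adic completion $\widehat{R[t^{1/p^\infty}]}$ (this is exactly \cref{smoke3}). This is not cosmetic: if the tilt were the uncompleted ring, then $\mathrm{Un}(X)\bigl(R[t^{1/p^\infty}]/t\bigr)=\mathrm{Env}_X\bigl(R[t^{1/p^\infty}],(t)\bigr)$, and since $t$ is a nonzerodivisor this equals $\Gamma(X,\mathcal{O}_X)\otimes R$ for \emph{every} quasi-ideal $X$, nilpotent or not. That would make the nilpotence hypothesis vacuous, and in particular your argument would go through verbatim for $X=\mathbb{G}_a^{\mathrm{perf}}$ --- whose unwinding is $S\mapsto S^\flat$ and visibly does \emph{not} evaluate on $R[t^{1/p^\infty}]/t$ to $\Gamma(\mathbb{G}_a^{\mathrm{perf}},\mathcal{O})\otimes R$ (cf.~\cref{last}, where the paper shows $\mathbb{G}_a^{\mathrm{perf}}$ is excluded from the essential image of $r$ precisely because of completion).

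Relatedly, your explanation of where nilpotence enters (``retains the full datum of $d_X$ rather than only its cokernel'') is not what actually happens. The real role of nilpotence (the content of \cref{smoke4}) is to neutralize the completion: one computes $\mathrm{Un}(X)\bigl(R[t^{1/p^\infty}]/t\bigr)=\mathrm{Env}_X\bigl(\widehat{R[t^{1/p^\infty}]},(t)\bigr)$ as a coequalizer, and the fact that the image of $x$ in $\Gamma(X,\mathcal{O}_X)$ is nilpotent is what allows one to replace $\widehat{R[t^{1/p^\infty}]}$ by $R[t^{1/p^\infty}]$ in the answer, yielding $\Gamma(X,\mathcal{O}_X)\otimes R$; \cref{smoke5} upgrades this to binary products of test rings, which is what propagates the Hopf/comodule structure. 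Without stating and proving these two computations, the inverse construction in your surjectivity step has no identified target to land in, and the ``diagram chase on test rings'' you gesture at cannot be carried out. You should correct the tilt and state the analogue of \cref{smoke4}/\cref{smoke5} explicitly before this sketch becomes a proof.
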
{}

\begin{example}\label{quiz}We note that $\mathrm{Spec\,} \mathbb{F}_p [x^{1/p^\infty}]/x $ can be equipped with the structure of a pointed $\mathbb{G}_a^{\mathrm{perf}}$-module which we denote as $\alpha^{\natural}.$ Another way to describe $\alpha^{\natural}$ is to say that it is the pointed $\mathbb{G}_a^\mathrm{perf}$-module underlying the kernel of the map $u: \mathbb{G}_a^\mathrm{perf} \to \mathbb{G}_a.$ It is also the same as $u^* \mathrm{Spec} \, \mathbb{F}_p$ where $\mathrm{Spec}\, \mathbb{F}_p$ is the pointed $\mathbb{G}_a$-module underlying the zero group scheme. Applying the unwinding functor to $\alpha^{\natural}$ gives the functor $\mathrm{QRSP} \mapsto \mathrm{Alg}_{\mathbb{F}_p}$ that sends $S \mapsto S.$
\end{example}

\begin{theorem}\label{though}Derived de Rham cohomology naturally viewed as an object $\mathrm{dR}$ of $\mathrm{Fun}(\mathrm{QRSP}, \mathrm{Alg}_{\mathbb{F}_p})_{\mathfrak{G}/}$ is naturally isomorphic to the unwinding of the nilpotent quasi-ideal given by $u^*W[F].$
\end{theorem}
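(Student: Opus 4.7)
The plan is to compute both functors on an arbitrary QRSP algebra $S$ and identify them, taking care to match the natural map from $\mathfrak{G}(S) = S^\flat$ on both sides. Since the claim is an isomorphism in $\mathrm{Fun}(\mathrm{QRSP}, \mathrm{Alg}_{\mathbb{F}_p})_{\mathfrak{G}/}$ and both $\mathrm{dR}$ and $\mathrm{Un}(u^*W[F])$ are manifestly functorial in $S$, the whole theorem reduces to a single natural identification of discrete $S^\flat$-algebras.

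On the derived de Rham side, I would invoke the classical Illusie--Bhatt description of derived de Rham cohomology on QRSP algebras: for a QRSP algebra $S$ with its canonical surjection $\theta \colon S^\flat \twoheadrightarrow S$ and kernel ideal $I \subset S^\flat$, the ring $\mathrm{dR}(S)$ is canonically isomorphic to the divided power envelope $D_{S^\flat}(I)$, with the natural transformation $\mathfrak{G}(S) = S^\flat \to \mathrm{dR}(S)$ being the structural inclusion. This is just the PD-envelope computation of crystalline cohomology specialized to the QRSP setting in characteristic $p$, where $\mathrm{dR}$ is already discrete.

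On the unwinding side, the key input is the Drinfeld-style identification $W[F] \simeq \mathbb{G}_a^\sharp$ recorded in \cref{introex1}: the group scheme $W[F]$ is nothing but the divided power hull of $\mathbb{G}_a$, so the quasi-ideal $W[F] \to \mathbb{G}_a$ is the \emph{universal} divided power thickening of $\mathbb{G}_a$. Pulling back along $u \colon \mathbb{G}_a^{\mathrm{perf}} \to \mathbb{G}_a$, the quasi-ideal $u^*W[F] \to \mathbb{G}_a^{\mathrm{perf}}$ becomes the universal divided power thickening relative to $\mathbb{G}_a^{\mathrm{perf}}$. The idea is then to exploit the formal properties of the unwinding functor established in \cref{section3}, in particular its compatibility with pullback along $u$ and its interpretation as realizing the quasi-ideal via the canonical surjection $S^\flat \twoheadrightarrow S$ attached to a QRSP algebra, to produce a natural identification
\[
\mathrm{Un}(u^*W[F])(S) \;\cong\; D_{S^\flat}(I),
\]
with the map from $\mathfrak{G}(S) = S^\flat$ again being the structural inclusion. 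Combining the two identifications and checking that the $\mathfrak{G}$-augmentations agree yields the desired isomorphism in $\mathrm{Fun}(\mathrm{QRSP}, \mathrm{Alg}_{\mathbb{F}_p})_{\mathfrak{G}/}$.

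The hard step is the unwinding-side computation: concretely translating the abstract $\mathrm{Un}$ construction, together with the quasi-ideal structure on $u^*W[F]$ coming from $W[F] = \mathbb{G}_a^\sharp$, into the explicit divided power envelope $D_{S^\flat}(I)$. This requires carefully unpacking how the quasi-ideal axioms encode the PD relations, and tracking how the pointing $d \colon u^*W[F] \to \mathbb{G}_a^{\mathrm{perf}}$ interacts with the tautological map $S^\flat \to \mathbb{G}_a^{\mathrm{perf}}$ coming from the QRSP datum. Once that bridge is in place, matching it with the Illusie--Bhatt description of $\mathrm{dR}$ on QRSP algebras is essentially a universal-property verification, and $\mathfrak{G}$-compatibility is formal.
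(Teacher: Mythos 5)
Your high-level strategy is the same as the paper's: identify both sides with the divided power envelope $D_{S^\flat}(I)$, with the structural map from $\mathfrak G(S) = S^\flat$ matching on both sides. The de Rham side is indeed \cite[Prop.~8.12]{BMS19} (\cref{citebms}), and reducing the unwinding side to $\mathrm{Env}_{W[F]}(S^\flat, I)$ via pullback compatibility is exactly \cref{move5}.

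However, what you flag as ``the hard step'' --- showing $\mathrm{Env}_{W[F]}(B,I) \simeq D_B(I)$ for a perfect ring $B$ --- is the actual content of the proof and you do not supply it. The paper does this in \cref{example2} by first establishing $\mathscr{T}_{\mathbb{G}_a^*}(M) \simeq \Gamma_B(M)$ (\cref{example1}), which rests on the explicit identification $\mathbb{G}_a^*(R) \simeq \mathrm{exp}(R)$ from \cite[Appendix~2]{BO78}, and then feeding this into the coequalizer $\mathrm{Sym}_B(I) \rightrightarrows \mathscr{T}_{X_B}(I)$ from \cref{cons2} together with \cite[Thm.~3.19]{BO78}, which says $D_B(I) = \Gamma_B(I)/J$ with $J$ generated by $\varphi(x) - x$ for $x \in I$. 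Your appeal to ``$W[F] = \mathbb{G}_a^\sharp$ is the universal divided power thickening of $\mathbb{G}_a$'' is the right intuition, but it is not a substitute for the calculation: the unwinding construction is not obviously a universal PD-thickening, so you still need the concrete bridge $\mathscr{T}_{\mathbb{G}_a^*}(M) \simeq \Gamma_B(M)$ to see that the coequalizer computing $\mathrm{Env}$ is literally the quotient $\Gamma_B(I) \to D_B(I)$. Until you write down that identification (or an equivalent one), the argument remains a plan rather than a proof.
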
{}

The above results indicate that properties of certain objects of $\mathrm{Fun}(\mathrm{QRSP}, \mathrm{Alg}_{\mathbb{F}_p})_{\mathfrak{G}/}$ can be deduced by studying nilpotent quasi-ideals or more generally pointed $\mathbb{G}_a^{\mathrm{perf}}$-modules which is the subject of \cref{section2}. For example, we define a full subcategory of pointed $\mathbb{G}_a^{\mathrm{perf}}$-modules which we call \textit{fractional rank-$1$} pointed $\mathbb{G}_a^{\mathrm{perf}}$-module (\textit{cf.} \cref{satu3})
which has an initial object given by $\alpha^{\natural}.$ By applying the unwinding functor, using \cref{quiz} and the universal property of $\alpha^{\natural}$ mentioned before, one gets the following result. 

\begin{theorem}[\cref{inhosp2}]The natural transformation $\mathrm{gr}^0 : \mathrm{dR} \to \mathrm{id}$ coming from $\mathrm{gr}^0$ of the Hodge filtration in derived de Rham cohomology is the unique natural transformation between $\mathrm{dR}$ and $\mathrm{id}$ viewed as objects of the category $\mathrm{Fun}(\mathrm{QRSP}, \mathrm{Alg}_{\mathbb{F}_p})_{\mathfrak{G}/}.$
\end{theorem}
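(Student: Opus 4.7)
The plan is to reduce the problem to a universal-property argument in the category of pointed $\mathbb{G}_a^{\mathrm{perf}}$-modules via the contravariant fully faithful embedding provided by the unwinding functor $\mathrm{Un}$. By \cref{though}, the derived de Rham functor satisfies $\mathrm{dR} \simeq \mathrm{Un}(u^*W[F])$ as objects of $\mathrm{Fun}(\mathrm{QRSP}, \mathrm{Alg}_{\mathbb{F}_p})_{\mathfrak{G}/}$, while by \cref{quiz} the identity functor satisfies $\mathrm{id} \simeq \mathrm{Un}(\alpha^{\natural})$ in the same comma category. Because the unwinding functor is fully faithful on the category of nilpotent quasi-ideals (\cref{nil}), and because both $u^*W[F]$ and $\alpha^\natural$ fall in that subcategory, natural transformations $\mathrm{dR} \to \mathrm{id}$ under $\mathfrak{G}$ are in bijection with morphisms $\alpha^{\natural} \to u^*W[F]$ of pointed $\mathbb{G}_a^{\mathrm{perf}}$-modules. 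It therefore suffices to show that such a morphism exists and is unique.

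Next I would invoke the universal property of $\alpha^\natural$ indicated just before the theorem statement: $\alpha^\natural$ is the initial object in the full subcategory of \emph{fractional rank-$1$} pointed $\mathbb{G}_a^{\mathrm{perf}}$-modules (\cref{satu3}). Consequently, once I know that $u^*W[F]$ belongs to this subcategory, I get automatically exactly one morphism $\alpha^{\natural} \to u^*W[F]$, which under unwinding corresponds to a unique natural transformation $\mathrm{dR} \to \mathrm{id}$.

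The key technical step is therefore verifying that the nilpotent quasi-ideal $u^*W[F]$ is fractional rank-$1$ in the sense of \cref{satu3}. The natural approach is to use the identification $W[F] \simeq \mathbb{G}_a^\sharp$ (the divided power completion of $\mathbb{G}_a$) from \cite[Lemma 3.2.6]{Dri20} referenced in \cref{introex1}. Since $\mathbb{G}_a^\sharp$ is a one-parameter formal group, its pullback along $u:\mathbb{G}_a^{\mathrm{perf}} \to \mathbb{G}_a$ should satisfy the defining conditions of fractional rank-$1$ on the nose; this reduces to an explicit check on coordinate rings involving the Witt-vector description of $W[F]$ and the perfection map $u$.

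Finally, to pin down that the unique natural transformation produced above really is $\mathrm{gr}^0$ of the Hodge filtration, I would observe that the Hodge filtration on $\mathrm{dR}$ furnishes a canonical natural transformation $\mathrm{dR}\to \mathrm{id}$ in $\mathrm{Fun}(\mathrm{QRSP}, \mathrm{Alg}_{\mathbb{F}_p})_{\mathfrak{G}/}$, and by the uniqueness just proved any such transformation must agree with the one extracted from the universal property. The main obstacle, as anticipated above, is the verification that $u^*W[F]$ lies in the fractional rank-$1$ subcategory; the other steps are formal consequences of the unwinding machinery developed in \cref{section3} and the universal property of $\alpha^\natural$.
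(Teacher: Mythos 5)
Your argument follows essentially the same route as the paper: the paper's one-line proof cites \cref{move2}, which in turn unrolls to precisely the combination you describe (\cref{NW2}/\cref{though} identifying $\mathrm{dR} = \mathrm{Un}(u^*W[F])$, \cref{satu5}/\cref{quiz} identifying $\mathrm{id} = \mathrm{Un}(\alpha^\natural)$, full faithfulness from \cref{nil}, initiality of $\alpha^\natural$ from \cref{Hmap}, and the identification with $\mathrm{gr}^0$ via \cref{move1}). However, the step you single out as the ``key technical step'' --- verifying that $u^*W[F]$ is of fractional rank $1$ --- is in fact tautological. By \cref{satu3}, a pointed $\mathbb{G}_a^{\mathrm{perf}}$-module is of fractional rank $1$ precisely when it is isomorphic to $u^*X'$ for some pointed $\mathbb{G}_a$-module $X'$; since $W[F]$ is already a pointed $\mathbb{G}_a$-module (\cref{introex1}, \cref{dri}), $u^*W[F]$ lies in that subcategory by definition. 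No appeal to $\mathbb{G}_a^\sharp$, the Witt-vector description, or any coordinate-ring computation is needed for this, and proposing one suggests a misreading of \cref{satu3} as a property to be verified rather than a definition describing the essential image of $u^*$. The genuinely nontrivial content is elsewhere: that the Hodge map $\alpha^\natural \to X$ of \cref{Hmap} unwinds to the Hodge-filtration $\mathrm{gr}^0$ (the content of \cref{move1}, relying on \cref{gax}, \cref{7up7}, \cref{7up8}), and that $\alpha^\natural$ is itself a nilpotent quasi-ideal (\cref{socs}) so that \cref{nil} applies; you gesture at the former but should make it explicit rather than derive it purely from uniqueness.
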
{}

We study a more refined class of objects which we call \textit{pure fractional rank-$1$} pointed $\mathbb{G}_a^{\mathrm{perf}}$-module (\textit{cf.} \cref{pure}). The full subcategory of 
pure fractional rank-$1$ pointed $\mathbb{G}_a^{\mathrm{perf}}$-module has an initial object given by $u^*W[F].$ By applying the unwinding functor, one gets a universal property of the de Rham cohomology functor which we loosely state below.

\begin{theorem}[Universal property of $\mathrm{dR}$]\label{lunch5}Derived de Rham cohomology is a \textit{final object} of a certain full subcategory of $\mathrm{Fun}(\mathrm{QRSP}, \mathrm{Alg}_{\mathbb{F}_p})_{\mathfrak{G}/}$ (\textit{cf.} \cref{univprop}).
\end{theorem}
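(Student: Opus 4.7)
The plan is to deduce the statement by applying the fully faithful contravariant unwinding functor $\mathrm{Un}$ of \cref{nil} to a purely algebro-geometric initiality statement inside the category of pointed $\mathbb{G}_a^{\mathrm{perf}}$-modules. Since $\mathrm{Un}$ is contravariant and fully faithful on nilpotent quasi-ideals, it carries an initial object in any full subcategory of nilpotent quasi-ideals to a final object in its essential image inside $\mathrm{Fun}(\mathrm{QRSP}, \mathrm{Alg}_{\mathbb{F}_p})_{\mathfrak{G}/}$. Combined with \cref{though}, which identifies $\mathrm{dR} \simeq \mathrm{Un}(u^{*}W[F])$, the task reduces to showing that $u^{*}W[F]$ is the initial object of the full subcategory of pure fractional rank-$1$ pointed $\mathbb{G}_a^{\mathrm{perf}}$-modules, and then identifying the essential image under $\mathrm{Un}$ with the subcategory appearing in the theorem statement.

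The first preparatory step is to verify that every pure fractional rank-$1$ pointed $\mathbb{G}_a^{\mathrm{perf}}$-module is automatically a nilpotent quasi-ideal in the sense of \cref{nilpotent}, so that $\mathrm{Un}$ can be applied. This should be a direct check once the definition of ``pure'' in \cref{pure} is unpacked: the purity condition should force the relevant kernel filtration to be nilpotent, with $u^{*}W[F]$ as the universal example.

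The main technical step is proving that $u^{*}W[F]$ is initial among pure fractional rank-$1$ pointed $\mathbb{G}_a^{\mathrm{perf}}$-modules. Given such an $X$ with point $d : X \to \mathbb{G}_a^{\mathrm{perf}}$, the underlying fractional rank-$1$ structure supplies, by the universal property of $\alpha^{\natural}$ already used in \cref{inhosp2}, a canonical map $\alpha^{\natural} \to X$. Since $u^{*}W[F]$ is the pullback along $u$ of $W[F] \cong \mathbb{G}_a^{\sharp}$, the divided power hull of $\mathbb{G}_a$ (\cref{introex1}), I would extend this map uniquely by matching the divided power structure on $W[F]$ with the data encoded by the ``pure'' hypothesis on the kernel of $d$. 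Concretely, I would present $u^{*}W[F]$ as a functor of points in terms of Witt-vector coordinates, and check that a compatible system of elements in $X$ refining a given point of $\alpha^{\natural}$ is pinned down by the purity axioms, giving both existence and uniqueness of the required morphism. Alternatively, one can invoke Drinfeld's structural results on $\mathbb{G}_a^{\sharp}$-modules from \cite{Dri20} to reduce the extension problem to a divided power envelope computation.

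The main obstacle I anticipate is this last translation: showing that the abstract ``pure fractional rank-$1$'' hypothesis is equivalent to the concrete availability of compatible divided powers on $\ker(d)$, and recognizing these as precisely the data classified by maps out of $u^{*}W[F]$. Once the initiality is established, the theorem follows formally by applying $\mathrm{Un}$: the contravariant embedding sends the initial $u^{*}W[F]$ to a final object, which by \cref{though} is canonically $\mathrm{dR}$, and the essential image of the pure fractional rank-$1$ subcategory is the full subcategory of $\mathrm{Fun}(\mathrm{QRSP}, \mathrm{Alg}_{\mathbb{F}_p})_{\mathfrak{G}/}$ referred to in the statement (to be made precise in \cref{univprop}).
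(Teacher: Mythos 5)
Your overall strategy—apply the fully faithful contravariant embedding $\mathrm{Un}$ of \cref{nil} to an initiality statement for $u^*W[F]$, and use \cref{though} to identify $\mathrm{Un}(u^*W[F])\simeq\mathrm{dR}$—matches the paper's argument. However, your ``preparatory step'' is both false and unnecessary. It is \emph{not} true that every pure fractional rank-$1$ pointed $\mathbb{G}_a^{\mathrm{perf}}$-module is a nilpotent quasi-ideal: $\mathbb{G}_a$ with its identity point is pure of rank $1$, so $\mathbb{G}_a^{\mathrm{perf}}=u^*\mathbb{G}_a$ is pure of fractional rank $1$, but $\mathbb{G}_a^{\mathrm{perf}}$ is explicitly \emph{not} nilpotent (\cref{socs}). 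The paper sidesteps this by \emph{defining} $\mathrm{Fun}(\mathrm{QRSP}, \mathrm{Alg}_{\mathbb{F}_p})^{\mathrm{pure\;rk}=1, \mathcal{N}\mathrm{Un}}_{\mathfrak{G}/}$ to be the image under $\mathrm{Un}$ of those $X$ that are \emph{both} pure fractional rank-$1$ \emph{and} nilpotent quasi-ideals. One then only needs $u^*W[F]$ to be initial in the larger class of all pure fractional rank-$1$ pointed $\mathbb{G}_a^{\mathrm{perf}}$-modules (\cref{rem6}); since $u^*W[F]$ itself lies in the restricted class, it is a fortiori initial there, and full faithfulness of $\mathrm{Un}$ on $\mathcal{N}\mathrm{QID}$ does the rest.

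On the initiality of $u^*W[F]$ itself, you propose a concrete extension argument via Witt-vector coordinates and divided power structures (or via Drinfeld's analysis of $\mathbb{G}_a^\sharp$-modules). The paper's route is shorter and more structural: since $u^*$ is fully faithful (\cref{pullback}), pure fractional rank-$1$ pointed $\mathbb{G}_a^{\mathrm{perf}}$-modules are equivalent to pure rank-$1$ pointed $\mathbb{G}_a$-modules; these form a full subcategory of $\mathcal{P}^1_*$, whose initial object is the Cartier dual $\mathbb{G}_a^*$ of the final object $\mathbb{G}_a\to\mathbb{G}_a$ (\cref{final}, \cref{rem5}); and $W[F]\simeq\mathbb{G}_a^*$ (\cref{dri}). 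Your approach would likely recover the same conclusion but amounts to re-deriving by hand the content that Cartier duality gives for free, and along the way you would need to articulate precisely how the purity hypothesis pins down a divided power structure—this is the nontrivial translation you yourself flag as the main obstacle, and the paper avoids it entirely.
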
{}

As an application of the universal property, we can deduce the following result \cite[Prop. 10.3.1]{BLM20}.

\begin{theorem}[Bhatt-Lurie-Mathew]\label{def12}If we consider algebraic de Rham cohomology as a functor defined on smooth $\mathbb{F}_p$-algebras denoted as $\mathrm{dR}$, then any endomorphism of $\mathrm{dR}$ that commutes with the $\mathrm{gr}^0$ map of the Hodge filtration $\mathrm{dR} \to \mathrm{id}$ is identity.
\end{theorem}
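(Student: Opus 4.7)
The plan is to deduce \cref{def12} from the universal property of $\mathrm{dR}$ established in \cref{lunch5}, after first reducing to QRSP algebras. By the same style of argument used in \cref{section5.1} to reduce \cref{inhosp3} to \cref{why?}, it suffices to show that any endomorphism $\phi$ of the discrete-valued functor $\mathrm{dR}: \mathrm{QRSP} \to \mathrm{Alg}_{\mathbb{F}_p}$ which commutes with the Hodge projection $\mathrm{gr}^0: \mathrm{dR} \to \mathrm{id}$ is the identity. This reduction is clean because $\mathrm{gr}^0$ is natural in the algebra and because de Rham cohomology on smooth $\mathbb{F}_p$-algebras is recovered from its values on QRSP covers.

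Next I would translate $\phi$ to the algebro-geometric side. By \cref{though}, $\mathrm{dR}$ corresponds, under the fully faithful contravariant unwinding embedding of \cref{nil}, to the nilpotent quasi-ideal $u^*W[F]$; by \cref{quiz}, the identity functor $\mathrm{id}$ corresponds to $\alpha^{\natural}$; and under these identifications, the natural transformation $\mathrm{gr}^0$ corresponds to the canonical map $\alpha^{\natural} \hookrightarrow u^*W[F]$ realizing $\alpha^{\natural}$ as the kernel of the point $d: u^*W[F] \to \mathbb{G}_a^{\mathrm{perf}}$. Thus $\phi$ pulls back to an endomorphism $\psi$ of $u^*W[F]$, and the compatibility of $\phi$ with $\mathrm{gr}^0$ translates into the compatibility of $\psi$ with $d$, i.e. into $\psi$ being an endomorphism of $u^*W[F]$ as a pointed $\mathbb{G}_a^{\mathrm{perf}}$-module.

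Finally I would invoke the universal property: by \cref{lunch5} the unwinding $\mathrm{dR}$ of $u^*W[F]$ is the final object of the relevant full subcategory of $\mathrm{Fun}(\mathrm{QRSP}, \mathrm{Alg}_{\mathbb{F}_p})_{\mathfrak{G}/}$, equivalently $u^*W[F]$ is the initial object of the category of pure fractional rank-$1$ pointed $\mathbb{G}_a^{\mathrm{perf}}$-modules. Any endomorphism of an initial object is the identity, so $\psi = \mathrm{id}$ and hence $\phi = \mathrm{id}$.

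The main obstacle I anticipate is the bookkeeping in the middle step: verifying that the hypothesis on $\phi$ (compatibility with the natural transformation $\mathrm{gr}^0: \mathrm{dR} \to \mathrm{id}$ in $\mathrm{Fun}(\mathrm{QRSP}, \mathrm{Alg}_{\mathbb{F}_p})$) really does match the condition on $\psi$ (preservation of the point $d$ in the category of pointed $\mathbb{G}_a^{\mathrm{perf}}$-modules). One needs to combine \cref{though}, \cref{quiz}, \cref{nil} and \cref{inhosp2} to place everything coherently in the correct (contravariant) slice category and track the structure maps through the unwinding functor; in particular one should note that the $\mathfrak{G}$-under structure on $\mathrm{dR}$ is encoded in the pointed $\mathbb{G}_a^{\mathrm{perf}}$-module structure of $u^*W[F]$ itself, so that morphisms of pointed $\mathbb{G}_a^{\mathrm{perf}}$-modules unwind to morphisms in $\mathrm{Fun}(\mathrm{QRSP}, \mathrm{Alg}_{\mathbb{F}_p})_{\mathfrak{G}/}$. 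Once these identifications are in place, the finality statement completes the proof purely formally.
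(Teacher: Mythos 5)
Your overall strategy---reduce to $\mathrm{QRSP}$, then invoke the initiality of $u^*W[F]$ (equivalently the finality of $\mathrm{dR}$) through the fully faithful unwinding embedding---is the same as the paper's. But the step you flag as ``the main obstacle'' and defer as ``bookkeeping'' is a genuine gap, and the lemmas you propose to combine cannot close it. The fully faithful embedding of \cref{nil} and the finality statement of \cref{univprop} are statements internal to the slice category $\mathrm{Fun}(\mathrm{QRSP}, \mathrm{Alg}_{\mathbb{F}_p})_{\mathfrak{G}/}$; your endomorphism $\phi$ a priori only lives in $\mathrm{Fun}(\mathrm{QRSP}, \mathrm{Alg}_{\mathbb{F}_p})$, with the extra hypothesis that it commutes with $\mathrm{gr}^0: \mathrm{dR} \to \mathrm{id}$. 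Neither \cref{though}, \cref{quiz}, \cref{nil}, nor \cref{inhosp2} can tell you that $\phi$ lies \emph{under} $\mathfrak{G}$, because they presuppose that you already are in that slice. The missing ingredient is a short direct argument: for a perfect ring $R$ viewed as a QRSP algebra, $\mathrm{dR}(R) \simeq R$ and the $\mathrm{gr}^0$ map is an isomorphism, so $\mathrm{gr}^0$-compatibility forces $\phi(R) = \mathrm{id}$; then naturality of $\phi$ along the arrow $S^\flat \to S$ in $\mathrm{QRSP}$ (with $S^\flat$ perfect) shows that $\phi(S)$ commutes with the structure map $\mathfrak{G}(S) = S^\flat \to \mathrm{dR}(S)$, i.e.\ $\phi$ is a morphism in the slice. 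This is exactly what the paper's proof does, and once it is done one can apply \cref{NW} directly without explicitly passing back through the unwinding equivalence.

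A minor factual correction: $\alpha^\natural$ is the kernel of $u: \mathbb{G}_a^{\mathrm{perf}} \to \mathbb{G}_a$ (see \cref{quiz}), not of the point $d: u^*W[F] \to \mathbb{G}_a^{\mathrm{perf}}$; the map $\alpha^\natural \to u^*W[F]$ is the Hodge map of \cref{Hmap}, which is not a kernel inclusion. This does not affect your argument's structure, but it is worth getting right, since the point is precisely that the $\mathrm{gr}^0$ transformation unwinds from this particular map rather than from some intrinsic kernel of $d$.
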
{}

\noindent
\textbf{Outline of the proof of \cref{inhosp3}.}
Once we have developed the properties of the unwinding functor ${\mathrm{Un}}$ in \cref{section3}, we try to use it to prove \cref{inhosp3} in 
\cref{section5}. We have noted that $\mathrm{dR}$ is essentially the data of the quasi-ideal $u^*W[F].$ Our strategy is the following.\vspace{2mm}

\noindent
1. We reduce the problem to the case where the base Artinian local ring $A$ is $\mathbb{F}_p[\epsilon]/\epsilon^2.$ \vspace{1mm}

\noindent
2. Given any deformation $\mathrm{dR}'$ of $\mathrm{dR},$ we extract a quasi-ideal from $\mathrm{dR}'$ denoted as $r (\mathrm{dR}')$ which is a deformation of $u^*W[F].$\vspace{1mm}

\noindent
3. We show that $\mathrm{dR}'$ is essentially determined by the quasi-ideal $r (\mathrm{dR}').$\vspace{1mm}

\noindent
4. We show that any deformation of $u^*W[F]$ to $\mathbb{F}_p[\epsilon]/\epsilon^2$ as a pointed $\mathbb{G}_a^{\mathrm{perf}}$-module is uniquely isomorphic to the trivial deformation obtained by base change. This is proven in \cref{mainthm1}. Therefore $r (\mathrm{dR}')$ is necessarily the trivial deformation of $u^*W[F]$ and by 3, $\mathrm{dR}'$ is necessarily the trivial deformation $\mathrm{dR} \otimes \mathbb{F}_p[\epsilon]/\epsilon^2$ as well.

\vspace{2mm}

\noindent
\textbf{Other approaches to \cref{inhosp3}.}\label{other} Our approach to \cref{inhosp3} uses QRSP algebras in an essential way in order to \textit{not} deal with deformation theory of coconnective $\mathbb{E}_\infty$-rings. Our construction of the unwinding functor ${\mathrm{Un}}$ is also devised in a way to work with the category $\mathrm{Fun}(\mathrm{QRSP}, \mathrm{Alg}_{\mathbb{F}_p}).$ However, in principle, this should not be absolutely necessary. Below we attempt to loosely explain other possible approaches that could be seen as more natural.\vspace{2mm}

By the reduction in \cref{section5.1}, it is equivalent to address the version of \cref{inhosp3} for the category $\mathrm{Poly}_{\mathbb F_p}$ of finitely generated polynomial algebras over $\mathbb{F}_p$ instead of all smooth algebras. Instead of studying the category $\mathrm{Fun}(\mathrm{QRSP}, \mathrm{Alg}_{\mathbb{F}_p})$, we can study the category $\mathrm{Fun}(\mathrm{Poly}_{\mathbb F_p}, \mathrm{CAlg}(D(\mathbb{F}_p))).$ A functor $F \in \mathrm{Fun}(\mathrm{Poly}_{\mathbb F_p}, \mathrm{CAlg}(D(\mathbb{F}_p)))$ that preserves coproducts would provide an $\mathbb{F}_p$-coalgebra object structure on the $\mathbb{E}_\infty$-ring $F(\mathbb{F}_p[x])$ coming from the $\mathbb{F}_p$-coalgebra structure of $\mathbb{F}_p[x]$ as an object of $\mathrm{Poly}_{\mathbb F_p}.$ One can also try to reverse the situation, i.e., given an $\mathbb{E}_\infty$-ring $K$ with the extra structure of an $\mathbb{F}_p$-coalgebra object, one can try to build a functor ${\mathrm{Un}}_K: \mathrm{Poly}_{\mathbb F_p} \to \mathrm{CAlg}(D(\mathbb{F}_p))$ that would send $\mathbb{F}_p[x] \mapsto K$ and extend in a coproduct preserving way. This version of ``unwinding" is explained in \cref{introex} (in a $1$-categorical language). Assuming good properties of these constructions, in order to approach \cref{inhosp3}, we are led to studying the deformations of the $\mathbb{E}_\infty$-ring $\mathrm{dR}(\mathbb{F}_p[x])$ along with the extra structure of an $\mathbb{F}_p$-coalgebra object.\vspace{2mm}

Using the stacky approach to $p$-adic cohomology theories due to Bhatt--Lurie \cite{BL} and Drinfeld \cite{Dri18} \cite{Dri20}, one can ask a similar question regarding deformation of the $\mathbb{F}_p$-algebra stack ${(\mathbb{A}^1_{\mathbb{F}_p})}^{\mathrm{dR}}$ relevant to \cref{inhosp3}. This is a stack whose cohomology of the structure sheaf recovers $\mathrm{dR}(\mathbb{F}_p [x]).$ Deformations of ${(\mathbb{A}^1_{\mathbb{F}_p})}^{\mathrm{dR}}$ as an $\mathbb{F}_p$-algebra stack seems to be relevant to \cref{inhosp3}. Further, using \cite[Prop. 3.5.1]{Dri20}, ${(\mathbb{A}^1_{\mathbb{F}_p})}^{\mathrm{dR}}$ is the cone of the quasi-ideal given by $W[F].$ Therefore, deformations of ${(\mathbb{A}^1_{\mathbb{F}_p})}^{\mathrm{dR}}$ as an $\mathbb{F}_p$-algebra stack seem related to deformations of the quasi-ideal or the pointed $\mathbb{G}_a$-module given by $W[F]$ which is studied in \cref{section2} of our paper.\vspace{2mm}

In the approach we have taken in this paper (which uses QRSP algebras) we can avoid talking about higher categorical structures and obtain a purely $1$-categorical formulation as mentioned in \cref{why?}. Further, the notion of a pointed $\mathbb{G}_a$-module or a quasi-ideal comes out quite naturally (\textit{cf.} \cref{smoke1}). As a downside, the construction of ``unwinding" seems more convoluted for \text{QRSP} than what it would be for $\mathrm{Poly}_{\mathbb F_p}$. We use quasisyntomic descendability and left Kan extensions to switch between $\mathrm{QRSP}$ and $\mathrm{Poly}_{\mathbb F_p},$ which could potentially be avoided in the other approaches outlined above.\vspace{2mm}

In any case, we point out that a precise formulation of the deformation problems involving the $\mathbb{E}_\infty$-ring $\mathrm{dR}(\mathbb{F}_p[x])$ or the $\mathbb{F}_p$-algebra stack $(\mathbb{A}^1_{\mathbb{F}_p})^{\mathrm{dR}}$ would likely be equivalent to \cref{inhosp3} and therefore they are  answered \textit{a posteriori} after proving \cref{inhosp3}. Also, a comparison of these approaches can lead to other questions as well. For example, motivated by \cref{lunch5}, one can attempt to formulate a universal property for the stack $(\mathbb{A}_{\mathbb{F}_p}^1)^{\mathrm{dR}}$ in the category of $\mathbb{F}_p$-algebra stacks. In \cref{table}, we explain a rough comparison between the stacky approach and the approach taken in our paper.

\subsection{Motivations and related work}
In this section we describe the motivations behind the constructions appearing in this paper and other related work. Our starting point was to approach \cref{inhosp3} which asks about deformations of a functor (instead of a single object) which we regard as somewhat difficult to approach. The strategy of the proof outlined above is vaguely inspired by some constructions from chromatic homotopy theory. Given a complex oriented multiplicative cohomology theory $E^*$, one can extract a formal group law from it by looking at $E^* (\mathbb{C}P^\infty)$ and using the multiplication $\mathbb{C}P^\infty \times \mathbb{C}P^\infty \to \mathbb{C}P^\infty.$ Further, given a formal group law, the Conner-Floyd construction \cite{CF66} defines a ``cohomology theory" associated to it. Motivated by this picture, one can ask the following naive question in our context.\vspace{2mm}

\noindent
\textbf{Question.} Is there a way to extract a ``group like object" from de Rham cohomology (or its deformations)? Further, is the theory of de Rham cohomology (and its deformations) determined by this ``group like object"?\vspace{2mm}

By the de Rham-crystalline comparison theorem \cite[Thm. V.2.3.2]{Ber74}, the theory of de Rham cohomology is essentially determined by the theory of divided power structures. This can be seen more concretely by using the work of Bhatt on derived de Rham cohomology \cite{Bha12}. Given a QRSP algebra $S$, by \cite[Prop. 8.12]{BMS19}, its derived de Rham cohomology $\mathrm{dR}(S)$ is naturally isomorphic to the divided power envelope $D_{S^\flat}(I)$ where $I:= \mathrm{Ker}(S^\flat \to S)$. Setting $S := \mathbb{F}_p[x^{1/p^\infty}]/x$ and considering $\mathrm{dR}(\mathbb{F}_p[x^{1/p^\infty}]/x)$, we get the ring of functions underlying $u^*W[F]$ from \cref{introex1}. Further, the Hopf stucture of $\mathbb{F}_p[x^{1/p^\infty}]/x$ provides a Hopf structure on $\mathrm{dR}(\mathbb{F}_p[x^{1/p^\infty}]/x)$ which is the same as the Hopf algebra underlying the ring of functions on $u^*W[F].$ This addresses the first half of our question above and extracts the ``group like object" $u^*W[F]$ from $\mathrm{dR}.$\vspace{2mm}

For the second half, one needs to build the de Rham cohomology functor from the object $u^*W[F].$ By the isomorphism $\mathrm{dR}(S) \simeq D_{S^\flat}(I)$ for a QRSP algebra $S$, it would be enough to build divided power envelopes from $u^*W[F].$ In \cite[Appendix 2]{BO78}, Berthelot-Ogus constructs the closely related divided power algebra $\Gamma_R(M)$ for any ring $R$ and an $R$-module $M$ by using a particular $R$-module called $\mathrm{exp}(R).$ We note that there is an isomorphism $\mathrm{exp}(R) \simeq W[F] (R),$ where the latter denotes the $R$-valued points of the group scheme $W[F].$ This suggests that in principle, it could be possible to build divided power envelopes from $u^*W[F].$ However, we need to equip the group scheme $u^*W[F]$ with more structure. This leads to the definition of a pointed $\mathbb{G}_a^{\mathrm{perf}}$-module, which is the framework for our ``group like object". In \cref{example3}, we see that using the unwinding functor, it is indeed possible to directly build divided power envelopes (and consequently derived de Rham cohomology) out of the pointed $\mathbb{G}_a^{\mathrm{perf}}$-module $u^*W[F].$ This addresses the second half of our question as well.\vspace{2mm}

Let us now mention some independent related work that appeared during the preparation of this paper. The connection between $u^*W[F]$ or $W[F]$ and de Rham cohomology also appears in the stacky approach to $p$-adic cohomology theories by Drinfeld \cite{Dri20}. The ``crystallization" of $\mathbb{A}^1 _{\mathbb{F}_p}$ is a stack that is obtained by taking the cone of the quasi-ideal $W[F]$ in $\mathbb{G}_a.$ The notion of a quasi-ideal also appeared in the work of Drinfeld and in general, a ring stack can be created out of a quasi-ideal by considering its cone. More details on these constructions can all be found in \cite{Dri20}. For us, a quasi-ideal is used as a special kind of a pointed $\mathbb{G}_a$ or a $\mathbb{G}_a^{\mathrm{perf}}$-module for which the unwinding functor is particularly well-behaved. In \cref{high11}, we show that the (opposite) category of quasi-ideals can be embedded in a certain naturally defined category. \vspace{2mm}

A connection between $W[F]$ and Hodge cohomology appears in the work of Moulinos, Robalo and To\"en on Hochschild homology \cite{MRT20}. In their context, Hodge cohomology appears as the associated graded object of the HKR filtration on Hochschild homology. The authors construct a filtered stack (over a $p$-adic base) which they call the filtered circle. The associated graded stack of the filtered circle is given by the classifying stack $B W[F].$ They show that Hochschild homology can be studied through this filtered circle where the filtration on the filtered circle induces 
the HKR filtration on Hochschild homology. Their work also gives a different way of thinking about the group scheme $W[F]$: the classifying stack $B W[F]$ is the affine stack corresponding to the cosimplicial ring given by the trivial square zero extension $\mathbb{F}_p \oplus \mathbb{F}_p[-1].$ The stack $B W[F]$ also appears in the work of To\"en in \cite{Toe20}, where it is used to define derived foliations on schemes.\vspace{2mm}

A universal property of the Hodge completed derived de Rham complex was recently obtained in \cite{Rak20} and motivated us to look for a universal property for $\mathrm{dR}$ from our perspective as in \cref{lunch5}.\vspace{2mm}

Finally, as previously noted, the analogue of \cref{inhosp3} for $A = \mathbb{Z}_p$ (instead of an Artinian local ring) was already known due to the work of Bhatt, Lurie and Mathew \cite[Thm. 10.1.2]{BLM20}, which they use to give a new proof of the de Rham Witt to crystalline comparison theorem of Illusie \cite[Thm. II.1.4]{Ill79}. \cref{inhosp3} in our paper also allows torsion base rings $A,$ and one can deduce \cite[Thm. 10.1.2]{BLM20} from it by a limit argument (see the discussion after \cref{hun}). A variant of questions regarding endomorphisms of the de Rham cohomology functor appears in the work of Li and Liu \cite{LL}.
\vspace{2mm}

The idea of controlling de Rham cohomology theory by a single object (with appropriate structure) originating from this paper has been pursued further in author's subsequent  joint work with Li in \cite{LM21}, where all the endomorphisms of de Rham cohomology theory as a functor has been classified in very general situations \cite[Thm 1.1]{LM21}. As a consequence of this classification, one can deduce Drinfeld's refinement of the Deligne--Illusie decomposition \cite[Thm 1.6]{LM21}.

\subsection{Acknowledgements}I am extremely grateful to my advisor Bhargav Bhatt for proposing the main question regarding deformations of de Rham cohomology studied in this paper as well as for his patience, interest and many suggestions throughout the project. Numerous discussions with him have improved many aspects of this paper. Much of this paper relies on his earlier work on $p$-adic derived de Rham cohomology; he also suggested the use of ``descendability" which is an important technical ingredient in some of the proofs. I am thankful to Attilio Castano, Andy Jiang and Emanuel Reinecke for many helpful discussions during the preparation of this paper. I would also like to thank Benjamin Antieau, Vladimir Drinfeld, Haoyang Guo, Luc Illusie, Arthur-C\'esar Le Bras, Shizhang Li, Akhil Mathew, Arpon Raksit and Bertrand Toën for comments on a draft version of the paper. Special thanks to the referee for many careful comments and valuable suggestions. 
\vspace{2mm}

This work is part of my PhD thesis at the University of Michigan and I am thankful to the mathematics department for its support. During the preparation of this paper, I was also partially supported by the Rackham International Students Fellowship, NSF grants DMS \#1801689 and \#1952399 through Bhargav Bhatt.

\newpage

\section{Modules over ring schemes}\label{section2}

In this section, we begin with the definition of a $\mathbb{G}_a$ and $\mathbb{G}_a^{\mathrm{perf}}$-module leading up to the notion of a \textit{pointed} $\mathbb{G}_a$ or $\mathbb{G}_a^{\mathrm{perf}}$-module. Our final goal is to study the deformation of the pointed $\mathbb{G}_a$-module $W[F],$ which is obtained from the kernel of Frobenius on Witt vectors and its closely related variant $u^*W[F],$ which is a pointed $\mathbb{G}_a^{\mathrm{perf}}$-module. This will in part be achieved via attaching universal properties to the objects $W[F]$ and $u^*W[F]$ as objects in certain categories. The construction of such categories leads to several refinements of the category of pointed $\mathbb{G}_a^{\mathrm{perf}}$-modules which we call pointed $\mathbb{G}_a^{\mathrm{perf}}$-modules of \textit{fractional rank 1} (\cref{satu3}), \textit{full of fractional rank $1$} (\cref{satu100}) and \textit{pure of fractional rank $1$} (\cref{pure}).

\begin{notation}
We let $\mathbb{N}$ denote the monoid of nonnegative integers. The set of positive integers will be denoted by $\mathbb{N}_{>0}.$ For a fixed prime $p,$ we let $\mathbb{N}[\frac 1 p]$ denote the monoid of nonnegative elements in $\mathbb{Z}[\frac 1 p] \subset \mathbb Q.$ The category of $A$-algebras will be denoted as $\mathrm{Alg}_A.$ Its opposite category, i.e., the category of affine scheme will be denoted by $\mathrm{Aff}_A.$ All schemes considered are affine schemes unless otherwise mentioned. The group schemes we consider are always assumed to be commutative. The notion of a $\mathbb{G}_a$-module is valid over any base ring $A.$ However, the notion of a $\mathbb{G}_a^{\mathrm{perf}}$-module will require us to fix a prime $p.$ In fact, $\mathbb{G}_a^{\mathrm{perf}}$-modules will only be defined over a base ring where $p$ is nilpotent. \end{notation}

\subsection{$\mathbb{G}_a$-modules}

Let $\mathcal{C}$ be any category which admits finite products. Many of the familiar concepts from algebra can be made sense of in the category $\mathcal{C}.$ For example, one may talk about any monoid $M$ acting on an object $c$ of $\mathcal{C},$ which is encoded by the data of a monoid homomorphism $M \to \w{Hom}_{\mathcal{C}}(c,c).$ One can also define the notion of a group object $\mathcal{G}$ of $\mathcal{C}$ and talk about $\mathcal{G}$ acting on an object of $\mathcal{C}.$ Further, one can talk about the notion of a ring object of $\mathcal{C}.$ If $\mathcal{R}$ is a ring object of $\mathcal{C},$ then one can define a notion of $\mathcal{R}$-module objects too. Many of the definitions we introduce in this section can be understood and defined in this generality. We will only spell out these definitions in the more concrete cases as required for our paper. However, our definitions will be based on the Yoneda lemma and they apply to the general situation of any category $\mathcal{C}$ with finite products.

\begin{definition}\label{def1} Let $R$ be an arbitrary ring and $\mathrm{Mod}_R$ be the category of $R$-modules. Let $F: \mathrm{Aff}_A^{\w{op}} \to \mathrm{Mod}_R$ be a functor. We will say that $F$ defines an $R$-module scheme over $A,$ if the set-valued presheaf underlying $F$ is representable by an affine scheme over $A.$ \vspace{2mm}

From a categorical perspective, one can also say that an $R$-module scheme is simply an $R$-module object in the category of affine schemes over $A.$
\end{definition}

\begin{remark}
Note that if $X$ is an $R$-module scheme, then by definition, $X$ is equipped with the structure of a commutative group scheme. Additionally, for every $r \in R,$ there is a map $m_r: X \to X$ of group schemes, which is the analogue of ``multiplication by $r$ map" in the case of usual rings and modules. These maps are required to satisfy certain conditions analogous to the usual ones in algebra that we do not spell out here. All of these data and conditions are captured by the functorial definition provided in \cref{def1}.
\end{remark}{}

\begin{remark}We point out that the ring $R$ in \cref{def1} is arbitrary and not required to be an $A$-algebra.
\end{remark}

\begin{example}Taking $X = \mathrm{Spec}\, A[x]$, we see that $X$ can be equipped with the structure of an $A$-module scheme.
\end{example}

\begin{remark} One can also similarly define a notion of $R$-module schemes that are not necessarily affine. However, such non affine examples will not be necessary for us in this paper; so in \cref{def1}, we restrict our definition to the affine case.
\end{remark}{}

\begin{example}\label{ex1} We note that $\mathrm{Spec}\, A[x]$ can be naturally equipped with the structure of a ring scheme over $A$. We will denote this ring scheme by $\mathbb{G}_a.$ It represents the functor that sends an affine scheme to its ring of global sections.
\end{example}{}

\begin{definition}[$\mathbb{G}_a$-module]\label{g_amod} Let us consider the category $\w{Aff}_A$ and the presheaf of rings on $\w{Aff}_A$ represented by the ring scheme $\mathbb{G}_a$. Let $F$ be a presheaf of modules over the presheaf of rings represented by $\mathbb{G}_a.$ We will say that $F$ is a $\mathbb{G}_a$-module over $A$, if the set-valued presheaf underlying $F$ is representable by an affine scheme over $A.$ Morphisms of $\mathbb{G}_a$-modules are defined as morphisms of presheaves of modules over the presheaf of rings represented by $\mathbb{G}_a.$
\vspace{2mm}

From a categorical perspective, one can simply say that a $\mathbb{G}_a$-module is a $\mathbb{G}_a$-module object in the category of affine schemes over $A.$
\end{definition}{}

\begin{remark}\label{hungry991}
Note that by definition, a $\mathbb{G}_a$-module $X$ has the structure of a commutative group scheme. Further, there is also the $\mathbb{G}_a$-action map $\mathrm{act}: X \times \mathbb{G}_a \to X.$ These data are subjected to the usual compatibilities which are all abstractly captured in \cref{g_amod}. For example, the $\mathbb{G}_a$-action must be compatible with the group operation $m_X: X \times X \to X.$ Further, the $\mathbb{G}_a$-action must also respect the multiplication map $m_{\mathbb{G}_a}: \mathbb{G}_a \times \mathbb{G}_a \to \mathbb{G}_a$ coming from the additive group scheme structure of $\mathbb{G}_a.$ We spell out the latter compatibility explicitly, which amounts to the following commutative diagram. Below, we let $\Delta: X \to X \times X$ denote the diagonal map.
\begin{center}
    \begin{equation}\label{hungry990}
    \begin{tikzcd}
\mathbb G_a \times X \times \mathbb G_a \arrow[rr, "\simeq "] \arrow[d, "\text{id}\times \Delta \times\text{id}"] &  & \mathbb G_a \times \mathbb G_a \times X \arrow[rr, "m_{\mathbb G_a} \times \text{id}"] &  & \mathbb G_a \times X \arrow[d, "\text{act}"'] \\
\mathbb G_a \times X \times X \times \mathbb G_a \arrow[rr, "\text{act}\times \text{act}"]                        &  & X \times X \arrow[rr, "m_X"]                                                           &  & X                                            
\end{tikzcd}
\end{equation}
\end{center}
\end{remark}{}

\begin{example}$\mathbb{G}_a$ itself can be equipped with the structure of a $\mathbb{G}_a$-module. If $A$ has char $p$, then $\mathrm{Spec}\, A[x]/ x^p$ can be equipped with the structure of a $\mathbb{G}_a$-module.
\end{example}{}

\begin{remark}\label{noise}We note that the affine scheme underlying a $\mathbb{G}_a$-module in particular has the action of $(\mathbb{G}_a, \cdot~)$, where the latter is considered to be a monoid scheme under multiplication; thus global sections on it gives a nonnegatively graded Hopf algebra. In other words, every $\mathbb{G}_a$-module has the structure of a nonnegatively graded group scheme. If the underlying affine scheme of a $\mathbb{G}_a$-module is written as $\w{Spec}\, B,$ then we have a direct sum decomposition $B = \bigoplus_{i \in \mathbb{N}} B_i$ coming from the grading, where $B_i$ denotes the summand of degree $i.$ We refer to \cite{MM65} for a study of graded Hopf algebras.
\end{remark}

\begin{remark}The notion of a $\mathbb{G}_a$-module extends to any scheme which is not \textit{a priori} assumed to be affine. However, we note that being a $\mathbb{G}_a$-module imposes strong topological restrictions on the underlying scheme. In fact, any scheme which can be equipped with the structure of a $\mathbb{G}_a$-module over a field is necessarily affine (see \cref{stillediting78}). Thus, there is not much of a loss of generality by defining the notion of $\mathbb{G}_a$-modules only in the affine case, as we do in our paper. We thank Drinfeld for pointing this out. Below, we prove a slightly more general proposition.
\end{remark}

\begin{proposition}\label{A^1hom}Let $G$ be a scheme over $k$ equipped with a $k$-rational point given by $c: \mathrm{Spec}\, k \to G$. Suppose that there is a map $F: G \times \mathbb{A}^{1}_k \to G$ such that the restriction map $G \times \left \{ 1 \right \} \to G$ is identity and the restriction map $G \times \left \{ 0 \right \} \to G$ is the composition $G \to \mathrm{Spec}\, k \to G$ (where the latter map comes from the chosen $k$-rational point). In this set up, if $G$ can be equipped with the structure of some group scheme, then $G$ must be an affine scheme.
\end{proposition}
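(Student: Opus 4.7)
The plan is to proceed via Chevalley's structure theorem together with the rigidity of morphisms $\mathbb{A}^1 \to A$ into an abelian variety. First, by replacing $F$ with its composition with left translation by $c^{-1}$, I may assume that the distinguished $k$-rational point $c$ coincides with the identity $e$ of $G$. The contraction $F$ then expresses every point $g \in G$ as the endpoint of the $\mathbb{A}^1$-arc $t \mapsto F(g,t)$ starting at $e$, and since $\mathbb{A}^1$ is connected this arc lies in the connected component $G^0$; hence $G = G^0$ is connected.

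Next, I would reduce to the case where $G$ is of finite type, smooth and geometrically connected over $k$ by standard dévissage. Affineness is fpqc-local on the base, so I may pass to the perfect closure of $k$; affineness is preserved and reflected by nilpotent thickenings (in particular $G$ is affine iff $G_{\mathrm{red}}$ is), so I may assume $G$ reduced and smooth; and any not-necessarily-finite-type commutative group scheme over a field is a filtered limit of finite type quotients, so finite-typeness can be arranged.

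At this point Chevalley's structure theorem yields a short exact sequence
\[ 1 \longrightarrow H \longrightarrow G \xrightarrow{\;\pi\;} A \longrightarrow 1 \]
with $H$ a normal connected affine subgroup scheme and $A$ an abelian variety. The crucial step is to show that $\pi$ is constant. For this I would apply the rigidity principle: for any test scheme $T$ and any $T$-valued point $g : T \to G$, the composition
\[ T \times \mathbb{A}^1_k \xrightarrow{g \times \mathrm{id}} G \times \mathbb{A}^1_k \xrightarrow{F} G \xrightarrow{\pi} A \]
is a $T$-family of morphisms $\mathbb{A}^1_k \to A$. Every such morphism extends to $\mathbb{P}^1_k \to A$ by the valuative criterion of properness and is then constant, since $\mathbb{P}^1$ has trivial Albanese. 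Hence the displayed composition factors through the projection $T \times \mathbb{A}^1_k \to T$. Evaluating at $t = 0$ gives the constant value $\pi(c) = e_A$, while evaluating at $t = 1$ gives $\pi \circ g$, so $\pi \circ g = e_A$ for every $T$-point $g$. Thus $\pi$ is the trivial morphism, $A$ is a point, and $G = H$ is affine.

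The main obstacle I anticipate is the reduction step: handling group schemes that are not a priori of finite type or not reduced requires some care, and over an imperfect base field $k$ one needs the refined form of Chevalley's theorem (à la Conrad/SGA~3) in which $H$ may fail to be smooth but is still affine. Once those reductions are in place, the rigidity argument above is quick. An alternative formulation that bypasses finite-typeness altogether would be to invoke the known fact that any morphism from $\mathbb{A}^1$ to an arbitrary algebraic group factors through its largest connected affine subgroup, and observe that the $\mathbb{A}^1$-arcs produced by $F$ cover all of $G$.
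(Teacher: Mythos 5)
Your strategy coincides with the paper's: after passing to an algebraically closed base, invoke a Chevalley-type decomposition $0 \to H \to G \to A \to 0$ with $H$ affine and $A$ an abelian variety, then use the $\mathbb{A}^1$-contraction together with the rigidity of maps from $\mathbb{A}^1$ into abelian varieties to kill the map to $A$, so $G = H$ is affine.

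The one genuine gap is the d\'evissage to finite type, which you flag yourself. You write that $G$ is a filtered limit of finite-type quotients $G_i$, but the contraction $F : G \times \mathbb{A}^1 \to G$ does not automatically descend to a quotient $G_i = G/K_i$, so it is not explained how one locates a finite-type $G_i$ to which the hypotheses apply; and the implication ``all $G_i$ affine $\Rightarrow$ $G$ affine'' is not spelled out. The paper avoids all of this by citing Perrin's refinement of Chevalley's theorem \cite[Cor. 4.2.9]{Per76}, which gives the exact sequence $0 \to H \to G \to A \to 0$ in the fpqc topology directly for a connected group scheme over an algebraically closed field, with no finite-type hypothesis; substituting that citation for your d\'evissage closes the gap. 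Two lesser points: the normalization to $c = e$ should be via $F''(g,t) := c^{-1}\,F(cg,t)$ rather than merely post-composing $F$ with left translation by $c^{-1}$, which breaks the $t=1$ endpoint condition; and the ``families'' form of the rigidity argument, extending $T \times \mathbb{A}^1 \to A$ to $T \times \mathbb{P}^1 \to A$ over an arbitrary base $T$, needs more than a bare invocation of the valuative criterion --- the paper's pointwise version (lift $t \in A(k)$ to a geometric point of $G$, contract a single map $\mathbb{A}^1_K \to A_K$, and use $A(k) \hookrightarrow A(K)$) is a safe replacement.
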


\begin{proof} To prove this assertion, we can assume that $k$ is algebraically closed. By the assumptions on the map $F: G \times \mathbb{A}^1_k \to G$, it follows that $G$ must be connected. By a modification of Chevalley's theorem due to Perrin \cite[Cor. 4.2.9]{Per76}, there exists an exact sequence $0 \to H \to G \to A \to 0$ of group schemes in the fpqc topology, where $H$ is an affine group scheme and $A$ is an abelian variety. By hypothesis, we have a distinguished $k$-rational point $c$ of $G,$ whose image in $A$ will be denoted by $c' \in A(k).$  Our claim would follow if we prove that $A(k)= 0,$ where $A(k)$ denotes the $k$-valued points of $A.$ We let $t \in A(k).$ Since $G \to A$ is an fpqc surjection, we can find an algebraically closed field $K,$ which contains $k$ and such that there exists $t' \in G(K)$ which is mapped to the image of $t$ in $A(K).$ Using $t'$ and the map $F: G \times \mathbb{A}^1_{k} \to G$ supplied by our assumptions, we obtain a map $\mathbb{A}^1_K \to A_K$ such that $\left \{1 \right \} \in \mathbb{A}^1_K$ is mapped to the image of $t$ in $A(K)$ and $\left \{0 \right \} \in \mathbb{A}^1_K$ maps to image of $c'$ in $A(K).$ Any such map extends to a map $\mathbb{P}^1_K \to A_K$ and since $A_K$ is an abelian variety, any such map has to factor through the Jacobian of $\mathbb{P}^1_K$, which is a point. Thus the map $\mathbb{A}^1_K \to A_K$ is constant. By fpqc sheaf property, the map $A(k)\to A(K)$ is injective. This implies that $t=c' \in A(k).$ Since $t$ was arbitrary, it follows that $A(k)$ consists of a single point and thus $A(k) =0,$ which gives the claim.
\end{proof}

\begin{remark}\label{stillediting78}
We clarify that in \cref{A^1hom}, we do not assume that the zero section of the group scheme structure on $G$ is the same as the $k$-rational point $c.$ In the language of $\mathbb{A}^1$-homotopy theory, the hypothesis in \cref{A^1hom} means that the structure map $G \to \w{Spec}\, k$ is a strict $\mathbb{A}^1$-homotopy equivalence \cite[2.3]{MV99}. Further, we point out that the hypothesis in \cref{A^1hom} is satisfied for any $\mathbb{G}_a$-module $X,$ by considering the zero section $\mathrm{Spec}\, k \to X$ itself to be the rational point $c$ and by taking $F$ to be the $\mathbb{G}_a$-module action map $X \times \mathbb{G}_a \to X.$ This gives the conclusion that any $\mathbb{G}_a$-module over a field is affine.
\end{remark}{}

\begin{proposition}\label{omgwat}The forgetful functor from the category of $\mathbb{G}_a$-modules to the category of graded group schemes is fully faithful.
\end{proposition}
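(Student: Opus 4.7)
The plan is to show that a $\mathbb{G}_a$-module map $f: X \to Y$ and a morphism of the underlying graded group schemes are defined by exactly the same condition on the underlying map of affine schemes. Let $X = \mathrm{Spec}\, B$ and $Y = \mathrm{Spec}\, C$ be two $\mathbb{G}_a$-modules with action maps $\alpha_X: \mathbb{G}_a \times X \to X$ and $\alpha_Y: \mathbb{G}_a \times Y \to Y$; on global sections these correspond to coactions $\mu_B: B \to A[t] \otimes_A B$ and $\mu_C: C \to A[t] \otimes_A C$, both of which are maps of Hopf algebras.

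First, I would record the basic observation underlying \cref{noise}: the graded group scheme structure on $X$ is obtained by restricting $\alpha_X$ along the inclusion of the multiplicative monoid $(\mathbb{G}_a, \cdot) \hookrightarrow \mathbb{G}_a$, but this ``restriction'' is literally the same scheme map $\alpha_X$; only the axioms imposed on it have been weakened (the distributivity axiom encoded in \cref{hungry990} is dropped). In particular, the $\mathbb{N}$-grading $B = \bigoplus_i B_i$ is recovered from $\mu_B$ via $B_i = \{b : \mu_B(b) = t^i \otimes b\}$, and a Hopf algebra map $\phi : C \to B$ preserves the grading precisely when $\mu_B \circ \phi = (\mathrm{id}_{A[t]} \otimes \phi) \circ \mu_C$.

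Next, I would compare the two notions of morphism. A $\mathbb{G}_a$-module map $f: X \to Y$ is an $A$-scheme map preserving the group structure and satisfying $f \circ \alpha_X = \alpha_Y \circ (\mathrm{id}_{\mathbb{G}_a} \times f)$; on global sections this is exactly the coaction compatibility $\mu_B \circ \phi = (\mathrm{id}_{A[t]} \otimes \phi) \circ \mu_C$ for the Hopf algebra map $\phi = f^*$. A morphism of the underlying graded group schemes is, by definition, a Hopf algebra map preserving the grading, which by the previous step is the very same coaction compatibility. Therefore the forgetful functor is a bijection on $\mathrm{Hom}$-sets, which is exactly full faithfulness.

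No real obstacle arises; the proposition is essentially tautological once one observes that the $\mathbb{G}_a$-module action and the associated $(\mathbb{G}_a, \cdot)$-monoid action share the same underlying scheme map, so that a morphism respects one if and only if it respects the other. The distinction between the two categories lives only on objects: the $\mathbb{G}_a$-module structure satisfies the additional distributivity axiom, so the forgetful functor is not essentially surjective in general, but it is fully faithful.
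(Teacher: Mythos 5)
Your proof is correct and takes essentially the same approach as the paper's: both reduce to the observation that the grading and the $\mathbb{G}_a$-action are encoded by the same coaction, so equivariance of a Hopf map for one is equivalent to equivariance for the other. The only cosmetic difference is that the paper passes to the $A[x^{\pm1}]$-coaction (i.e.\ $\mathbb{G}_m$-equivariance) and invokes injectivity of $U[x]\hookrightarrow U[x^{\pm1}]$ to compare, whereas you work directly with the $A[t]$-coaction via the eigenspace description $B_i=\{b:\mu_B(b)=t^i\otimes b\}$; these are the same argument in slightly different clothing.
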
{}

\begin{proof}Let $\mathrm{Spec}\,U $ and $\mathrm{Spec}\,V$ be two $\mathbb{G}_a$ modules and let $\mathrm{Spec}\, U \to \mathrm{Spec} \, V$ be a map of $\mathbb{G}_a$-modules. This is the data of a map $V \to U$ that is a Hopf algebra map and is equivariant with the $A[x]$-coaction, i.e., commutes with the $A[x]$-coaction maps $U \to U[x] $ and $V \to V[x].$ However, the latter compatiblity can be checked after composing along the injective maps $U[x] \to U[x^{\pm 1}]$ and $V[x] \to V[x^{\pm 1}]$ and thus it is enough to provide a map $V \to U$ which is compatible with the $A[x^{\pm 1}]$-coaction, i.e., a graded Hopf algebra map  $V \to U.$ \end{proof}{}

\begin{remark}
Let us give a much more abstract generalization of \cref{omgwat} inspired from a comment by the referee. If $\mathcal{X}$ is a topos and $\mathcal{R}$ is a ring object of $\mathcal{X},$ one can consider the category of $\mathcal{R}$-module objects. The units in $\mathcal{R}$ can be viewed as a group object of $\mathcal{X},$ which we will denote as $\mathcal{R}^\times.$ Then there is a forgetful functor $\varphi$ from the category of $\mathcal{R}$-modules to the category of $\mathcal{R}^\times$-representations. Note that the category of $\mathcal{R}^\times$-representations can also be viewed as the category of $\mathbb{Z}[\mathcal{R}^\times]$-modules of $\mathcal{X}.$ The forgetful functor $\varphi$ can simply be identified with the restriction of scalars along the natural map $\mathbb{Z}[\mathcal{R}^\times] \to \mathcal{R}.$ Therefore, by the adjunction between restriction and extension of scalars, $\varphi$ is fully faithful if and only if the counit is naturally isomorphic to identity. The latter is equivalent to the natural map $\mathcal{R} \otimes_{\mathbb{Z}[\mathcal{R}^\times]} \mathcal{R} \to \mathcal{R}$ being an isomorphism. Now, specializing to the case when $\mathcal{X}$ is the fpqc topos, $\mathcal{R}= \mathbb{G}_a$ and $\mathcal{R}^\times = \mathbb{G}_m,$ the condition that the natural map $\mathcal{R} \otimes_{\mathbb{Z}[\mathcal{R}^\times]} \mathcal{R} \to \mathcal{R}$ is an isomorphism is implied by the fact that $\mathbb{Z}[\mathbb{G}_m] \to \mathbb{G}_a$ is a surjection of sheaves. The last claim can be deduced by the observation that for any ring $S$ and an element $f \in S,$ $f$ is a sum of at most two units Zariski locally on $\w{Spec}\,S.$ Indeed, $\w{Spec}\, S_{f}$ and $\w{Spec}\, S_{1-f}$ cover $\w{Spec}\, S$; on $\w{Spec}\, S_{f},$ $f$ is already a unit and on $\w{Spec}\, S_{1-f}$, we have $f=1 + (f-1).$ This gives \cref{omgwat}.
\end{remark}{}

\begin{remark}We note that a graded group scheme being a $\mathbb{G}_a$-module is no extra data, but a condition. This condition is not always satisfied. For example, the Witt group scheme $W$ which represents the functor $\mathrm{Aff}_A \to \mathrm{Sets}$ given by $\mathrm{Spec}\,B \mapsto W(B)$ where $W(B)$ is the ring of $p$-typical Witt vectors of $B$ is a graded group scheme but not a $\mathbb{G}_a$-module. Indeed, for any $b \in B,$ multiplication by the Teichmüller lift $[b]= (b, 0, \ldots, 0, \ldots)$ of $b$ equips $W$ with the structure of a nonnegatively graded group scheme (\textit{cf}. \cref{noise}). However, for $\underline{t} \in W(B)$ and $b, b' \in B,$ in general, $[b+ b'] \cdot t \neq  [b] \cdot t + [b'] \cdot t. $ Therefore, the graded group scheme $W$ does not satisfy the condition of being a $\mathbb{G}_a$-module. \textit{cf.}~\cref{hungry991},~\cref{hungry990}.
\end{remark}

\begin{proposition}\label{conn}Let $\mathrm{Spec}\, B$ be a $\mathbb{G}_a$-module. Then as a graded algebra, the degree zero piece of $B$ is naturally isomorphic to $A$ as an $A$-module. In other words, as a graded Hopf algebra, $B$ is connected.
\end{proposition}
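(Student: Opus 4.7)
The plan is to combine the Hopf algebra structure on $B$ (coming from the underlying commutative group scheme structure on $X = \mathrm{Spec}\,B$) with the $\mathbb{G}_a$-module axioms. Let $\eta: A \to B$ denote the unit (coming from the structure map $X \to \mathrm{Spec}\,A$) and $\epsilon: B \to A$ the counit (coming from the zero section $0_X: \mathrm{Spec}\,A \to X$); these satisfy $\epsilon \circ \eta = \mathrm{id}_A$.

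First I would make the grading explicit. As noted in \cref{noise}, the grading on $B$ is obtained from the action of the multiplicative monoid underlying $\mathbb{G}_a,$ i.e., from the restriction of the action $\mathrm{act}: X \times \mathbb{G}_a \to X$ along the multiplicative monoid structure of $\mathbb{G}_a.$ Dualizing yields a coaction $\rho: B \to B[x],$ and the grading reads $B_i = \{\, b \in B : \rho(b) = b \otimes x^i \,\}.$ Equivalently, writing $\rho(b) = \sum_i \rho_i(b)\, x^i,$ the projection $B \to B_0$ is given by the composition of $\rho$ with the evaluation $x \mapsto 0.$

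The key step is to identify this degree $0$ projection. The $\mathbb{G}_a$-module axioms force the composition
\[
X \xrightarrow{\mathrm{id}\times 0} X \times \mathbb{G}_a \xrightarrow{\mathrm{act}} X
\]
to equal the constant map $X \to \mathrm{Spec}\,A \xrightarrow{0_X} X$ (i.e.\ ``scalar multiplication by $0$''). Dualizing, the composition of $\rho$ with the evaluation at $x=0$ equals $\eta \circ \epsilon: B \to A \to B.$ Hence $\rho_0 = \eta \circ \epsilon.$

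Finally I would conclude as follows. For $b \in B_0$ one has $b = \rho_0(b) = \eta(\epsilon(b)) \in \eta(A),$ and conversely if $b = \eta(a)$ then $\rho_0(b) = \eta(\epsilon(\eta(a))) = \eta(a) = b,$ so $b \in B_0.$ Thus $B_0 = \eta(A).$ Since $\epsilon \circ \eta = \mathrm{id}_A,$ the map $\eta: A \to B_0$ is an isomorphism of $A$-modules, giving the claim. The entire argument is essentially formal, so I do not expect any real obstacle; the only point that needs care is the unambiguous identification of the degree-$0$ projection $\rho_0$ with the Hopf-theoretic composition $\eta \circ \epsilon,$ for which one must invoke the full $\mathbb{G}_a$-module axiom $0 \cdot x = 0_X$ rather than merely the monoid action.
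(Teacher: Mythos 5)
Your proposal is correct and follows essentially the same route as the paper: both arguments hinge on showing that the degree-$0$ projection (obtained by composing the coaction $B \to B[x]$ with evaluation at $x = 0$) coincides with $\eta \circ \epsilon$, using the $\mathbb{G}_a$-module axiom that scalar multiplication by $0$ factors through the zero section. The paper packages the conclusion by observing that this map factors through the counit $B \to A$ (with $A \to B$ injective) and has kernel $I_{>0}$, while you directly verify $B_0 = \eta(A)$; the two formulations are equivalent.
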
{}

\begin{proof}First, by using the zero section of a group scheme, we note that the $A$-algebra structure map $A \to B$ is injective. The $\mathbb{G}_a$-module structure map is given by a map $B \to B[x].$ Killing $x$ produces a map $B \to B$ whose kernel is $I_{>0} := \bigoplus_{i>0} B_i$ where $B = \bigoplus_{i \ge 0} B_i.$ Further, using the fact that $\mathrm{Spec}\,B$ is a $\mathbb{G}_a$-module, we note that the map $B \to B$ obtained this way also has the property that it factors through $B \to A$, which is the zero map of the comultiplication. Since the map $A \to B$ is injective, this provides an $A$-algebra map $B \to A$ whose kernel is $I_{>0}.$ Thus $B_0$ is naturally isomorphic to $A$, as desired. 
\end{proof}{}

\begin{remark}\label{lastonetoday176}
Let $\w{Spec}\, B$ be a $\mathbb{G}_a$-module as above. As noted in \cref{noise}, $B$ has the structure of a nonnegatively graded Hopf algebra. Let $c: B \to B \otimes B$ be the comultiplication map. The proof of \cref{conn} shows that the (surjective) map $z: B \to A$ induced by the zero section has kernel equal to $I_{>0};$ this gives a natural isomorphism $A \simeq B_0.$ Therefore, for any $b \in B_i,$ such that $i>0,$ we have $c(b) = p\otimes 1 + 1 \otimes q + c_{+}(b),$ where $p,\,q \in B_i$ and $c_{+}(b) \in I_{>0} \otimes_A I_{>0}.$ We claim that $p=q=b.$ To see this, we note that the composite map $B \xrightarrow{c} B \otimes_A B \xrightarrow{\w{id} \otimes z} B \otimes_A A \simeq B $ is the identity on $B.$ Recalling the fact that the kernel of $z$ is the ideal $I_{>0},$ it follows that $p=b.$ Similarly one obtains $q=b.$ To summarize, we see that if $b \in B$ is a homogeneous element of degree $>0,$ then $c(b) = b\otimes 1 + 1 \otimes b + c_+ (b),$ where $c_+(b) \in I_{>0} \otimes_A I_{>0}.$ This observation will be used in the proof of \cref{classifyg_amodchar0}. As a special case of this observation, we note that since $\deg c_+(b) = \deg b = i$, the element $c_+(b)$ is necessarily zero if $i=1.$
\end{remark}{}

\begin{remark}\label{detai}
By unwrapping \cref{g_amod}, one sees that a $\mathbb{G}_a$-module over an arbitrary base ring $A$ is equivalent to the following: \vspace{2mm}

\noindent
1. For every $A$-algebra $B,$ a $B$-module scheme $\w{Spec}\, M_B$ over $B.$\vspace{1mm}

\noindent
2.~For every map $B \to B'$ of $A$-algebras, an isomorphism $\w{res}^{B'}_{B}: M_B \otimes_{B} B' \simeq M_{B'}.$ Further, in this isomorphism, the $B$-action on the left hand side is compatible with the restriction of the $B'$-action on the right hand side along the map $B \to B'.$ The latter is a condition and not extra data.\vspace{2mm}

Further, a morphism of $\mathbb{G}_a$-modules under this equivalence translates to the following:\vspace{2mm}

\noindent
1. For every $A$-algebra $B,$ a morphism $\Phi_B$ of $B$-module schemes over $B.$ \vspace{1mm}

\noindent
2. For every map $B \to B'$ of $A$-algebras, the maps $\Phi_B'$ and $\Phi_B$ are compatible with $\w{res}^{B'}_{B}.$
\end{remark}{}

\begin{definition}[Pointed $\mathbb{G}_a$-module]\label{def2}A $\mathbb{G}_a$-module scheme $X$ along with the data of a map $X \to \mathbb{G}_a$ of $\mathbb{G}_a$-modules will be called a \textit{pointed $\mathbb{G}_a$-module $X$.} We will follow the convention that the data of the map $X \to \mathbb{G}_a$ will be simply called a point. Maps between pointed $\mathbb{G}_a$-modules are maps of $\mathbb{G}_a$-modules that commute with the points. We denote the category of such objects by $\mathbb{G}_a \mathrm{-Mod}_*.$
\end{definition}{}

\begin{remark}[\textit{cf}.~\cref{high}]\label{deg1}If $X = \mathrm{Spec}\, B$ is a $\mathbb{G}_a$-module, then we note that giving a map $X \to \mathbb{G}_a$ of $\mathbb G_a$-modules is equivalent to choosing an element of degree $1$ in the graded algebra $B.$ This follows from the fact that if $x$ is an element of degree $1$ in $B$, then the comultiplication map $B \to B \otimes_A B$ sends $x \to x \otimes 1 + 1 \otimes x.$ Thus a pointed $\mathbb{G}_a$-module is the data of a $\mathbb{G}_a$-module $\mathrm{Spec}\, B$ and an element $x \in B_1$ (where $B_1$ is the degree $1$ piece of $B$ ). The choice of this element $x \in B_1$ is the reason we use the word ``point'' to talk about the map $X \to \mathbb{G}_a$; it is motivated by the terminology in topology where a space $Y$ and a choice of an element $y \in Y$ is called a pointed space. Using functor of points, in our case, this can also be interpreted as an $X$-valued point of $\mathbb{G}_a.$
\end{remark}{}

\begin{remark} $\mathbb{G}_a$ can be naturally equipped with the structure of a pointed $\mathbb{G}_a$-module using the identity map $\mathbb{G}_a \to \mathbb{G}_a.$ In fact, $\mathbb{G}_a$ is the final object of $\mathbb{G}_a\mathrm{-Mod}_*.$ The initial object of $\mathbb{G}_a\mathrm{-Mod}_*$ is given by the zero section $\mathrm{Spec}\, A \to \mathbb{G}_a$. \cref{dri} and \cref{rem7} records more examples of pointed $\mathbb G_a$-modules.
\end{remark}{}

\begin{example}
Let $A$ be the base ring fixed as before. If $M$ is an $A$-module, then $\w{Spec}\,(\w{Sym}_A (M))$ naturally has the structure of a $\mathbb{G}_a$-module over $A$ (\textit{cf.}~\cref{7up3}). Below, we will show that if $A$ is a $\mathbb{Q}$-algebra, then every $\mathbb{G}_a$-module is of the form described above. We thank the referee for bringing this to our attention. 
\end{example}

\begin{proposition}\label{classifyg_amodchar0}
 Let $A$ be a $\mathbb{Q}$-algebra and $X$ be a $\mathbb{G}_a$-module over $A.$ Then, there is an $A$-module $M$ such that we have an isomorphism $X \simeq \mathrm{Spec}\,(\mathrm{Sym}_A (M))$ of $\mathbb{G}_a$-modules.
\end{proposition}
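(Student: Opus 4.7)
The plan is to let $B := \mathcal{O}(X)$ and to produce an explicit two-sided inverse to the natural Hopf-algebra map $\phi\colon \mathrm{Sym}_A(M)\to B$, where $M := B_1$. By \cref{noise} and \cref{conn}, $B$ is a non-negatively graded commutative cocommutative Hopf $A$-algebra with $B_0=A$, and by \cref{lastonetoday176} every element of $M$ is primitive; so $\phi$ is well-defined and grading-preserving. The identification $X\simeq\mathrm{Spec}(\mathrm{Sym}_A(M))$ as $\mathbb{G}_a$-modules will then follow automatically since the $\mathbb{G}_a$-coaction on each side is dictated by the grading.

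The first ingredient I would pin down is the $\mathbb{G}_a$-coaction $\rho\colon B\to A[t]\otimes_A B$. Restricting $\rho$ along $A[t]\hookrightarrow A[t,t^{-1}]$ corresponds to restricting the $\mathbb{G}_a$-action to its unit subscheme, which by \cref{noise} is exactly the grading $\mathbb{G}_m$-action; since $B[t]\hookrightarrow B[t,t^{-1}]$ is injective, this forces $\rho(b)=t^{|b|}b$ on homogeneous $b$. Next I would translate the distributivity diagram \cref{hungry990} into the identity
\[
\rho(b)\big|_{t=t_1+t_2} \;=\; (\mathrm{id}\otimes m_B)\circ(\rho\otimes\rho)\circ c(b) \quad\text{in } A[t_1,t_2]\otimes B,
\]
where $c\colon B\to B\otimes B$ is the comultiplication. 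For $b\in B_n$, decomposing $c(b)=\sum_{k+l=n} c_{k,l}(b)$ bihomogeneously and comparing coefficients of $t_1^k t_2^{n-k}$ gives the key identity $\mu_k(b) := m_B\bigl(c_{k,n-k}(b)\bigr) = \binom{n}{k}\,b$. Iterating distributivity $r-1$ times and invoking coassociativity of $c$ then promotes this to
\[
\mu_{k_1,\ldots,k_r}(b) \;=\; \binom{n}{k_1,\ldots,k_r}\,b
\]
for every composition $(k_1,\ldots,k_r)$ of $n$, where $\mu_{k_1,\ldots,k_r}$ is the composite of iterated comultiplication $c^{(r-1)}\colon B\to B^{\otimes r}$, projection $\pi_{k_1,\ldots,k_r}$ onto the summand $B_{k_1}\otimes\cdots\otimes B_{k_r}$, and iterated multiplication $m_B^{(r-1)}\colon B^{\otimes r}\to B$.

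With these identities in hand, I would now use that $A$ is a $\mathbb{Q}$-algebra to invert the integers $\binom{n}{k_1,\ldots,k_r}$. The special case $\mu_1(b)=n\,b\in M\cdot B_{n-1}$ combined with induction on $n$ shows that $B$ is generated by $M$ as an $A$-algebra, so $\phi$ is surjective. For injectivity I would specialise to $r=n$ and $(k_1,\ldots,k_n)=(1,\ldots,1)$, obtaining $\mu_{1,\ldots,1}(b)=n!\,b$, and define
\[
\psi_n\colon B_n \longrightarrow \mathrm{Sym}_A^n(M), \qquad b\longmapsto \tfrac{1}{n!}\bigl[\pi_{1,\ldots,1}(c^{(n-1)}(b))\bigr],
\]
where the brackets denote the image in $\mathrm{Sym}_A^n(M)$ of an element of $M^{\otimes n}$. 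The relation $\mu_{1,\ldots,1}=n!\cdot\mathrm{id}$ immediately yields $\phi\circ\psi_n=\mathrm{id}_{B_n}$; the reverse identity $\psi_n\circ\phi=\mathrm{id}$ follows from the computation $c^{(n-1)}(m)=\sum_{j=1}^n 1\otimes\cdots\otimes m\otimes\cdots\otimes 1$ for $m\in M$, which spreads a product $m_1\cdots m_n\in\mathrm{Sym}^n(M)$ symmetrically across all $n!$ slot-assignments.

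The main technical obstacle will be organising the iterated distributivity bookkeeping uniformly over an arbitrary $\mathbb{Q}$-algebra base (since $A$ need not be a field and $B$ need not be $A$-flat); I plan to handle this throughout by working directly with the grading on $B$ as an $A$-module decomposition, which induces a corresponding decomposition on each $B^{\otimes r}$ regardless of flatness, and by invoking the characteristic-zero hypothesis only at the very end to divide by multinomial coefficients.
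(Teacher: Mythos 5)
Your proof is correct, and for injectivity it takes a genuinely different route from the paper. The surjectivity step is essentially identical on both sides: you and the paper both apply the distributivity diagram \cref{hungry990}, read off $m_B(c_{k,n-k}(b))=\binom{n}{k}b$ by comparing coefficients of $t_1^k t_2^{n-k}$, and conclude by induction using the case $k=1$ and invertibility of $n$. For injectivity, the paper argues by a minimal-counterexample diagram chase: if $n\ge 2$ is minimal with $F_n$ not injective, then $\mathrm{Sym}_A^n(B_1)\to\mathrm{Sym}_A^{n-1}(B_1)\otimes\mathrm{Sym}_A^1(B_1)$ is injective (its composite with multiplication is $\times n$), $F_{n-1}\otimes F_1$ is an isomorphism by minimality, and commutativity of the relevant square forces $F_n$ injective. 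You instead iterate the distributivity identity to get $m_B^{(n-1)}\circ\pi_{1,\dots,1}\circ c^{(n-1)}=n!\cdot\mathrm{id}_{B_n}$ and write down an explicit two-sided inverse $\psi_n=\tfrac{1}{n!}\,[\,\pi_{1,\dots,1}\circ c^{(n-1)}\,]$ — a ``projection onto primitives'' in the spirit of classical graded Hopf structure theorems. This is more constructive and makes the role of the characteristic-zero hypothesis transparent (one divides by a multinomial coefficient); it also renders your separate surjectivity paragraph redundant, since $\phi\circ\psi_n=\mathrm{id}$ already gives surjectivity. The paper's argument is more economical in that it only ever uses the $(n-1,1)$-component of the comultiplication, not the full $(1,\dots,1)$-component, and it avoids the iterated-coassociativity bookkeeping needed to justify $\mu_{k_1,\dots,k_r}=\binom{n}{k_1,\dots,k_r}\mathrm{id}$. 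Your one caveat about flatness is handled correctly: $\pi_{k_1,\dots,k_r}$ exists because the $A$-module direct sum $B=\bigoplus_i B_i$ induces one on $B^{\otimes r}$ with no flatness hypothesis.
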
{}

\begin{proof}
Let $B := \Gamma (X, \mathcal{O}_X)$. As noted in \cref{noise}, we have natural a direct sum decomposition $B= \bigoplus_{i \in \mathbb{N}} B_i.$ Since $X$ is a $\mathbb{G}_a$-module, $B$ has the structure of a connected (\cref{conn}) nonnegatively graded Hopf algebra. We will start by recalling that (see~\cref{lastonetoday176})
 if $b \in B$ is a homogeneous element such that $\deg b >0,$ then the comultiplication map $B \to B \otimes_A B$ sends \begin{equation}\label{wolverine}
     b \mapsto b \otimes 1 + 1 \otimes b + \sum_{u} b'_u \otimes b''_u,
 \end{equation}where $b'_u,\,b''_u \in B$ are homogeneous elements of $B$ such that $\deg b'_u,\,\deg b''_u >0. $ In particular, \cref{wolverine} implies that if $ b \in B_1,$ then the comultiplication map $B \to B \otimes_A B$ sends $b \to b \otimes 1 + 1 \otimes b.$ This implies that the natural map 
$$F: \mathrm{Sym}_A (B_1) \to B$$ is a map of graded Hopf algebras. This constructs a map $X \to \mathrm{Spec}\, (\mathrm{Sym}_A (B_1))$ of graded group schemes, which is automatically a map of $\mathbb{G}_a$-modules by \cref{omgwat}. To prove the proposition, it would be enough to show that the map $F: \mathrm{Sym}_A (B_1) \to B$ is an isomorphism of $A$-algebras.\vspace{2mm}

First we show that $F$ is surjective. For this, we use the commutative diagram \cref{hungry990} that every $\mathbb{G}_a$-module $X$ is required to satisfy. Applying the global section functor to \cref{hungry990}, we obtain the following commutative diagram of $A$-algebras.

\begin{center}
\begin{tikzcd}
{A[t] \otimes_A B \otimes_A A[t]}                     &  & {B \otimes_A (A[t] \otimes_A A[t])} \arrow[ll, "\simeq"'] & {B \otimes_A A[t]} \arrow[l] \\
{A[t]\otimes_A B \otimes_A B\otimes_A A[t]} \arrow[u] &  & B \otimes_A B \arrow[ll]                                  & B \arrow[u] \arrow[l]       
\end{tikzcd}
\end{center}{}
We will write $A [t] \otimes_A B \otimes_A A[t] \simeq B[t_1, t_2],$ where the isomorphism sends $(t \otimes 1 \otimes 1) \mapsto t_1$ and $(1 \otimes 1 \otimes t) \mapsto t_2$. We note that in order to show that $F$ is surjective, by construction, it would be enough to show that $B$ is generated as an $A$-algebra by its degree $1$ elements. \vspace{2mm}

To this end, let $b \in B$ be a homogeneous element of degree $i>1.$ By \cref{wolverine}, the composite map $B \to B \otimes_A B \to B[t_1, t_2]$ from the above diagram sends $$b \mapsto t_{1}^i b + t_2^i b + \sum_{u} t_1^{(\deg b'_u) } t_2^{(\deg b''_u)}b'_u \cdot b''_u,$$ where $b'_u, b''_u \in B$ are as described in \cref{wolverine}. On the other hand, the composite map $B \to B \otimes_A A[t] \to B[t_1, t_2]$ sends $b \mapsto b (t_1 + t_2)^i. $ By the commutativity of the diagram, we have the relation 
\begin{equation}\label{hungry997}
    b (t_1 + t_2)^i = t_{1}^i b + t_2^i b + \sum_{u} t_1^{(\deg b'_u) } t_2^{(\deg b''_u)}b'_u \cdot b''_u
\end{equation}{}in the ring $B[t_1, t_2].$ Since $\deg b'_u,\,\deg b''_u >0$ and $\deg b =i,$ it follows that $\deg b'_u,\,\deg b''_u <i.$ Since the base ring $A$ is assumed to be a $\mathbb{Q}$-algebra, by comparing coefficients of $t_1^{i-1}t_2$ from both sides in the equation \cref{hungry997}, we see that $b = \sum_{v} c'_v c''_v$ for some homogeneous elements $c'_v, \, c''_v \in B$ such that $\deg c'_v, \, \deg c''_v < i.$ Since $b \in B$ was an arbitrary element of degree $>1,$ inductively we obtain that $B$ is generated as an algebra by its degree $1$ elements, which shows that $F$ is surjective, as desired.\vspace{2mm}

Now we show that $F$ is injective. By construction, $F$ is a graded map and it induces an isomorphism in degree $1.$ For an integer $r,$ let $F_r$ denote the induced map on the summands of degree $r.$ Let us assume for the sake of contradiction that $F$ is not injective. Let $n \ge 2$ be the minimal integer such that $F_n$ is not injective. Since we already know that $F$ is surjective, it follows that $F_r$ is an isomorphism for $ r < n.$ We note the following commutative diagram in the category of $A$-modules
\begin{center}
\begin{tikzcd}
\mathrm{Sym}_A^n (B_1) \arrow[d, "F_n"'] \arrow[rr] &  & \mathrm{Sym}_A(B_1) \otimes_A \mathrm{Sym}_A(B_1) \arrow[rr] \arrow[d, "F \otimes F"'] &  & \mathrm {Sym}_A^{n-1}(B_1) \otimes_A \mathrm{Sym}_A^1(B_1) \arrow[d, "F_{n-1} \otimes_A F_1"'] \\
B_n \arrow[rr]                                  &  & B \otimes_A B \arrow[rr]                                                         &  & B_{n-1} \otimes_A B_1 .              \end{tikzcd} 
\end{center}In the above, the horizontal maps are obtained from the graded Hopf algebra structure on $\mathrm{Sym}_A (B_1)$ and $B.$ More explicitly, the left horizontal maps are induced by restricting the comultiplication to the summand of degree $n$ and the right horizontal maps are the projection maps arising from the grading. We note that the upper horizontal composite map $\mathrm{Sym}^n_A (B_1) \to \mathrm{Sym}^{n-1}_A (B_1) \otimes_A \mathrm{Sym}^1_A (B_1)$ is injective. To see the latter claim, one notes that the composition $\mathrm{Sym}^n_A (B_1) \to \mathrm{Sym}^{n-1}_A (B_1) \otimes_A \mathrm{Sym}^1_A (B_1) \xrightarrow[]{\mathrm{mult}} \mathrm{Sym}^n_A(B_1)$ is multiplication by the integer $n$, which is an isomorphism since $A$ is a $\mathbb Q$-algebra; here the map $\mathrm{mult} \colon \mathrm{Sym}^{n-1}_A (B_1) \otimes_A \mathrm{Sym}^1_A (B_1) \to \mathrm{Sym}^n_A(B_1)$ is induced by the graded $A$-algebra structure on $\mathrm{Sym}_A(B_1).$ This implies that the composite map $$\mathrm{Sym}^n_A (B_1) \to \mathrm{Sym}^{n-1}_A (B_1) \otimes_A \mathrm{Sym}^1_A (B_1) \xrightarrow[]{F_{n-1} \otimes_A F_1} B_{n-1} \otimes_A B_1$$ is injective since $F_{n-1} \otimes_A F_1$ is an isomorphism by the induction hypothesis. The above commutative diagram now implies that $F_n$ must be injective, which finishes the proof.
\end{proof}{}

\subsection{$\mathbb{G}_a^{\mathrm{perf}}$-modules}\label{sec2.2}
Below we define the notion of a $\mathbb{G}_a^{\mathrm{perf}}$-module which will be defined over a fixed ring $A$ such that $p^n = 0$ in $A$ for some $n.$

\begin{proposition}\label{tilt}
The functor $(\,\cdot\,)^\flat: \mathrm{Aff}_A^{\mathrm{op}} \to \mathrm{Sets}$ given by $\mathrm{Spec}\, B \to B^\flat$, where $B^\flat := \lim_{x \to x^p} B$ is naturally valued in rings.
\end{proposition}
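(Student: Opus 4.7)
The plan is to describe $B^\flat$ concretely as the set of sequences $\underline{b} = (b_0, b_1, b_2, \ldots)$ with $b_i \in B$ and $b_{i+1}^p = b_i$, and then equip this set with ring operations whose naturality in $B$ is automatic. The key technical input is an elementary congruence lemma: if $p^n = 0$ in $B$ and $a, b \in B$ satisfy $a \equiv b \pmod{p}$, then $a^{p^k} = b^{p^k}$ once $k$ is large enough (say $k \geq n-1$). This is proved by induction on $k$, using the binomial expansion to show $a \equiv b \pmod{p^i}$ implies $a^p \equiv b^p \pmod{p^{i+1}}$, and then invoking $p^n = 0$.

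Using this lemma, I would first construct a natural bijection $B^\flat \xrightarrow{\sim} (B/p)^\flat$. The map is the obvious reduction; injectivity follows because a sequence $(b_i) \in B^\flat$ is determined by its values $b_i = b_{i+m}^{p^m}$ for large $m$, which agree with those of any other sequence congruent to it mod $p$ by the lemma. For surjectivity, given $(\bar b_i) \in (B/p)^\flat$, pick arbitrary lifts $\tilde b_i \in B$ and set $b_i := \tilde b_{i+m}^{p^m}$ for $m \geq n-1$; the lemma shows this is independent of the choice of $m$ and the lifts, and the identity $b_{i+1}^p = b_i$ is immediate.

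Since Frobenius is a ring map on $B/p$, the target $(B/p)^\flat = \varprojlim_{F} B/p$ is an honest inverse limit of rings and hence a ring in a manifestly functorial way. Transporting along the bijection, one gets explicit formulas on $B^\flat$: multiplication is componentwise, $(b_i) \cdot (c_i) = (b_i c_i)$ (which is already well-defined on the nose since $(b_{i+1} c_{i+1})^p = b_i c_i$), and addition is given by $(b_i) + (c_i) = (s_i)$ with $s_i := (b_{i+m} + c_{i+m})^{p^m}$ for any $m \geq n-1$, the lemma guaranteeing independence of $m$. Functoriality in $B$ is then immediate: a ring map $f: B \to B'$ sends sequences to sequences componentwise, and that this respects addition and multiplication is inherited from the known functoriality of the characteristic-$p$ tilt via the natural bijection.

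The only real obstacle is verifying the well-definedness statements hidden in the lemma — specifically, making sure the addition formula stabilizes and the transported ring structure agrees with the componentwise multiplication. Both reduce to the single congruence statement above, so the proof is short once that lemma is in hand.
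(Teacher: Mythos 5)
Your proposal is correct and takes essentially the same approach as the paper: both establish the natural bijection $B^\flat \simeq \varprojlim_{x \mapsto x^p} B/p$ and transport the ring structure from the characteristic-$p$ side, where Frobenius is a ring map. You spell out explicitly the congruence lemma ($a \equiv b \bmod p$ implies $a^{p^k} = b^{p^k}$ for $k$ large, given $p^n = 0$) that the paper compresses into the remark that $B$, being an $A$-algebra, is $p$-adically complete.
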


\begin{proof}This follows from the natural bijection $B^\flat \simeq \lim_{x \to x^p} B/p$, which holds since $B$, being an $A$-algebra, is $p$-adically complete.\end{proof}{}

\begin{definition}The functor $(\,\cdot\,)^\flat$ from \cref{tilt} is represented by $\mathrm{Spec}\,A[x^{1/p^\infty}]$, which can be naturally viewed as a ring scheme and will be denoted as $\mathbb{G}_a^{\mathrm{perf}}$ when equipped with this ring scheme structure.
\end{definition}{}

\begin{remark}\label{rmk}When $A$ is an $\mathbb{F}_p$-algebra, the comultiplication of the Hopf algebra underlying $A[x^{1/p^\infty}]$ can be described easily: it is given by the map $A[x^{1/p^\infty}] \to A[x^{1/p^\infty}] \otimes_A A[x^{1/p^\infty}]$ given by $x^{1/p^n} \to x^{1/p^n }\otimes 1 + 1 \otimes x^{1/p^n}$ for all $n.$ However, in general the comultiplication is less simple to write down and we need to trace through the bijection $B^\flat \simeq \lim_{x \to x^p} B/p.$ For example, when $A$ is a $\mathbb{Z}/p^2 \mathbb{Z}$ algebra, the comultiplication is given by $A[x^{1/p^\infty}] \to A[x^{1/p^\infty}] \otimes_A A[x^{1/p^\infty}]$ which sends $x^{1/p^n} \to x^{1/p^n}\otimes 1 + 1 \otimes x^{1/p^n} + \sum_{0<i < p} \binom{p}{i} x^{i/p^{n+1}} \otimes x^{p-i/p^{n+1}}.$
\end{remark}

\begin{remark}\label{ring} When $A$ is an $\mathbb{F}_p$-algebra, the $A$-algebra map $A[x] \to A[x^{1/p^\infty}]$ gives us a morphism of ring schemes $\mathbb{G}_a^{\mathrm{perf}} \to \mathbb{G}_a.$ At the level of functor of points, this morphism is induced by the natural map $B^\flat \to B,$ which is a ring homomorphism when $B$ is an $\mathbb{F}_p$-algebra. However, using \cref{rmk} we can see that if $p \ne 0$ in $A,$ the natural map $A[x] \to A[x^{1/p^\infty}]$ is not a map of Hopf algebras and hence does not give a morphism of ring schemes.
\end{remark}{}

\begin{definition}[$\mathbb{G}_a^{\mathrm{perf}}$-module] Let us consider the category $\w{Aff}_A$ and the presheaf of rings on $\w{Aff}_A$ represented by the ring scheme $\mathbb{G}_a^{\w{perf}}$. Let $F$ be a presheaf of modules over the presheaf of rings represented by $\mathbb{G}_a^{\w{perf}}.$ We will say that $F$ is a $\mathbb{G}_a^{\w{perf}}$-module over $A$, if the set-valued presheaf underlying $F$ is representable by an affine scheme over $A.$ Morphisms of $\mathbb{G}_a^{\w{perf}}$-modules are defined as morphisms of presheaves of modules over the presheaf of rings represented by $\mathbb{G}_a^{\w{perf}}.$
\vspace{2mm}

From a categorical perspective, one can simply say that a $\mathbb{G}_a^{\w{perf}}$-module is a $\mathbb{G}_a^{\w{perf}}$-module object in the category of affine schemes over $A.$
\end{definition}{}

\begin{remark}
Note that by definition, a $\mathbb{G}_a^{\w{perf}}$-module scheme $X$ has the structure of a commutative group scheme. Further, there is also the $\mathbb{G}_a^{\w{perf}}$-action map $X \times \mathbb{G}_a^{\w{perf}} \to X.$ To avoid confusion, we clarify that a $\mathbb{G}_a^{\w{perf}}$-module is not the same as a $\mathbb{G}_a^{\w{perf}}$-module object in the category of perfect rings (when $A= \mathbb{F}_p$). Further, the affine scheme underlying a $\mathbb{G}_a^{\w{perf}}$-module need not be a perfect scheme either. 
\end{remark}{}

\begin{example}\label{ex3}When $A$ has char. $p$, the scheme $\mathrm{Spec}\,A[x^{1/p^\infty}]/x$ can be equipped with the structure of a $\mathbb{G}_a^{\mathrm{perf}}$-module which we will denote as $\alpha^{\natural}.$ If $p \ne 0$ in $A$, the Hopf structure of $A[x^{1/p^\infty}]$ as described in \cref{rmk} does not induce a Hopf structure in the quotient $A[x^{1/p^\infty}]/x.$
\end{example}{}

\begin{remark}\label{gradingdet}A $\mathbb{G}_a^{\mathrm{perf}}$-module $\mathrm{Spec}\, B$ naturally provides us a $\mathbb{G}_m^{\mathrm{perf}}:= \mathrm{Spec}\, A[x^{\pm 1/p^\infty}]$-equivariant group scheme $\mathrm{Spec}\,B$, or, more specifically in our case, an $\mathbb{N}[1/p]$-graded group scheme structure on $\mathrm{Spec}\, B.$ The summand of $B$ of degree $i \in \mathbb{N}[1/p]$ will be denoted as $B_i,$ so that $B = \bigoplus_{i \in \mathbb{N}[1/p]} B_i.$ Further, the forgetful functor from $\mathbb{G}_a^{\mathrm{perf}}$-modules to $\mathbb{N}[1/p]$-graded (or $\mathbb{Z}[1/p]$-graded) group schemes is fully faithful. The proof follows in a way entirely similar to the proof of \cref{omgwat}.
\end{remark}{}

\begin{proposition}\label{conn1}Let $\mathrm{Spec}\, B$ be a $\mathbb{G}_a^{\mathrm{perf}}$-module. Then as a graded algebra, the degree zero piece of $B$ is isomorphic to $A$ as an $A$-module.
 \end{proposition}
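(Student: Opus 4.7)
The plan is to mimic the proof of \cref{conn}, using the $\mathbb{N}[1/p]$-grading from \cref{gradingdet} in place of the $\mathbb{N}$-grading. Write $B = \bigoplus_{i \in \mathbb{N}[1/p]} B_i$, so that the claim amounts to showing that the structure map $A \to B$ factors through an isomorphism onto $B_0$.

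First I would observe that the zero section of the underlying group scheme provides a retraction $z \colon B \to A$ of the $A$-algebra structure map $u \colon A \to B$; in particular $u$ is injective. Next I would analyze the $\mathbb{G}_a^{\mathrm{perf}}$-coaction map $c \colon B \to B \otimes_A A[x^{1/p^\infty}]$ associated to the action $\mathrm{Spec}\,B \times \mathbb{G}_a^{\mathrm{perf}} \to \mathrm{Spec}\,B$. Let $\varepsilon \colon A[x^{1/p^\infty}] \to A$ be the augmentation sending $x^{1/p^n} \mapsto 0$ for all $n$, corresponding to the zero section of $\mathbb{G}_a^{\mathrm{perf}}$. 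Then I would consider the composite
\begin{equation*}
\pi \;:=\; (\mathrm{id}_B \otimes \varepsilon) \circ c \colon B \to B.
\end{equation*}

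The key point is to compute $\pi$ in two different ways. On one hand, the axiom that $0 \in \mathbb{G}_a^{\mathrm{perf}}$ acts as the zero endomorphism of the group scheme $\mathrm{Spec}\,B$ translates (at the level of rings) into the statement that $\pi$ factors as $B \xrightarrow{z} A \xrightarrow{u} B$; hence the image of $\pi$ lies in $u(A) \subseteq B$. On the other hand, by \cref{gradingdet} the $\mathbb{N}[1/p]$-grading on $B$ is obtained by restricting the $\mathbb{G}_a^{\mathrm{perf}}$-coaction along the inclusion $\mathbb{G}_m^{\mathrm{perf}} \hookrightarrow \mathbb{G}_a^{\mathrm{perf}}$; concretely, for $b \in B_i$ one has $c(b) = b \otimes x^i$. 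Therefore $\pi$ is identically the projection $B \twoheadrightarrow B_0 \hookrightarrow B$ associated with the grading.

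Comparing the two descriptions yields $u(A) = B_0$ inside $B$, and since $u$ is injective the induced map $A \to B_0$ is an isomorphism of $A$-modules. I do not expect any serious obstacle beyond correctly identifying the $\mathbb{N}[1/p]$-grading with the restriction of the $\mathbb{G}_a^{\mathrm{perf}}$-coaction, which is exactly the content of \cref{gradingdet}; the only mild subtlety (absent in the $\mathbb{G}_a$ case) is that the Hopf structure on $A[x^{1/p^\infty}]$ is not given by the naive formula when $p$ is not nilpotent in $A$ in the stronger sense (\emph{cf.}~\cref{rmk}), but since we use only the augmentation $\varepsilon$ and the monoid (not comonoid) structure on $\mathbb{G}_a^{\mathrm{perf}}$ that subtlety does not enter.
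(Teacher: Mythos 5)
Your proof is correct and takes essentially the same route as the paper's: your map $\pi = (\mathrm{id}_B \otimes \varepsilon) \circ c$ is exactly what the paper obtains by ``killing $x^{1/p^n}$ for all $n$'' in the structure map $B \to B[x^{1/p^\infty}]$, and both arguments then identify $\pi$ simultaneously as the projection onto $B_0$ (your $c(b) = b\otimes x^i$ step, which is valid since $B[x^{1/p^\infty}] \hookrightarrow B[x^{\pm 1/p^\infty}]$ is injective) and as a map factoring through $u\colon A \hookrightarrow B$ via the zero section. The only cosmetic difference is that you spell out the computation of $\ker \pi$ via the grading, whereas the paper states it directly.
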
{}

\begin{proof}First, by using the zero section of a group scheme, we note that the $A$-algebra structure map $A \to B$ is injective. The $\mathbb{G}_a^{\mathrm{perf}}$-module structure map is given by a map $B \to B[x^{1/p^\infty}].$ Killing $x^{1/p^n}$ for all $n$ produces a map $B \to B$ whose kernel is $I_{>0} := \bigoplus_{i>0} B_i$ where $B = \bigoplus_{i \in \mathbb{N}[1/p]} B_i.$ Further, using the fact that $\mathrm{Spec}\,B$ is a $\mathbb{G}_a^{\mathrm{perf}}$-module, we note that the map $B \to B$ obtained this way also has the property that it factors through $B \to A$ which is the zero map of the comultiplication. Since the map $A \to B$ is injective, this provides an $A$-algebra map $B \to A$ whose kernel is $I_{>0}.$ Thus $B_0$ is naturally isomorphic to $A$, as desired. 
\end{proof}{}

\begin{remark}A $\mathbb{G}_a^{\mathrm{perf}}$-module over $\mathbb{F}_p$ amounts to the following data. For every $\mathbb{F}_p$-algebra $S,$ we have an $S^\flat$-module scheme $\mathrm{Spec}\,M_S$ over $S$ such that for a map $\varphi: S \to R$ of $\mathbb{F}_p$-algebras, we have isomorphisms $
\mathrm{res}_{S}^{R} : M_S \otimes_{S} R \simeq M_R$ of $R$-algebras. Further, the action of $ S^\flat$ on $M_S$ provides an endomorphism of $M_S$ for every $s^\flat \in S^\flat$ which under the isomorphism $\mathrm{res}_S^R$ corresponds to the endomorphism induces by $\varphi^\flat (s^\flat)$ on $M_R.$ Here $\varphi^\flat$ denotes the map $S^\flat \to R^\flat.$ Using the map $S^\flat \to S$ for an $\mathbb{F}_p$-algebra $S$, we see that it is enough to specify the same data only on perfect rings.
\end{remark}{}

\begin{proposition}\label{perfect}Let $(A, \mathfrak{m})$ be an Artinian local ring with residue field $\mathbb{F}_p.$ For every perfect ring $R$, let $W_A(R):= A \otimes_{\mathbb{Z}_p} W(R).$ Then a $\mathbb{G}_a^{\mathrm{perf}}$-module over $A$ is equivalent to the following data:\vspace{2mm}

\noindent
1. For every perfect ring $R$, an $R$-module scheme $\mathrm{Spec}\, M_R$ over $W_A(R).$\vspace{1mm}

\noindent
2. For every map $S \to R$ of perfect rings, an isomorphism $\mathrm{res}_S^R: M_S \otimes_{W_A(S)} W_A(R) \simeq M_R.$ Further, in this isomorphism, the $S$-action on the left hand side is compatible with the restriction of the $R$-action on the right hand side along the map $S \to R.$ The latter is a condition and not extra data.\vspace{2mm}

Further, a morphism of $\mathbb{G}_a^{\mathrm{perf}}$-module under this equivalence translates to the following data:\vspace{2mm}

\noindent
1. For every perfect ring $R$, a morphism $\Phi_R$ of $R$-module schemes over $W_A(R).$\vspace{1mm}

\noindent
2. For every map $S \to R$ of perfect rings, the maps $\Phi_R$ and $\Phi_S$ are compatible with $\mathrm{res}_S^R.$
\end{proposition}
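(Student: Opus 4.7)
The plan is to produce mutually inverse functors between the two categories, with the adjunction $W_A \dashv (\,\cdot\,)^\flat$ between perfect rings and $A$-algebras as the central technical input. Concretely, for any perfect ring $R$ and any $A$-algebra $B$, one has a natural bijection $\mathrm{Hom}_{\mathrm{Perf}}(R, B^\flat) \simeq \mathrm{Hom}_A(W_A(R), B)$; taking $R = B^\flat$ and the identity yields a canonical theta map $\theta_B \colon W_A(B^\flat) \to B$, natural in $B$. I will also use the identification $W_A(R)^\flat = R$ for perfect $R$: since $A$ is Artinian local with residue field $\mathbb{F}_p$, one computes $W_A(R)/p = (A/p) \otimes_{\mathbb{F}_p} R$, and as iterated Frobenius kills the nilradical of $A/p$, the tilt collapses this tensor product back to $R$ using perfectness of $R$.

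In the forward direction, given a $\mathbb{G}_a^{\mathrm{perf}}$-module $X = \mathrm{Spec}\, O_X$ over $A$, set $M_R := O_X \otimes_A W_A(R)$ for each perfect $R$. For any $W_A(R)$-algebra $C$, one has $(\mathrm{Spec}\, M_R)(C) = X(C)$, which inherits a $C^\flat$-module structure; restricting this along the induced map $R = W_A(R)^\flat \to C^\flat$ gives an $R$-action functorial in $C$, i.e., an $R$-module scheme structure on $\mathrm{Spec}\, M_R$ over $W_A(R)$. Base change along a map $S \to R$ of perfect rings produces the isomorphism $\mathrm{res}_S^R$, with action-compatibility following from naturality.

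In the backward direction, given data $\{M_R, \mathrm{res}_S^R\}$, define $O_X := M_{A^\flat} \otimes_{W_A(A^\flat)} A$, where the tensor product uses $\theta_A$. For any $A$-algebra $B$, the map $A \to B$ induces $A^\flat \to B^\flat$ and hence $W_A(A^\flat) \to W_A(B^\flat)$; combined with the isomorphism $M_{A^\flat} \otimes_{W_A(A^\flat)} W_A(B^\flat) \simeq M_{B^\flat}$ supplied by the data, together with the observation that the two $W_A(A^\flat)$-algebra structures on $B$ (namely $W_A(A^\flat) \to A \to B$ and $W_A(A^\flat) \to W_A(B^\flat) \xrightarrow{\theta_B} B$) agree by uniqueness of Witt-vector lifts of the common underlying map $A^\flat \to B/p$, this yields
\begin{equation*}
\mathrm{Hom}_A(O_X, B) \;\simeq\; \mathrm{Hom}_{W_A(A^\flat)}(M_{A^\flat}, B) \;\simeq\; \mathrm{Hom}_{W_A(B^\flat)}(M_{B^\flat}, B).
\end{equation*}
The $B^\flat$-action on the rightmost set, coming from the $B^\flat$-module scheme structure on $\mathrm{Spec}\, M_{B^\flat}$, then endows $X := \mathrm{Spec}\, O_X$ with a functorial $\mathbb{G}_a^{\mathrm{perf}}$-module structure.

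It remains to verify that the two constructions are mutually inverse. Forward-then-backward is the identity because the composite $A \to W_A(A^\flat) \xrightarrow{\theta_A} A$ is the identity of $A$, so $O_X \otimes_A W_A(A^\flat) \otimes_{W_A(A^\flat)} A = O_X$. Backward-then-forward recovers $M_R \simeq O_X \otimes_A W_A(R)$ by using the data's functoriality along the canonical map $A^\flat \to R = W_A(R)^\flat$ induced by $A \to W_A(R)$. The morphism correspondence is obtained by the same arguments applied to maps between Hopf algebras. The main technical burden is the bookkeeping required to check the compatibility of all the natural maps in play (theta, tilt functoriality, and the two $W_A(A^\flat)$-algebra structures on $B$); once this is set up carefully, the remaining verifications are formal.
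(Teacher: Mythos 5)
Your proof is correct and follows essentially the same route as the paper's: both rest on the two key facts that $W_A(R)^\flat \simeq R$ for perfect $R$ (using nilpotency of $\mathfrak{m}$, hence of $p$) and that there is a canonical $A$-algebra map $W_A(B^\flat) \to B$ characterized by formal \'etaleness of $W_A(B^\flat)$ over $A$, and both obtain the forward direction by base change. The only cosmetic differences are that you package the latter map as the counit of the adjunction $W_A \dashv (\,\cdot\,)^\flat$, prove the tilt computation via $W_A(R)/p \simeq (A/p)\otimes_{\mathbb{F}_p}R$ rather than via $W_A(R)/\mathfrak{m}$, and construct the coordinate ring $O_X$ in one shot from $M_{A^\flat}$ rather than building $M_B$ for every $A$-algebra $B$ and invoking the general $B^\flat$-module-scheme criterion, but the substance is the same.
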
{}

\begin{proof}Let $R$ be a perfect ring. We note that $W_A(R)$ is an $A$-algebra. So given a $\mathbb{G}_a^{\mathrm{perf}}$-module scheme over $W_A(R)$, we obtain an $W_A(R)^\flat$-module scheme which will be denoted as $M_R.$ Thus one direction of the proposition will follow from the following lemma.

\begin{lemma}\label{lem1}In the above set up, $W_A(R)^\flat \simeq R.$
\end{lemma}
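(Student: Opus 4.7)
The plan is to compute the tilt directly by exploiting the nilpotence of $p$ in $A$. Since $A$ is Artinian local with residue field $\mathbb{F}_p$, the maximal ideal $\mathfrak{m}$ is nilpotent and contains $p$, so $p^n = 0$ in $A$ (and hence in $W_A(R) = A \otimes_{\mathbb{Z}_p} W(R)$) for some $n \ge 1$. In particular $W_A(R)$ is already $p$-adically complete, so
\[
W_A(R)^\flat \;=\; \varprojlim_{x \mapsto x^p} W_A(R)/p.
\]
Setting $B := A/p$, the fact that $p$ kills $B \otimes_{\mathbb{Z}_p} W(R)$ together with the classical identification $W(R)/p \cong R$ for perfect $R$ yields a ring isomorphism $W_A(R)/p \cong B \otimes_{\mathbb{F}_p} R$.

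The crux is then to analyze the Frobenius endomorphism $\phi = \phi_B \otimes \phi_R$ of $B \otimes_{\mathbb{F}_p} R$. Because $B$ is local with nilpotent maximal ideal $\mathfrak{m}/p$ and residue field $\mathbb{F}_p$, the nilradical of $B$ equals $\mathfrak{m}/p$, and the reduction $B \twoheadrightarrow \mathbb{F}_p$ splits the inclusion of $\mathbb{F}_p$. Choosing $N$ large enough that $p^N$ exceeds the nilpotency index of $\mathfrak{m}$, the iterate $\phi_B^N$ factors as $B \twoheadrightarrow \mathbb{F}_p \hookrightarrow B$; in particular $\mathrm{image}(\phi_B^N) = \mathbb{F}_p \subseteq B$. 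Since Frobenius on $R$ is bijective, this forces $\mathrm{image}(\phi^N) \subseteq \mathbb{F}_p \otimes_{\mathbb{F}_p} R = 1 \otimes R$ for all such $N$.

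Finally, any compatible sequence $(a_n)$ in $\varprojlim_{\phi} (B \otimes_{\mathbb{F}_p} R)$ satisfies $a_n = \phi^N(a_{n+N})$ for every $N$, so each $a_n$ lies in $\bigcap_N \mathrm{image}(\phi^N) \subseteq 1 \otimes R$. Writing $a_n = 1 \otimes r_n$, the compatibility $a_{n+1}^p = a_n$ becomes $r_{n+1}^p = r_n$ in $R$, whence the inverse limit reduces to $\varprojlim_{x \mapsto x^p} R \cong R$, the last isomorphism being perfectness of $R$. I do not anticipate a real obstacle; the only substantive point is the interplay between the Artinian-nilpotent structure of $A/p$ and the $p$-th power map, which is exactly what makes the ``Artinian part'' disappear in the tilt and leaves only $R$ behind.
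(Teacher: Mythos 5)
Your proof is correct, and it takes a genuinely different route from the one in the paper. The paper works directly with the limit along $x \mapsto x^p$ in $W_A(R)$ itself (using $p$-adic completeness), and compares it element-by-element with $\varprojlim_{x \mapsto x^p} W_A(R)/\mathfrak{m}$, checking injectivity and surjectivity by hand via the relations $a_{n+k}^{p^k}=a_n$ and the nilpotence of $\mathfrak{m}$. You instead pass immediately to the Frobenius tower on $W_A(R)/p \cong (A/p)\otimes_{\mathbb{F}_p} R$, and the whole argument is driven by one structural observation: since the maximal ideal of $B=A/p$ is nilpotent and equals the nilradical, a sufficiently high power of Frobenius on $B$ factors through the residue field $\mathbb{F}_p$, so iterated Frobenius on $B\otimes_{\mathbb{F}_p} R$ has image inside $1\otimes R$, collapsing the inverse limit to $\varprojlim_\phi R \cong R$. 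Both proofs ultimately rest on nilpotence of $\mathfrak{m}$, but yours isolates the mechanism more conceptually (Frobenius kills the ``Artinian direction'') and avoids the explicit lifting-and-stabilization argument of the paper's surjectivity step. One minor point worth a sentence in a write-up: the identity $\phi^N = \phi_B^N \otimes \phi_R^N$ on $B\otimes_{\mathbb{F}_p} R$ holds because Frobenius on any $\mathbb{F}_p$-algebra is $\mathbb{F}_p$-linear, so the tensor-of-Frobenii is well-defined and agrees with the absolute Frobenius on simple tensors, hence everywhere; and the inclusion $R = \mathbb{F}_p\otimes_{\mathbb{F}_p}R \hookrightarrow B\otimes_{\mathbb{F}_p}R$ is injective because $\mathbb{F}_p\to B$ is split.
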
{}

\begin{proof} This fact is rather classical and a proof can be found in \cite[Prop. 2.1.2]{FF18}. We will explain a proof in our set up for the convenience of the reader. We start by noting that there is a natural map $W_A(R)^\flat:= \lim_{x \to x^p} W_A(R)/p \to \lim_{x \to x^p}W_A(R)/\mathfrak{m}$ since $p \in \mathfrak{m}.$ Since $W_A(R)/\mathfrak{m}$ is isomorphic to $R$, which is a perfect ring, it will be enough to prove that the natural map is an isomorphism. We know that the natural map $\lim_{x \to x^p} W_A(R) \to W_A(R)^\flat $ is a set theoretic bijection since $W_A(R)$ is $p$-adically complete. Thus it is enough to show that the natural map $\lim_{x \to x^p} W_A(R) \to \lim_{x \to x^p} W_A(R)/\mathfrak{m}$ is a set theoretic bijection. First we check injectivity. Let $(a_n)$ and $(b_n)$ be two sequences in $\lim_{x \to x^p} W_A(R)$ such that $a_n = b_n \,\mathrm{mod}\, \mathfrak{m}.$ For every $k$, we have $a_{n+k}^{p^k }= a_n$ and $b_{n+k}^{p^k}= b_n.$ Since $a_{n+k} = b_{n+k} \,\mathrm{mod}\, \mathfrak{m},$ one inductively checks using $p \in \mathfrak{m}$ that  $a_n = b_n \, \mathrm{mod}\, \mathfrak{m}^{k+1}.$ Since $k$ was arbitrary, and the ideal $\mathfrak{m}$ is nilpotent, this checks the injectivity. For surjectivity, we fix $(\overline{a_n}) \in \lim_{x \to x^p} W_A(R)/\mathfrak{m}.$ We choose arbitrary lifts $a_n$ of $\overline{a_n}$ to $W_A(R).$ For every $k$, we have $a_{n+k+1}^p = a_{n+k} \, \mathrm{mod}\, \mathfrak{m}.$ Thus, $a_{n+k+1}^{p^{k+1}}= a_{n+k}^{p^k}\, \mathrm{mod}\, \mathfrak{m}^{k+1} .$ Since $\mathfrak{m}$ is nilpotent, the sequence $k \to a_{n+k}^{p^k}$ is eventually constant, and we define the limit element to be $b_n.$ Now it follows that $b_{n+1}^p = b_n$ and $b_n$ lifts $\overline{a}_n$, which proves the required surjectivity. 
\end{proof}{}

For the opposite direction, we are given with the data of an $R$-module scheme $\mathrm{Spec}\, M_R$ over $W_A(R)$ for every perfect ring $R$. In order to obtain a $\mathbb{G}_a^{\mathrm{perf}}$-module, we are required to provide the data of a $B^\flat$-module scheme $\mathrm{Spec}\, M_B$ over $B$ for every $A$-algebra $B.$  For this, we note the following lemma.

\begin{lemma}Let $B$ be an $A$-algebra. There is a natural map $W_A(B^\flat) \to B$ which induces an isomorphism $W_A(B^\flat)^\flat \to B^\flat.$
\end{lemma}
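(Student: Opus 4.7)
The plan is to construct the map via a relative version of Fontaine's theta map, and then identify its tilt with the isomorphism provided by \cref{lem1}.

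First, I would construct the natural map $\theta \colon W_A(B^\flat) \to B$. Since $B$ is an $A$-algebra and $\mathfrak{m}$ (hence $p$) is nilpotent in $A$, the ring $B$ satisfies $p^n B = 0$ for some $n$, so $B$ is $p$-adically complete. The multiplicative ``zeroth component'' map $B^\flat \to B$, sending $x = (x_0, x_1, \ldots) \in B^\flat = \varprojlim_{y \mapsto y^p} B$ to $x^\# := x_0$, lifts by the universal property of $p$-typical Witt vectors to the classical Fontaine theta map $\theta_0 \colon W(B^\flat) \to B$, given on sums of Teichmüller lifts by $\sum p^i [b_i] \mapsto \sum p^i b_i^\#$. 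Since $\theta_0$ is $\mathbb{Z}_p$-linear and $B$ is an $A$-algebra, tensoring with $A$ over $\mathbb{Z}_p$ and composing with the multiplication $A \otimes_{\mathbb{Z}_p} B \to B$ produces the desired ring homomorphism $\theta \colon W_A(B^\flat) = A \otimes_{\mathbb{Z}_p} W(B^\flat) \to B$.

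Applying the tilt functor then gives $\theta^\flat \colon W_A(B^\flat)^\flat \to B^\flat$. Since $B^\flat$ is a perfect ring, \cref{lem1} applied with $R = B^\flat$ supplies an isomorphism $\iota \colon W_A(B^\flat)^\flat \xrightarrow{\sim} B^\flat$, induced by the quotient $W_A(B^\flat) \to W_A(B^\flat)/\mathfrak{m} \simeq W(B^\flat)/p = B^\flat$ (the last equality using perfectness). Thus it suffices to show $\theta^\flat = \iota$, as then $\theta^\flat$ is automatically an isomorphism.

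For the comparison, I would evaluate both maps on Teichmüller-type elements. Given $y \in B^\flat$, the sequence $\bigl([y^{1/p^n}]\bigr)_n$ defines an element of $W_A(B^\flat)^\flat = \varprojlim_{x \mapsto x^p} W_A(B^\flat)/p$, since Teichmüller lifts are multiplicative. Under $\theta^\flat$ this maps to $\bigl((y^{1/p^n})^\#\bigr)_n$, which unwinds precisely to $y \in B^\flat = \varprojlim_{z \mapsto z^p} B/p$. Under $\iota$, reduction mod $\mathfrak{m}$ sends $[y^{1/p^n}]$ to $y^{1/p^n}$ via the first Witt coordinate, and after identifying $\varprojlim_{x \mapsto x^p} B^\flat$ with $B^\flat$ one again obtains $y$. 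Since both $\theta^\flat$ and $\iota$ are ring homomorphisms, and such Teichmüller sequences (together with $A$-coefficients and $p$-adic completeness of $W(B^\flat)$) generate $W_A(B^\flat)^\flat$ topologically, this forces $\theta^\flat = \iota$.

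The principal difficulty is bookkeeping the nested identifications: the $W_A$ construction involves tensoring $W$ with $A$ over $\mathbb{Z}_p$, tilting is an inverse limit under Frobenius modulo $p$, and the isomorphism of \cref{lem1} instead uses the inverse limit modulo $\mathfrak{m}$. Reconciling the ``mod $p$'' viewpoint relevant for $\theta^\flat$ with the ``mod $\mathfrak{m}$'' viewpoint relevant for $\iota$, in the presence of nontrivial $A$-coefficients on the Witt side, is the main subtlety; everything else in the argument is formal.
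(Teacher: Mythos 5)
Your proof is correct but constructs the natural map $W_A(B^\flat)\to B$ by a genuinely different route. The paper takes the composite $W_A(B^\flat)\to W_A(B^\flat)/\mathfrak m\simeq B^\flat\to B/\mathfrak m$ and lifts it uniquely along the nilpotent surjection $B\twoheadrightarrow B/\mathfrak m$, using that $\mathbb L_{B^\flat/\mathbb F_p}=0$ forces $\mathbb L_{W_A(B^\flat)/A}=0$, so $W_A(B^\flat)$ is formally \'etale over $A$. You instead produce the map explicitly as the $A$-linearization of Fontaine's $\theta\colon W(B^\flat)\to B$; by the uniqueness clause of formal \'etaleness the two constructions yield the same map. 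The paper's argument stays inside the cotangent-complex technology already in use for this proposition and gives uniqueness of the lift for free; yours is more concrete but imports the existence and ring-homomorphism property of $\theta$ as external facts (and your phrase ``lifts by the universal property of $p$-typical Witt vectors'' is not quite how $\theta$ arises, though the map itself is classical). Both proofs then defer the isomorphism on tilts to \cref{lem1}. A small streamlining of your comparison of $\theta^\flat$ with the isomorphism $\iota$ of \cref{lem1}: since $\mathfrak m$ is nilpotent, $\varprojlim_{x\mapsto x^p}(-)/p=\varprojlim_{x\mapsto x^p}(-)/\mathfrak m$ (as already used in the proof of \cref{lem1}), so tilting sees only $\theta\bmod\mathfrak m$, which is precisely the map $B^\flat\to B/\mathfrak m$ whose tilt \cref{lem1} identifies; thus $\theta^\flat=\iota$ with no element-wise check. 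If you do prefer the explicit check, the appeal to ``topological generation with $A$-coefficients'' is unnecessary: the sequences $\bigl([y^{1/p^n}]\bigr)_n$ for $y\in B^\flat$ already exhaust all of $W_A(B^\flat)^\flat$, being exactly the image of $\iota^{-1}$, so verifying the two maps agree on them suffices outright.
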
{}

\begin{proof}There is a natural map $B^\flat \to B/p \to B/\mathfrak{m}.$ This gives a natural map of $A$-algebras $W_A(B^\flat) \to B^\flat \to B/\mathfrak{m}.$ We note that $W_A(B^\flat)$ is a flat $A$-algebra by definition. Since $B^\flat$ is perfect, we have $\mathbb{L}_{B^\flat/\mathbb{F}_p}=0$ implying $\mathbb{L}_{W_A(B^\flat)/A}=0.$ This implies that $W_A(B^\flat)$ is a formally \'etale $A$-algebra. Since the map $B \to B/\mathfrak{m}$ has nilpotent kernel, it follows that the map $W_A(B^\flat) \to B/\mathfrak{m}$ lifts uniquely to provide a map $W_A(B^\flat) \to B,$ as desired. The map $W_A(B^\flat)^\flat \to B^\flat$ is an isomorphism by \cref{lem1}.
\end{proof}{}

Now we can define $M_B:=M_{B^\flat}\otimes_{W_A(B^\flat)} B$. Then $\mathrm{Spec}\, M_B$ automatically has the structure of a $B^\flat$-module scheme. This data determines a $\mathbb{G}_a^{\mathrm{perf}}$-module.
\end{proof}{}

\begin{definition}[Pointed $\mathbb{G}_a^{\mathrm{perf}}$-module]A $\mathbb{G}_a^{\mathrm{perf}}$-module scheme $X$ over $A$ along with the data of a map $X \to \mathbb{G}_a^{\mathrm{perf}}$ of $\mathbb{G}_a^{\mathrm{perf}}$-modules will be called a \textit{pointed $\mathbb{G}_a^{\mathrm{perf}}$-module $X$.} We will follow the convention that the data of the map $X \to \mathbb{G}_a^{\mathrm{perf}}$ will be simply called a point. Maps between pointed $\mathbb{G}_a^{\mathrm{perf}}$-modules are maps of $\mathbb{G}_a^{\mathrm{perf}}$-modules that commute with the points. We denote the category of such objects by $\mathbb{G}_a^{\mathrm{perf}} \w{--}\mathrm{Mod}_*.$
\end{definition}{}

\begin{example}\label{alpha}Let $A$ be an $\mathbb{F}_p$-algebra. The Hopf algebra $A[x^{1/p^\infty}]/x$ along with the natural map $A[x^{1/p^\infty}] \to A[x^{1/p^\infty}]/x$ equips $\mathrm{Spec}\, A[x^{1/p^\infty}]/x$ with the structure of a pointed $\mathbb{G}_a^{\mathrm{perf}}$-module scheme. This will be denoted as $\alpha^{\natural}.$ An analogue of this does not exist when $A$ does not have characteristic $p.$ 
\end{example}{}

\begin{remark}Unlike the case of $\mathbb{G}_a$-modules from \cref{deg1}, for a $\mathbb{G}_a^{\mathrm{perf}}$-module $X = \mathrm{Spec}\, B$, it is not true that giving a map $X \to \mathbb{G}_a^{\mathrm{perf}}$ is equivalent to choosing an element of degree $1$ in $B.$
\end{remark}{}

For the remainder of this section, we will work over a base ring $A$ of characteristic $p.$

\begin{proposition}[Pullback functor]\label{pullback}Let $A$ be an $\mathbb{F}_p$-algebra. Then there is a map of ring schemes $u:\mathbb{G}_a^{\mathrm{perf}} \to \mathbb{G}_a$ over $A.$ Further, pullback along this map defines a fully faithful functor $u^*: \mathbb{G}_a\w{--}\mathrm{Mod}_* \to \mathbb{G}_a^{\mathrm{perf}}\w{--}\mathrm{Mod}_*.$
\end{proposition}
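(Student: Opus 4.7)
The plan is to construct the functor $u^*$ as a fiber product and verify full faithfulness by passing to graded Hopf algebras.

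First, the map $u \colon \mathbb{G}_a^{\mathrm{perf}} \to \mathbb{G}_a$ noted in \cref{ring} is a ring scheme map: on Hopf algebras it is the inclusion $A[x] \hookrightarrow A[x^{1/p^\infty}]$, which is a Hopf algebra map because $x$ is primitive on both sides (\cref{rmk}), and it is evidently compatible with the multiplicative ring structures. For a pointed $\mathbb{G}_a$-module $(X, d)$ with $X = \mathrm{Spec}\,B$ and point $b \in B_1$, I define $u^*X$ to be the fiber product
\[
u^*X := X \times_{d,\,\mathbb{G}_a,\,u} \mathbb{G}_a^{\mathrm{perf}}
\]
formed in $\mathbb{G}_a^{\mathrm{perf}}$-modules (viewing $X$ and $\mathbb{G}_a$ as $\mathbb{G}_a^{\mathrm{perf}}$-modules via restriction along $u$), with point given by projection onto $\mathbb{G}_a^{\mathrm{perf}}$. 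On global sections $u^*X = \mathrm{Spec}\,B'$ with $B' := B \otimes_{A[y],\, y \mapsto b} A[y^{1/p^\infty}]$, and $B'$ inherits an $\mathbb{N}[1/p]$-grading extending the $\mathbb{N}$-grading of $B$ (placing $y^{1/p^k}$ in degree $1/p^k$). Functoriality is immediate, and the basic cases $u^*\mathbb{G}_a = \mathbb{G}_a^{\mathrm{perf}}$ and $u^*(\mathrm{Spec}\,A) = \alpha^{\natural}$ match \cref{quiz}.

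For full faithfulness I would use the fully faithful embeddings into graded group schemes (\cref{omgwat} and \cref{gradingdet}). The key computation is that $(B')_n = B_n$ for every integer $n \in \mathbb{N}$: a monomial $\beta \otimes y^m$ with $\beta \in B$ homogeneous and $m \in \mathbb{N}[1/p]$ has total degree $\deg\beta + m$, and since $B$ is $\mathbb{N}$-graded (\cref{conn}) integer total degree forces $m \in \mathbb{N}$, after which $\beta \otimes y^m = \beta\cdot b^m \otimes 1$ lies in $B$ via the identification $y = b$. Moreover $B \hookrightarrow B'$ is a sub-Hopf-algebra inclusion, because the $y^{1/p^k}$ are primitive and hence the comultiplication on $B'$ restricted to $B$ coincides with the comultiplication of $B$.

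Given a morphism $g \colon u^*X \to u^*Y$ of pointed $\mathbb{G}_a^{\mathrm{perf}}$-modules with corresponding graded Hopf algebra map $g^\sharp \colon C' \to B'$, gradedness forces $g^\sharp(C_n) \subseteq (B')_n = B_n$, producing a graded Hopf algebra map $f^\sharp \colon C \to B$ (Hopf compatibility is inherited from the sub-Hopf-algebra inclusions); by \cref{omgwat} this yields a map $f \colon X \to Y$ of $\mathbb{G}_a$-modules, and $f$ is pointed because $g^\sharp(y) = y$. Conversely, pointedness of $g$ dictates $g^\sharp(1 \otimes y^{1/p^k}) = 1 \otimes y^{1/p^k}$ for every $k$, and multiplicativity then gives $g^\sharp(c \otimes y^{1/p^k}) = f^\sharp(c) \otimes y^{1/p^k}$; hence $g^\sharp = f^\sharp \otimes_{A[y]} \mathrm{id}$, i.e., $g = u^*f$. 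Thus restriction to integer degrees is inverse to $u^*$ on Hom-sets, proving full faithfulness. The main subtlety lies in the integer-degree identity $(B')_n = B_n$, which relies crucially on $B$ being $\mathbb{N}$-graded — precisely the structural distinction that makes pullback along $u$ rigid and rules out the analogous statement with $\mathbb{G}_a^{\mathrm{perf}}$-modules on both sides.
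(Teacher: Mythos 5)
Your proof is correct and follows essentially the same route as the paper: define $u^*X = X \times_{\mathbb{G}_a} \mathbb{G}_a^{\mathrm{perf}}$, pass to graded Hopf algebras via \cref{omgwat} and \cref{gradingdet}, identify the integer-degree subalgebra $(B')_\mathbb{N}$ with $B$ to build $f^\sharp$ from $g^\sharp$, and recover $g^\sharp = f^\sharp \otimes_{A[y]} \mathrm{id}$ from pointedness plus multiplicativity. Your treatment of $(B')_n = B_n$ is a bit more explicit than the paper's ``by considering the $A$-subalgebra of elements of integral degree'' (and, like the paper, it silently relies on the injectivity of $B \hookrightarrow B'$, which follows from $A[x] \to A[x^{1/p^\infty}]$ being free), but the argument is the same.
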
{}

\begin{proof}The first part follows from considering the natural map $S ^\flat \to S$ for every $A$-algebra $S$ since $\mathbb{G}_a^{\mathrm{perf}}(S) = S^\flat$ and $\mathbb{G}_a(S)= S.$ For the second part, we start by defining the functor $u^*.$ Let $X$ be a pointed $\mathbb{G}_a$-module over $A.$ We set $u^*X := X \times_{\mathbb{G}_a} \mathbb{G}_a^{\mathrm{perf}}.$ Then it follows that $u^* X$ is naturally equipped with the structure of a pointed $\mathbb{G}_a^{\mathrm{perf}}$-module where the map $u^*X \to \mathbb{G}_a^{\mathrm{perf}}$ is given by the projection map.
\vspace{2mm}

It is clear that $u^*$ is faithful. To see that it is full, we take $\mathrm{Spec}\, M$ and $\mathrm{Spec} \,N$ to be two pointed $\mathbb{G}_a$-modules over $A$. Let $f: u^*\mathrm{Spec}\, M \to u^*\mathrm{Spec}\, N$ be a map of pointed $\mathbb{G}_a^{\mathrm{perf}}$-modules. Now the graded algebras underlying $u^* \mathrm{Spec}\, M$ and $u ^*\mathrm{Spec}\, N$ are respectively given by $M':=M \otimes_{A[x]} A[x^{1/p^\infty}]$ and $N':=N \otimes_{A[x]} A[x^{1/p^\infty}].$ The map $f$ induces a graded map on algebras $ \overline{f}:N' \to M'.$ By considering the $A$-subalgebra of elements of integral degree in $M'$ and $N'$ we recover $M$ and $N$ and also get a graded map denoted as $\overline{f}_\circ: N \to M.$ Since $\overline{f}$ was a graded Hopf algebra map, it follows that $\overline{f}_{\circ}$ is also a graded Hopf algebra map. This induces a map $f_{\circ}: \mathrm{Spec}\, M \to \mathrm{Spec}\, N$ which by construction is a map of pointed $\mathbb{G}_a$-modules. Now we note that applying $u^*$ to this map recovers $f$. To see the last statement, we note that since $f$ was a map of pointed $\mathbb{G}_a^{\w{perf}}$--modules, the graded Hopf algebra map $\overline{f}:N' \to M'$ must send $1 \otimes x^{1/p^i} \mapsto 1 \otimes x^{1/p^i}$ for $i \ge 0.$ Further, as an $A$-algebra, $N'=N \otimes_{A[x]} A[x^{1/p^\infty}]$ is generated by the image of the natural map $N \to N'$ and the elements $1 \otimes x^{1/p^i}$ for $ i \ge 0$ (similarly for $M'$). These observations imply that the map $\overline{f}$ is the map obtained from $\overline{f}_\circ$ by base changing along $A[x] \to A[x^{1/p^\infty}],$ which ultimately implies that $f = u^* f_\circ.$ 
\end{proof}{}

\begin{definition}[Fractional rank 1]\label{satu3}Let $A$ be an $\mathbb{F}_p$-algebra. A pointed $\mathbb{G}_a^{\mathrm{perf}}$-module $X$ over $A$ is said to be of \textit{fractional rank $1$} if it is isomorphic to $u^*X'$ for a pointed $\mathbb{G}_a$-module $X'.$
\end{definition}{}

\begin{remark}\label{rem3}If $X = \mathrm{Spec}\,V$ is a pointed $\mathbb{G}_a^{\mathrm{perf}}$-module of fractional rank $1$ over $\mathbb{F}_p$, then it follows from the definition and \cref{conn1} that the map of graded algebras $\mathbb{F}_p[x^{1/p^\infty}] \to V$ corresponding to the point is an isomorphism in degrees $<1$ and thus the pieces of degree $<1$ of $V$ are vector spaces of dimension $1$. More precisely, let $V_{<1}$ denote the algebra obtained by killing the ideal of elements of degree $\ge 1.$ Further, let $X$ be isomorphic to $u^* \mathrm{Spec}\, U$ for a pointed $\mathbb{G}_a$-module $\mathrm{Spec}\, U$. Then $V_{<1} \simeq \mathbb{F}_p[x^{1/p^\infty}]/x \otimes_{\mathbb{F}_p} U_0 \simeq \mathbb{F}_p[x^{1/p^\infty}]/x;$ where the last isomorphism follows from \cref{conn}. Also, we note that $\dim U_1 = \dim V_1.$
\end{remark}{}

\begin{example}\label{stilladding??945}
Let $A$ be an $\mathbb{F}_p$-algebra and let us consider $\w{Spec}\, A$ equipped with the pointed $\mathbb{G}_a$-module structure coming from the zero section of $\mathbb{G}_a.$ Then we have a natural isomorhism $u^* (\w{Spec}\, A) \simeq \alpha^\natural$ (\cref{alpha}). In particular, the pointed $\mathbb{G}_a^{\w{perf}}$-module $\alpha^\natural$ is of fractional rank $1$.
\end{example}{}
\subsection{The Hodge map}

\begin{proposition}[Hodge map]\label{Hmap}Let $A$ be an $\mathbb{F}_p$-algebra. Let $X$ be a pointed $\mathbb{G}_a^{\mathrm{perf}}$-module over $A$ of fractional rank $1.$ Then there is a unique map $\alpha^{\natural} \to X$ in $\mathbb{G}_a^{\mathrm{perf}}\w{--}\mathrm{Mod}_*.$ This map will be called the Hodge map.
\end{proposition}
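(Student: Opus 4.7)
The plan is to reduce this assertion to a simple universal property in the category of pointed $\mathbb{G}_a$-modules, using the full faithfulness of the pullback functor $u^\ast$ from \cref{pullback} together with the identification of $\alpha^{\natural}$ as $u^\ast(\w{Spec}\,A)$ noted in \cref{stilladding??945}.

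First, since $X$ is of fractional rank $1$, choose a pointed $\mathbb{G}_a$-module $X'$ together with an isomorphism $X \simeq u^\ast X'$. By \cref{pullback}, the functor $u^\ast: \mathbb{G}_a\w{--}\w{Mod}_* \to \mathbb{G}_a^{\w{perf}}\w{--}\w{Mod}_*$ is fully faithful; combining this with the identification $\alpha^{\natural} \simeq u^\ast (\w{Spec}\,A)$ of \cref{stilladding??945}, one obtains a natural bijection
\[
\w{Hom}_{\mathbb{G}_a^{\w{perf}}\w{--}\w{Mod}_*}(\alpha^{\natural}, X) \;\simeq\; \w{Hom}_{\mathbb{G}_a\w{--}\w{Mod}_*}(\w{Spec}\,A, X'),
\]
where $\w{Spec}\,A$ is equipped with the pointed $\mathbb{G}_a$-module structure coming from the zero section of $\mathbb{G}_a$. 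So it suffices to show that there is a unique morphism $\w{Spec}\,A \to X'$ in $\mathbb{G}_a\w{--}\w{Mod}_*$.

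This last step is the earlier remark that $\w{Spec}\,A$ with its zero section point is the initial object of $\mathbb{G}_a\w{--}\w{Mod}_*$. Indeed, the zero section $\w{Spec}\,A \to X'$ is a morphism of $\mathbb{G}_a$-modules, and it commutes with the points since the point $X' \to \mathbb{G}_a$ is a map of $\mathbb{G}_a$-modules and hence carries the identity element of $X'$ to the identity of $\mathbb{G}_a$. Conversely, any morphism in $\mathbb{G}_a\w{--}\w{Mod}_*$ out of $\w{Spec}\,A$ is forced to be this zero section, because a map of $\mathbb{G}_a$-modules sends the unique point of $\w{Spec}\,A$ to the identity element of the target. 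Combining these steps produces the required unique Hodge map $\alpha^{\natural} \to X$.

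I do not expect any serious obstacle here: the statement is essentially a formal consequence of two items already in place, namely full faithfulness of $u^\ast$ (which moves the question from $\mathbb{G}_a^{\w{perf}}$-modules to $\mathbb{G}_a$-modules) and the initiality of the zero section in $\mathbb{G}_a\w{--}\w{Mod}_*$ (which supplies both existence and uniqueness). The only minor point to verify carefully is that the chosen $X'$ is well-defined up to unique isomorphism so that the Hodge map is intrinsic to $X$; but this is immediate from the full faithfulness of $u^\ast$.
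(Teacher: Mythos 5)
Your proof is correct and takes essentially the same route as the paper: the paper's one-line proof invokes exactly the two facts you use, namely the full faithfulness of $u^*$ from \cref{pullback} and the identification $\alpha^\natural \simeq u^*(\w{Spec}\,A)$ from \cref{stilladding??945}. The only extra content you supply is the verification that $\w{Spec}\,A$ with the zero-section point is initial in $\mathbb{G}_a\w{--}\mathrm{Mod}_*$, which the paper already records as a remark following \cref{def2}; your argument for that step (a group-scheme map preserves identity elements) is sound.
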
{}

\begin{proof}
This follows from \cref{pullback} and \cref{stilladding??945}.
\end{proof}{}

\begin{remark}We will later see (\textit{cf.} \cref{move2}) that the Hodge map $\alpha^{\natural} \to X$ corresponds to the $\mathrm{gr}_0$ map of a certain kind of ``Hodge filtration" that can be defined on a functor associated to $X.$
\end{remark}{}

Now we would like to study a ``rigidity" property of the Hodge map $\alpha^{\natural} \to X$ from \cref{Hmap} under deformations. For this purpose, we make the following definition.

\begin{definition}\label{whot}
A pointed $\mathbb{G}_a$-module $X$ is said to be \textit{full} of rank $1$ if the map $X \to \mathbb{G}_a$ induces a surjection on the piece of degree $1$ on the underlying graded algebra map obtained by taking global sections.
\end{definition}{}

\begin{definition}[Full of fractional rank $1$]\label{satu100}
A pointed $\mathbb{G}_a^{\mathrm{perf}}$-module $X$ over an $\mathbb{F}_p$-algebra $A$ is said to be \textit{full} of fractional rank $1$ if it is of fractional rank $1$ and the map $X \to \mathbb{G}_a^{\mathrm{perf}}$ induces a surjection on the piece of degree $1$ on the underlying graded algebra map obtained by taking global sections.
\end{definition}{}

\begin{remark}\label{rem4}It follows from definitions that a pointed $\mathbb{G}_a^{\mathrm{perf}}$-module $\mathrm{Spec}\, V$ is full of fractional rank $1$ if and only if it is the image under $u^*$ of a full of rank $1$ pointed $\mathbb{G}_a$-module $\mathrm{Spec}\, U$. In this situation, when $A= \mathbb{F}_p$, we have two cases.\vspace{1mm}

\noindent
\textit{Case 1.}
If the graded map $\mathbb{F}_p[x] \to U$ sends $x \to 0$, then $V = U \otimes_{\mathbb{F}_p [x]} \mathbb{F}_p [x^{1/p^\infty}] = U\otimes_{\mathbb{F}_p} \frac{\mathbb{F}_p [x^{1.p^\infty}]}{x}.$ Thus the graded map $\mathbb{F}_p[x^{1/p^\infty}] \to V$ corresponding to the pointed $\mathbb{G}_a^{\mathrm{perf}}$-module structure is an isomorphism in degrees $<1$ and $x$ is sent to zero, so $V$ has no non-zero elements in degree $i$ for $1 \le i<2.$\vspace{2mm}

\noindent
\textit{Case 2.} Otherwise, $\dim U_1 = \dim V_1 = 1$ and the graded map $\mathbb{F}_p[x] \to U$ sends $x$ to a basis element of $U_1.$ Then the map $\mathbb{F}_p [x^{1/p^\infty}] \to V$ corresponding to the pointed $\mathbb{G}_a^{\mathrm{perf}}$-module structure is an isomorphism in degrees $i$ for $0 \le i<2.$
\end{remark}{}

\begin{proposition}[Rigidity of the Hodge map]\label{hodgestability}Let $X$ be a pointed $\mathbb{G}_a^{\mathrm{perf}}$-module over $\mathbb{F}_p$ which is full of fractional rank $1$. Let $X'$ be a deformation of $X$ as a pointed $\mathbb{G}_a^{\mathrm{perf}}$-module over $\mathbb{F}_p[\epsilon]/\epsilon^2.$ Then the Hodge map $ \alpha^{\natural} \to X $ admits a unique deformation $\alpha^{\natural} [\epsilon] \to X'.$
\end{proposition}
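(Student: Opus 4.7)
The plan is to show that the deformation $X'$ is itself of fractional rank $1$ over $R := \mathbb{F}_p[\epsilon]/\epsilon^2$, after which the existence and uniqueness of the lifted Hodge map follow immediately from \cref{Hmap} applied over the base $R$. Concretely, I would write $X = u^*Y$ with $Y = \mathrm{Spec}\,U$ a full of rank $1$ pointed $\mathbb{G}_a$-module over $\mathbb{F}_p$ (\cref{satu100}), and set $V' := \Gamma(X',\mathcal{O}_{X'})$, with mod-$\epsilon$ reduction $V = U \otimes_{\mathbb{F}_p[x]} \mathbb{F}_p[x^{1/p^\infty}]$; both are $\mathbb{N}[\tfrac{1}{p}]$-graded Hopf algebras, and $V'$ is $R$-flat in each graded degree.

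The main construction is the integer-graded $R$-subalgebra $U' := \bigoplus_{n \in \mathbb{N}} V'_n \subset V'$. By $R$-flatness, $U'/\epsilon = U$ and each $U'_n$ is free over $R$ of rank $\dim_{\mathbb{F}_p} U_n$. I would then check: (i) $U'$ is a graded Hopf $R$-subalgebra of $V'$, so that $\mathrm{Spec}\,U'$ inherits the structure of a pointed $\mathbb{G}_a$-module over $R$ (with point the restriction of $R[x^{1/p^\infty}] \to V'$ to the integer-graded part $R[x] \to U'$); and (ii) the canonical map $\psi \colon U' \otimes_{R[x]} R[x^{1/p^\infty}] \to V'$ is an isomorphism. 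Both statements hold mod $\epsilon$ by the explicit shape of $V$ recorded in \cref{rem4}, and this is exactly where the fullness of $X$ enters. Passing from $V$ to $V'$ is by flatness combined with a degreewise Nakayama argument: in (ii) both source and target of $\psi$ are $R$-flat in each fixed graded degree, so an isomorphism mod $\epsilon$ lifts; in (i) the fullness-controlled behaviour of $V'$ in degrees $<2$ rules out the $\epsilon$-torsion non-integer-graded components that $\Delta(u)$ could a priori acquire for $u \in U'$.

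Granted (i) and (ii), we obtain $X' \simeq u^*(\mathrm{Spec}\,U')$ as pointed $\mathbb{G}_a^{\mathrm{perf}}$-modules over $R$, exhibiting $X'$ as of fractional rank $1$ over $R$. Applying \cref{Hmap} over the base $R$ then furnishes a unique map of pointed $\mathbb{G}_a^{\mathrm{perf}}$-modules $\alpha^\natural[\epsilon] \to X'$ over $R$. Its reduction modulo $\epsilon$ must coincide with the original Hodge map $\alpha^\natural \to X$ by the uniqueness clause of \cref{Hmap} over $\mathbb{F}_p$, which gives existence of the deformation; uniqueness is then immediate from the uniqueness over $R$.

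The main obstacle is verification (i): that $U' \subset V'$ is closed under the Hopf comultiplication. A priori, for $u \in U'$, the element $\Delta(u)$ could acquire a nonzero component in $V'_a \otimes_R V'_b$ with $(a,b)$ non-integer, which is necessarily $\epsilon$-torsion since the analogous statement holds mod $\epsilon$. Ruling out such phantom components is precisely what the fullness hypothesis accomplishes: it pins down $V'$ in degrees $<2$ via \cref{rem4} and Nakayama, and this rigid low-degree data then propagates upward through the graded Hopf axioms to force the offending components to vanish in all degrees.
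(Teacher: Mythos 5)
Your plan is genuinely different from the paper's, and it contains a real gap at the step you yourself flag as the main obstacle. The paper's proof does \emph{not} try to show that $X'$ is again of fractional rank~$1$. It simply writes $X' = \mathrm{Spec}\,B'$, uses \cref{rem3} (which needs only fractional rank~$1$, not fullness) to see that killing the ideal of elements of degree $\ge 1$ produces a graded $R$-algebra surjection $B' \to \frac{\mathbb{F}_p[\epsilon][x^{1/p^\infty}]}{x}$, and then verifies directly that this map is a Hopf map. The verification is a short case analysis in degree: in degrees $<1$ it follows from the point data, in degrees $1 \le d < 2$ it follows from fullness via \cref{rem4}, and in degrees $\ge 2$ it is automatic because any homogeneous element of $B' \otimes B'$ of degree $\ge 2$ has at least one tensor factor of degree $\ge 1$, so it is killed by the quotient map on both sides of the square. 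The crucial economy here is that one never needs to understand the fractional-bidegree components of the comultiplication of $B'$ in high degree; they simply die in the target.

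Your route instead tries to prove that the integer-graded subalgebra $U' \subseteq V'$ is a Hopf subalgebra and that $U'\otimes_{R[x]}R[x^{1/p^\infty}]\to V'$ is an isomorphism, i.e.\ that $X' \simeq u^*(\mathrm{Spec}\,U')$ is again of fractional rank~$1$. This is precisely the content of the paper's \cref{lastprop}, and the paper proves that statement either via the cone/stack formalism (\cref{lastprop} itself, whose proof \emph{invokes} \cref{hodgestability} to produce the map $\alpha^\natural\to M$ and is therefore unavailable to you) or via a detour through de~Rham cohomology (\cref{sec5.2}). Your claim~(i), that for $u\in U'_n$ the element $\Delta(u)$ has no $\epsilon$-torsion components in $V'_a\otimes V'_b$ with $(a,b)$ fractional, is exactly the nontrivial assertion: you correctly observe such components are $\epsilon$-torsion, and you pin down degree~$1$ via \cref{rem4} and Nakayama (this part is fine, and $U'_1=Rx$ or $0$), but the proposed ``propagation upward through the graded Hopf axioms'' is not an argument. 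Coassociativity and cocommutativity constrain the fractional components of $\Delta(u)$ for $\deg u\ge 2$ but do not visibly force them to vanish; ruling them out requires a genuine structural input. Until you supply that, your proof is incomplete.

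It is worth contrasting what the two routes buy. If one \emph{could} establish your~(i) elementarily, one would get the stronger statement \cref{lastprop} as a byproduct, which is appealing. But the paper's direct construction of the map is strictly less demanding: it only needs to control $B'$ in degrees $<2$ (where fullness and flatness do all the work), whereas your approach requires global control of $\Delta$ on $B'$ in all degrees. Your step~(ii) is fine once~(i) is granted (it is a degreewise Nakayama argument using $R$-flatness of each graded piece), and your final appeal to \cref{Hmap} over $R$ together with the reduction-mod-$\epsilon$ uniqueness is also fine. The gap is concentrated in~(i), and it is not a gap that can be papered over with a reference to ``the Hopf axioms''; it is exactly where the real work lives.
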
{}

\begin{proof}We write $X= \mathrm{Spec}\,B$ and $X'= \mathrm{Spec}\, B'.$ We have a map $\mathbb{F}_p [\epsilon][x^{1/p^\infty}] \to B'$ coming from the data of the point. By \cref{rem3}, this is an isomorphism in degrees $<1$ since it is so modulo $\epsilon.$ Let $B'_{<1}$ denote the graded algebra we obtain by killing the ideal of elements in degrees $\ge 1$ in $B$. This gives an isomorphism of graded algebras $B'_{<1} \simeq \frac{\mathbb{F}_p[\epsilon][x^{1/p^\infty}]}{x}$. Thus the quotient map $B' \to B'_{<1}$ can be identified with a graded algebra map $B' \to \frac{\mathbb{F}_p[\epsilon][x^{1/p^\infty}]}{x}$ or in other words, a graded map $\alpha^{\natural}[\epsilon] \to \mathrm{Spec}\, B'.$ We note that both sides have the structure of pointed $\mathbb{G}_a^{\mathrm{perf}}$-modules and the graded map of schemes we have constructed is compatible with the data of the points. Thus in order to prove that it is a map of pointed $\mathbb{G}_a^{\mathrm{perf}}$-modules, we only need to check that $B' \to \frac{\mathbb{F}_p[\epsilon][x^{1/p^\infty}]}{x}$ is a map of graded Hopf algebras, i.e., the following diagram commutes.

\begin{center}
\begin{tikzcd}
B' \arrow[r] \arrow[d] & \frac{\mathbb{F}_p[\epsilon][x^{1/p^\infty}]}{x} \arrow[d]  \\
B'\otimes B' \arrow[r] & \frac{\mathbb{F}_p[\epsilon][x^{1/p^\infty}]}{x} \otimes_{\mathbb{F}_p[\epsilon]} \frac{\mathbb{F}_p[\epsilon][x^{1/p^\infty}]}{x}
\end{tikzcd}
\end{center}

It is enough to check that the diagram commutes for homogeneous elements $b \in B'.$ When $\deg b <1$, it follows from the data of the points. By \cref{rem4}, if $X$ falls under \text{Case 1}, then the diagram commutes for $1 \le \deg b <2$, as $b$ is necessarily zero in that case. If $X$ falls under Case $2$, then the map $\mathbb{F}_p[x^{1/p^\infty}] \to B$ is an isomorphism in degrees $<2.$ Therefore, the map $\mathbb{F}_p[\epsilon][x^{1/p^\infty}] \to B'$ coming from the data of the point is an isomorphism in degrees $<2$ as well. Thus the diagram commutes for $1 \le \deg b <2$ in this case as well. Now we suppose that $\deg b \ge 2.$ The comultiplication would send this to a homogeneous element of $B'\otimes B'.$ However, any homogeneous element of degree $\ge 2$ in $B'\otimes B'$ would have to be of the form $\sum_{u}x_u \otimes y_u $ such that $x_u, y_u$ are homogeneous elements in $B'$ and $\deg x_u + \deg y_u = \deg b \ge 2$, therefore either $\deg x_u$ or $\deg y_u$ is $\ge 1$, implying that $\sum_u x_u \otimes y_u$ is sent to zero under the lower horizontal map. But since $\deg b \ge 2,$ it is sent to zero by the upper horizontal map as well, which kills every element in degree $\ge 1$, verifying the commutativity of the diagram. The uniqueness follows from the grading.
\end{proof}{}

\begin{example}\label{unstable}We note that \cref{hodgestability} is false if we do not assume the  $\mathbb{G}_a^{\mathrm{perf}}$-module to be full, i.e., assuming that the  $\mathbb{G}_a^{\mathrm{perf}}$-module is of fractional rank $1$ alone is not sufficient. We consider the group scheme $\alpha_p$ as a pointed $\mathbb{G}_a$-module over $\mathbb{F}_p$ via the map $\mathbb{F}_p[x] \to \frac{\mathbb{F}_p [x]}{x^p}$ that sends $x \to 0.$ Applying the functor $u^*$, we obtain a pointed $\mathbb{G}_a^{\mathrm{perf}}$-module via the map $\mathbb{F}_p[x^{1/p^\infty}] \to \frac{\mathbb{F}_p[x^{1/p^\infty}]}{x} \otimes_{\mathbb{F}_p} \frac{\mathbb{F}_p[t]}{t^p}.$ This can have deformations that do not admit a pointed $\mathbb{G}_a^{\mathrm{perf}}$-module map to $\alpha^{\natural}[\epsilon].$ Indeed, we consider the graded algebra $\frac{\mathbb{F}_p[\epsilon][x^{1/p^\infty}]}{x} \otimes_{\mathbb{F}_p[\epsilon]} \frac{\mathbb{F}_p[\epsilon][t]}{t^p}.$ The Hopf structure has a nontrivial deformation given by $$t \to t \otimes 1 + 1 \otimes t + \epsilon \sum_{0 < i < p} \frac{1}{p}\binom{p}{i} x^{i/p} \otimes x^{1-i/p}.$$ This deformation further has the structure of a pointed $\mathbb{G}_a^\mathrm{perf}$-module. However, this does not have a deformation of the pointed $\mathbb{G}_a^{\mathrm{perf}}$-module map $\frac{\mathbb{F}_p[x^{1/p^\infty}]}{x} \otimes_{\mathbb{F}_p} \frac{\mathbb{F}_p[t]}{t^p} \to \alpha^{\natural}$ obtained by killing all elements of degree $\ge 1.$ Indeed, any deformation of such a map would be given by killing all the generators in degrees $\ge 1$ as well, which would not be a Hopf map.
\end{example}

\subsection{Cartier duality}

In this section, we record a variant of Cartier duality in the graded situation. For more details on such constructions we refer to \cite[1.6]{GR14}. We will use this duality to study the deformation of certain pointed $\mathbb{G}_a$ and $\mathbb{G}_a^{\mathrm{perf}}$-modules. 

\begin{definition}Let $R$ be any ring. A nonnegatively graded module $\bigoplus_{i\ge 0} V_i$ over $R$ is said to be of \textit{free of finite type} if $V_i$ is a finite dimensional free module over $R$ for every $i \ge 0.$ A graded algebra over $R$ will be called free of finite type of it is free of finite type as a module over $R.$
\end{definition}

\begin{definition}
If $M=\bigoplus_{i\ge 0} V_i$ is a free of finite type graded module over $R$, then we can define the dual of $M$ as $M^*:= \bigoplus_{i\ge 0} V_i^*$, where $V_i^*$ denotes the dual of $V_i.$ It follows that $M^{**}$ is functorially isomorphic to $M.$
\end{definition}{}

\begin{definition}Let $S$ be a graded free of finite type Hopf algebra over $R.$ Then $S^*$ also has the structure of a graded free of finite type Hopf algebra over $R.$ We call $S^*$ the Cartier dual of $S.$ It follows that $S^{**}$ is naturally isomorphic to $S.$ Thus Cartier duality provides an anti-equivalence between the category of nonnegatively graded free of finite type Hopf algebras over $R$ with itself.
\end{definition}{}

\begin{definition}Let $\mathcal{P}^1_*$ denote the category of nonnegatively graded affine group schemes $X$ over $R$ whose underlying Hopf algebra is free of finite type along with the data of a map $X \to \mathbb{G}_a$ of graded group schemes such that at the level of graded algebras of global sections, this map induces an \textit{isomorphism} on degrees $\le 1.$ The map $X \to \mathbb{G}_a$ will be called a point. Morphisms between two such objects are morphisms of graded group schemes that commute with the data of the points.
\end{definition}{}

\begin{remark}\label{high}We recall that a nonnegatively graded Hopf algebra $S$ over $R$ is called connected if the degree zero piece of $S$ denoted as $S_0$ is isomorphic to $R$ as an $R$-module. Equivalently, $S$ is called connected if the structure map $R \to S$ induces isomorphism on the degree zero piece. It follows from the definition that the underlying graded Hopf algebra $\Gamma(X, \mathcal{O}_X)$ is connected for an $X \in \mathcal{P}^1_*.$ We note that if $S$ is a connected nonnegatively graded Hopf algebra, then the comultiplication on $S$ sends an element $x$ of homogeneous degree $1$ to $x \otimes 1 + 1 \otimes x.$ In particular, giving a map $R[x] \to S$ of connected nonnegatively graded Hopf algebras over a base ring $R$ amounts to choosing a homogeneous element of degree $1$ in $S.$  

\end{remark}{}

\begin{remark}The identity map $\mathbb{G}_a \to \mathbb{G}_a$ is the final object of $\mathcal{P}^1_*$.
\end{remark}

\begin{proposition}[Duality]We fix a base ring $R$ as before. The category $\mathcal{P}^1_*$ has a notion of duality which sends an object $\mathrm{Spec}\, M \to \mathbb{G}_a$ to another object $\mathrm{Spec}\, M^* \to \mathbb{G}_a$, where $M^*$ is the Cartier dual of the nonnegatively graded free of finite type Hopf algebra $M.$ Further, this duality in $\mathcal{P}^1_*$ is involutive and thus provides an anti-equivalence of $\mathcal{P}^1_*$ with itself. This duality in $\mathcal{P}^1_*$ will be called Cartier duality as well.
\end{proposition}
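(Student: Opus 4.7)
The plan is to construct the dual object on both the underlying Hopf algebra and the point explicitly, and then deduce functoriality and involutivity from standard Cartier duality. Fix $(\mathrm{Spec}\, M, p) \in \mathcal{P}^1_*$. The point $p$ corresponds to a graded Hopf algebra map $R[x] \to M$ sending $x$ to an element $d \in M_1$; by hypothesis this map is an isomorphism on degrees $\le 1$, so $M_0 = R$ and $M_1$ is free of rank one with basis $d$. Since $M$ is graded free of finite type, the Cartier dual $M^*$ is again a nonnegatively graded free of finite type Hopf algebra, with $(M^*)_0 = R$ and $(M^*)_1$ free of rank one.

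First I would define the dual point. Let $d^* \in (M^*)_1$ be the unique element characterized by $d^*(d) = 1$. I claim $d^*$ is primitive, so that $x \mapsto d^*$ defines a graded Hopf algebra map $R[x] \to M^*$, i.e.\ a point $p^* \colon \mathrm{Spec}\, M^* \to \mathbb{G}_a$. The primitivity is a general feature of degree-one elements in any connected nonnegatively graded Hopf algebra: for $\phi \in (M^*)_1$ and $a, b \in M$, one has $\Delta_{M^*}(\phi)(a \otimes b) = \phi(ab)$, and on the degree-one summand $(M \otimes M)_1 = M_1 \otimes M_0 \oplus M_0 \otimes M_1$ the multiplication reduces to the identity on $M_1$ via $M_0 = R$, forcing $\Delta_{M^*}(\phi) = \phi \otimes 1 + 1 \otimes \phi$. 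The induced map $R[x] \to M^*$ is then tautologically an isomorphism on degrees $\le 1$, so $(\mathrm{Spec}\, M^*, p^*) \in \mathcal{P}^1_*$.

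Next I would check contravariant functoriality. A morphism $(\mathrm{Spec}\, M, p_M) \to (\mathrm{Spec}\, N, p_N)$ in $\mathcal{P}^1_*$ is a graded Hopf algebra map $\psi \colon N \to M$ with $\psi(d_N) = d_M$. Dualizing gives a graded Hopf algebra map $\psi^* \colon M^* \to N^*$, and the computation $(\psi^*(d_M^*))(d_N) = d_M^*(\psi(d_N)) = d_M^*(d_M) = 1$ shows $\psi^*(d_M^*) = d_N^*$ by uniqueness; hence $\psi^*$ underlies a morphism $(\mathrm{Spec}\, N^*, p_N^*) \to (\mathrm{Spec}\, M^*, p_M^*)$. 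Involutivity then follows because the biduality map $M \to M^{**}$ is an isomorphism of graded Hopf algebras (each $M_i$ being a finite free $R$-module), and under this isomorphism $d$ corresponds to $d^{**}$, the unique element of $(M^{**})_1$ evaluating to $1$ on $d^*$.

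The only content beyond formal Cartier duality is the primitivity of $d^*$, and that is settled by the grading/connectedness argument above; I do not foresee a real obstacle, since the required compatibility of the point with duality is rigid once the iso-on-degrees-$\le 1$ condition is imposed.
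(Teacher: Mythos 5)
Your proposal is correct and follows essentially the same route as the paper's proof. The paper packages the ``choose a basis of a rank-one free module'' content into a small auxiliary category $\mathcal{PL}(R)$ and cites \cref{high} for the fact that points of a connected graded Hopf algebra correspond to degree-one elements; you instead carry out the corresponding verifications directly (explicitly checking primitivity of $d^*$ via the formula $\Delta_{M^*}(\phi)(a\otimes b)=\phi(ab)$ and the degree decomposition of $(M\otimes M)_1$, and checking functoriality and involutivity by hand). This is a valid unpacking of the same argument, and the primitivity computation is exactly the content of the cited remark in the paper.
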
{}

\begin{proof}Let $\mathrm{Spec}\,M \to \mathbb{G}_a \in \mathcal{P}^1_*$. We need to check that there is a map of graded Hopf algebras $R[x] \to M^*$ inducing isomorphism in degrees $\le 1$ which is further functorial and is compatible with applying Cartier duality twice. First we note the following lemma.

\begin{lemma}Let $\mathcal{PL}(R)$ denote the category whose objects are pairs $(L,p )$ where $L$ is a free module of rank $1$ over $R$ and $p$ is a basis of $L$ as an R-module. Maps are defined in the obvious way. Then there is a notion of functorial duality on $\mathcal{PL}(R)$ which is involutive and sends $L \mapsto L^*$.
\end{lemma}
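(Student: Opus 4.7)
The plan is to define the duality functor on $\mathcal{PL}(R)$ explicitly and check the three required properties (well-definedness of the dual object, functoriality, and involutivity) by direct computation, using the fact that everything happens for free modules of rank one.

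First I would define the functor on objects. Given $(L, p) \in \mathcal{PL}(R)$, set $L^{\ast} := \mathrm{Hom}_R(L, R)$ and let $p^{\ast} \in L^{\ast}$ be the unique $R$-linear map with $p^{\ast}(p) = 1$. Since $L$ is free of rank one with basis $p$, each element of $L$ has a unique expression $rp$, so $p^{\ast}(rp) = r$ is well defined; moreover any $\phi \in L^{\ast}$ satisfies $\phi = \phi(p)\cdot p^{\ast}$, so $p^{\ast}$ is itself a basis of the rank one free module $L^{\ast}$. Hence $(L^{\ast}, p^{\ast}) \in \mathcal{PL}(R)$.

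Next I would extend the construction to morphisms. Interpreting maps in $\mathcal{PL}(R)$ in the obvious way (as $R$-linear maps compatible with the distinguished bases), for $f : (L, p) \to (L', p')$ with $f(p) = p'$, set $f^{\ast} : L'^{\ast} \to L^{\ast}$ to be the usual transpose, $\phi \mapsto \phi \circ f$. The only thing to check is that $f^{\ast}$ sends $p'^{\ast}$ to $p^{\ast}$, and indeed $f^{\ast}(p'^{\ast})(p) = p'^{\ast}(f(p)) = p'^{\ast}(p') = 1$, so $f^{\ast}(p'^{\ast}) = p^{\ast}$ by the uniqueness in the definition of $p^{\ast}$.

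Finally, involutivity comes from the standard evaluation map $\mathrm{ev}_L : L \to L^{\ast\ast}$, $\ell \mapsto (\phi \mapsto \phi(\ell))$, which is an isomorphism on free modules of finite rank. It is natural and carries $p$ to $p^{\ast\ast}$, since $\mathrm{ev}_L(p)(p^{\ast}) = p^{\ast}(p) = 1 = p^{\ast\ast}(p^{\ast})$. Thus $\mathrm{ev}_L$ is a natural isomorphism $(L, p) \xrightarrow{\sim} (L^{\ast\ast}, p^{\ast\ast})$ in $\mathcal{PL}(R)$, giving the required involutivity. There is really no obstacle here; the lemma is essentially a bookkeeping observation that the standard duality of free rank one modules lifts verbatim to the pointed category $\mathcal{PL}(R)$.
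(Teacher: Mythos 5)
Your proof is correct and follows essentially the same route as the paper: define $p^{\ast}$ as the unique functional sending $p$ to $1$, observe it is a basis of $L^{\ast}$, check the transpose respects the distinguished bases, and use the standard evaluation isomorphism to obtain involutivity. The only difference is that you spell out a few of the routine verifications that the paper leaves implicit.
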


\begin{proof} Our task is to construct a functor $\mathcal{PL}(R) \to \mathcal{PL}(R)$ which we define by sending $(L,p) \to (L^*, p^*)$ where $p^*: L \to R$ is the unique map that sends $p \to 1.$ It is clear that $p^*$ is a basis for $L^*.$ If $(L_1, p_1) \to (L_2, p_2)$ is an arrow in $\mathcal{PL}(R)$, then there is a natural map $L_2^* \to L_1^*$ which takes $p_2^*$ to $p_1^*$, thus it is clear that our construction defines a functor. The natural isomorphism $L \simeq L^{**}$ sends $p \to p^{**}$ so it follows that the functor we constructed is involutive. \end{proof}{}

Now we note that the Cartier dual $M^*$ of $M$ is again a connected Hopf algebra over $R$ since $M$ is connected. By \cref{high}, the map $\mathrm{Spec}\,M \to \mathbb{G}_a$ corresponds to an object $(M_1, t) \in \mathcal{PL(R)},$ where $M_1$ denotes the degree $1$ piece of $M$ and $t \in M_1$ is the image of $x$ under the graded map $R[x] \to M.$ By the above Lemma, duality provides us an object $(M_1^*, t^*).$ By \cref{high}, this corresponds to a map $\mathrm{Spec}\, M^* \to \mathbb{G}_a$. By construction, the underlying graded algebra map $R[x] \to M^*$ induces an isomorphism in degrees $\le 1$. Thus $\mathrm{Spec}\, M^* \to \mathbb{G}_a$ is naturally an object of $\mathcal{P}^1_*.$ The fact that this construction is functorial and involutive follows from the above Lemma.\end{proof}{}

\begin{example}\label{final}It follows that the category $\mathcal{P}_*^1$ has an initial object $\mathbb G_a ^* \to \mathbb G_a$, which is the Cartier dual of the final object $\mathbb{G}_a \to \mathbb G_a$ and will be simply denoted by $\mathbb{G}_a^*.$ By computing the Cartier dual, it follows that the graded algebra underlying $\mathbb{G}_a^*$ is the divided power polynomial algebra in one variable. By construction, $\mathbb{G}_a^*$ is a nonnegatively graded group scheme since it is the dual of $\mathbb{G}_a$. It also follows that in the graded algebra underlying $\mathbb{G}_a^*,$ the summands of a fixed degree are all free of rank $1.$ Let us mention the group scheme structure of $\mathbb{G}_a^*$ more explicitly at the level of functor of points. For an algebra $B,$ the $B$-valued points of $\mathbb{G}_a^*$ can be identified with the set of sequences $(b_0, b_1, \ldots, b_n ,\ldots),$ where $b_i \in B$ for $i \ge 0$ are such that $b_1=1$ and $b_n b_m = \frac{(m+n)!}{m! n!} b_{n+m}.$ The addition operation in the group scheme $\mathbb{G}_a^*$ at the level of $B$-valued points can be described as $$(b_0, b_1, \ldots, b_n, \ldots) + (c_0, c_1, \ldots, c_n, \ldots)= (d_0, d_1, \ldots, d_n, \ldots),\,\, \text{where}\,\, d_k := \sum_{i+j=k} b_{i} c_{j}.$$ For $b \in B,$ setting $b\cdot(b_0, b_1, \ldots, b_n, \ldots) := (b_0, b b_1, \ldots, b^n b_n, \ldots)$ describes the $\mathbb{G}_a$-action on $\mathbb{G}_a^*.$ By using the relation $b_n b_m = \frac{(m+n)!}{m! n!} b_{n+m}$ and the binomial theorem, one easily verifies that the $\mathbb{G}_a$-action on $\mathbb{G}_a^*$ equips $\mathbb{G}_a^*$ with the structure of a $\mathbb{G}_a$-module.
\end{example}{}

\begin{proposition}\label{dri}Let $R$ be a $\mathbb{Z}_p$-algebra. Then $\mathbb{G}_a^*$ as an object of $\mathcal{P}^1_*$ over $R$ is uniquely isomorphic to $W[F] \to \mathbb G_a$, where the latter denotes the Kernel of Frobenius on the $p$-typical Witt ring scheme (see \cref{introex1}). (In particular, $W[F]$ has the structure of a pointed $\mathbb{G}_a$-module.)
\end{proposition}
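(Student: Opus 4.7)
The plan is to verify that $W[F] \to \mathbb{G}_a$ naturally lies in $\mathcal{P}^1_*$ and then apply the initiality of $\mathbb{G}_a^*$ established in \cref{final} to produce the isomorphism.

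First, I would equip $W[F]$ with the structure of an object of $\mathcal{P}^1_*$. By \cref{introex1}, $W[F]$ carries a natural $\mathbb{G}_a$-module structure coming from Teichm\"uller multiplication, since the relation $V(x) \cdot y = V(x \cdot F(y))$ forces $VW(S)$ to act trivially on $W[F](S)$, and $W/VW \simeq \mathbb{G}_a$. By \cref{noise}, this endows the Hopf algebra $R[W[F]]$ with a nonnegative grading. The projection $W \to \mathbb{G}_a$ sending $(w_0,w_1,\ldots) \mapsto w_0$ is a morphism of ring schemes, hence in particular a morphism of $\mathbb{G}_a$-modules with respect to Teichm\"uller action on the source, and its restriction $d : W[F] \to \mathbb{G}_a$ provides the required point of $\mathcal{P}^1_*$.

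Second, I would verify that $R[W[F]]$ is free of finite type in each degree and that $d$ induces an isomorphism in degrees $\leq 1$. The most efficient way, valid over any $\mathbb{Z}_p$-algebra $R$, is through the classical identification $W[F] \cong \mathbb{G}_a^{\sharp}$ with the divided-power envelope of $\mathbb{G}_a$ along its zero section (alluded to in the excerpt as \cite[Lemma 3.2.6]{Dri20}). Concretely, the underlying Hopf algebra is the divided-power polynomial algebra $R\langle x\rangle = \bigoplus_{n\geq 0} R\cdot x^{[n]}$ with relations $x^{[n]}x^{[m]} = \binom{n+m}{n} x^{[n+m]}$; Teichm\"uller rescaling places $x^{[n]}$ in degree $n$, each graded piece is free of rank one over $R$, and the point $d$ corresponds to the projection onto $R \oplus R\cdot x^{[1]}$. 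This simultaneously checks the free-of-finite-type hypothesis and the isomorphism-in-degrees-$\leq 1$ hypothesis.

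Third, with $W[F]$ now confirmed as an object of $\mathcal{P}^1_*$, the initiality of $\mathbb{G}_a^*$ from \cref{final} supplies a unique morphism $\varphi : \mathbb{G}_a^* \to W[F]$ in $\mathcal{P}^1_*$. Since both objects have underlying graded Hopf algebras isomorphic to the divided power polynomial algebra with each graded piece free of rank one over $R$, and compatibility with the points forces $\varphi$ to send the distinguished degree-one generator to the distinguished degree-one generator, the divided-power relations propagate this to all higher degrees: $x^{[n]} \mapsto x^{[n]}$. Hence $\varphi$ is a graded $R$-linear isomorphism in every degree, and therefore an isomorphism of objects of $\mathcal{P}^1_*$. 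Uniqueness of the isomorphism is automatic from the initiality of $\mathbb{G}_a^*$.

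The main obstacle is the explicit identification $W[F] \cong \mathbb{G}_a^{\sharp}$; once this classical computation is imported, the remainder of the argument is essentially formal. Alternatively, one could bypass the direct identification by invoking Cartier duality in $\mathcal{P}^1_*$: it suffices to show that the Cartier dual of $W[F]$ in $\mathcal{P}^1_*$ is the final object $\mathbb{G}_a$, which amounts to the same combinatorial fact that the divided-power polynomial algebra and the ordinary polynomial algebra $R[x]$ are Cartier dual as graded Hopf algebras.
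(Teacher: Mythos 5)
Your proposal reaches the correct conclusion, but it takes a different route from the paper, and one step is a bit loose. The key difference: the paper explicitly acknowledges the identification $W[F] \cong \mathbb{G}_a^\sharp$ from \cite[Lemma 3.2.6]{Dri20} as one valid proof, and then deliberately offers a ``different proof'' that does \emph{not} import that identification. The paper's self-contained argument uses faithful flatness of $F \colon W \to W$ to show that $W[F]$ is flat over $\mathbb{Z}_p$, so that each graded summand is a finitely generated flat (hence free) $\mathbb{Z}_p$-module; reducing mod $p$ shows each has rank $1$; this alone places $W[F] \to \mathbb{G}_a$ in $\mathcal{P}^1_*$ without invoking any explicit description of the Hopf algebra. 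Then it checks the map from $\mathbb{G}_a^*$ is an isomorphism by verifying it is nonzero mod $p$ on each rank-one summand, by inspecting the \emph{comultiplication}. Your approach instead takes the divided-power description of $W[F]$ as the starting point, which essentially gives the isomorphism by comparison of explicit Hopf algebras, so the initiality argument you then run is doing little more than re-establishing what the imported identification already provides. That is not wrong, but it is closer to citing the result than to re-proving it, which is exactly what the paper chose to avoid.

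There is also a gap in your third step that is worth flagging. You argue that because $\varphi$ sends the degree-one generator to itself, ``the divided-power relations propagate this to all higher degrees.'' Write $\varphi(x^{[n]}) = c_n x^{[n]}$. Applying $\varphi$ to the multiplicative relation $x^{[1]} x^{[n-1]} = n\, x^{[n]}$ gives $n(c_{n-1} - c_n) = 0$. Over $\mathbb{Z}_p$ this does force $c_{n-1} = c_n$, but over a general $\mathbb{Z}_p$-algebra $R$ with $p$-torsion it does not, since $n$ may be a zero-divisor. The clean fix---which is what the paper does---is to use the comultiplication $\Delta(x^{[n]}) = \sum_{i+j=n} x^{[i]} \otimes x^{[j]}$: comparing the coefficient of $x^{[1]} \otimes x^{[n-1]}$ in $\Delta(\varphi(x^{[n]})) = (\varphi \otimes \varphi)\Delta(x^{[n]})$ gives $c_n = c_1 c_{n-1}$ with unit coefficients, so $c_n = 1$ over any base. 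Alternatively, one can reduce to $R = \mathbb{Z}_p$ first (as the paper does), noting that $\mathbb{G}_a^*$, $W[F]$, and the unique map between them are all obtained by base change from $\mathbb{Z}_p$. As written, your ``valid over any $\mathbb{Z}_p$-algebra'' phrasing skips over this point; it is harmless here only because you have already imported the isomorphism from \cite{Dri20}, which makes the verification redundant.
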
{}

\begin{proof} For a proof, we refer to \cite[Lemma 3.2.6]{Dri20}. We provide sketch of a different proof. It is enough to prove the claim when $R= \mathbb{Z}_p.$ Since $F: W \to W$ is faithfully flat (see \cite[Section 3.4]{Dri20} and \cref{introex1}), $W[F]$ is a graded group scheme that is flat over $\mathbb{Z}_p.$ For every $i \ge 0$, the summand of degree $i$ in the graded algebra underlying $W[F]$ is a finitely generated flat module over $\mathbb{Z}_p$ and therefore must be free of finite rank. By going modulo $p,$ one sees that these summands of degree $i$ must be free of rank $1$ for every $i \ge 0.$ Thus one observes that $W[F] \to \mathbb G_a$ is an object of $\mathcal{P}^1_*.$ Since $\mathbb{G}_a^* \to \mathbb{G}_a$ is an initial object of $\mathcal{P}_*^1$ by \cref{final}, it follows that there is a unique map $\mathbb{G}_a^* \to W[F]$ in $\mathcal{P}^1_*.$ In order to check that this map is an isomorphism, it is enough to check it at the level of the induced map on graded algebras underlying the two group schemes. For the latter, it is further enough to check it for the induced maps on summands of degree $i$, at the level of $\mathbb{Z}_p$-modules. Since these summands all have rank $1,$ it is enough to check that these maps are nonzero modulo $p.$ However, the last statement can be seen directly by inspecting the comultiplication of the Hopf algebras underlying $\mathbb{G}_a^*$ and $W[F]$ after reducing modulo $p.$ \end{proof}{}

\begin{remark}\label{rem7}Fix $n \ge 1$. Let $\mathcal{P}^1_{*_{< n}}$ denote the full subcategory of $\mathcal{P}^1_*$ spanned by the objects $\mathrm{Spec}\, M \to \mathbb{G}_a$ whose underlying graded algebra $M$ satisfies $M_i = 0$ for $i \ge n.$ Then $\mathcal{P}^{1}_{*_{< n}}$ is preserved under Cartier duality. When $R$ is a char. $p$ base ring and $n = p^k$, then the final object of $\mathcal{P}^1_{*_{< n}}$ is given by $\alpha_{p^k} \to \mathbb{G}_a$ where $\alpha_{p^k}$ is the $\mathbb{G}_a$-module underlying $\mathrm{Spec}\, R[x]/x^{p^k}.$ Its Cartier dual is given by $W_k[F]$, where $W_k$ is the kernel of Frobenius on the $k$-truncated $p$-typical Witt ring scheme. The Hopf structure on the ring of functions on $W_k[F]$ is obtained by considering the subalgebra of elements in degree $< p^k$ in the Hopf algebra underlying $W[F].$ By duality, $W_k[F] \to \mathbb{G}_a$ is the initial object of $\mathcal{P}^1_{*_{< n}}.$ We note that $W_k[F]$ is also a $\mathbb{G}_a$-module and thus $W_k[F]$ has the structure of a pointed $\mathbb G_a$-module. Explicitly, the graded $R$-algebra underlying $W_k[F]$ (considered as a graded group scheme over $R$) is given by $\frac{R[x_0, x_1, \ldots, x_{k-1}]}{(x_0^p, x_1^p, \ldots, x_{k-1}^p)},$ where $\deg x_i= p^i.$
\end{remark}{}

\begin{example}[Duality in $\mathcal{P}^1_*$]\label{cart}We consider the graded algebra $\frac{\mathbb{F}_p [x_0, x_1, \ldots, x_n]}{(x_0^{p^{k+1}},x_1^p, \ldots, x_n^p) } $ where $\deg x_0=1$ and $\deg x_i = p^{k+i}$ for $i \ge 1.$ This has a graded subalgebra given by $\frac{\mathbb{F}_p [x_0^{p^k}, x_1, \ldots, x_n]}{(x_0^{p^{k+1}},x_1^p, \ldots, x_n^p) }$. Let us write $\frac{\mathbb{F}_p [t_0, t_1, \ldots, t_n]}{(t_0^{p},t_1^p, \ldots, t_n^p) }$ to denote the graded Hopf algebra underlying $W_{n+1}[F],$ where $\deg t_i = p^i$ for $i \ge 0.$ Thus, the algebra $\frac{\mathbb{F}_p [x_0^{p^k}, x_1, \ldots, x_n]}{(x_0^{p^{k+1}},x_1^p, \ldots, x_n^p) }$ can be naturally equipped with the Hopf structure coming via the isomorphism with $\frac{\mathbb{F}_p [t_0, t_1, \ldots, t_n]}{(t_0^{p},t_1^p, \ldots, t_n^p) }$ that sends $t_0 \mapsto x_0^{p^k}$ and $t_i \mapsto x_i$ for $i \ge 1.$ This equips the graded algebra $\frac{\mathbb{F}_p [x_0, x_1, \ldots, x_n]}{(x_0^{p^{k+1}},x_1^p, \ldots, x_n^p) } $ with a graded Hopf structure which is uniquely obtained by sending $x_0 \mapsto x_0 \otimes 1 + 1 \otimes x_0 $ and requiring that the graded subalgebra $\frac{\mathbb{F}_p [x_0^{p^k}, x_1, \ldots, x_n]}{(x_0^{p^{k+1}},x_1^p, \ldots, x_n^p) }$ equipped with the aforementioned Hopf structure is a Hopf subalgebra. There is also a map $\mathrm{Spec}\, \frac{\mathbb{F}_p [x_0, x_1, \ldots, x_n]}{(x_0^{p^{k+1}},x_1^p, \ldots, x_n^p) } \to \mathbb{G}_a$ corresponding to the element $x_0$ of degree $1$ (see \cref{high}) which makes the former an object of $\mathcal{P}^1_*.$\vspace{2mm}

We also consider the graded algebra $\frac{\mathbb{F}_p [y_0, y_1, \ldots, y_{k}]}{(y_0^p, \ldots, y_{k-1}^p, y_{k}^{p^{n+1}})},$ where $\deg y_i = p^i.$ By quotienting with $y_k^{p}$ we obtain the graded algebra $\frac{\mathbb{F}_p [y_0, y_1, \ldots, y_{k}]}{(y_0^p, \ldots, y_{k-1}^p, y_{k}^{p})}$ which has a Hopf structure coming from $W_{k+1}[F].$ This induces a graded Hopf structure on $\frac{\mathbb{F}_p [y_0, y_1, \ldots, y_{k}]}{(y_0^p, \ldots, y_{k-1}^p, y_{k}^{p^{n+1}})}$ which is uniquely obtained by sending $y_k^{p} \to y_k^p \otimes 1 + 1 \otimes y_k^p$ and requiring that the quotient map $\frac{\mathbb{F}_p [y_0, y_1, \ldots, y_{k}]}{(y_0^p, \ldots, y_{k-1}^p, y_{k}^{p^{n+1}})} \to \frac{\mathbb{F}_p [y_0, y_1, \ldots, y_{k}]}{(y_0^p, \ldots, y_{k-1}^p, y_{k}^{p})}$ is a map of graded Hopf algebras, when the quotient is equipped with the aforementioned Hopf structure. There is also a map $\mathrm{Spec}\, \frac{\mathbb{F}_p [y_0, y_1, \ldots, y_{k}]}{(y_0^p, \ldots, y_{k-1}^p, y_{k}^{p^{n+1}})} \to \mathbb{G}_a$ corresponding to the element $y_0$ of degree $1$ which makes this an object of $\mathcal{P}^1_*.$\vspace{2mm}

The above two objects of $\mathcal{P}^1_*$ are Cartier dual of each other. Indeed, by killing the ideal of elements in degrees $\ge p^{k+1}$ in $\frac{\mathbb{F}_p [x_0, x_1, \ldots, x_n]}{(x_0^{p^{k+1}},x_1^p, \ldots, x_n^p) } $, we obtain the sub Hopf algebra $\mathbb{F}_p [x_0]/x_0^{p^{k+1}}.$ This implies that killing the ideal of elements in degrees $\ge p^{k+1}$ in the Cartier dual $\frac{\mathbb{F}_p [x_0, x_1, \ldots, x_n]}{(x_0^{p^{k+1}},x_1^p, \ldots, x_n^p) }^*$ we get an isomorphism with the graded Hopf algebra $\frac{\mathbb{F}_p [y_0, y_1, \ldots, y_{k}]}{(y_0^p, \ldots, y_{k-1}^p, y_{k}^{p})}.$ Under this isomorphism, $y_k$ is identified with the basis element in degree $p^k$ of $\frac{\mathbb{F}_p [x_0, x_1, \ldots, x_n]}{(x_0^{p^{k+1}},x_1^p, \ldots, x_n^p) }^*.$ Inspecting the comultiplication in $\frac{\mathbb{F}_p [x_0, x_1, \ldots, x_n]}{(x_0^{p^{k+1}},x_1^p, \ldots, x_n^p) }$, we conclude that the powers of the basis element in degree $p^k$ of $\frac{\mathbb{F}_p [x_0, x_1, \ldots, x_n]}{(x_0^{p^{k+1}},x_1^p, \ldots, x_n^p) }^*$ gives a basis element in degrees $i$ for $p^k \le i < p^{k+n+1}.$ This shows that the two objects of $\mathcal{P}^1_*$ mentioned above are indeed dual to each other.
\end{example}{}

\subsection{Deformations of some $\mathbb{G}_a$ and $\mathbb{G}_a^{\mathrm{perf}}$-modules}
In this section, we study deformations of some pointed $\mathbb{G}_a$ (resp. $\mathbb{G}_a^{\mathrm{perf}})$-modules. All deformations are required to be flat over the base. While studying deformations, we would like to make use of certain universal properties. In order to formulate these universal properties, we will need to restrict our attention to a suitable full subcategory of $\mathbb{G}_a\w{--} \mathrm{Mod}_*$ (resp. $\mathbb{G}_a^{\mathrm{perf}}\w{--} \mathrm{Mod}_*$).

\begin{definition}A pointed $\mathbb{G}_a$-module $X$ is said to be \textit{pure} of rank $1$ if the graded Hopf algebra $\Gamma (X, \mathcal{O}_X)$ is free of finite type and the map $X \to \mathbb{G}_a$ induces an isomorphism in degree $1$ at the level of graded algebras of global sections. The full subcategory of such objects inside the category $\mathbb{G}_a\w{--} \mathrm{Mod}_*$ will be called the category of pure rank-$1$ pointed $\mathbb{G}_a$-modules.
\end{definition}{}

\begin{remark}\label{rem5}Since (by using \cref{conn}) the category of pure rank-$1$ pointed $\mathbb{G}_a$-modules is a full subcategory of $\mathcal{P}^1_*$, it follows that $\mathbb{G}_a^*$ is the initial object in the category of pure rank-$1$ pointed $\mathbb{G}_a$-modules. Therefore $\mathbb{G}_a^*$ admits no nontrivial endomorphisms other than the identity (over an arbitrary base ring). Over a $\mathbb{Z}_p$-algebra, $W[F]$ is isomorphic to $\mathbb{G}_a^*$ and thus inherits the same universal property.
\end{remark}{}

\begin{proposition}\label{12}Let $A$ be a $\mathbb{Z}_p$-algebra. Then $W[F]$ has no nontrivial endomorphism as a pointed $\mathbb{G}_a$-module over $A.$
\end{proposition}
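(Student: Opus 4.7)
The plan is to deduce this from the material already developed: specifically Proposition~\ref{dri}, Example~\ref{final}, and the full faithfulness statement of Proposition~\ref{omgwat}. The statement will essentially be a corollary of Remark~\ref{rem5}, which already indicates that $\mathbb{G}_a^*$ has no nontrivial endomorphisms.

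First, I would recall that by (the proof of) Proposition~\ref{dri}, over any $\mathbb{Z}_p$-algebra $A$ the pointed $\mathbb{G}_a$-module $W[F] \to \mathbb{G}_a$ is an object of $\mathcal{P}^1_*$, in fact a pure rank-$1$ pointed $\mathbb{G}_a$-module: the summands in each degree are free of finite rank (one checks flatness of $F\colon W \to W$ gives flatness of $W[F]$ over $\mathbb{Z}_p$, reducing modulo $p$ gives rank $1$ in each degree), and the point $W[F] \to \mathbb{G}_a$ induces an isomorphism in degrees $\le 1$. Moreover, Proposition~\ref{dri} provides a unique isomorphism $\mathbb{G}_a^* \simeq W[F]$ in $\mathcal{P}^1_*$.

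Next, by Example~\ref{final}, $\mathbb{G}_a^*$ is the initial object of $\mathcal{P}^1_*$; in particular it admits no nontrivial endomorphisms in $\mathcal{P}^1_*$. Transporting this along the isomorphism of Proposition~\ref{dri}, $W[F]$ also admits no nontrivial endomorphisms in $\mathcal{P}^1_*$, i.e.\ no nontrivial endomorphisms as a pointed graded group scheme with point inducing an isomorphism in degrees $\le 1$.

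Finally, it remains to upgrade this to the statement about pointed $\mathbb{G}_a$-modules. Here I would invoke Proposition~\ref{omgwat}: the forgetful functor from $\mathbb{G}_a$-modules to graded group schemes is fully faithful. Consequently any $\mathbb{G}_a$-module endomorphism of $W[F]$ commuting with the point is the same data as an endomorphism in $\mathcal{P}^1_*$, and thus must be the identity. I do not foresee a real obstacle; the only thing to be careful about is keeping track that ``endomorphism as a pointed $\mathbb{G}_a$-module'' and ``endomorphism in $\mathcal{P}^1_*$'' really do agree, which is precisely what Proposition~\ref{omgwat} (together with the degree condition defining $\mathcal{P}^1_*$, automatic since the point is preserved) supplies.
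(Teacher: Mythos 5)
Your proof is correct and follows essentially the same route as the paper, which deduces the result from \cref{dri} together with \cref{rem5}: both arguments rest on $\mathbb{G}_a^*\simeq W[F]$ being initial in $\mathcal{P}^1_*$ and on the full faithfulness of the forgetful functor to graded group schemes. The only cosmetic difference is that the paper phrases the last step via the intermediate full subcategory of pure rank-$1$ pointed $\mathbb{G}_a$-modules rather than invoking \cref{omgwat} directly, but the content is identical.
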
{}

\begin{proof}This follows from \cref{dri} and \cref{rem5}. \end{proof}{}

\begin{proposition}\label{mad}Let $(A, \mathfrak{m})$ be an Artinian local ring with residue field $k$ which has $\mathrm{char.} \, p>0.$ Then any deformation of the pointed $\mathbb{G}_a$-module $W[F]_k$ over $A$ is uniquely isomorphic to $W[F]_A$.
\end{proposition}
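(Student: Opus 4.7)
The plan is to combine the initial property of $W[F]_A$ among pure rank-$1$ pointed $\mathbb{G}_a$-modules, established in \cref{dri} and \cref{rem5}, with a Nakayama's lemma argument that upgrades the universal property from the residue field level to the level of $A.$ First observe that since $A$ is Artinian local with residue field $k$ of characteristic $p,$ the element $p$ lies in $\mathfrak{m}$ and is nilpotent in $A,$ so $A$ is a $\mathbb{Z}_p$-algebra. By \cref{dri} and \cref{rem5}, $W[F]_A$ is then an initial object in the category of pure rank-$1$ pointed $\mathbb{G}_a$-modules over $A.$

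Let $X = \mathrm{Spec}\, B$ be a deformation of $W[F]_k$ over $A,$ i.e.\ a pointed $\mathbb{G}_a$-module over $A$ that is flat over $A$ and reduces to $W[F]_k$ along $A \to k.$ First I would verify that $X$ is itself pure of rank $1.$ Writing $B = \bigoplus_i B_i,$ each $B_i$ is flat over $A$ and its reduction $B_i \otimes_A k$ is free of finite rank over $k$ (namely the corresponding graded piece of the Hopf algebra of $W[F]_k,$ which is pure rank-$1$ by \cref{dri} and \cref{final}). Nakayama's lemma then forces each $B_i$ to be finitely generated and free of the same rank over $A.$ The degree-$1$ component of the graded algebra map $A[x] \to B$ induced by the point is a map $A \to B_1$ between free rank-$1$ modules that is an isomorphism modulo $\mathfrak{m},$ hence its value at $1$ is a unit (since $A$ is local), so the map is an isomorphism. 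Thus $X$ is pure rank-$1.$

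By the initial property of $W[F]_A,$ there is a unique morphism $\varphi : W[F]_A \to X$ of pointed $\mathbb{G}_a$-modules. Its reduction modulo $\mathfrak{m}$ must coincide with the unique morphism $W[F]_k \to X \otimes_A k \simeq W[F]_k,$ which by the same initial property is the identity. On global sections, the corresponding graded algebra map $B \to \Gamma(W[F]_A, \mathcal{O})$ is therefore an isomorphism mod $\mathfrak{m}$; since both sides are finitely generated free $A$-modules of the same rank in each graded piece, Nakayama gives surjectivity in each degree, and a surjection between finitely generated free modules of the same rank is automatically an isomorphism. Hence $\varphi$ is an isomorphism of pointed $\mathbb{G}_a$-modules over $A.$ Uniqueness of this isomorphism is immediate from \cref{12}: any two such isomorphisms differ by an endomorphism of $W[F]_A,$ which must be the identity.

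The main technical step is thus the Nakayama argument showing that deformations of a pure rank-$1$ pointed $\mathbb{G}_a$-module remain pure rank-$1,$ which is routine; the essential input is the universal property already packaged in \cref{dri} and \cref{rem5}, so once flatness is used to reduce everything to the residue field, there is nothing further to do.
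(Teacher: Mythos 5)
Your proof is correct and follows essentially the same route as the paper: reduce to the fact that $W[F]_A$ is initial among pure rank-$1$ pointed $\mathbb{G}_a$-modules, check that a flat deformation of $W[F]_k$ is itself pure rank-$1$, and conclude by reduction modulo $\mathfrak{m}$ together with \cref{12}. Your version spells out the Nakayama argument behind the claims that the deformation is pure rank-$1$ and that the map is an isomorphism, details the paper leaves implicit.
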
{}

\begin{proof}Let $X$ be any deformation of $W[F]_k$ to $A$; in particular, $X$ is flat over $\w{Spec}\, A.$ Then $X$ is necessarily a pure rank-$1$ pointed $\mathbb{G}_a$-module. Since $W[F]_A$ is the initial object in the category of pure rank-$1$ pointed $\mathbb{G}_a$-modules, there is a unique map $W[F]_A \to X$ of pointed $\mathbb{G}_a$-modules. This map is an isomorphism after going modulo $\mathfrak{m}$, and therefore is an isomorphism. The uniqueness follows from \cref{12}. \end{proof}{}

\begin{proposition}\label{mad1}Let $A$ be an $\mathbb{F}_p$-algebra. Then the pointed $\mathbb{G}_a$-module $W_n[F]$ defined over $A$ has no nontrivial endomorphisms.
\end{proposition}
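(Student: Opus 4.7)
The plan is to mimic the proof of \cref{12}, replacing the universal property of $W[F]$ among all pure rank-$1$ pointed $\mathbb{G}_a$-modules (which only holds over $\mathbb{Z}_p$-algebras via \cref{dri}) with its \emph{truncated} analogue recorded in \cref{rem7}. Concretely, I would first observe that $W_n[F]$ is naturally an object of the full subcategory $\mathcal{P}^1_{*_{<p^n}} \subset \mathcal{P}^1_*$ of graded group schemes whose underlying Hopf algebra is free of finite type and concentrated in degrees $<p^n,$ equipped with a point inducing an isomorphism on the pieces of degree $\leq 1.$ All three conditions are visible from the explicit presentation $\Gamma(W_n[F], \mathcal O) \simeq A[x_0, \ldots, x_{n-1}]/(x_0^p, \ldots, x_{n-1}^p)$ with $\deg x_i = p^i$ and point corresponding to the degree $1$ element $x_0,$ as spelled out in \cref{rem7}.

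The second ingredient I would invoke is the statement of \cref{rem7} itself: by Cartier duality applied to the fact that $\alpha_{p^n} \to \mathbb{G}_a$ is the \emph{final} object of $\mathcal{P}^1_{*_{<p^n}},$ the dual $W_n[F] \to \mathbb{G}_a$ is the \emph{initial} object of $\mathcal{P}^1_{*_{<p^n}}.$ In particular, $W_n[F]$ has a unique endomorphism inside $\mathcal{P}^1_{*_{<p^n}},$ namely the identity.

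The remaining step is a translation of categories. An endomorphism of $W_n[F]$ as a pointed $\mathbb{G}_a$-module is, by definition, an endomorphism of $\mathbb{G}_a$-module schemes commuting with the map to $\mathbb{G}_a.$ By the full faithfulness of the forgetful functor from $\mathbb{G}_a$-modules to graded group schemes (\cref{omgwat}), any such endomorphism automatically respects the $\mathbb{N}$-grading; in particular it preserves the truncation condition $M_i = 0$ for $i \geq p^n.$ Hence it is an endomorphism of $W_n[F]$ inside $\mathcal{P}^1_{*_{<p^n}}.$ Combined with the previous paragraph, this forces it to be the identity.

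There is essentially no obstacle here: the structural content has already been packaged in \cref{rem7} via Cartier duality, and the only step requiring any care is noting that a pointed $\mathbb{G}_a$-module endomorphism is automatically graded, which is precisely the content of \cref{omgwat}. The argument is independent of whether $A$ is a field, so it works uniformly over any $\mathbb{F}_p$-algebra; the hypothesis $\mathrm{char}\, A = p$ enters only through the fact that $W_n[F]$ has Hopf algebra of finite rank of the required shape, which was recorded in \cref{rem7}.
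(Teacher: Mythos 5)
Your proof is correct and follows essentially the same route as the paper, whose entire argument for \cref{mad1} is the one-line citation of \cref{rem7}. You have correctly unpacked that citation: $W_n[F]$ lies in the full subcategory $\mathcal{P}^1_{*_{<p^n}}$, is its initial object by Cartier duality from $\alpha_{p^n}$, and any pointed $\mathbb{G}_a$-module endomorphism passes to an endomorphism in $\mathcal{P}^1_{*_{<p^n}}$ and back along the (fully) faithful forgetful functor of \cref{omgwat}, hence must be the identity. One small clarification: the claim that a pointed $\mathbb{G}_a$-module endomorphism "automatically respects the grading" follows from the mere existence of the forgetful functor, not from its full faithfulness; what you actually use \cref{omgwat} for is the \emph{faithfulness}, which lets you conclude that the original endomorphism is the identity once its image in $\mathcal{P}^1_{*_{<p^n}}$ is.
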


\begin{proof}Follows from \cref{rem7}.\end{proof}{}

\begin{proposition}\label{mad2}
Let $(A, \mathfrak{m})$ be an Artinian local ring over $\mathbb{F}_p$ with residue field $k$. Then any deformation of the pointed $\mathbb{G}_a$-module $W_n[F]_k$ is uniquely isomorphic to $W_n[F]_A.$
\end{proposition}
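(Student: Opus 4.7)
The plan is to mimic the proof of \cref{mad} but in the truncated setting governed by the full subcategory $\mathcal{P}^1_{*_{<p^n}}$ of \cref{rem7}, and then to extract uniqueness from the no-nontrivial-endomorphisms statement \cref{mad1}.

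First I would unpack what the data of a deformation $X$ of $W_n[F]_k$ over the Artinian local $\mathbb{F}_p$-algebra $A$ supplies. Writing $B := \Gamma(X,\mathcal{O}_X)$, flatness of $X$ over $A$ together with the assumption $B \otimes_A k \simeq \Gamma(W_n[F]_k,\mathcal{O})$ forces each graded piece $B_i$ to be a finitely generated flat $A$-module (hence free), and since the explicit presentation $\frac{k[x_0,\dots,x_{n-1}]}{(x_0^p,\dots,x_{n-1}^p)}$ recalled in \cref{rem7} vanishes in degrees $\ge p^n$, flatness over Artinian $A$ plus Nakayama force $B_i = 0$ for $i \ge p^n$. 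Thus the graded Hopf algebra $B$ is free of finite type over $A$, concentrated in degrees $< p^n$.

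Next I would check that the point $d\colon X \to \mathbb{G}_a$ realizes $X$ as an object of $\mathcal{P}^1_{*_{<p^n}}$, namely that the induced graded algebra map $A[x] \to B$ is an isomorphism in degrees $\le 1$. In degree $0$ this is \cref{conn}; in degree $1$, the map $A \simeq A \cdot x \to B_1$ reduces mod $\mathfrak{m}$ to the corresponding isomorphism for $W_n[F]_k$, and then Nakayama in the Artinian local setting upgrades this to an isomorphism over $A$ (both sides being free of the same rank). With this verified, \cref{rem7} tells us that $W_n[F]_A$ is the initial object of $\mathcal{P}^1_{*_{<p^n}}$ over $A$, so there is a unique morphism $\varphi\colon W_n[F]_A \to X$ in this category; in particular $\varphi$ is a map of pointed $\mathbb{G}_a$-modules.

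To conclude that $\varphi$ is an isomorphism, I would reduce mod $\mathfrak{m}$: by the same initiality applied over $k$, the mod-$\mathfrak{m}$ reduction $\varphi \otimes_A k$ is the unique map $W_n[F]_k \to X \otimes_A k \simeq W_n[F]_k$, hence must agree with the identity under the given identification. Since both $W_n[F]_A$ and $X$ are $A$-flat and $\varphi \otimes_A k$ is an isomorphism, Nakayama for Artinian local rings gives that $\varphi$ itself is an isomorphism. Finally, for uniqueness up to unique isomorphism: if $\varphi_1,\varphi_2\colon W_n[F]_A \xrightarrow{\sim} X$ are two such isomorphisms, then $\varphi_2^{-1}\circ\varphi_1$ is an endomorphism of $W_n[F]_A$ as a pointed $\mathbb{G}_a$-module, and by \cref{mad1} (applied to the $\mathbb{F}_p$-algebra $A$) this endomorphism is the identity, so $\varphi_1 = \varphi_2$.

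The only step I expect to require care is the verification that $X$ actually lies in $\mathcal{P}^1_{*_{<p^n}}$; everything else is a purely formal combination of initiality plus flatness-Nakayama. That verification, however, is mechanical once one invokes flatness of $X/A$ and the Artinian hypothesis on $A$, so there should be no genuine obstacle.
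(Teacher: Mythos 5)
Your proposal is correct and follows the same route the paper intends: the paper's proof is the one-line ``Follows from \cref{rem7},'' and \cref{rem7} establishes precisely that $W_n[F]$ is the initial object of $\mathcal{P}^1_{*_{<p^n}}$, which is the engine of your argument (exactly as in the paper's own proof of \cref{mad} for the untruncated $W[F]$). You have merely unpacked the implicit steps — flatness forcing the deformation into $\mathcal{P}^1_{*_{<p^n}}$, initiality plus Nakayama for the isomorphism, and \cref{mad1} for rigidity — all of which match the paper's intended reasoning.
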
{}

\begin{proof}
Follows from \cref{rem7}.
\end{proof}{}

\begin{definition}[Pure of fractional rank $1$]\label{pure}Let $A$ be a base ring of char. $p>0.$ A pointed $\mathbb{G}_a^{\mathrm{perf}}$-module $X$ over $A$ is said to be \textit{pure of fractional rank 1} if it is isomorphic to $u^* Y$ for some pure rank-$1$ pointed $\mathbb{G}_a$-module $Y.$ The full subcategory of such objects inside $\mathbb{G}_a^{\mathrm{perf}}\w{--}\mathrm{Mod}_*$ will be called the category of pure fractional rank-$1$ pointed $\mathbb{G}_a^{\mathrm{perf}}$-module. This category is also the essential image of the functor $u^*$ restricted to the category of pure rank-$1$ pointed $\mathbb{G}_a$-modules.
\end{definition}{}

\begin{proposition}\label{rem6}The category of pure fractional rank-$1$ pointed $\mathbb{G}_a^{\mathrm{perf}}$ modules has a final object given by $\mathbb{G}_a^{\mathrm{perf}}.$ This category also has an initial object which is given by $u^*W[F].$ 
\end{proposition}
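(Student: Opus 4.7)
The plan is to reduce the statement to identifying initial and final objects in the category of pure rank-$1$ pointed $\mathbb{G}_a$-modules, and then transport them across the pullback functor $u^*$. By \cref{pullback}, the functor $u^*$ is fully faithful. By definition of ``pure of fractional rank $1$'' in \cref{pure}, its restriction to the full subcategory of pure rank-$1$ pointed $\mathbb{G}_a$-modules is essentially surjective onto the category in question. Therefore this restriction is an equivalence of categories, so it suffices to identify the initial and final objects on the $\mathbb{G}_a$-side and apply $u^*$.

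For the final object, I would observe that $\mathbb{G}_a$ (with the identity map as its point) is the final object of $\mathbb{G}_a\w{--}\mathrm{Mod}_*$. Its global sections Hopf algebra $A[x]$ is graded free of finite type with the identity inducing an isomorphism in degree $1$, so $\mathbb{G}_a$ belongs to the pure rank-$1$ subcategory and remains final there. Then applying $u^*$, one has $u^*\mathbb{G}_a = \mathbb{G}_a \times_{\mathbb{G}_a} \mathbb{G}_a^{\mathrm{perf}} = \mathbb{G}_a^{\mathrm{perf}}$, which is therefore the final object of the pure fractional rank-$1$ category.

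For the initial object, the idea is to use the Cartier duality machinery from the previous subsection. I would first note that the category of pure rank-$1$ pointed $\mathbb{G}_a$-modules sits as a full subcategory of $\mathcal{P}^1_*$: the degree-zero isomorphism required by $\mathcal{P}^1_*$ is automatic from \cref{conn}, while the degree-$1$ isomorphism matches the purity hypothesis. By \cref{final}, $\mathbb{G}_a^*$ is the initial object of $\mathcal{P}^1_*$, and the discussion in \cref{final} exhibits it with a natural $\mathbb{G}_a$-module structure whose graded summands are all free of rank $1$; thus $\mathbb{G}_a^*$ belongs to, and is initial in, the pure rank-$1$ subcategory. Finally, by \cref{dri}, there is a canonical isomorphism $\mathbb{G}_a^* \simeq W[F]$ of pointed $\mathbb{G}_a$-modules over any $\mathbb{Z}_p$-algebra (and in particular over the $\mathbb{F}_p$-algebra $A$ of \cref{pure}), so that applying $u^*$ produces the initial object $u^*W[F]$ as claimed.

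I do not expect a genuine obstacle here; the proof essentially amounts to stringing together the equivalence induced by $u^*$, the universal properties of $\mathbb{G}_a$ and $\mathbb{G}_a^*$ in $\mathcal{P}^1_*$, and the identification $\mathbb{G}_a^* \simeq W[F]$ from \cref{dri}. The one point requiring a touch of care is verifying that $\mathbb{G}_a^*$ really does lie in the pure rank-$1$ subcategory (as opposed to just in $\mathcal{P}^1_*$), but this is already handled by the explicit description of its $\mathbb{G}_a$-module structure and its divided-power polynomial graded algebra recorded in \cref{final}.
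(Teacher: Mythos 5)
Your proposal is correct and takes essentially the same route as the paper's proof, which simply cites the full faithfulness of $u^*$ (\cref{pullback}) together with \cref{rem5} (itself resting on \cref{conn}, \cref{final}, and \cref{dri}); you have merely unrolled that chain of references and spelled out the verification that $\mathbb{G}_a$ and $\mathbb{G}_a^*$ lie in the pure rank-$1$ subcategory.
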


\begin{proof}
This follows from the definition of pure fractional rank-$1$ modules by using \cref{rem5} and the fact that $u^*$ is fully faithful (\cref{pullback}).
\end{proof}{}

\begin{proposition}\label{new3}Let $A$ be an $\mathbb{F}_p$-algebra. Then $u^*W[F]$ has no nontrivial endomorphism as a pointed $\mathbb{G}_a^{\mathrm{perf}}$-module over $A.$
\end{proposition}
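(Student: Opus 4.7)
The plan is to reduce the question to an endomorphism problem for $W[F]$ as a pointed $\mathbb{G}_a$-module, where we already have the desired rigidity result.

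The key input is the full faithfulness of the pullback functor $u^* : \mathbb{G}_a\text{-Mod}_* \to \mathbb{G}_a^{\mathrm{perf}}\text{-Mod}_*$ established in \cref{pullback}. Since $u^*W[F]$ is, by construction, the image of $W[F]$ under $u^*$, full faithfulness gives a bijection
\[
\mathrm{End}_{\mathbb{G}_a^{\mathrm{perf}}\text{-Mod}_*}(u^*W[F]) \; \xrightarrow{\sim} \; \mathrm{End}_{\mathbb{G}_a\text{-Mod}_*}(W[F]).
\]
Thus it suffices to show that $W[F]$, viewed as a pointed $\mathbb{G}_a$-module over the $\mathbb{F}_p$-algebra $A$, admits no nontrivial endomorphisms.

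For this, I would simply invoke \cref{12}: any $\mathbb{F}_p$-algebra is in particular a $\mathbb{Z}_p$-algebra (via the unique ring map $\mathbb{Z}_p \to \mathbb{F}_p \to A$), so that proposition applies directly and yields $\mathrm{End}_{\mathbb{G}_a\text{-Mod}_*}(W[F]) = \{\mathrm{id}\}$. Combining this with the bijection above gives $\mathrm{End}_{\mathbb{G}_a^{\mathrm{perf}}\text{-Mod}_*}(u^*W[F]) = \{\mathrm{id}\}$, as required.

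There is really no obstacle here beyond checking that the two ingredients slot together cleanly; the genuine content lies in the earlier results. The only point worth a moment's care is verifying that the identification $u^*W[F] \simeq u^*(W[F])$ is literally the one produced by the functor $u^*$ (so that full faithfulness can be applied verbatim), and that \cref{12} is applicable in the characteristic $p$ setting, both of which are immediate from the definitions.
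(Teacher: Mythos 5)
Your proof is correct and, though phrased slightly differently, relies on exactly the same two ingredients as the paper's: full faithfulness of $u^*$ (\cref{pullback}) and the universal property of $W[F]$ (\cref{rem5}, packaged in \cref{12}). The paper's one-line proof instead cites \cref{rem6} ($u^*W[F]$ is initial among pure fractional rank-$1$ pointed $\mathbb{G}_a^{\mathrm{perf}}$-modules), but unwinding that reference gives precisely your argument.
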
{}

\begin{proof}This follows from \cref{rem6}.\end{proof}{}

Our next Proposition will deal with deformations of $u^*W[F].$ We point out that using \cref{mad} and \cref{lastprop} one can directly prove \cref{mainthm1} below. However, since the proof of \cref{lastprop} uses the language of stacks, we prefer to record an elementary argument in the case of $u^* W[F].$  Before we begin, we record a lemma.

\begin{lemma}\label{cot}
Let $S= \bigoplus_{i \in \mathbb{N}[1/p]}S_i$ be a perfect, graded $\mathbb{F}_p$-algebra. For a fixed $n \in \mathbb{N}[1/p],$ we consider the ideal $I:= \bigoplus_{i \ge n} S_i.$ Then there is a unique deformation up to unique isomorphism of $S/I$ over $\mathbb{F}_p [\epsilon]/[\epsilon^2]$ which is compatible with the grading.
\end{lemma}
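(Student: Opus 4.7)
The plan is to prove both existence and uniqueness of the graded deformation by explicitly constructing, for any graded flat deformation $R'$ of $R := S/I$ over $\mathbb{F}_p[\epsilon]/\epsilon^2$, a canonical graded $\mathbb{F}_p$-algebra section $\sigma \colon R \to R'$ of the reduction map $R' \twoheadrightarrow R$. The key input is that since $S$ is perfect, every homogeneous element $x \in S_i$ admits a unique $p^k$-th root $y_k \in S_{i/p^k}$ for every $k \ge 0$; and if $x \in R_i$ (so $i < n$), then $i/p^k < n$ forces each $y_k$ to lie already in $R_{i/p^k}$.

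To construct $\sigma$, I would fix any $k$, choose an arbitrary lift $\tilde y_k \in R'_{i/p^k}$ of $y_k$, and set $\sigma(x) := \tilde y_k^{p^k} \in R'_i$. Independence of the lift is the main point: if $\tilde y_k' = \tilde y_k + \epsilon w$ is another lift, then since $R'$ has characteristic $p$ (as $p = 0$ in $\mathbb{F}_p[\epsilon]/\epsilon^2$) and $\epsilon^p = 0$ (as $p \ge 2$), the binomial expansion collapses to $(\tilde y_k + \epsilon w)^p = \tilde y_k^p + (\epsilon w)^p = \tilde y_k^p$; iterating $k$ times gives $(\tilde y_k')^{p^k} = \tilde y_k^{p^k}$. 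Independence of $k$ follows because $\tilde y_{k+1}^p$ is itself a valid lift of $y_k = y_{k+1}^p$. Additivity and multiplicativity of $\sigma$ then follow from the Frobenius identities $(a+b)^{p^k} = a^{p^k} + b^{p^k}$ and $(ab)^{p^k} = a^{p^k} b^{p^k}$ in characteristic $p$, applied to chosen lifts of $p^k$-th roots; and in degrees $i+j \ge n$ the identity $\sigma(x)\sigma(x') = \sigma(xx')$ is automatic because both sides vanish for grading reasons. The $\mathbb{F}_p[\epsilon]/\epsilon^2$-linear extension of $\sigma$ reduces to the identity modulo $\epsilon$, hence is an isomorphism $(S/I) \otimes_{\mathbb{F}_p} \mathbb{F}_p[\epsilon]/\epsilon^2 \overset{\sim}{\to} R'$ by flatness of both sides.

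For uniqueness of the isomorphism, any graded automorphism of the trivial deformation reducing to the identity modulo $\epsilon$ has the form $\mathrm{id} + \epsilon D$ for a degree-zero graded $\mathbb{F}_p$-linear derivation $D \colon R \to R$. Writing any homogeneous $x \in R$ as $x = y^p$ with $y \in R$ (again by the key observation), one computes $D(x) = p\, y^{p-1} D(y) = 0$, so $D$ vanishes identically and the automorphism is trivial. The main subtlety is the well-definedness and algebra-preservation of $\sigma$, which ultimately hinges on the identity $(\tilde y + \epsilon w)^p = \tilde y^p$ in $R'$; this uses characteristic $p$ to kill the middle binomial coefficients and $\epsilon^p = 0$ to kill $(\epsilon w)^p$, and would genuinely fail over a base such as $\mathbb{F}_p[\epsilon]/\epsilon^{p+1}$ — reflecting why the square-zero hypothesis is essential here.
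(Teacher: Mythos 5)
Your proof is correct and rests on the same mechanism as the paper's: perfectness supplies $p^k$-th roots, and in a square-zero extension of a characteristic-$p$ ring the identity $(\tilde y + \epsilon w)^p = \tilde y^p$ makes lifting a root and raising to the $p$-th power canonical; the paper packages this as the unique graded lift $S[\epsilon] \to B$ whose kernel is $I[\epsilon]$, whereas you directly build the graded section $\sigma \colon S/I \to R'$. Two small omissions worth tidying: take $k \ge 1$ in defining $\sigma$ (the well-definedness argument is vacuous at $k = 0$), and record explicitly that flatness forces $R'_i = 0$ for $i \ge n$ --- the first thing the paper's proof establishes --- before using it in the multiplicativity step for degrees $\ge n$.
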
{}

\begin{proof}This could also be proven by a graded version of the cotangent complex but we prefer to give a direct proof. Let $B= \bigoplus_i B_i$ be a deformation of $S/I$ compatible with the grading. Since $(S/I)_i = 0$ for $i \ge n$, by going modulo $\epsilon,$ it follows that $B_i = 0$ for $i \ge n.$ Note that, since $S$ is perfect, the natural map $S\to S/I$ lifts uniquely to give a map $f:S[\epsilon]:= S \otimes \mathbb F_p[\epsilon]/\epsilon^2 \to B.$ Further, since $S$ is perfect, this map is automatically graded. Indeed, for a homogeneous element $s \in S \subset S[\epsilon]$, $s^{1/p}$ is also homogeneous and $f(s^{1/p})$ is a homogeneous element if we go modulo $\epsilon.$ By taking $p$-th powers, that implies that $f(s)$ is a homogeneous element. This shows that $f$ is a graded map. Now the map $S_i \to (S/I)_i$ is an isomorphism for $i<n$ and zero for $i \ge n.$ Since $B_i$ is flat over $\mathbb{F}_p[\epsilon]/\epsilon^2,$ by going modulo $\epsilon$ (see \cref{new2}), it follows that $f_i: S[\epsilon]_i \to B_i$ is an isomorphism for $i <n,$ and is necessarily the zero map for $i \ge n$ since $B_i = 0$ for $i \ge n.$ This shows that the kernel of $f$ is $I[\epsilon]:= \bigoplus_{k \ge n} S[\epsilon]_k.$ Now $f$ is surjective, since it is a surjection modulo $\epsilon$. This shows that $B \simeq S/I [\epsilon]$, compatible with the grading. Uniqueness follows from grading and by taking $p$-power roots.\end{proof}{}

\begin{proposition}\label{mainthm1}
The pointed $\mathbb{G}_a^{\mathrm{perf}}$-module $u^*W[F]$ over $\mathbb{F}_p$ has no nontrivial deformation over $\mathbb{F}_p[\epsilon]/\epsilon^2.$
\end{proposition}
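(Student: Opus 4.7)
The strategy is to exploit the rigid graded structure of $u^*W[F]$, whose algebra of functions $R := \Gamma(u^*W[F], \mathcal{O})$ is an $\mathbb{N}[1/p]$-graded $\mathbb{F}_p$-algebra with every graded piece of rank one. Concretely, unwinding the definition $u^*W[F] = W[F] \times_{\mathbb{G}_a} \mathbb{G}_a^{\mathrm{perf}}$ and using the presentation $\Gamma(W[F]) \simeq \mathbb{F}_p[x_0, x_1, \ldots]/(x_i^p)$ with $\deg x_i = p^i$ and pointing sending $x \mapsto x_0$, one finds
\[
R \simeq \mathbb{F}_p[x^{1/p^\infty}, y_1, y_2, \ldots]/(x^p, y_i^p), \qquad \deg x^{1/p^s} = 1/p^s, \quad \deg y_i = p^i.
\]
For any deformation $X' = \mathrm{Spec}\,B$ over $\mathbb{F}_p[\epsilon]/\epsilon^2$, the $\mathbb{G}_m^{\mathrm{perf}} \subset \mathbb{G}_a^{\mathrm{perf}}$-equivariance (\cref{gradingdet}) forces $B$ to inherit a compatible $\mathbb{N}[1/p]$-grading, and flatness makes each graded piece $B_i$ free of rank one over $\mathbb{F}_p[\epsilon]/\epsilon^2$.

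Since $u^*W[F]$ is pure, hence full, of fractional rank $1$, \cref{hodgestability} produces a unique Hodge map $\alpha^\natural[\epsilon] \to X'$; dually this is a graded Hopf-algebra surjection $B \twoheadrightarrow \mathbb{F}_p[\epsilon][x^{1/p^\infty}]/x$, canonically identifying the degree-$<1$ part of $B$ with that of the trivial deformation. In parallel, the pointing supplies distinguished primitive lifts $\tilde x^{1/p^s}$ of $x^{1/p^s}$, and \cref{cot} applied to the perfect graded ring $\mathbb{F}_p[x^{1/p^\infty}]$ reinforces uniqueness of the graded algebra structure in this low-degree range.

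I would then proceed by induction on $i \geq 1$ to uniquely lift each Witt-style generator $y_i \in R_{p^i}$ to an element $\tilde y_i \in B_{p^i}$. Since $B_{p^i}$ is free of rank $1$ over $\mathbb{F}_p[\epsilon]/\epsilon^2$, a lift $\tilde y_i$ exists and is unique up to a unit. The comultiplication expands as
\[
\Delta_B(\tilde y_i) = \tilde y_i \otimes 1 + 1 \otimes \tilde y_i + (\text{lower-degree terms}),
\]
where by the induction hypothesis the lower-degree terms match those in the trivial deformation. Together with the requirement $\tilde y_i^p = 0$ and compatibility with the full $\mathbb{G}_a^{\mathrm{perf}}$-action (not merely the grading), this should pin down $\tilde y_i$ and $\Delta_B(\tilde y_i)$ to agree with the trivial deformation. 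The conclusion $B \simeq R \otimes_{\mathbb{F}_p} \mathbb{F}_p[\epsilon]/\epsilon^2$ then follows, with uniqueness guaranteed by \cref{new3}.

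The main obstacle is the inductive step: one must verify that at each stage the deformation equations arising from coassociativity, the relation $\tilde y_i^p = 0$, and the $\mathbb{G}_a^{\mathrm{perf}}$-module axiom have a unique solution. This is a finite-dimensional linear-algebra problem thanks to the rank-one graded pieces, but it requires careful tracking of the combinatorics of the Witt-vector comultiplication modulo $\epsilon$. A notable subtlety is that the integer-degree part of $B$ need not form a Hopf subalgebra a priori, so one cannot simply reduce to \cref{mad}; the inductive argument must be carried out entirely within the $\mathbb{N}[1/p]$-graded setting.
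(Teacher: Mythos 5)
Your outline shares the paper's high-level strategy: exploit the $\mathbb{N}[1/p]$-grading, pin down the low-degree part via the perfection/rigidity of the point (you correctly invoke \cref{hodgestability} and \cref{cot}), and then try to determine the rest of the Hopf structure degree by degree. You also correctly identify the two main structural facts — that each graded piece is free of rank one, and that the integer-degree part need not be a Hopf subalgebra a priori, which blocks a naive reduction to \cref{mad}.

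However, you leave a genuine gap, and you flag it yourself: the inductive step that ``should pin down $\tilde y_i$ and $\Delta_B(\tilde y_i)$'' is not carried out, and it is not a routine linear-algebra exercise. The obstruction is precisely that the comultiplication on $B_{p^i}$ is not visibly constrained by what you have said so far; the relations $\tilde y_i^p=0$ and coassociativity leave room for $\epsilon$-corrections that involve products of lower-degree generators, and there is no a priori reason these corrections vanish. The paper's proof resolves this by a tool you do not mention: \textbf{Cartier duality on finite-type truncations}. Concretely, the paper first truncates to $u^*W_{n+1}[F]$ (\cref{ten2}), rescales to a nonnegatively $\mathbb{Z}$-graded, free-of-finite-type Hopf algebra lying in $\mathcal{P}^1_*$, and then passes to the Cartier dual. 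After dualizing, the object in question is a deformation of a quotient of $W_{k+1}[F]$, whose rigidity is already known from the initial-object argument (\cref{mad2}); a small grading argument lifts the unique isomorphism back through the quotient. Only then does one assemble the truncations (\cref{ten1} and the colimit over $n$) to handle $u^*W[F]$ itself. Without the dualization step, or some equally sharp substitute, the ``careful tracking of the combinatorics of the Witt-vector comultiplication'' you defer to is exactly the part of the proof that requires a new idea. In short: your setup and reductions are sound and parallel the paper's, but the missing inductive step is the heart of the matter, and Cartier duality (or an equivalent device) is needed to close it.
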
{}

\begin{proof}
Since the Hopf algebra $B$ underlying $u^*W[F]$ is not graded by nonnegative integers, the theory of Cartier duality breaks down. Indeed, dimension of the piece of degree $1$ in $B\otimes_{\mathbb{F}_p} B$ is infinite and thus does not behave well under duality. \textit{A priori}, we cannot directly apply any of our results above. Our proof will use some lemmas which will ultimately break it down to steps where we are only dealing with finite type Hopf algebras. 

\begin{lemma}\label{ten2} For $n \ge 0,$ the graded group scheme $u^*  W_{n+1}[F]$ over $\mathbb{F}_p$ has no nontrivial deformation over $\mathbb F_p [\epsilon]:= \mathbb{F}_p[\epsilon]/\epsilon^2$ as a graded group scheme. Further, $u^*  W_{n+1}[F] \otimes_{\mathbb{F}_p} \mathbb{F}_p [\epsilon]$ admits a unique endomorphism as a pointed $\mathbb{G}_a^{\mathrm{perf}}$-module.
\end{lemma}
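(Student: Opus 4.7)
The second assertion follows immediately from the full faithfulness of $u^{\ast}$ (Proposition~\ref{pullback}) applied over the $\mathbb{F}_p$-algebra $\mathbb{F}_p[\epsilon]$: endomorphisms of $u^{\ast} W_{n+1}[F]\otimes_{\mathbb{F}_p}\mathbb{F}_p[\epsilon]$ as a pointed $\mathbb{G}_a^{\mathrm{perf}}$-module over $\mathbb{F}_p[\epsilon]$ are identified with endomorphisms of $W_{n+1}[F]\otimes_{\mathbb{F}_p}\mathbb{F}_p[\epsilon]$ as a pointed $\mathbb{G}_a$-module, which are trivial by Proposition~\ref{mad1}.

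For the first assertion, let $B$ be a graded group scheme deformation of $A := u^{\ast} W_{n+1}[F]$ over $\mathbb{F}_p[\epsilon]$; by flatness each graded piece $B_d$ is a free $\mathbb{F}_p[\epsilon]$-module of rank at most one. The plan is to construct an explicit isomorphism $A\otimes_{\mathbb{F}_p}\mathbb{F}_p[\epsilon]\simeq B$ of graded Hopf algebras. The first key step is to build a compatible system of lifts $\tilde{x}_0^{1/p^k}\in B_{1/p^k}$ satisfying $\bigl(\tilde{x}_0^{1/p^{k+1}}\bigr)^p = \tilde{x}_0^{1/p^k}$ for all $k \ge 0$. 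For each $K$, any generator of $B_{1/p^K}$ produces a partial compatible system of length $K+1$ via iterated $p$-th powers; since the set of such partial systems is finite and nonempty with nonempty restriction maps to shorter systems, an inverse limit of finite nonempty sets furnishes an infinite compatible system.

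Such a system is automatically primitive: writing $\Delta\tilde{x}_0^{1/p^{k+1}} = \tilde{x}_0^{1/p^{k+1}}\otimes 1 + 1\otimes\tilde{x}_0^{1/p^{k+1}} + \epsilon E_{k+1}$, the ring-homomorphism identity $\bigl(\Delta\tilde{x}_0^{1/p^{k+1}}\bigr)^p = \Delta\tilde{x}_0^{1/p^k}$ together with the commutative characteristic-$p$ computation $(u + v + \epsilon w)^p = u^p + v^p$ valid modulo $\epsilon^2$ forces $E_k = 0$. The same trick shows $\tilde{x}_0^p = 0$: from primitivity one gets $\Delta(\tilde{x}_0^p) = \tilde{x}_0^p\otimes 1 + 1\otimes\tilde{x}_0^p$, and if $\tilde{x}_0^p = \epsilon c\,\tilde{x}_1$ with $\tilde{x}_1\in B_p$ lifting $x_1$, comparing with $\epsilon c\,\Delta\tilde{x}_1$---whose reduction modulo $\epsilon$ exhibits the nonzero Witt cross-term $P_1(x_0)\in A\otimes A$---forces $c = 0$.

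Consequently the sub-Hopf-algebra of $B$ generated by $\{\tilde{x}_0^{1/p^k}\}_{k\ge 0}$ is canonically $\alpha^{\natural}\otimes_{\mathbb{F}_p}\mathbb{F}_p[\epsilon]$. To conclude one chooses integer-degree lifts $\tilde{x}_1,\dots,\tilde{x}_n$ so that the sub-$\mathbb{F}_p[\epsilon]$-algebra $\tilde{W}\subset B$ generated by $\tilde{x}_0,\tilde{x}_1,\dots,\tilde{x}_n$ is a sub-Hopf-algebra; equipped with the point $\tilde{x}_0$, $\tilde{W}$ is then an object of $\mathcal{P}^1_{*, <p^{n+1}}$ deforming $W_{n+1}[F]$. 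Since $W_{n+1}[F]$ is the initial object of that category (Remark~\ref{rem7}) and admits no nontrivial endomorphisms (Remark~\ref{rem5}), the canonical map $W_{n+1}[F]\otimes_{\mathbb{F}_p}\mathbb{F}_p[\epsilon]\to \tilde{W}$ reduces modulo $\epsilon$ to the identity and hence is an isomorphism by flatness; assembling the two sub-Hopf-algebras yields the desired isomorphism. The principal technical obstacle is arranging that the $\tilde{x}_i$'s have comultiplications landing in $\tilde{W}\otimes\tilde{W}$ rather than acquiring $\epsilon$-corrections with fractional-degree components in $B\otimes B$, which is handled by iteratively establishing the Frobenius-vanishing $\tilde{x}_i^p = 0$ (via the same characteristic-$p$ argument, using that each subsequent Witt polynomial $P_{i+1}$ is nonzero) and absorbing residual corrections into appropriate adjustments of the lifts.
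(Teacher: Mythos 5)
Your handling of the second assertion matches the paper exactly: full faithfulness of $u^{\ast}$ (\cref{pullback}) plus \cref{mad1}. Your first few steps on the first assertion are also sound and parallel (implicitly or explicitly) the paper's Step~1: the Frobenius trick $(u+v+\epsilon w)^p = u^p + v^p$ establishing primitivity of each $\tilde{x}_0^{1/p^k}$ is essentially what the paper uses to justify the assertion that the comultiplication sends $X_0^{1/p^m} \mapsto X_0^{1/p^m}\otimes 1 + 1\otimes X_0^{1/p^m}$, and your Witt cross-term argument forcing $\tilde{x}_0^p = 0$ (and, inductively, $\tilde{x}_i^p = 0$) is a nice, more direct route to a conclusion the paper only reaches after its detour through Cartier duality.

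However, there is a genuine gap in your final step, which you partly acknowledge but do not resolve. You need to choose integer-degree lifts $\tilde{x}_1,\ldots,\tilde{x}_n$ so that the subalgebra $\tilde{W}$ they generate (together with $\tilde{x}_0$) is a \emph{sub-Hopf-algebra} of $B$. The obstruction is precisely that $\Delta\tilde{x}_i$ lives in $(B\otimes B)_{p^i} = \bigoplus_{a+b=p^i, a,b\in\mathbb{N}[1/p]} B_a\otimes B_b$, and while the reduction modulo $\epsilon$ lands in integer degrees (since $W_{n+1}[F]$ is an honest integer-graded Hopf algebra), the $\epsilon$-correction $\epsilon F_i$ may \emph{a priori} carry components $\tilde{x}_0^a\otimes\tilde{x}_0^b$ with $a,b$ genuinely fractional. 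Knowing $\tilde{x}_i^p = 0$ constrains the algebra structure, not the coalgebra structure. And the only adjustments available to you — rescaling $\tilde{x}_i$ by a unit of $\mathbb{F}_p[\epsilon]$, since each $B_{p^i}$ is free of rank one — change $F_i$ by a specific coboundary that cannot reach the fractional-degree subspace. So you have not shown the fractional components vanish or can be eliminated; asserting that they can is essentially equivalent to the theorem you are trying to prove.

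The paper circumvents this exact difficulty by a truncation trick: since $C'$ is generated as an algebra by $X_0^{1/p^m}$ and finitely many $X_i$, the (finitely many) comultiplications $\Delta X_i$ involve only boundedly small fractional denominators, so for $k$ large enough the subalgebra $C'_k$ — spanned by elements of degree in $\tfrac{1}{p^k}\mathbb{N}$ — is automatically a sub-Hopf-algebra. After shifting degrees by $p^k$ this becomes an honest $\mathbb{N}$-graded pointed $\mathbb{G}_a$-module, at which point Cartier duality in $\mathcal{P}^1_{\ast}$ and \cref{mad2} (deformations of $W_{k+1}[F]$ are trivial) finish the job. Your approach insists on landing in the integer-graded sub-Hopf-algebra from the start, which is a strictly stronger claim than what the paper establishes directly, and is the crux of the matter rather than a technicality. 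To repair your argument you would need either a cocycle computation ruling out the fractional components of $F_i$, or a reduction to the paper's truncated picture.
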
{}

\begin{proof}We will break down the proof in a few steps.\vspace{2mm}

\noindent
\textit{Step 1.} We write the graded algebra underlying $u^*  W_{n+1}[F]$ as $C=\frac{\mathbb{F}_p[x_0^{1/p^\infty}, x_1, \ldots, x_n]}{x_i^p}$, where $\deg x_i = p^i.$ This admits a map of graded Hopf algebras $\mathbb{F}_p [x_0 ^{1/p^\infty}] \to \frac{\mathbb{F}_p[x_0^{1/p^\infty}, x_1, \ldots, x_n]}{x_i^p}.$ Let $C'$ be the graded Hopf algebra underlying the deformation of $u^*  W_{n+1}[F].$ Let $C':= \bigoplus_{i \in \mathbb{N}[1/p]} C'_i$ as a graded algebra. By killing the ideal of elements of degree $\ge p$, we obtain a ring $C'_{<p}$ which is a deformation of the graded algebra $\mathbb{F}_p [x_0^{1/p^\infty}]/x_0^p$ which has to be uniquely isomorphic to the trivial deformation by \cref{cot}. Thus, by taking grading into account, we see that $C'$ has to be of the form $$\frac{\mathbb{F}_p [\epsilon][X_0 ^{1/p^\infty}, \ldots, X_n]}{(X_0^p - c_1 \epsilon X_1, X_1^p - c_2 \epsilon X_2, \ldots, X_n^p)} ,$$ where $X_i$ is a lift of $x_i$ of degree $p^i$ and $c_i \in \mathbb F_p.$ The comultiplication sends $X_0^{1/p^m} \to X_0^{1/p^m} \otimes 1 + 1 \otimes X_0^{1/p^m}$ in $C' \otimes C'.$ This shows that there is map of graded Hopf algebras $\mathbb{F}_p [\epsilon][X_0^{1/p^\infty}] \to C' $ which is a deformation of the map $\mathbb{F}_p [x_0^{1/p^\infty}] \to C.$ Thus for $k$ large enough, the graded Hopf algebra map $$\mathbb{F}_p [\epsilon][X_0^{1/p^k}] \to C'_k:= \frac{\mathbb{F}_p [\epsilon][X_0 ^{1/p^k}, \ldots, X_n]}{(X_0^p - c_1 \epsilon X_1, X_1^p - c_2 \epsilon X_2, \ldots, X_n^p)}$$ is firstly a deformation of the graded Hopf algebra map $\mathbb{F}_p [x_0^{1/p^k}] \to C_k:= \frac{\mathbb{F}_p [x_0 ^{1/p^k}, \ldots, x_n]}{ x_i^p}$, and secondly the map of graded Hopf algebras $\mathbb{F}_p [\epsilon][X_0^{1/p^\infty}] \to C' $ is obtained by pulling back the map $\mathbb{F}_p [\epsilon][X_0^{1/p^k}] \to C'_k$ along $\mathbb{F}_p[\epsilon][X_0^{1/p^k}] \to \mathbb{F}_p[\epsilon] [X_0^{1/p^\infty}].$ Thus, to prove \cref{ten2}, it is enough to prove that $\mathbb{F}_p [\epsilon][X_0^{1/p^k}] \to C'_k$ is isomorphic to the trivial deformation of $\mathbb{F}_p [x_0^{1/p^k}] \to C_k.$ For the latter claim, by a shifting of degree, it is enough to prove that the pointed $\mathbb{G}_a$-module $\mathrm{Spec}\, \frac{\mathbb{F}_p [x_0, x_1, \ldots, x_n]}{(x_0^{p^{k+1}},x_1^p, \ldots, x_n^p) } \to \mathbb{G}_a$ has no nontrivial deformations over $\mathbb F_p [\epsilon]$. Here $\deg x_0 =1$ and $\deg x_i = p^{k+i}$ for $i \ge 1.$\vspace{2mm}

\noindent
\textit{Step 2.} In order to prove that the pointed $\mathbb{G}_a$-module $\mathrm{Spec}\, \frac{\mathbb{F}_p [x_0, x_1, \ldots, x_n]}{(x_0^{p^{k+1}},x_1^p, \ldots, x_n^p) } \to \mathbb{G}_a$ has no nontrivial deformations over $\mathbb F_p [\epsilon]$, it is equivalent to prove the same for its Cartier dual, which by \cref{cart} is given by $$\mathrm{Spec}\, \frac{\mathbb{F}_p [x_0, x_1, \ldots, x_{k}]}{(x_0^p, \ldots, x_{k-1}^p, x_{k}^{p^{n+1}})} \to \mathbb{G}_a.$$ We let $D$ denote the Hopf algebra $ \frac{\mathbb{F}_p [x_0, x_1, \ldots, x_{k}]}{(x_0^p, \ldots, x_{k-1}^p, x_{k}^{p^{n+1}})}.$ Then $D/ x_k^p$ is isomorphic to $\Gamma(W_{k+1}[F], \mathcal{O})$ as a graded Hopf algebra. Let $D'$ be a deformation of $D.$ Then $D'$ as a graded algebra is of the form $$\frac{\mathbb{F}_p [\epsilon] [X_0, X_1, \ldots, X_{k}]}{(X_0^p- c_1 \epsilon X_1, \ldots, X_{k-1}^p- c_{k}\epsilon X_k, X_{k}^{p^{n+1}})}$$ where $\deg X_i= p^i$ and $X_i \in D'$ is chosen to be a lift of $x_i \in D.$ Now $D'/X_k^p$ is a deformation of the graded Hopf algebra underlying $W_{k+1}[F]$, but the latter has no nontrivial deformations by \cref{mad2}. Thus we obtain an isomorphism $(D/x_k^p) [\epsilon]=W_{k+1}[F][\epsilon] \to D'/X_k^p$ of graded Hopf algebras (which is also compatible with the pointed $\mathbb{G}_a$-module structure). This lifts uniquely to a map of graded algebras $D [\epsilon] \to D'$ as such a map is uniquely determined by the image of $x_0, \ldots, x_k$ and they have a unique homogeneous lift. One also needs to check that $x_i ^p$ is sent to zero for $0 <i \le k-1$ and $x_k^{p^{n+1}}$ is sent to zero which also follows from grading arguments. This map by construction is an isomorphism on the level of graded algebras and it would be enough to check that it is a Hopf algebra map, i.e., we need to prove that the maps $D[\epsilon] \to D' \to D'\otimes D'$ and $D[\epsilon]\to D[\epsilon] \otimes D[\epsilon] \to D' \otimes D'$ agree. For that, we only need to check that the images of $x_0, \ldots, x_k$ agree. It is known that they agree modulo the ideal $(X_k^p \otimes 1, 1 \otimes X_k^p)$ and thus by grading they are actually the same. The map we constructed is also compatible with the pointed $\mathbb{G}_a$-module structure.\vspace{2mm}

The statement about endomorphisms as pointed $\mathbb{G}_a^{\mathrm{perf}}$-module follows since $W_{n+1}[F] [\epsilon]$ has no nontrivial endomorphism as pointed $\mathbb{G}_a$-module by \cref{mad1}.\end{proof}{}

\begin{lemma}\label{ten1}The graded algebra underlying any deformation of $u^*W[F]$ as a graded group scheme is isomorphic to $$\frac{\mathbb{F}_p[\epsilon][x_0^{1/p^\infty}, x_1, \ldots]}{x_i^p}.$$
\end{lemma}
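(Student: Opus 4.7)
The plan is to show that any graded-Hopf-algebra deformation $C'$ of $\Gamma(u^*W[F])$ has underlying graded $\mathbb{F}_p[\epsilon]$-algebra isomorphic to the trivial deformation $\mathbb{F}_p[\epsilon][x_0^{1/p^\infty}, x_1, \ldots]/(x_i^p)$.

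Mimicking Step 1 of the proof of \cref{ten2}, I would first pin down the shape of $C'$. Killing the ideal of elements of degree $\ge p$ in $C'$ yields a graded-compatible deformation of $\mathbb{F}_p[x_0^{1/p^\infty}]/x_0^p$, which is trivial by \cref{cot}. This supplies compatible lifts $X_0^{1/p^k} \in C'$ of $x_0^{1/p^k}$ satisfying $(X_0^{1/p^k})^p = X_0^{1/p^{k-1}}$. A base-$p$ calculation (using $x_0^p = 0$ and $x_j^p = 0$ for $j \ge 1$) shows that $\Gamma(u^*W[F])_{p^i}$ is one-dimensional over $\mathbb{F}_p$, spanned by $x_i$, for every $i \ge 0$. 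Fixing lifts $X_i \in C'_{p^i}$ of $x_i$, one obtains constants $c_{i+1} \in \mathbb{F}_p$ and a presentation
\[
C' \simeq \mathbb{F}_p[\epsilon][X_0^{1/p^\infty}, X_1, X_2, \ldots]/(X_i^p - c_{i+1}\epsilon X_{i+1} : i \ge 0).
\]

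The crux is to force every $c_{i+1}$ to vanish, which I would accomplish by a truncation argument reducing to the finite case already treated in \cref{ten2}. For each $n \ge 1$, set $I_n := \bigoplus_{d \ge p^{n+1}} C'_d$; since comultiplication respects grading, $I_n$ is a graded Hopf ideal, and the quotient $C'/I_n = \bigoplus_{d < p^{n+1}} C'_d$ is a graded Hopf algebra, free as an $\mathbb{F}_p[\epsilon]$-module. A short base-$p$ computation shows that every monomial in $\Gamma(u^*W_{n+1}[F])$ has degree at most $a_0 + (p-1)(p + p^2 + \cdots + p^n) < p + (p^{n+1}-p) = p^{n+1}$ (using $a_0 < p$), so $\Gamma(u^*W[F])_{<p^{n+1}} = \Gamma(u^*W_{n+1}[F])$. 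Hence $C'/I_n$ is a graded-group-scheme deformation of $u^*W_{n+1}[F]$, and by \cref{ten2} it is (uniquely) isomorphic to $\Gamma(u^*W_{n+1}[F])[\epsilon]$.

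Under this isomorphism, for each $0 \le i \le n$ the image of $X_i$ lies in the rank-$1$ free $\mathbb{F}_p[\epsilon]$-module in degree $p^i$ spanned by $x_i$, and is therefore of the form $(1 + b_i\epsilon)x_i$, whose $p$-th power vanishes. Pulling back gives $X_i^p = 0$ in $C'/I_n$. For $i < n$, the element $X_i^p = c_{i+1}\epsilon X_{i+1}$ has degree $p^{i+1} < p^{n+1}$, so it does not lie in $I_n$ automatically; its vanishing in $C'/I_n$ thus forces $c_{i+1}\epsilon X_{i+1} = 0$ in $C'_{p^{i+1}}$, and since the latter is free of rank $1$ with basis $X_{i+1}$ we conclude $c_{i+1} = 0$. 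Letting $n \to \infty$ yields $c_j = 0$ for all $j \ge 1$, as required. The expected main difficulty is verifying that every monomial of $\Gamma(u^*W_{n+1}[F])$ lives in degrees strictly less than $p^{n+1}$, since this is what makes $C'/I_n$ a deformation of the full Hopf algebra $\Gamma(u^*W_{n+1}[F])$ (rather than of a strict sub-Hopf-algebra), allowing \cref{ten2} to be invoked directly.
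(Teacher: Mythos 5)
Your proposal follows the same two-stage structure as the paper's proof: first pin down the shape of $C'$ as a graded algebra by using \cref{cot} in degree $<p$, then truncate at degree $p^{n+1}$, invoke \cref{ten2}, and read off $c_{i+1}=0$ by comparing to the trivialization; the closing paragraph is correct. However, there is a genuine gap at the crucial step where you produce a Hopf structure on the truncation.

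You assert that $I_n := \bigoplus_{d \ge p^{n+1}} C'_d$ is a Hopf ideal ``since comultiplication respects grading.'' This inference is false, and $I_n$ is in fact not a Hopf ideal. Graded-ness of $c$ only gives $c(C'_d) \subset \bigoplus_{i+j=d} C'_i \otimes C'_j$, and for $d \ge p^{n+1}$ this sum contains terms with $i, j < p^{n+1}$, which are not killed by the projection to $(C'/I_n)\otimes(C'/I_n)$. Concretely, for $p = 2$ and $n = 0$, the comultiplication satisfies $c(X_1) = X_1 \otimes 1 + 1 \otimes X_1 + X_0 \otimes X_0 + \epsilon\cdot(\text{stuff})$, and $X_0 \otimes X_0$ has bidegree $(1,1)$, so $c(X_1) \notin I_0 \otimes C' + C' \otimes I_0$. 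Thus the quotient map $C' \to C'/I_n$ is not a map of Hopf algebras, and you cannot transfer the comultiplication along it. The misconception is visible already over $\mathbb{F}_p$: $\Gamma(u^*W_{n+1}[F])$ is a Hopf \emph{subalgebra}, not a Hopf quotient, of $\Gamma(u^*W[F])$ (\textit{cf.}~\cref{rem7}).

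The remark in the paper's proof of \cref{ten1} that $X_n^p$ is primitive is pointing at the actual mechanism, which is a subalgebra-then-quotient argument rather than a direct quotient. Let $A \subset C'$ be the subalgebra generated by $X_0^{1/p^\infty}, X_1, \ldots, X_n$. Since $\deg X_i \le p^n < p^{n+1}$ for $0 \le i \le n$, every tensor factor of $c(X_i)$ has degree $<p^{n+1}$ and so only involves $X_0, \ldots, X_n$; thus $c(A) \subset A \otimes A$ and $A$ is a Hopf subalgebra. Now $(X_n^p)$ is a Hopf ideal of $A$ because $X_n^p$ is primitive in $C'$: writing $c(X_n) = X_n\otimes 1 + 1\otimes X_n + \sum_u Y_u \otimes Z_u$, the carry terms reduce modulo $\epsilon$ to tensors of monomials of positive \emph{integral} degree in $\Gamma(W[F])$, whose $p$-th powers vanish in $B$; hence $Y_u^p, Z_u^p \in \epsilon C'$ and $Y_u^p \otimes Z_u^p \in \epsilon^2 (C'\otimes C') = 0$, giving $c(X_n^p) = X_n^p \otimes 1 + 1 \otimes X_n^p$. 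The composite $A \hookrightarrow C'$ followed by $A \twoheadrightarrow A/(X_n^p) \cong C'/I_n$ is what furnishes the Hopf structure on $C'/I_n$. With this substitution for your ``automatic Hopf ideal'' step, the rest of your argument goes through as written.
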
{}

\begin{proof}We write the graded algebra underlying $u^*  W[F]$ as $B=\frac{\mathbb{F}_p[x_0^{1/p^\infty}, x_1, \ldots]}{x_i^p}$, where $\deg x_i = p^i.$ Similar to the proof of \cref{ten2}, it follows that the graded algebra underlying global sections of a deformation of $u^*W[F]$ is isomorphic to $$B'=\frac{\mathbb{F}_p[\epsilon][X_0^{1/p^\infty}, X_1, \ldots]}{(X_0^p - c_1 \epsilon X_1, X_1^p - c_2\epsilon X_2, \ldots)}, $$where $X_i$ is taken to be a lift of $x_i$ of degree $p^i$ and $c_i \in \mathbb F_p.$ Our goal is to prove that $c_i = 0$ for all $i$. Killing the ideal of elements of degree $\ge p^{n+1}$, we obtain the graded algebra $$B'_{<p^{n+1}}:=\frac{\mathbb{F}_p[\epsilon][X_0^{1/p^\infty}, X_1, \ldots, X_n]}{(X_0^p - c_1 \epsilon X_1,\ldots, X_n^p)}.$$ Further, this has a Hopf structure: this follows from the observation that the comultiplication in $B'$ sends $X_n^p \mapsto X_n^p \otimes 1 + 1 \otimes X_n^p.$ Now $\mathrm{Spec}\,B'_{<p^{n+1}}$ as a graded group scheme is a deformation of $u^*  W_{n+1}[F]$, and thus by \cref{ten2} must be uniquely isomorphic to the trivial deformation, which implies $c_i = 0$ for $0<i<n+1.$ Since $n$ was arbitrary, we are done.
\end{proof}{}

Now we note that the algebra $B$ underlying $u^*W[F]$ has the property that for every $n \ge 1$, the elements of degree $< p^n$ form a subalgebra denoted as $\tau_{< n}B$ which is the graded Hopf algebra underlying $u^*W_n[F].$ By \cref{ten1}, it follows that elements of degree $<p^n$ in $B'$ also form a subalgebra $\tau_{<n}B'$ which has the structure of a graded Hopf algebra. Moreover, we observe that $\tau_{<n}B'$ is a deformation of $\tau_{<n}B$ and thus by \cref{ten2}, there is a unique isomorphism of graded Hopf algebras 
$\tau_{<n}B [\epsilon] \to \tau_{<n} B'$ that sends $x_0 \mapsto X_0.$ By taking colimit over $n$, we have constructed an isomorphism $B[\epsilon] \to B'$ of graded Hopf algebras. Finally, we recall that, as noted in \cref{gradingdet}, the grading determines the $\mathbb{G}_a^{\w{perf}}$-module structure. This proves the proposition.\end{proof}{}

\begin{remark}\label{chmv981}
We point out that the statements of \cref{cot}, \cref{mainthm1} and their proofs remain valid with $\mathbb{F}_p$ replaced by any perfect field of characteristic $p>0.$
\end{remark}{}

\begin{remark}One can also approach \cref{mainthm1} by first showing that any deformation of $u^*W[F]$ must be a pullback of a deformation of $W[F].$ This is essentially carried out in a purely formal way in \cref{sec5.2} by using the connection with de Rham cohomology. In \cref{lastprop}, we prove a generalization by using similar ideas.
\end{remark}{}

\newpage

\section{Construction of functors using $\mathbb{G}_a$ and $\mathbb{G}_a^{\mathrm{perf}}$-modules }\label{section3} In this section, our ultimate goal is to create functors from $\mathrm{QRSP} \to \mathrm{Alg}_A$ via an ``unwinding" process using the data of a pointed $\mathbb{G}_a^{\mathrm{perf}}$-module (\textit{cf}. \cref{sec3.3}). This construction will be done using a closely related variant: using the data of a pointed $\mathbb{G}_a^{\mathrm{perf}}$-module, we can ``unwind" it to construct a functor $\mathscr{P}I \to \mathrm{Alg}_A$, where $\mathscr{P}I$ denotes the category with objects $(B,I)$ where $B$ is a perfect ring and $I$ is an ideal. This is carried out in \cref{sec3.2}. Further, this construction for $\mathscr{P}I$ has a closely related variant for the category $\mathfrak{C}_A$ consisting of objects $(B,I)$ where $B$ is an $A$-algebra and $I$ is an ideal of $B.$ Given a pointed $\mathbb{G}_a$-module, we can unwind it to create a functor $\mathfrak{C}_A \to \mathrm{Alg}_A.$ We will study this construction first in \cref{sec3.1}. Below we note an example which aims to explain an analogue of the unwinding construction in a simpler case.

\begin{example}\label{introex}Let $\mathcal{C}$ be a category with all colimits. Let $c \in \mathcal{C}^{\w{op}}$ be a commutative $\mathbb{F}_p$-algebra object in the category $\mathcal{C}^{\w{op}}.$ Therefore, the functor $\mathrm{Hom}_{\mathcal{C}}(c, \cdot)$ is naturally valued in commutative $\mathbb{F}_p$-algebras, when $c$ is viewed as an object of $\mathcal{C}$. Let $\overline{\mathrm{Poly}}_{\mathbb F_p}$ denote the category of not necessarily finitely generated polynomial $\mathbb{F}_p$-algebras. We will construct a functor ${\mathrm{Un}}_{c}: \overline{\mathrm{Poly}}_{\mathbb F_p} \to \mathcal{C}$ which we may call the unwinding of $c.$ For $B \in \overline{\mathrm{Poly}}_{\mathbb F_p},$ we define ${\mathrm{Un}}_c (B) \in \mathcal{C}$ such that we have a natural functorial bijection $\mathrm{Hom}_{\mathcal{C}}({\mathrm{Un}}_c (B), d) \simeq \mathrm{Hom}_{\mathrm{Alg}_{\mathbb{F}_p}}(B, \mathrm{Hom}_{\mathcal{C}}(c, d))$ for $d \in \mathcal{C}.$ This maybe computed \textit{functorially} as a colimit by using the natural diagram $\mathbb{F}_p[\mathbb{F}_p[B]]\substack{\rightarrow \\ \rightarrow} \mathbb{F}_p[B]$ in $\w{Alg}_{\mathbb{F}_p}$ whose coequalizer is $B$ (this coequalizer diagram depends functorially on the ring $B$ and is an instance of the general ``bar construction" in the context of monads). More precisely, the above coequalizer diagram induces a diagram $$\coprod_{\mathbb{F}_p[B]} c\,\, \substack{\longrightarrow \\ \longrightarrow}\coprod_{B} c$$ in $\mathcal{C}$ whose coequalizer is $\w{Un}_{c}(B);$ here the coproducts of $c$ are taken over the sets underlying the rings $B$ and $\mathbb{F}_p[B].$ Note that by construction, we have ${\mathrm{Un}}_c (\mathbb{F}_p[x]) \simeq c$. Also, by construction, ${\mathrm{Un}}_c: \overline{\mathrm{Poly}}_{\mathbb F_p} \to \mathcal{C}$ preserves coproducts. This whole discussion carries over even if $\mathbb{F}_p$ is replaced by an arbitrary commutative ring.\vspace{2mm}

This construction shows that given an object of the category $\mathcal{C}$ with appropriate extra structure, one can unwind it to create a functor from $\overline{\mathrm{Poly}}_{\mathbb F_p} \to \mathcal{C}.$ In this section, our goal is to develop a similar formalism for the categories $\mathfrak{C}_A, \mathscr{P}I$ and $\mathrm{QRSP}$ which would be useful to us in \cref{sec4} and \cref{section5}.
\end{example}{}

\subsection{Tensoring a module with a module scheme}In this section we record a construction which ``tensors" a module with a module scheme and gives an algebra as an output. In a category $\mathcal{C}$ with all coproducts, one can make sense of tensoring an object $c \in \mathcal{C}$ with a set $S$, denoted as $c \otimes S$, which has the property that we have a natural isomorphism $\mathrm{Hom}_{\mathcal{C}}(c \otimes S, d) \simeq \mathrm{Hom}_{\mathrm{Sets}}(S, \mathrm{Hom}_{\mathcal{C}}(c, d))$ for $d \in \mathcal{C}.$ In this case, $c \otimes S$ is the coproduct $\coprod_{S}c.$ Below, we carry out an analogue of this construction.

\begin{construction}\label{cons1}Let $X= \mathrm{Spec}\, B$ be an $R$-module scheme over $A$ (\cref{def1}). In this situation, we have a functor from $\mathrm{Alg}_A \to \mathrm{Mod}_R$ which sends an $A$ algebra $S$ to $X(S):=\mathrm{Hom}_A(\mathrm{Spec}\, S, X).$ This functor is limit preserving and by the adjoint functor theorem, it has a left adjoint which will be denoted by $\mathscr{T}_X(\,.\,): \mathrm{Mod}_R \to \mathrm{Alg}_A.$ In other words, we have the following natural isomorphism 
$$ \mathrm{Hom}_{\mathrm{Alg}_A}(\mathscr{T}_X(M), S) \simeq \mathrm{Hom}_{\mathrm{Mod}_R} (M, X(S)).$$
\end{construction}{}

\begin{remark}\label{bday28}
Note that for an $m \in M,$ we have a natural map $\mathrm{Hom}_{\mathrm{Mod}_R} (M, X(S)) \to X(S)$ obtained by evaluation at $m \in M.$ This induces a map denoted as $\w{ev}_m: \Gamma (X, \mathcal{O}_X) \to \mathscr{T}_{X}(M)$ that will be useful in \cref{con}.
\end{remark}{}

\begin{remark}\label{7up}
Let us also describe an explicit way to construct the algebra $\mathscr{T}_X(M)$ for an $R$-module $M.$ Considering $M$ as a set, first we take the coproduct of the algebra $B= \Gamma(X, \mathcal{O}_X)$ indexed over $M.$ We will write this as $\coprod_{M} B.$ By the universal property of the coproduct, for each $m \in M,$ we have a map which we will write as $m: B \to \coprod_{M} B.$ We also have a map $B \to B \otimes_A B$ which is the comultiplication map and a map $r: B \to B$ coming from the $R$-module action of $\mathrm{Spec}\, B$ for $r \in R$. Then $\mathscr{T}_X(M)$ is the coequalizer of the following diagram indexed by $(R\times M) \coprod (M \times M).$

\end{remark}{}
\begin{center}
\begin{tikzcd}
B \arrow[r, "rm"] \arrow[d, "r"] & \coprod_{M}B & B \arrow[l, "m+n"'] \arrow[d]         \\
B \arrow[ru, "m"']               &              & B\otimes_A B \arrow[lu, "m\otimes n"]
\end{tikzcd}
\end{center}{}

\begin{proposition}\label{sharp}In the above set up, we have \vspace{2mm}

\noindent
$\mathrm{1}$. $\mathrm{colim} \mathscr{T}_X(M_i) \simeq \mathscr{T}_X(\mathrm{colim}\, M_i).$\vspace{1mm}

\noindent
$\mathrm{2}$. $\mathscr{T}_X(M) \otimes_A \mathscr{T}_X(N) \simeq  \mathscr{T}_X (M \oplus N).$\vspace{1mm}

\noindent
$\mathrm{3}$. If $M$ is a free $R$-module of rank $1$, then $\mathscr{T}_X(M) \simeq \Gamma(X, \mathcal{O}_X).$ 
\end{proposition}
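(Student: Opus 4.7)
The plan is to deduce all three claims formally from the defining adjunction
\[
\mathrm{Hom}_{\mathrm{Alg}_A}(\mathscr{T}_X(M), S) \simeq \mathrm{Hom}_{\mathrm{Mod}_R}(M, X(S)),
\]
using Yoneda together with elementary facts about coproducts and colimits in the categories $\mathrm{Alg}_A$ and $\mathrm{Mod}_R$. Since $\mathscr{T}_X$ was introduced in \cref{cons1} precisely as a left adjoint, most of what is asserted is automatic; the only content is identifying the relevant colimits in each category.

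For part~1, I will simply invoke the fact that left adjoints preserve colimits. More concretely, for any $S \in \mathrm{Alg}_A$, the functor $X(S) = \mathrm{Hom}_A(\mathrm{Spec}\,S, X)$ is limit preserving in the $R$-module variable (in fact it is the forgetful functor applied to a representable functor), so
\[
\mathrm{Hom}_{\mathrm{Alg}_A}(\mathrm{colim}\,\mathscr{T}_X(M_i), S) \simeq \lim \mathrm{Hom}_{\mathrm{Mod}_R}(M_i, X(S)) \simeq \mathrm{Hom}_{\mathrm{Mod}_R}(\mathrm{colim}\,M_i, X(S)),
\]
and then Yoneda identifies $\mathrm{colim}\,\mathscr{T}_X(M_i)$ with $\mathscr{T}_X(\mathrm{colim}\,M_i)$.

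For part~3, I note that when $M$ is free of rank $1$ over $R$, choosing a basis identifies $\mathrm{Hom}_{\mathrm{Mod}_R}(M, X(S))$ naturally with the underlying set of $X(S)$, i.e.\ with $\mathrm{Hom}_{\mathrm{Alg}_A}(\Gamma(X, \mathcal{O}_X), S)$. Since this identification is functorial in $S$, Yoneda gives the desired isomorphism $\mathscr{T}_X(M) \simeq \Gamma(X, \mathcal{O}_X)$.

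Part~2 is the direct sum case of part~1. The coproduct in $\mathrm{Mod}_R$ of $M$ and $N$ is the direct sum $M \oplus N$, while the coproduct in $\mathrm{Alg}_A$ of two $A$-algebras is the tensor product over $A$. Since $\mathscr{T}_X$ is a left adjoint it carries coproducts to coproducts, and so $\mathscr{T}_X(M \oplus N) \simeq \mathscr{T}_X(M) \otimes_A \mathscr{T}_X(N)$. There is no real obstacle in any of the three parts; the only thing to be mildly careful about is to phrase part~1 so that the colimit on the left is taken in $\mathrm{Alg}_A$ (where it need not agree with any set-level colimit), which the adjunction handles automatically.
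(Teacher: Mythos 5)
Your proof is correct and takes essentially the same approach as the paper, which simply states that the result ``follows from the construction of $\mathscr{T}_X(M)$'' — i.e.\ from the defining adjunction of \cref{cons1}; you have merely spelled out the routine Yoneda/left-adjoint bookkeeping that the paper leaves implicit.
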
{}

\begin{proof}Follows from the construction of $\mathscr{T}_X(M).$ \end{proof}{}

\begin{example}\label{7up22}
When $X$ is the zero group scheme $\w{Spec}\, A$ over $A$, thought of as an $R$-module scheme over $A$ and $M$ is any $R$-module, we have $\mathscr{T}_X (M) \simeq A$ as an $A$-algebra.
\end{example}{}

\begin{example}\label{geo}In the case where $X = \mathbb{G}_a= \mathrm{Spec}\, A[x]$ viewed as an $A$-module scheme over $A,$ given any $A$-module $M$, we have $\mathscr{T}_{\mathbb{G}_a}(M) \simeq \mathrm{Sym}_A(M)$ as an $A$-algebra. This follows from the universal property discussed in \cref{cons1} above.
\end{example}{}

\begin{example}\label{example1}We mention an example that will be particularly important to us. Let $\mathbb{G}_a^*$ be the Cartier dual of $\mathbb{G}_a$ viewed as an $A$-module scheme over $A.$ For an $A$-module $\mathscr{T}_{\mathbb{G}_a^*}(M) \simeq \Gamma_A(M).$ This essentially follows from \cite[Appendix 2]{BO78} by observing that for an $A$ algebra $R$, there is a natural isomorphism of $A$-modules $\mathbb{G}_a^* (R) \simeq \mathrm{exp}(R)$ where $\mathrm{exp}(R)$ denote the elements $f(x) \in 1 + x R[x]$ satisfying $f(x+y) = f(x)f(y)$ which forms an abelian group by multiplication of power series which further has an $R$-module structure given by $r \cdot f(x) := f(rx)$ (\textit{cf.}~\cref{final}).\end{example}{}

\begin{remark}We point out that for a fixed $R$-module $M$, the association $X \mapsto \mathscr{T}_X(M)$ is a contravariant functor. Further, in the set up of \cref{cons1}, it follows that $\w{Spec}\, \mathscr{T}_X(M)$ can be naturally equipped with the structure of a $R$-module scheme over $A$ for any fixed $X.$

\end{remark}

\begin{remark}\label{7up3}
If $X$ is a $\mathbb{G}_a$-module over $A$ and $M$ is any $A$-module, then $\w{Spec}\, \mathscr{T}_X(M)$ is naturally equipped with the structure of a $\mathbb{G}_a$-module over $A.$ Further, for any $m \in M,$ the map $X \to \w{Spec}\, \mathscr{T}_X(M)$ induced by $\w{ev}_m$ from \cref{bday28} is a map of $\mathbb{G}_a$-modules. Both of these statements follow from the functorial descriptions provided in \cref{cons1}. Therefore, the map $\w{ev}_m: \Gamma (X, \mathcal{O}_X) \to \mathscr{T}_{X}(M) $ is a map of graded Hopf algebras. From \cref{7up}, it follows that the elements in the images of the maps $\w{ev}_m$ for all $m \in M$ generate $\mathscr{T}_X(M)$ as an $A$-algebra.
\end{remark}{}

\subsection{Unwinding pointed $\mathbb{G}_a$-modules}\label{sec3.1}

\begin{notation}\label{bday}We fix an arbitrary base ring $A$ as before. Let $\mathfrak{C}_A$ denote the category of pairs $(B,I)$ where $B$ is an $A$-algebra and $I$ is an ideal of $B.$ Morphisms in $\mathfrak{C}_A$ between $(B, I)\to (B',I')$ are defined as  $A$-algebra maps $B \to B'$ that maps $I$ inside $I'.$ 
\end{notation}{}

\begin{construction}[Unwinding]\label{cons2} Let $\mathbb{G}_a\w{--}\mathrm{Mod}_*$ denote the category of pointed $\mathbb{G}_a$-modules over $A$. We will construct a (contravariant) functor $${\mathrm{Un}}: \mathbb{G}_a\w{--}\mathrm{Mod}_* \to  \mathrm{Fun}(\mathfrak{C}_A, \mathrm{Alg}_A).$$ We will say that ${\mathrm{Un}}(X)$ is the functor obtained by \textit{unwinding} the pointed $\mathbb{G}_a$-module $X.$ To describe the construction, we fix an $X\in \mathbb{G}_a\w{--}\mathrm{Mod}_*$. Given $(B,I) \in \mathfrak{C}_A$, we obtain a diagram $X_B \to \mathbb{G}_{a,B}$ of $B$-module schemes by base changing to $B.$ Now the ideal $I$ can be regarded as a $B$-module and thus by applying \cref{cons1}, we obtain a map $\mathscr{T}_{\mathbb{G}_{a,B}}(I) \to \mathscr{T}_{X_B} (I).$ By \cref{geo} we get a map of $B$-algebras $\mathrm{Sym}_B(I) \to \mathscr{T}_{X_B}(I).$ Since $I$ is an ideal of $B$, there are natural maps $\mathrm{Sym}_B(I) \to B \to \mathrm{Sym}_B(I) \to \mathscr{T}_{X_B}(I).$ Thus by composing we get another map $\mathrm{Sym}_B(I) \to \mathscr{T}_{X_B}(I).$ We denote the coequalizer of these two maps $$\mathrm{Sym}_B(I)\,\, \substack{\longrightarrow \\ \longrightarrow}\,\,\mathscr{T}_{X_B}(I)$$ by $\mathrm{Env}_X (B,I).$ This is naturally a $B$-algebra. Now we define ${\mathrm{Un}}(X) (B,I) := \mathrm{Env}_X(B,I).$
\end{construction}{}

\begin{remark}[Unwinding via universal property]\label{univ1}It will be very useful for us to have a description of the universal property of $\mathrm{Env}_X (B,I)$ as a $B$-algebra. For a $B$-algebra $S$, we note that $\mathbb{G}_{a,B}(S) = S$ is naturally a $B$-module. In fact, there is a map $B \to S$ of $B$-modules giving a natural map $I \subset B \to S.$ This gives an element $*\in \mathrm{Hom}_{B\mathrm{-Mod}}(I,S).$ Therefore, we obtain two maps $ \mathrm{Hom}_{B\mathrm{-Mod}}(I, X(S))\,\, \substack{\longrightarrow \\ \longrightarrow}\,\,\mathrm{Hom}_{B\mathrm{-Mod}}(I,S).$ Here one of the maps (of sets) sends everything to $*$ and the other one is the map induced by the data of the point $X \to \mathbb{G}_a.$ We note that by \cref{cons2}, we have $$\mathrm{Hom}_{B\mathrm{-Alg}}(\mathrm{Env}_X(B,I),S) \simeq \mathrm{Eq}(\mathrm{Hom}_{B\mathrm{-Mod}}(I, X(S))\,\, \substack{\longrightarrow \\ \longrightarrow}\,\,\mathrm{Hom}_{B\mathrm{-Mod}}(I,S)) .$$
\end{remark}{}

\begin{remark}\label{presheaf}We note that there is a natural functor $\mathfrak{G}: \mathfrak{C}_A \to \mathrm{Alg}_A$ given by $\mathfrak{G}(B,I) = B.$ From \cref{univ1}, we see that there is a natural isomorphism $\mathfrak{G} \simeq {\mathrm{Un}}(\mathbb{G}_a).$ Let $\mathrm{Fun}(\mathfrak{C}_A, \mathrm{Alg}_A)_{\mathfrak{G}/}$ denote the category of functors $F: \mathfrak{C}_A \to \mathrm{Alg}_A$ equipped with a natural transformation $\mathfrak{G} \to F.$ The morphisms are required to be compatible with this data. It follows that in \cref{cons2}, we actually produced a (contravariant) functor $${\mathrm{Un}}: \mathbb{G}_a\w{--}\mathrm{Mod}_* \to  \mathrm{Fun}(\mathfrak{C}_A, \mathrm{Alg}_A)_{\mathfrak{G}/}.$$
\end{remark}

\begin{example}\label{move4}The functor $\mathfrak{C}_A \to \mathrm{Alg}_A$ given by sending $(B,I) \to B/I$ is the unwinding of the pointed $\mathbb{G}_a$-module corresponding to the zero section $\w{Spec}\, A \to \mathbb{G}_a.$
\end{example}{}

\begin{example}[Divided power envelope via unwinding]\label{example2}We let the base ring $A$ be $\mathbb{F}_p$ for simplicity. The functor ${\mathrm{Un}}(\mathbb{G}_a^*): \mathfrak{C}_{\mathbb{F}_p} \to \mathrm{Alg}_{\mathbb{F}_p}$ takes a pair $(B,I)$ to the divided power envelope $D_B(I).$ In order to see this, we compute $\mathrm{Env}_{\mathbb{G}_a^*}(B,I)$ following \cref{cons2}. This is computed as the coequalizer of two maps $$\mathrm{Sym}_B(I)\,\, \substack{\longrightarrow \\ \longrightarrow}\,\,\mathscr{T}_{{\mathbb{G}_a ^*}_B}(I).$$ We note that by \cref{example1}, $\mathscr{T}_{{\mathbb{G}_a ^*}_B}(I) \simeq \Gamma_B(I).$ Thus the claim $\mathrm{Env}_{\mathbb{G}_a^*}(B,I) \simeq D_B(I)$ follows from \cite[Thm. 3.19]{BO78} which says that $D_B(I) = \Gamma_B(I)/ J$ where $J$ is the ideal generated by $\varphi(x) - x$ for all $x \in I$ and $\varphi: I \to \Gamma_1(I)$ is the natural map. Since $\mathbb{G}_a^* \simeq W[F]$ over $\mathbb{F}_p$, we also have $\mathrm{Env}_{W[F]} (B,I) \simeq D_B(I).$ 
\end{example}{}

\begin{remark}\label{env}
Let $X\in \mathbb{G}_a \w{--}\mathrm{Mod}_*$ and let $B$ be an $A$-algebra and $f$ be a non-zero divisor in $B.$ Then we can explicitly describe $\mathrm{Env}_X(B,f).$ We note that the ideal $I$ generated by $f$ in this case is free of rank $1$ and thus there is an isomorphism $\mathscr{T}_{X_B}(I) \simeq \Gamma(X_B, \mathcal{O}_{X_B})  \simeq \Gamma(X, \mathcal{O}_X) \otimes_A B$ by \cref{sharp}. Now $\mathrm{Env}_X(B, f)$ is the quotient $\frac{\Gamma(X, \mathcal{O}_X)\otimes_A B}{(t \otimes 1 - 1 \otimes f)}$ where $t$ is the image of $x$ under the map $A[x] \to \Gamma(X, \mathcal{O}_X)$ corresponding to the data of the point i.e., the map $X \to \mathbb{G}_a.$
\end{remark}

\begin{remark}For any $X \in \mathbb{G}_a \w{--}\mathrm{Mod}_*$, it follows that ${\mathrm{Un}}(X) (B,0) = \mathrm{Env}_X (B,0) \simeq B.$ This follows since $\mathscr{T}_{X_B}(0) \simeq B$ and therefore $\mathrm{Env}_X(B,0)$ is a coequalizer of two $B$-algebra maps $B\substack{\rightarrow \\ \rightarrow}B $ that coincide. Thus the natural map $\mathfrak{G} \to \mathrm{Un}(X)$ induces isomorphism restricted to the full subcategory spanned by objects of the form $(B,0).$
\end{remark}{}

\begin{proposition}\label{env1}Let $X \in \mathbb{G}_a \w{--}\mathrm{Mod}_*.$ Then $\mathrm{Env}_X (B[x], x) \simeq \Gamma(X, \mathcal{O}_X) \otimes_A B$ as a $B$-algebra. The map $\mathrm{Env}_X(B[x], 0) \to \mathrm{Env}(B[x],x)$ identifies with the map $B[x] \to \Gamma(X_B, \mathcal{O}_{X_B})$ coming from the data of the point $X \to \mathbb{G}_a.$
\end{proposition}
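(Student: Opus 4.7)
The plan is to invoke the explicit computation of \cref{env} with the ring $B$ replaced by $B[x]$ and the regular element $f$ replaced by $x$, and then simplify the resulting presentation. Indeed, $x$ is a non-zero divisor in $B[x]$, so \cref{env} applies and yields
\[
\mathrm{Env}_X(B[x], x) \;\simeq\; \bigl(\Gamma(X,\mathcal{O}_X) \otimes_A B[x]\bigr) \big/ (t \otimes 1 - 1 \otimes x),
\]
where $t \in \Gamma(X,\mathcal{O}_X)$ is the image of the variable under the Hopf algebra map $A[x] \to \Gamma(X,\mathcal{O}_X)$ corresponding to the point $X \to \mathbb{G}_a$.

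The first step is to rewrite the tensor product $\Gamma(X,\mathcal{O}_X) \otimes_A B[x]$ as a polynomial algebra over $\Gamma(X,\mathcal{O}_X) \otimes_A B = \Gamma(X_B, \mathcal{O}_{X_B})$ in the variable $1 \otimes x$. The relation $t \otimes 1 = 1 \otimes x$ is then the defining relation for the elementary substitution isomorphism
\[
\Gamma(X_B, \mathcal{O}_{X_B})[y] / (y - t \otimes 1) \;\xrightarrow{\sim}\; \Gamma(X_B, \mathcal{O}_{X_B}), \qquad y \mapsto t \otimes 1,
\]
which finishes the first assertion. (One can also verify this directly by checking the universal property in \cref{univ1}: both sides corepresent, on $B[x]$-algebras $S$, the data of an element $s \in S$ together with a lift of $s$ to an $S$-point of $X$; such data is manifestly the same as giving an $S$-point of $X$, i.e.\ a $B$-algebra map $\Gamma(X_B, \mathcal{O}_{X_B}) \to S$.)

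For the second claim, note that $\mathrm{Env}_X(B[x], 0) \simeq B[x]$ by the remark preceding \cref{env1}, and the morphism $\mathrm{Env}_X(B[x], 0) \to \mathrm{Env}_X(B[x], x)$ is functorially induced by the inclusion of pairs $(B[x], 0) \hookrightarrow (B[x], x)$ in $\mathfrak{C}_A$. Tracing the identifications above, this map sends $x \in B[x]$ to the class of $1 \otimes x$ in the quotient, which equals $t \otimes 1 \in \Gamma(X,\mathcal{O}_X) \otimes_A B$. This is by definition the map obtained by base changing the algebra map $A[x] \to \Gamma(X,\mathcal{O}_X)$ (which encodes the point $X \to \mathbb{G}_a$) along $A \to B$, as required.

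No serious obstacle is anticipated here: the only substantive input is the rank-one computation of \cref{env} together with the identifications $\mathscr{T}_{X_B}(I) \simeq \Gamma(X,\mathcal{O}_X) \otimes_A B$ when $I$ is free of rank $1$ (\cref{sharp}(3)). The bulk of the task is simply keeping careful track of which copy of $x$ plays which role (ring variable versus generator of the ideal versus image of the point).
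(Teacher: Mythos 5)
Your proof is correct and follows essentially the same route as the paper's: both invoke \cref{env} with $B[x]$ and $f=x$, then perform the substitution isomorphism collapsing $(\Gamma(X,\mathcal{O}_X)\otimes_A B)[x]/(x - t\otimes 1) \simeq \Gamma(X_B,\mathcal{O}_{X_B})$. The paper compresses this into a one-liner ("from which the proposition follows"); you have simply filled in the elementary simplification and the tracking of the second assertion explicitly, which is a good habit here given that the symbol $x$ is being overloaded.
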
{}

\begin{proof}Using \cref{env}, we have $\mathrm{Env}_X(B[x], x) \simeq \frac{\Gamma(X, \mathcal{O}_X)\otimes_A B[x]}{t \otimes 1 - 1 \otimes x}$ as $B[x]$-algebras from which the proposition follows. \end{proof}{}

Given that there is a way to unwind the data of a pointed $\mathbb{G}_a$-module $X$ and obtain a functor ${\mathrm{Un}}(X):\mathfrak{C}_A \to \mathrm{Alg}_A$, it is natural to ask if this is reversible, i.e., if there is a functor $r$ from $\mathrm{Fun}(\mathfrak{C}_A, \mathrm{Alg}_A)_{\mathfrak{G}/}$ to $\mathbb{G}_a\w{--}\mathrm{Mod}_*$ such that applying $r$ to ${\mathrm{Un}}(X)$ recovers the pointed $\mathbb{G}_a$-module $X.$ There are multiple problems in achieving this as discussed below.\vspace{2mm}

Firstly, defining the functor $r$ is not possible unless we impose some conditions on the functor $F \in \mathrm{Fun}(\mathfrak{C}_A, \mathrm{Alg}_A)_{\mathfrak{G}/}.$ Indeed, for every $A$-algebra $B$, we can look at the $B$-algebra $F (B[x], x).$ This has a $B$-action, however $F (B[x], x)$ might not be a Hopf algebra. The functor $F$ needs to preserve some pushout diagrams for that to happen; this is taken into account in \cref{contain}. Under these special assumptions on $F$ it is indeed possible to define a functor $r$ as desired. The functor $r$ is defined below in \cref{restriction}.\vspace{2mm}

However, we note that not every pointed $\mathbb{G}_a$-module can appear as image under the functor $r$ of an $F \in \mathrm{Fun}(\mathfrak{C}_A, \mathrm{Alg}_A)$. Indeed, we have the following commutative diagram in $\mathfrak{C}_A.$
\vspace{-2mm}
\begin{center}
\begin{equation}\label{qp}
\begin{tikzcd}
{(A[x], x) \otimes (A[x],x)}           &  &  & {(A[x], x)\otimes (A[x], 0)} \arrow[lll] \\
{(A[x], 0)\otimes (A[x], x)} \arrow[u] &  &  & {(A[x],x)} \arrow["x \to x \otimes x"', u ] \arrow[lll, "x \otimes x\, \leftarrow x "]            
\end{tikzcd}    
\end{equation}{}
\end{center}{}
Applying $F$ to the above diagram would impose extra conditions on the pointed $\mathbb{G}_a$-module obtained from $F$ which need not be satisfied by every pointed $\mathbb{G}_a$-module. Thus it is impossible to recover $X$ by using the functor $r$ from ${\mathrm{Un}}(X)$ unless it satisfies some special conditions to begin with. To account for this, we are naturally led to the notion of a ``quasi-ideal" in $\mathbb{G}_a$ due to Drinfeld \cite[Section 3.1]{Dri20}.

\begin{definition}[Drinfeld]\label{drin}A pointed $\mathbb{G}_a$-module $X$ with the data of the point denoted as $d: X \to \mathbb{G}_a$ will be called a \textit{quasi-ideal in} $\mathbb{G}_a$ if the following diagram commutes.

\begin{center}
\begin{tikzcd}
X \times X \arrow[r, "\mathrm{id}\times d "] \arrow[d, "d \times \mathrm{id} "] & X \times \mathbb G_a \arrow[d, "\mathrm{action}"] \\
\mathbb G_a \times X \arrow[r, "\mathrm{action}"] & X                             
\end{tikzcd}
\end{center}{}
By writing $X = \mathrm{Spec}\, B$ for a graded Hopf algebra $B$ and $t \in B$ for the fixed choice of the element in degree $1$ corresponding to the data of the point, we note that $X$ is a quasi-ideal if and only if $b \otimes t^{\deg b} = t^{\deg b} \otimes b$ in $B \otimes B$ for every homogeneous $b \in B.$ We let $\mathrm{QID}\w{--}\mathbb{G}_a$ denote the full subcategory of quasi-ideals in $\mathbb{G}_a$ inside $\mathbb{G}_a \w{--}\mathrm{Mod}_*.$
\end{definition}{}

\begin{remark}\label{qid1}Using the inclusion $\mathrm{QID}\w{--}\mathbb{G}_a \to \mathbb{G}_a \w{--}\mathrm{Mod}_*$ of categories and \cref{cons2} we obtain a (contravariant) functor still denoted us ${\mathrm{Un}}: \mathrm{QID}\w{--}\mathbb{G}_a \to \mathrm{Fun}(\mathfrak{C}_A, \mathrm{Alg}_A)_{\mathfrak{G}/}.$ We will later see that this functor is fully faithful.
\end{remark}{}

\begin{proposition}\label{koszul}Let $B$ be an $A$-algebra. Let $(f_j)_{j \in \mathscr{J}}$ be a collection of non-zero divisors in $B$ and let $I$ be the ideal generated by them. Let $F$ be the free module over $B$ spanned by $x_j$ for $j \in \mathscr{J}.$ We assume that the $B$-module map  $F \to I$ that sends $x_i \to f_i$ has kernel generated by $(f_ix_j - f_j x_i)$ for $i,j \in \mathscr{J}$. Let $X$ be a quasi-deal in $\mathbb{G}_a.$ Then the natural map $$\coprod_{j \in \mathscr{J}} \mathrm{Env}_X (B, f_j) \to \mathrm{Env}_X(B,I)$$ is an isomorphism. Here the coproduct is taken in the category of $B$-algebras.
\end{proposition}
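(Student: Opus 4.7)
The plan is to verify both sides corepresent the same functor on $B$-algebras by unwinding the universal property described in \cref{univ1}. I would fix a test $B$-algebra $S$ and analyze $\mathrm{Hom}_{B\text{-Alg}}(-,S)$ applied to each side.

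By \cref{univ1}, a $B$-algebra map $\mathrm{Env}_X(B,I) \to S$ is the same data as a $B$-module map $\varphi : I \to X(S)$ such that $d \circ \varphi$ equals the natural inclusion $\iota_S : I \hookrightarrow B \to S$. On the other hand, since each $f_j$ is a non-zero divisor the principal ideal $(f_j)$ is free of rank one via $f_j \mapsto 1$, so $\mathrm{Env}_X(B, f_j)$ (compare \cref{env}) corepresents the functor $S \mapsto \{y \in X(S) : d(y) = f_j\}$. Converting the coproduct into a product of sets, a $B$-algebra map $\coprod_{j} \mathrm{Env}_X(B, f_j) \to S$ is the same as a tuple $(y_j)_{j \in \mathscr{J}}$ with $y_j \in X(S)$ and $d(y_j) = f_j$ for each $j$. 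The natural map in the statement corresponds, under these identifications, to $\varphi \mapsto (\varphi(f_j))_{j}$.

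To construct an inverse, given a tuple $(y_j)$ I would define a $B$-module map $\tilde\varphi : F \to X(S)$ by $x_j \mapsto y_j$ and show it factors through the surjection $F \twoheadrightarrow I$. By hypothesis the kernel of $F \to I$ is generated by the elements $f_i x_j - f_j x_i$, so factoring amounts to verifying the identities
$$f_i \cdot y_j \;=\; f_j \cdot y_i \qquad \text{in } X(S) \quad \text{for all } i,j \in \mathscr{J}.$$
This is precisely where the quasi-ideal hypothesis enters: evaluating the commutative square of \cref{drin} on the $S$-valued point $(y_i, y_j) \in (X \times X)(S)$ gives $d(y_i) \cdot y_j = d(y_j) \cdot y_i$, and since $d(y_i) = f_i,\; d(y_j) = f_j$, this yields exactly the required relation. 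The resulting map $\varphi : I \to X(S)$ then automatically satisfies $d \circ \varphi = \iota_S$ because $d(\varphi(f_j)) = d(y_j) = f_j$ and both sides are $B$-linear. The two assignments are mutually inverse by construction, and by Yoneda this proves the proposition.

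The main obstacle is really just bookkeeping: one must recognize that the Koszul-type generators $f_i x_j - f_j x_i$ of the kernel of $F \to I$ correspond, via the $\mathbb{G}_a^{\w{perf}}$-action on $X$, to precisely the obstruction measured by the quasi-ideal square in \cref{drin}. Once this correspondence is noticed, the rest is a direct application of the universal property and Yoneda.
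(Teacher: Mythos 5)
Your argument is correct and is essentially the paper's own proof: both reduce, via the universal property in \cref{univ1}, to comparing $B$-module maps from $I$ and from $F$ into $X(S)$, and both invoke the quasi-ideal square of \cref{drin} to verify the Koszul relations $f_i\cdot y_j = f_j\cdot y_i$ so that a map on $F$ factors through $I$. (One small slip: in the final paragraph you write ``$\mathbb{G}_a^{\mathrm{perf}}$-action'' where it should be the $\mathbb{G}_a$-action, since this proposition concerns quasi-ideals in $\mathbb{G}_a$.)
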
{}

\begin{proof}
By \cref{univ1}, $\mathrm{Env}_X (B,I)$ corepresents the functor $H_1$ that sends $$S \mapsto \mathrm{Eq}(\mathrm{Hom}_B(I, X(S)) \,\, \substack{\longrightarrow \\ \longrightarrow}\,\, \mathrm{Hom}_B(I, S)), $$ where one of the maps come from composing with $X(S) \to S$ and the other one maps everything to the element in $\mathrm{Hom}_B(I,S)$ corresponding to $I \subset B \to S.$ Given such an element in the equalizer, by precomposing with the surjection $F \to I$, we obtain a natural transformation from $H_1$ to the functor $H_2$ that sends $S \to \mathrm{Eq}(\mathrm{Hom}_B(F, X(S)) \,\, \substack{\longrightarrow \\ \longrightarrow}\,\, \mathrm{Hom}_B(F, S)),$ where, as before, one of the maps come from $X(S) \to S$ and the other one from sending everything to the $B$-linear map $F \to I \subset B \to S.$ Since $F \to I$ is a surjection, it follows that the map $H_1(S) \to H_2(S)$ is injective. Below we check that this map is also surjective. \vspace{2mm}

To show surjectivity, we need to show that any map $u: F \to X(S)$ which fits into the commutative diagram
\begin{center}
\begin{tikzcd}
F \arrow[r] \arrow[d] & X(S) \arrow[d] \\
I \arrow[r]           & S   \end{tikzcd}
\end{center}{}
factors through $F \to I.$ Let $u_i:=u(x_i) \in X(S).$ The map $X \to \mathbb{G}_a$ induces a map on $S$-valued points that we denote as $d: X(S) \to S.$ Since $X$ is a quasi-ideal in $\mathbb{G}_a$, it follows that $d u_j\cdot u_i = d u_i\cdot u_j.$ By the commutativity of the diagram this implies $f_j \cdot u_i = f_i \cdot u_j$ in $X(S)$ or equivalently $u (f_j x_i - f_i x_j )=0$, i.e., the map indeed factors through $I.$ Now the proposition follows by using \cref{env} and noting that $\coprod_{j \in \mathscr{J}} \mathrm{Env}_X (B, f_j)$ corepresents the functor $H_2.$ \end{proof}{}

\begin{proposition}\label{coprod}Let $B$ be an $A$-algebra. Let $S$ be any set. Let $(B[S], (S))$ denote the coproduct of $(B[x], x)$ over $S.$ Let $X$ be a quasi-ideal in $\mathbb{G}_a.$ Then the natural map $$\coprod_S \mathrm{Env}_X (B[x], x) \to \mathrm{Env}_X (B[S], (S))$$ is an isomorphism of $B$-algebras. Here the coproduct is taken in the category of $B$-algebras. In particular, we obtain an isomorphism $(\coprod_S \Gamma(X, \mathcal{O}_X)) \otimes_A B \simeq \mathrm{Env}_X (B[S], S)$ of $B$-algebras.
\end{proposition}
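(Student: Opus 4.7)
The plan is to invoke \cref{koszul} directly, taking the ambient ring to be $B[S]$ and the family $(f_s)_{s \in S}$ to consist of the variables $f_s := s$. Each such variable is a non-zero divisor in $B[S]$, and since any element of the free module $F := \bigoplus_{s \in S} B[S] \cdot x_s$ involves only finitely many indices, the verification that the kernel of $F \twoheadrightarrow (S)$ is generated by the Koszul relations $s \cdot x_{s'} - s' \cdot x_s$ reduces to the classical fact that variables in a polynomial ring form a regular sequence. Consequently, \cref{koszul} yields an isomorphism of $B[S]$-algebras
\[
\coprod_{s \in S} \mathrm{Env}_X(B[S], s) \xrightarrow{\;\sim\;} \mathrm{Env}_X(B[S], (S)),
\]
with the coproduct taken in $B[S]$-algebras.

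Next, since $s$ is a non-zero divisor in $B[S]$, \cref{env} gives $\mathrm{Env}_X(B[S], s) \simeq (\Gamma(X, \mathcal{O}_X) \otimes_A B[S])/(t \otimes 1 - 1 \otimes s)$. Decomposing $B[S] \simeq B[s] \otimes_B B[S\setminus\{s\}]$ and applying \cref{env1} to the $B[s]$-factor yields a natural $B[S]$-algebra identification
\[
\mathrm{Env}_X(B[S], s) \;\simeq\; \mathrm{Env}_X(B[x], x) \otimes_{B[x]} B[S],
\]
i.e., $\mathrm{Env}_X(B[S], s)$ is the base change of $\mathrm{Env}_X(B[x], x)$ along the $B[x]$-algebra map $B[x] \to B[S]$ sending $x \mapsto s$. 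Since $B[S]$ is itself the coproduct (in $B$-algebras) of copies of $B[x]$ indexed by $S$, substituting the displayed identification into the coproduct above collapses it to
\[
\coprod_{s \in S}^{B[S]} \mathrm{Env}_X(B[S], s) \;\simeq\; \Bigl(\coprod_{s \in S}^{B} \mathrm{Env}_X(B[x], x)\Bigr) \otimes_{B[S]} B[S] \;=\; \coprod_{s \in S}^{B} \mathrm{Env}_X(B[x], x).
\]
Combined with \cref{env1}, this proves the main isomorphism and immediately yields the ``in particular'' statement.

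The proof is essentially a careful reduction to \cref{koszul}; the only real bookkeeping hurdle is tracking which category each coproduct lives in -- \cref{koszul} produces a coproduct in $B[S]$-algebras, whereas the proposition expresses things as a coproduct in $B$-algebras. These two viewpoints are reconciled by the base-change identification $\mathrm{Env}_X(B[S], s) \simeq \mathrm{Env}_X(B[x], x) \otimes_{B[x]} B[S]$ above, together with the standard decomposition of $B[S]$ as a coproduct of copies of $B[x]$ over $B$.
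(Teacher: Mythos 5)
Your proof is correct and follows essentially the same route as the paper's: both reduce via \cref{koszul} to principal ideals, compute each $\mathrm{Env}_X(B[S],s)$ using \cref{env}/\cref{env1}, and then perform the same bookkeeping to pass from a coproduct of $B[S]$-algebras to a coproduct of $B$-algebras. Your phrasing of the middle step as a base change of $\mathrm{Env}_X(B[x],x)$ along $B[x]\to B[S]$ is an equivalent repackaging of the paper's $\Gamma(X_B,\mathcal{O})\otimes_B B[S\setminus s]$, and the final collapse of the $B[S]$-coproduct to a $B$-coproduct is the same observation (though your intermediate expression $\bigl(\coprod_S^B \mathrm{Env}_X(B[x],x)\bigr)\otimes_{B[S]} B[S]$ is a tautological restatement of the target rather than a genuinely intermediate object; the real content is the universal-property check that $\coprod_s^{B[S]}(E_s\otimes_{B[x_s]}B[S])\simeq\coprod_s^B E_s$, which holds).
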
{}

\begin{proof}This follows from \cref{koszul} and \cref{env1}. Indeed, let us consider the polynomial algebra $B[S]$ on the set $S.$ An element $s \in S$ can be thought of as an indeterminate $s \in B[S]$; the ideal $(S)$ is the ideal generated by these indeterminates. Due to the fact that in the polynomial ring $B[t_1, \ldots, t_r],$ the elements $t_1,\ldots,t_r$ are non-zero divisors and form a (Koszul) regular sequence \cite[Tag 062D]{SP}, one can apply \cref{koszul} in this situation to calculate $\mathrm{Env}_{X}(B[S], (S))$; this gives us an isomorphism $$\coprod_{s \in S} \mathrm{Env}_{X}(B[S], s) \simeq \mathrm{Env}_{X}(B[S], (S)).$$Here the coproduct is being taken in the category of $B[S]$-algebras. For an element $s \in S,$ let $(S \setminus s) \subset S$ denote the complement of $s.$ Then $B[S] \simeq B[(S \setminus s)] \otimes_{B} B[s],$ where $B[s]$ is the polynomial ring in the single indeterminate $s.$ Therefore, $\mathrm{Env}_{X}(B[S], s) \simeq \mathrm{Env}_{X}(B[(S \setminus s)]\otimes_B B[s], (1 \otimes s)) \simeq \Gamma(X_B, \mathcal{O}) \otimes_{B} B[(S \setminus s)],$ as $B[(S \setminus s)]$-algebra. Here, the last isomorphism follows from \cref{env1}. Note that since $X$ is a quasi-ideal in $\mathbb{G}_a,$ we have a map $X \to \mathbb{G}_a$ which gives an element $t \in \Gamma(X_B, \mathcal{O}_X)$ by passing to the map induced on the ring of global sections. Further, by \cref{env1}, the $B[S] \simeq B[(S \setminus s)] \otimes_B B[s]$-algebra structure on $\Gamma(X_B, \mathcal{O}) \otimes_{B} B[(S \setminus s)]$ is induced by the map of $B[(S \setminus s)]$-algebras $ B[(S \setminus s)] \otimes_{B} B[s] \to \Gamma(X_B, \mathcal{O}) \otimes_{B} B[(S \setminus s)]$ that sends $s \mapsto t\otimes 1.$ Therefore, 
$$\coprod_{s \in S} \mathrm{Env}_{X}(B[S], s) \simeq \coprod_{s \in S} \Gamma(X_B, \mathcal{O}) \otimes_{B} B[(S \setminus s)]$$ in the category of $B[S]$-algebras. However, one observes that the right hand side is isomorphic to $\coprod_{s \in S} \Gamma(X_B, \mathcal{O}),$ where the coproduct is taken in the category of $B$-algebras; the $B[S]$-algebra structure is given by the map $B[S] \to \coprod_{s \in S} \Gamma(X_B, \mathcal{O})$ which is obtained by taking coproduct of the map $B[x] \to \Gamma(X_B, \mathcal{O})$ that sends $ x \mapsto t$ over the set $S$ in the category of $B$-algebras. By \cref{env1}, we have $\coprod_{s \in S} \mathrm{Env}_X (B[x], x) \simeq \coprod_{s \in S} \Gamma(X_B, \mathcal{O})$, which finishes the proof.
\end{proof}{}

\begin{remark}
The natural maps appearing in \cref{koszul} or \cref{coprod} exist for any pointed $\mathbb{G}_a$-module $X.$ The fact that these maps are isomorphisms (in either of the propositions) implies that $X$ satisfies the property of being a quasi-ideal in $\mathbb{G}_a$. This follows from \cref{drin} and functoriality of $\mathrm{Un}(X)$ applied to the diagram in \cref{qp}. We thank the referee for pointing this out.
\end{remark}{}

\begin{definition}\label{contain}Let $F \in \mathrm{Fun}(\mathfrak{C}_A, \mathrm{Alg}_A)_{\mathfrak{G}/}$ (\cref{presheaf}) be a functor which satisfies the following conditions.\vspace{2mm}

\noindent
$\mathrm{1}.$ The natural map $\mathfrak{G}(B,0) \to F(B,0)$ is an isomorphism for every $A$-algebra $B.$\vspace{1mm}

\noindent
$\mathrm{2}.$ The natural map $F((B[x],x)) \otimes_B F((B[x], x)) \to F(B[x]\otimes_B B[x], (x\otimes1, 1\otimes x))$ is an isomorphism.\vspace{1mm}

\noindent
$\mathrm{3}.$ The natural map $F(A[x], x) \otimes_A B \to F(B[x], x)$ is an isomorphism.\vspace{2mm}

We denote the full subcategory of such functors inside $\mathrm{Fun}(\mathfrak{C}_A, \mathrm{Alg}_A)_{\mathfrak{G}/}$ as $\mathrm{Fun}(\mathfrak{C}_A, \mathrm{Alg}_A)^{\otimes}_{\mathfrak{G}/}.$
\end{definition}{}

\begin{remark}\label{acci}
We note that the unwinding functor ${\mathrm{QID}\w{--}\mathbb{G}_a} ^{\mathrm{op}} \to \mathrm{Fun}(\mathfrak{C}_A, \mathrm{Alg}_A)_{\mathfrak{G}/}$ maps a quasi-ideal inside the full subcategory 
$\mathrm{Fun}(\mathfrak{C}_A, \mathrm{Alg}_A)^{\otimes}_{\mathfrak{G}/}.$ Indeed, this follows from \cref{env1} and \cref{coprod}. Therefore, the unwinding functor factors to give a functor still denoted as $\mathrm{Un}: {\mathrm{QID}\w{--}\mathbb{G}_a} ^{\mathrm{op}} \to \mathrm{Fun}(\mathfrak{C}_A, \mathrm{Alg}_A)^{\otimes}_{\mathfrak{G}/}.$
\end{remark}{}

\begin{proposition}\label{restriction}Let $F \in \mathrm{Fun}(\mathfrak{C}_A, \mathrm{Alg}_A)_{\mathfrak{G}/}^\otimes.$ For every $A$-algebra $B$, $\mathrm{Spec}\,F(B[x],x)$ naturally has the structure of a $B$-module scheme. Thus we obtain a (contravariant) functor $r: \mathrm{Fun}(\mathfrak{C}_A, \mathrm{Alg}_A)_{\mathfrak{G}/}^\otimes \to \mathrm{QID}\w{--} \mathbb{G}_a.$
\end{proposition}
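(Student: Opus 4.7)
The plan is to build the $B$-module scheme structure on $X_B := \mathrm{Spec}\, F(B[x], x)$ by applying the functor $F$ to the cogroup/comodule structure maps of $B[x]$, reinterpreted as morphisms in $\mathfrak{C}_A$. First, I would record the relevant morphisms out of $(B[x], (x))$: the coaddition $c \colon (B[x], x) \to (B[x]\otimes_B B[x], (x\otimes 1, 1\otimes x))$ given by $x \mapsto x\otimes 1 + 1\otimes x$; the counit $e \colon (B[x], x) \to (B, 0)$, $x \mapsto 0$; the antipode $\sigma \colon (B[x], x) \to (B[x], x)$, $x \mapsto -x$; and for each $b \in B$, the scalar multiplication $\mu_b \colon (B[x], x) \to (B[x], x)$, $x \mapsto bx$. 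Each is visibly a morphism in $\mathfrak{C}_A$, since the ideal $(x)$ is respected. The natural transformation $\mathfrak{G} \to F$ applied to $(B, 0) \to (B[x], (x))$ endows $F(B[x], x)$ with the structure of a $B$-algebra and simultaneously yields the ``point'' map $B[x] \simeq F(B[x], 0) \to F(B[x], x)$, i.e., $X_B \to \mathbb{G}_{a,B}$.

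Applying $F$ to the morphisms above, using condition~(1) to identify $F(B, 0)\simeq B$ and condition~(2) to identify $F(B[x]\otimes_B B[x], (x\otimes 1, 1\otimes x)) \simeq F(B[x], x)\otimes_B F(B[x], x)$, I obtain coaddition, counit, antipode, and scalar multiplication maps on $F(B[x], x)$. The cogroup axioms (coassociativity, counitality, compatibility of the antipode) follow from the corresponding identities in $B[x]$ by functoriality of $F$; for coassociativity one must identify $F$ of the triple tensor product $(B[x]^{\otimes 3}, (x\otimes 1\otimes 1, 1\otimes x\otimes 1, 1\otimes 1\otimes x))$ with $F(B[x], x)^{\otimes 3}$, which follows from iterating condition~(2). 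The $B$-module relations $\mu_{b+b'} = \mu_b + \mu_{b'}$ (using the coaddition just constructed), $\mu_{bb'} = \mu_b \circ \mu_{b'}$, $\mu_1 = \mathrm{id}$, etc., likewise descend from the analogous identities for $\mathbb{G}_{a,B}$. Base-change compatibility under $B \to B'$, required to package the family $\{X_B\}$ into a genuine $\mathbb{G}_a$-module over $A$ in the sense of \cref{detai}, is supplied by condition~(3).

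To see that $X_B$ is in fact a quasi-ideal in $\mathbb{G}_a$, I would apply $F$ to the commutative square in $\mathfrak{C}_A$ obtained by decomposing the coaddition $c$ through each of the two intermediate pairs $(B[x]\otimes_B B[x], (x\otimes 1))$ and $(B[x]\otimes_B B[x], (1\otimes x))$; the composition on either side with the point map recovers the two legs of the square in \cref{drin}. Concretely, this realizes the quasi-ideal diagram as the image under $F$ of a commutative diagram in $\mathfrak{C}_A$. Finally, functoriality in $F$ is automatic: a morphism $F \to F'$ in $\mathrm{Fun}(\mathfrak{C}_A, \mathrm{Alg}_A)_{\mathfrak{G}/}^{\otimes}$ commutes with all the structure maps and thus induces a morphism of the resulting pointed $\mathbb{G}_a$-modules, which is then necessarily a morphism of quasi-ideals.

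The main obstacle I anticipate is the bookkeeping for the quasi-ideal condition: one must exhibit the right commutative diagram in $\mathfrak{C}_A$ whose image under $F$ is the square of \cref{drin}, and verify that the identifications provided by condition~(2) make the two sides of that square literally match. The cogroup axioms and base-change compatibility, by contrast, are formal consequences of functoriality together with conditions (1)--(3), so they should be handled quickly.
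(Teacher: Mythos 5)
Your proposal is fundamentally the same as the paper's, and most of it is correct: you build the cogroup structure on $(B[x],x)$ in $\mathfrak{C}_{A_{(B,0)/}}$, apply $F$, use conditions (1)--(3) of \cref{contain} to make the identifications, get the $B$-action from functoriality along $x\mapsto bx$, assemble the family over all $B$ via \cref{detai} and condition (3), and get the point from $(B[x],0)\to(B[x],x)$. All of this matches the paper. Functoriality in $F$ is indeed automatic.

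There is, however, a genuine error in the quasi-ideal step, precisely where you flagged uncertainty. You propose to factor the \emph{coaddition} $c\colon (B[x],x)\to (B[x]\otimes_B B[x], (x\otimes 1, 1\otimes x))$, $x\mapsto x\otimes 1 + 1\otimes x$, through the intermediate pairs $(B[x]\otimes_B B[x], (x\otimes 1))$ and $(B[x]\otimes_B B[x], (1\otimes x))$. That factorization does not exist: $x\otimes 1 + 1\otimes x$ lies in the ideal $(x\otimes 1, 1\otimes x)$ but in neither principal ideal $(x\otimes 1)$ nor $(1\otimes x)$, so $c$ is not a morphism in $\mathfrak{C}_A$ into either intermediate pair. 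The diagram you need is not built from the coaddition at all. It is built from the map $x\mapsto x\otimes x$, which is the co-map dual to the $\mathbb{G}_a$-action rather than the group law, and which \emph{does} factor through both intermediates since $x\otimes x \in (x\otimes 1)\cap(1\otimes x)$. Concretely the correct commutative square in $\mathfrak{C}_A$ is the one the paper records as \cref{qp}, with both arrows out of $(A[x],x)$ sending $x\mapsto x\otimes x$ and the two remaining arrows being the natural inclusions of ideals. Applying $F$ to that square (together with the identifications from conditions (1)--(3)) produces exactly the quasi-ideal square of \cref{drin}; applying $F$ to the square you describe does not, because your square is not a square in $\mathfrak{C}_A$. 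The fix is local and does not disturb the rest of your argument.
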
{}

\begin{proof}
We note that $(B[x],x)$ is a cogroup object of $\mathfrak{C}_{A _{(B,0)/}}$. Therefore, it follows from definitions that $\w{Spec}\, F(B[x],x)$ is a group scheme. The $B$-action on $\w{Spec}\, F(B[x],x)$ is given by functoriality along the arrows $(B[x],x) \to (B[x],x)$ given by $x \mapsto b x$ for $b \in B.$ Therefore, $\w{Spec}\, F(B[x],x)$ is indeed naturally a $B$-module scheme over $B.$ \cref{detai} implies that varying this data over all $A$-algebras $B$ provides us with a $\mathbb{G}_a$-module. Further, functoriality along the arrows $(B[x], 0) \to (B[x], x)$ equips this $\mathbb{G}_a$-module with the structure of a pointed $\mathbb{G}_a$-module. To see that it is a quasi-ideal in $\mathbb{G}_a,$ we use functoriality along the commutative diagram in \cref{qp}.
\end{proof}{}

\begin{proposition}\label{satu1}The functor  $r:\mathrm{Fun}(\mathfrak{C}_A, \mathrm{Alg}_A)^{\otimes}_{\mathfrak{G}/}\to {\mathrm{QID}\w{--}\mathbb{G}_a} ^{\mathrm{op}}$ has a left adjoint given by ${\mathrm{Un}}$ from \cref{acci}.
\end{proposition}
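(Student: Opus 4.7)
\emph{Proof plan.} The goal is to exhibit, for each quasi-ideal $X$ in $\mathbb{G}_a$ and each $F \in \mathrm{Fun}(\mathfrak{C}_A, \mathrm{Alg}_A)^{\otimes}_{\mathfrak{G}/}$, a natural bijection
\[
\mathrm{Hom}_{\mathrm{Fun}(\mathfrak{C}_A, \mathrm{Alg}_A)^{\otimes}_{\mathfrak{G}/}}(\mathrm{Un}(X), F) \; \longleftrightarrow \; \mathrm{Hom}_{\mathrm{QID}\w{--}\mathbb{G}_a}(r(F), X).
\]
I will construct forward and backward maps $\Phi$ and $\Psi$, and check they are mutually inverse.

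Construction of $\Phi$: given $\eta: \mathrm{Un}(X) \to F$, evaluate at $(B[x], x)$ to obtain a $B$-algebra map $\eta_{(B[x], x)}: \mathrm{Env}_X(B[x], x) \to F(B[x], x)$. By \cref{env1} the source is $\Gamma(X_B, \mathcal{O}_{X_B})$, so at the level of schemes this is a $B$-morphism $\mathrm{Spec}\, F(B[x], x) = r(F)_B \to X_B$. Functoriality of $\eta$ along the arrows $(B[x],0) \to (B[x],x)$, along scaling $x \mapsto bx$, and along $(B[x],x) \to (B[x,y],(x,y))$ (plus the pushout conditions for $F$ of \cref{contain}) shows that, as $B$ varies, these morphisms assemble into a map $r(F) \to X$ of pointed $\mathbb{G}_a$-modules, automatically respecting the quasi-ideal structure since $X$ is a quasi-ideal. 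This defines $\Phi(\eta)$.

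Construction of $\Psi$: given $\psi: r(F) \to X$ and $(B,I) \in \mathfrak{C}_A$, I build $\Psi(\psi)_{(B,I)}: \mathrm{Env}_X(B,I) \to F(B,I)$ via the universal property of \cref{univ1}, i.e., by exhibiting a $B$-module map $\phi: I \to X(F(B,I))$ whose composition with $d: X(F(B,I)) \to F(B,I)$ is the canonical map $I \hookrightarrow B \to F(B,I)$. For each $i \in I$, the morphism $i_*: (B[x],x) \to (B,I)$ with $x \mapsto i$ induces a $B$-algebra map $F(B[x],x) \to F(B,I)$, which by condition 3 of \cref{contain} is the same datum as an element $v(i) \in r(F)(F(B,I))$. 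The assignment $v: I \to r(F)(F(B,I))$ is $B$-linear: additivity comes from factoring $x \mapsto i_1+i_2$ through $(B[x,y],(x,y))$ together with the coproduct-preservation condition 2, which identifies the addition on $r(F)$; scalar multiplication comes from factoring through the scaling arrow $x \mapsto bx$. Then set $\phi := \psi_{F(B,I)} \circ v$; the equalizer/point condition is ensured by compatibility of $\psi$ with the maps to $\mathbb{G}_a$, together with condition 1 of \cref{contain} giving $F(B,0) = B$. Naturality in $(B,I)$ and compatibility with the structural arrow from $\mathfrak{G}$ follow directly from the naturality of all the pieces used.

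Verification that $\Phi$ and $\Psi$ are mutually inverse: the identity $\Phi \circ \Psi = \mathrm{id}$ is a direct unwinding at $(B[x],x)$, where $v(x)$ is the identity of $F(B[x],x)$, so $\psi_{F(B[x],x)}(v(x))$ is precisely $\psi_B$ viewed tautologically. For $\Psi \circ \Phi = \mathrm{id}$, fix $\eta$ and compare the two $B$-algebra maps $\mathrm{Env}_X(B,I) \to F(B,I)$; by the universal property it suffices to compare their associated $B$-module maps $I \to X(F(B,I))$, and on each $i \in I$ these agree by naturality of $\eta$ along $i_*: (B[x],x) \to (B,I)$. I expect the main technical obstacle to be the $B$-linearity of $v$ in the construction of $\Psi$: it is precisely here that the three conditions of \cref{contain} are indispensable, as they encode the cogroup structure of $(B[x],x)$ (together with scaling) in a form that $F$ respects, so that the $\mathbb{G}_a$-module structure on $r(F)$ is faithfully transcribed.
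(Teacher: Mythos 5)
Your argument is correct and has the same overall skeleton as the paper's: both prove the adjunction by exhibiting the hom-set bijection
\[
\mathrm{Hom}(\mathrm{Un}(X), F) \;\simeq\; \mathrm{Hom}_{\mathrm{QID}\w{--}\mathbb{G}_a}(rF, X),
\]
and in both cases the easy direction (your $\Phi$, the paper's $s$) is just ``apply $r$ and use $r\,\mathrm{Un}(X)\simeq X$.'' The difference lies in how the hard direction is presented. The paper first manufactures a counit $\mathrm{Un}(rF)\to F$, and the technical heart is a lemma about a commuting square of presheaves: it presents $I$ as a coequalizer of free $B$-modules indexed by $B\times I$ and $I\times I$, passes to equalizers of presheaves, and uses condition 2 of \cref{contain} to transport $F^{\mathrm{op}}$ through these limits. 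You instead build $\Psi(\psi)_{(B,I)}$ directly from \cref{univ1} by writing down the evaluation $v:I\to r(F)(F(B,I))$, $i\mapsto F(i_*)$, and verifying $B$-linearity by hand: additivity via the cogroup arrow $(B[x],x)\to(B[x,y],(x,y))$ and condition 2, scalar multiplication via the scaling arrow $x\mapsto bx$. This is literally the paper's coequalizer presentation of $I$ read off on elements, so the two proofs use identical inputs; yours trades the abstract presheaf manipulation for a more concrete, element-by-element check, which arguably makes the role of the three conditions of \cref{contain} more visible. Your checks that $\Phi\circ\Psi$ and $\Psi\circ\Phi$ are identities (evaluating at the universal pair $(B[x],x)$, resp.\ comparing module maps $I\to X(F(B,I))$ via naturality of $\eta$ along $i_*$ and the functoriality $[x]\mapsto[i]$ of \cref{con}) are sound, though you implicitly rely on the injectivity of the assignment ``$B$-algebra map $\mapsto$ $B$-module map $I\to X(-)$'' coming from \cref{univ1}, which is worth spelling out; and the closing sentence about naturality in $(B,I)$ and compatibility with $\mathfrak{G}$ is asserted rather than checked, but it does follow from functoriality of $i_*$ and condition 1 of \cref{contain}.
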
{}

\begin{proof}Let $F \in \mathrm{Fun}(\mathfrak{C}_A, \mathrm{Alg}_A)^{\otimes}_{\mathfrak{G}/}$ and $X \in {\mathrm{QID}\w{--}\mathbb{G}_a}^{\mathrm{op}}.$ We prove that there is a natural bijection $$\mathrm{Hom}({\mathrm{Un}}(X), F) \simeq \mathrm{Hom}(X, rF).$$ Applying $r$ and noting that $r {\mathrm{Un}}(X) \simeq X$ by \cref{env1} and \cref{coprod} provides a map from the left hand side to the right hand side which will be called $s$. We will construct a map the other way. We will first construct a map ${\mathrm{Un}} (rF) \to F.$ To do so, we note that there is an isomorphism $$\varphi: \mathrm{Hom}_{(B,0)}((B,I), \cdot) \simeq \mathrm{Eq} \left(\mathrm{Hom}_{B\w{-Mod}}(I, \mathrm{Hom}_{(B,0)}((B[x], x), \cdot)) \,\, \substack{\longrightarrow \\ \longrightarrow}\,\ \mathrm{Hom}_{B\w{-Mod}}(I, \mathrm{Hom}_{(B,0)}((B[x], 0), \cdot)) \right)  $$ in $\mathrm{Psh}(\mathfrak{C}_{A _{(B,0)/}}^{\mathrm{op}}).$ Here on the right hand side, one of the maps is induced by the map $(B[x], 0) \to (B[x], x)$ in $\mathfrak{C}_A$ and the other map is obtained by sending everything to the element in $\mathrm{Hom}_{B\w{-Mod}}(I, \mathrm{Hom}_{(B,0)}((B[x],0), \cdot))$ corresponding to the map induced by the inclusion $I \subset B$ and the fact that $\mathrm{Hom}_{(B,0)} ((B[x],0), \cdot )$ is naturally valued in $B$-algebras.\vspace{2mm}

We note that $F$ induces a map $F^{\mathrm{op}}: \mathrm{Psh}(\mathfrak{C}_{A _{(B,0)/}}^{\mathrm{op}}) \to \mathrm{Psh}(\mathrm{Alg}_B^{\mathrm{op}}).$ Applying $F^{\mathrm{op}}$ to the above isomorphism $\varphi$ and noting that $F^{\mathrm{op}}\mathrm{Hom}_{(B,0)}((B,I), \cdot)\simeq \mathrm{Hom}_{B}(F(B,I), \cdot) $ we obtain a diagram 
$$\mathrm{Hom}_{B}(F(B,I), \cdot) \to  \left(F^{\mathrm{op}}\mathrm{Hom}_{B\w{-Mod}}(I, \mathrm{Hom}_{(B,0)}((B[x], x), \cdot)) \,\, \substack{\longrightarrow \\ \longrightarrow}\,\ F^{\mathrm{op}} \mathrm{Hom}_{B\w{-Mod}}(I, \mathrm{Hom}_{(B,0)}((B[x], 0), \cdot)) \right)$$ in $\mathrm{Psh}(\mathrm{Alg}_B^ {\mathrm{op}}).$

\begin{lemma}The following diagram in $\mathrm{Psh}(\mathrm{Alg}_B^ {\mathrm{op}})$ commutes.  

\begin{center}
\begin{tikzcd}
F^{\mathrm{op}}\mathrm{Hom}_{B\mathrm{-Mod}}(I, \mathrm{Hom}_{(B,0)}((B[x], x), \cdot)) \arrow[d] \arrow[rr,shift left=.45ex,] \ar[rr,shift right=.45ex,swap,] &  & F^{\mathrm{op}} \mathrm{Hom}_{B\mathrm{-Mod}}(I, \mathrm{Hom}_{(B,0)}((B[x], 0), \cdot)) \arrow[d] \\
\mathrm{Hom}_{B\mathrm{-Mod}}(I, \mathrm{Hom}_{B}(F(B[x], x), \cdot)) \arrow[rr,shift left=.45ex,] \ar[rr,shift right=.45ex,swap,]       &  &  \mathrm{Hom}_{B\mathrm{-Mod}}(I, \mathrm{Hom}_{B}(F(B[x], 0), \cdot))         
\end{tikzcd}
\end{center}{}
\end{lemma}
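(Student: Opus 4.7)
The plan is to verify commutativity pointwise: evaluate both rows at an arbitrary test $B$-algebra $S \in \mathrm{Alg}_B$ and chase a generic element through both compositions. Since $F^{\mathrm{op}}$ is determined as the left Kan extension along $F \colon \mathfrak{C}_{A_{(B,0)/}} \to \mathrm{Alg}_B$, we have the formula
\[
F^{\mathrm{op}}(G)(S) \;\simeq\; \mathrm{colim}_{((C,J),\,\alpha\colon F(C,J) \to S)}\, G(C,J),
\]
the colimit being taken over the comma category. Thus a generic element of the top-left group is represented by a quadruple $\bigl((C,J),\,\phi\colon I \to J,\,\alpha\colon F(C,J) \to S\bigr)$, and similarly for the top-right with $\phi$ replaced by its composition $\phi'\colon I \to J \hookrightarrow C$.

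The first substantive step is to make the two vertical arrows explicit on such representatives. Each $i \in I$ produces a $\mathfrak{C}_{A_{(B,0)/}}$-morphism $\phi_i \colon (B[x],*) \to (C,J)$ sending $x \mapsto \phi(i)$, with $* = x$ on the left (using $\phi(i) \in J$) and $* = 0$ on the right. Composing $F(\phi_i)$ with $\alpha$ yields a $B$-algebra map $F(B[x],*) \to S$, and the assignment $i \mapsto \alpha \circ F(\phi_i)$ is the induced $B$-linear map; functoriality of $F$ shows this descends to equivalence classes. On the right, condition~1 of \cref{contain} gives $F(B[x],0) \simeq B[x]$, so this map collapses to $\bar{\alpha} \circ \phi'$, where $\bar{\alpha}$ denotes the composite $C = \mathfrak{G}(C,J) \to F(C,J) \xrightarrow{\alpha} S$ arising from the structural natural transformation $\mathfrak{G} \to F$.

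The second step is to trace both horizontal maps through the diagram. For the map induced by $(B[x],0) \to (B[x],x)$, the composition going right-then-down sends our representative to $i \mapsto \bar{\alpha}(\phi(i))$ directly. The composition going down-then-right produces $i \mapsto (\alpha \circ F(\phi_i))(t)$, where $t \in F(B[x],x)$ is the image of $x$ under $F(B[x],0) \to F(B[x],x)$; the key identification is that functoriality of $F$ and of $\mathfrak{G} \to F$ send $F(\phi_i)(t)$ to the image of $\phi(i)$ under $C \to F(C,J)$, which recovers $\bar{\alpha}(\phi(i))$. For the other horizontal map -- the one sending every element to the canonical element corresponding to the inclusion $I \subset B$ -- both compositions yield the constant $B$-linear map induced by the structure map $I \subset B \to S$, essentially by construction of the canonical element and the compatibility with $\mathfrak{G} \to F$.

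The main obstacle is bookkeeping rather than any substantive difficulty: one must carefully unwind $F^{\mathrm{op}}$ applied to the non-representable presheaf $\mathrm{Hom}_{B\text{-Mod}}(I, \mathrm{Hom}_{(B,0)}((B[x],*), -))$ as a colimit over the comma category, and verify that the vertical arrows in the statement genuinely agree with the explicit description above. Once the arrows are identified in this form, commutativity is a direct consequence of naturality of $F$ and of $\mathfrak{G} \to F$.
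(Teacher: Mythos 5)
Your argument is correct but takes a genuinely different route from the paper. The paper never passes to elements: it expresses $I$ as a coequalizer of free $B$-modules indexed over $B \times I \coprod I \times I$ and $I$, dualizes so that $\mathrm{Hom}_{B\mathrm{-Mod}}(I, \mathrm{Hom}_{(B,0)}((B[x],*),\cdot))$ becomes an equalizer of products of representable presheaves, and then obtains the vertical arrows from the universal property of limits applied to $F^{\mathrm{op}}$; commutativity is then pure naturality. You instead unwind $F^{\mathrm{op}}$ via the colimit formula for the left Kan extension, evaluate at a test algebra $S \in \mathrm{Alg}_B$, and chase explicit representatives $((C,J), \alpha, \phi)$ through both compositions. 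What your approach buys is that the vertical arrows become computable as $(\phi,\alpha) \mapsto (i \mapsto \alpha \circ F(\phi_i))$, which makes the commutativity check transparent once the auxiliary map $\bar\alpha \colon C \simeq \mathfrak{G}(C,J) \to F(C,J) \to S$ is introduced; the cost is that $B$-linearity and well-definedness of $i \mapsto \alpha \circ F(\phi_i)$ must be verified by hand, whereas the paper's construction is $B$-linear by design. The one ingredient your sketch does not name explicitly is condition~2 of \cref{contain}: additivity of $i \mapsto \alpha \circ F(\phi_i)$ amounts to checking that $\phi_{i+i'}$ factors through the cogroup comultiplication of $(B[x],x)$ and that $F$ carries this to the comultiplication of $F(B[x],x)$, which is exactly the isomorphism $F(B[x]\otimes_B B[x], (x\otimes 1, 1\otimes x)) \simeq F(B[x],x) \otimes_B F(B[x],x)$. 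The paper invokes this assumption explicitly; you should do so as well when filling in the bookkeeping you defer.
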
{}

\begin{proof}This follows from universal property and assumption 2 in \cref{contain} on $F$. We will show how to construct a map $F^{\mathrm{op}}\mathrm{Hom}_{B\mathrm{-Mod}}(I, \mathrm{Hom}_{(B,0)}((B[x], x), \cdot)) \to \mathrm{Hom}_{B\mathrm{-Mod}}(I, \mathrm{Hom}_{B}(F(B[x], x), \cdot)).$ We note that $I$ is the coequalizer of a diagram $\left(F^{B \times I \coprod I \times I} \,\, \substack{\longrightarrow \\ \longrightarrow}\,\  F^{I}\right)$ of free $B$-modules where the first map sends the basis elements $x_{(b,i)} \to x_{bi}$ and $x_{(i,i')} \to x_{i+i'}$ and the second map sends $x_{(b,i)} \to b x_{i}$ and $x_{(i,i')} \to x_{i}+ x_{i'}.$ Therefore, $\mathrm{Hom}_{B\mathrm{-Mod}}(I, \mathrm{Hom}_{(B,0)}((B[x],x), \cdot))$ is the equalizer of the two maps $$ \prod_{I}\mathrm{Hom}_{(B,0)}((B[x],x), \cdot)\,\, \substack{\longrightarrow \\ \longrightarrow}\,\ \prod_{B\times I \coprod I \times I} \mathrm{Hom}_{(B,0)} ((B[x], x), \cdot).$$ One of the maps corresponds to the map determined by $x_{(b,i)} \to x_{bi}$ and $x_{(i,i')}\to x_{i+ i'}.$ The other map is induced by combining the maps $$\prod_{I}\mathrm{Hom}_{(B,0)}((B[x],x), \cdot) \to \mathrm{Hom}_{(B,0)}((B[x], x), \cdot)\to  \mathrm{Hom}_{(B,0)}((B[x], x), \cdot), $$ where the first map is projection from $i$-th factor and the second map is obtained by using $(B[x],x) \to (B[x], x)$ that sends $x \to bx$ for $b \in B$; and $$ \prod_I \mathrm{Hom}_{(B,0)}((B[x], x), \cdot) \to  \mathrm{Hom}_{(B,0)}((B[x], x), \cdot)\times  \mathrm{Hom}_{(B,0)}((B[x], x), \cdot) \to  \mathrm{Hom}_{(B,0)}((B[x], x), \cdot)$$ where the first map is projection from $(i,i')$-th factor and the last map uses the map $(B[x], x) \to  (B[x], x)\otimes (B[x], x)$ given by $x \to x\otimes 1 + 1 \otimes x.$ Now universal property of limits and assumption 2 in \cref{contain} constructs the desired map $$F^{\mathrm{op}}\mathrm{Hom}_{B\mathrm{-Mod}}(I, \mathrm{Hom}_{(B,0)}((B[x], x), \cdot)) \to \mathrm{Hom}_{B\mathrm{-Mod}}(I, \mathrm{Hom}_{B}(F(B[x], x), \cdot))$$ and the naturality guarantees the commutativity of the diagram in the Lemma. \end{proof}{}

Thus we obtain a map $$\mathrm{Hom}_{B}(F(B,I), \cdot) \to  \mathrm{Eq}\left(\mathrm{Hom}_{B\mathrm{-Mod}}(I, \mathrm{Hom}_{B}(F(B[x], x), \cdot)) \,\, \substack{\longrightarrow \\ \longrightarrow}\,\  \mathrm{Hom}_{B\mathrm{-Mod}}(I, \mathrm{Hom}_{B}(F(B[x], 0), \cdot)) \right)$$ in $\mathrm{Psh}(\mathrm{Alg}_B^ {\mathrm{op}})$. Using \cref{univ1}, we note that the right hand side is corepresented by $\mathrm{Env}_{rF}(B,I)$. Thus we obtain a natural map $\mathrm{Env}_{rF}(B,I) \to F(B,I).$ This provides the map ${\mathrm{Un}}(rF) \to F$ that we wanted. Now given a map $X \to rF$ in ${\mathrm{QID}\w{--}\mathbb{G}_a} ^{\mathrm{op}}$, we obtain a map ${\mathrm{Un}}(X) \to {\mathrm{Un}}(rF) \to F.$ This gives a map from $\mathrm{Hom}(X, rF)$ to $\mathrm{Hom}({\mathrm{Un}}(X), F)$ which will be called $t$. By \cref{env1}, it follows that $st$ is identity. In order to show that $ts$ is identity, it will be sufficient to show that if there are two natural transformations $U,V: {\mathrm{Un}}(X) \to F$ that are mapped to the same element by $s$ then $U$ and $V$ are the same natural transformation. Note that we always have a commutative diagram 

\begin{center}
\begin{tikzcd}
{{\mathrm{Un}}(r {\mathrm{Un}}(X))} \arrow[d] \arrow[rr, "\simeq"] &  & { {\mathrm{Un}}(X)} \arrow[d, "U"'] \arrow[d, "V"] \\
{{\mathrm{Un}}(rF)} \arrow[rr]                                   &  & { F }                                          
\end{tikzcd}
\end{center}{}
Since the upper horizontal arrow is an isomorphism, the above diagram shows that $U$ and $V$ are the same natural transformation as desired.\end{proof}{}

\begin{proposition}\label{high11}The functor $${\mathrm{Un}}: {\mathrm{QID}\w{--}\mathbb{G}_a}^{\mathrm{op}} \to \mathrm{Fun}(\mathfrak{C}_A, \mathrm{Alg}_A)_{\mathfrak{G}/}$$ is fully faithful.
\end{proposition}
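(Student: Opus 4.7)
The plan is to deduce this from Proposition~\ref{satu1}. By Remark~\ref{acci}, the unwinding functor factors through the full subcategory $\mathrm{Fun}(\mathfrak{C}_A, \mathrm{Alg}_A)^{\otimes}_{\mathfrak{G}/} \hookrightarrow \mathrm{Fun}(\mathfrak{C}_A, \mathrm{Alg}_A)_{\mathfrak{G}/},$ and since a full subcategory inclusion is automatically fully faithful, it suffices to show that $\mathrm{Un}: {\mathrm{QID}\w{--}\mathbb{G}_a}^{\mathrm{op}} \to \mathrm{Fun}(\mathfrak{C}_A, \mathrm{Alg}_A)^{\otimes}_{\mathfrak{G}/}$ is fully faithful.

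By Proposition~\ref{satu1}, this version of $\mathrm{Un}$ is a left adjoint to $r.$ It is a standard categorical fact that a left adjoint is fully faithful precisely when the unit of the adjunction is a natural isomorphism. This reduces the task to verifying that the unit $X \to r(\mathrm{Un}(X))$ is an isomorphism for every quasi-ideal $X$ in $\mathbb{G}_a.$ Once this is established, for any pair $X, Y,$ the adjunction isomorphism and the unit isomorphism give natural bijections
$$\mathrm{Hom}(\mathrm{Un}(X), \mathrm{Un}(Y)) \simeq \mathrm{Hom}(X, r(\mathrm{Un}(Y))) \simeq \mathrm{Hom}(X, Y),$$
yielding the desired full faithfulness.

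To verify the unit is an isomorphism, I would unwind definitions. Following \cref{restriction}, the pointed $\mathbb{G}_a$-module $r(\mathrm{Un}(X))$ is characterized, over an $A$-algebra $B,$ by the $B$-module scheme $\mathrm{Spec}\, \mathrm{Un}(X)(B[x], x) = \mathrm{Spec}\, \mathrm{Env}_X(B[x], x),$ with the data of the point recorded via functoriality along $(B[x], 0) \to (B[x], x).$ By \cref{env1}, we have a canonical functorial identification $\mathrm{Env}_X(B[x], x) \simeq \Gamma(X, \mathcal{O}_X) \otimes_A B = \Gamma(X_B, \mathcal{O}_{X_B}),$ under which the map induced by $(B[x], 0) \to (B[x], x)$ coincides with the map coming from the point $X \to \mathbb{G}_a.$ Combined with the compatibility with the $\mathbb{G}_a$-action (functoriality along $x \mapsto bx$) this identifies $r(\mathrm{Un}(X))$ with $X$ as a pointed $\mathbb{G}_a$-module, which furnishes the unit isomorphism.

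The essential work behind \cref{high11} has already been carried out in the proofs of \cref{env1}, \cref{coprod}, and \cref{satu1}, so no serious obstacle remains. The only point requiring care is tracking the compatibilities (with the $B$-module structure, the point, and the quasi-ideal condition) under the identification $r(\mathrm{Un}(X)) \simeq X,$ all of which follow by reading off the construction of $r$ from the proof of \cref{restriction} and comparing against the explicit description of $\mathrm{Env}_X(B[x], x)$ given in \cref{env1}.
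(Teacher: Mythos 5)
Your proposal is correct and follows essentially the same route as the paper: \cref{satu1} gives the adjunction $\mathrm{Un} \dashv r$, and $r\,\mathrm{Un}(X) \simeq X$ (established via \cref{env1} and \cref{coprod}) shows the unit is an isomorphism, so the left adjoint is fully faithful. Your elaboration of how the identification $r(\mathrm{Un}(X)) \simeq X$ tracks the point and the $B$-module structure is an accurate unpacking of what the paper leaves implicit.
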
{}

\begin{proof}Follows from \cref{satu1} since $r {\mathrm{Un}} (X) \simeq X$.\end{proof}{}

\subsection{Unwinding pointed $\mathbb{G}_a^{\mathrm{perf}}$-modules I}\label{sec3.2}In this section, we will record an analogue of the construction from previous section for pointed $\mathbb{G}_a^{\mathrm{perf}}$-modules. In order to do that, some modifications are needed. As is the case with $\mathbb{G}_a^{\mathrm{perf}}$-modules, we work with a fixed prime $p.$

\begin{notation}
Let $\mathscr{P}I$ denote the category of pairs $(B,I)$ where $B$ is a perfect ring and $I$ is an ideal. Morphisms are defined to be maps $(B,I) \to (B', I')$ where $B \to B'$ is a ring homomorphism such that $I$ is mapped inside $I'.$ Let $A$ be a fixed Artinian local ring with residue field $\mathbb{F}_p.$ Let $\mathbb{G}_a^{\mathrm{perf}} \w{--}\mathrm{Mod}_*$ denote the category of pointed $\mathbb{G}_a^{\mathrm{perf}}$-modules over $A.$
\end{notation}{}

\begin{construction}[Unwinding]\label{cons4}We will construct a (contravariant) functor $${\mathrm{Un}}: \mathbb{G}_a^{\mathrm{perf}} \w{--}\mathrm{Mod}_* \to \mathrm{Fun}(\mathscr{P}I, \mathrm{Alg}_A).$$ We will say that ${\mathrm{Un}}(X)$ is the functor obtained by \textit{unwinding} the pointed $\mathbb{G}_a^{\mathrm{perf}}$-module $X.$ To describe the construction, we fix an $X \in \mathbb{G}_a^{\mathrm{perf}} \w{--}\mathrm{Mod}_*.$ Given $(B,I) \in \mathscr{P}I,$ we obtain a diagram $X_B \to \mathbb{G}_{a,B} ^\mathrm{perf}$ of $B$-module schemes over $W_A(B)$ by \cref{perfect}. Now the ideal $I$ can be regarded as a $B$-module and thus by applying \cref{cons1}, we get a map $\mathscr{T}_{\mathbb{G}_{a,B}^\mathrm{perf}}(I) \to \mathscr{T}_{X_B}(I).$ Since $I \subset B,$ by the universal property of the construction in \cref{cons1}, we obtain a map $\mathscr{T}_{\mathbb{G}_{a,B}^\mathrm{perf}}(I) \to W_A(B).$ By composition, we get a map $\mathscr{T}_{\mathbb{G}_{a,B}^\mathrm{perf}}(I) \to W_A(B) \to \mathscr{T}_{\mathbb{G}_{a,B}^\mathrm{perf}}(I) \to \mathscr{T}_{X_B}(I).$ Therefore, we now have two maps $$\mathscr{T}_{\mathbb{G}_{a,B}^\mathrm{perf}}(I) \,\, \substack{\longrightarrow \\ \longrightarrow}\,\,\mathscr{T}_{X_B}(I).$$ We denote the coequalizer of the above diagram by $\mathrm{Env}_X(B,I)$ which is naturally a $W_A(B)$-algebra. Now we define ${\mathrm{Un}}(X)(B,I) :=\mathrm{Env}_X(B,I).$
\end{construction}{}

\begin{remark}[Unwinding via universal property]\label{univ2}We describe the universal property of $\mathrm{Env}_X (B,I)$ as a $W_A(B)$-algebra. For a $W_A(B)$-algebra $S,$ we note that $\mathbb{G}_{a,B}^\mathrm{perf} (S)= S^\flat$ is naturally a $B$-module. In fact, there is a map $B \to S^\flat$ of $B$-modules giving a natural map $I \to S^\flat.$ This gives an element $* \in \mathrm{Hom}_B (I, S^\flat).$ By \cref{perfect}, $X$ can be regarded as a $B$-module scheme over $W_A(B)$. Therefore, we obtain two maps $\mathrm{Hom}_{B}(I, X(S)) \,\, \substack{\longrightarrow \\ \longrightarrow}\,\, \mathrm{Hom}_B (I, S^\flat).$ Here one of the maps (of sets) sends everything to $*$ and the other one is the map induced by the data of the point $X \to \mathbb{G}_a ^{\mathrm{perf}}.$ We note that by \cref{cons4}, we have $$\mathrm{Hom}_{W_A(B)}(\mathrm{Env}_X(B,I), S) \simeq \mathrm{Eq}(\mathrm{Hom}_{B}(I, X(S)) \,\, \substack{\longrightarrow \\ \longrightarrow}\,\, \mathrm{Hom}_B (I, S^\flat)). $$ 
\end{remark}{}

\begin{remark}\label{dinner}We note that there is a natural functor $\mathfrak{G}: \mathscr{P}I \to \mathrm{Alg}_A$ given by $(B,I) \mapsto W_A(B).$ From \cref{univ2}, it follows that $\mathfrak{G} \simeq  {\mathrm{Un}}(\mathbb{G}_a^{\mathrm{perf}}).$ Let $\mathrm{Fun}(\mathscr{P}I, \mathrm{Alg}_A)_{\mathfrak{G}/}$ denote the category of functors $F: \mathscr{P}I \to \mathrm{Alg}_A$ equipped with a natural transformation $\mathfrak{G} \to F.$ The morphisms are required to be compatible with this data. It follows that in \cref{cons4}, we actually produced a (contravariant) functor $${\mathrm{Un}}: \mathbb{G}_a^{\mathrm{perf}} \w{--}\mathrm{Mod}_* \to \mathrm{Fun}(\mathscr{P}I, \mathrm{Alg}_A)_{\mathfrak{G}/}.$$
\end{remark}{}

\begin{remark}\label{kle}For any pointed $\mathbb{G}_a^{\mathrm{perf}}$-module $X$, we have the isomorphism ${\mathrm{Un}}(X) (B,0) \simeq W_A(B).$ This follows from the universal property of the unwinding construction. Thus the natural map $\mathfrak{G} \to \mathrm{Un}(X)$ induces isomorphism restricted to the full subcategory of $\mathscr{P}I $ spanned by objects of the form $(B,0)$ for a perfect ring $B.$
\end{remark}{}

\begin{remark}\label{satu4}We point out that unless $A = \mathbb{F}_p,$ sending $(B,I) \mapsto B/I$ is in general not an object of $\mathrm{Fun}(\mathscr{P}I, \mathrm{Alg}_A)_{\mathfrak{G}/}$ that can be obtained via applying the unwinding functor. When we are working over $A = \mathbb{F}_p,$ the functor $(B,I)\to B/I$ is naturally isomorphic to ${\mathrm{Un}} (\alpha^{\natural})$, where $\alpha^{\natural}$ is the pointed $\mathbb{G}_a^{\mathrm{perf}}$-module as described in \cref{alpha}. Further, in this case, the functor $(B,I) \mapsto (B/I)_{\mathrm{perf}}:= \mathrm{colim}_{x \mapsto x^p} (B/I)$ is the unwinding of the pointed $\mathbb{G}_a^{\mathrm{perf}}$-module corresponding to zero.
\end{remark}{}

\begin{remark}\label{gax1}
Let $A \to A'$ be a map of Artinian local rings. Let $(B,I) \in \mathscr{P}I$. We note that $W_{A'}(B) \simeq W_{A}(B) \otimes_{A} A'.$ Further, if $X$ is a pointed $\mathbb{G}_a^{\w{perf}}$-module over $A,$ then $X':=X \times_{\w{Spec}\, A} \w{Spec}\, A'$ is naturally a pointed $\mathbb{G}_a^{\w{perf}}$-module over $A'.$ From the universal property described in \cref{univ2}, it follows that $$\mathrm{Env}_{X'} (B,I) \simeq \mathrm{Env}_{X}(B,I)\otimes_{W_A(B)} W_{A'} (B),$$ as a $W_A(B)$-algebra. This implies that $\mathrm{Un}(X')(B,I) \simeq \w{Un} (X) (B,I) \otimes_{A} A'.$  
\end{remark}{}

Having discussed the unwinding functor for pointed $\mathbb{G}_a$ and $\mathbb{G}_a^{\mathrm{perf}}$-modules, let us record a statement regarding their compatibility. For simplicity, we work over the fixed base ring $\mathbb{F}_p.$ In this case, one has the functor $u^*: \mathbb{G}_a \w{--}\mathrm{Mod}_* \to \mathbb{G}_a^{\mathrm{perf}} \w{--}\mathrm{Mod}_*$ from \cref{pullback}. We prove the following

\begin{proposition}\label{move5}Let $X$ be a pointed $\mathbb{G}_a$-module over $\mathbb{F}_p.$ Let $(B,I) \in \mathscr{P}I.$ Then there is a natural isomorphism
$$\mathrm{Env}_X (B,I) \simeq \mathrm{Env}_{u^*X}(B,I).$$
\end{proposition}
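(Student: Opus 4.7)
The plan is to compare both envelopes via their universal properties. Since $B$ is a perfect $\mathbb{F}_p$-algebra, one has $W_{\mathbb{F}_p}(B) = B$, so both $\mathrm{Env}_X(B,I)$ and $\mathrm{Env}_{u^*X}(B,I)$ are naturally $B$-algebras, and by Yoneda it suffices to produce a bijection of their functors of points on $B$-algebras that is natural in the test algebra. Using \cref{univ1}, for any $B$-algebra $S$, the set $\mathrm{Hom}_{B\text{-Alg}}(\mathrm{Env}_X(B,I), S)$ consists of the $B$-linear maps $\phi \colon I \to X(S)$ such that $d \circ \phi$ equals the canonical map $I \hookrightarrow B \to S$, where $d \colon X \to \mathbb{G}_a$ is the point. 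Using \cref{univ2}, $\mathrm{Hom}_{B\text{-Alg}}(\mathrm{Env}_{u^*X}(B,I), S)$ consists of the $B$-linear maps $\psi \colon I \to u^*X(S)$ whose composite to $S^\flat$ equals the canonical map $I \hookrightarrow B = B^\flat \to S^\flat$ (here using that $B$ is perfect).

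Next I would unravel the fiber product $u^*X(S) = X(S) \times_S S^\flat$ at the level of $B$-modules: a $B$-linear $\psi \colon I \to u^*X(S)$ is the same datum as a pair $(\phi, \beta)$ of $B$-linear maps $\phi \colon I \to X(S)$ and $\beta \colon I \to S^\flat$ satisfying $d \circ \phi = u \circ \beta$, where $u \colon S^\flat \to S$ is the natural tilt map. The equalizer condition defining $\mathrm{Env}_{u^*X}(B,I)$ pins $\beta$ down to be the canonical map $I \hookrightarrow B \to S^\flat$; once $\beta$ is so fixed, $u \circ \beta$ collapses to the canonical map $I \to B \to S$, and the compatibility $d \circ \phi = u \circ \beta$ becomes precisely the condition carving out $\mathrm{Env}_X(B,I)$. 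Conversely, any $\phi$ meeting this condition pairs uniquely with the canonical $\beta$ to yield an admissible $\psi$.

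This yields a bijection of Hom-sets natural in $S$, hence by Yoneda an isomorphism $\mathrm{Env}_X(B,I) \simeq \mathrm{Env}_{u^*X}(B,I)$ of $B$-algebras. Naturality in morphisms $(B,I) \to (B',I')$ of $\mathscr{P}I$ is automatic, since every step of the identification flows from a universal property and from the fiber product $u^*X = X \times_{\mathbb{G}_a} \mathbb{G}_a^{\mathrm{perf}}$, both of which are functorial. I do not expect a real obstacle: the essential input is simply that perfectness of $B$ makes the ``tilt component'' of a $B$-linear lift into $u^*X(S)$ tautological, so enforcing the extra datum demanded by $\mathrm{Env}_{u^*X}$ imposes no condition beyond what $\mathrm{Env}_X$ already records.
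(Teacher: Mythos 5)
Your proposal is correct and takes essentially the same approach as the paper: the paper's proof also invokes the universal properties from \cref{univ1} and \cref{univ2} together with the pullback square $u^*X(S) = X(S) \times_S S^\flat$, and your writeup just makes explicit the bookkeeping (fixing the $\beta$-component, using perfectness of $B$ to identify the canonical maps) that the paper leaves to the reader.
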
{}

\begin{proof}This follows by using the universal properties of $\mathrm{Env}_X(B,I)$ and $\mathrm{Env}_{u^*X}(B,I)$ as a $B$-algebra from \cref{univ1} and \cref{univ2} and the following pullback diagram of $B$-modules for a given $B$-algebra $S$ from \cref{pullback}.

\begin{center}
\begin{tikzcd}
u^*X(S) \arrow[r] \arrow[d] & S^\flat \arrow[d] \\
X(S) \arrow[r]            & S \end{tikzcd}
\end{center}{}
\end{proof}{}

\begin{example}In \cref{satu4} we stated that ${\mathrm{Un}}(\alpha^{\natural})$ is the functor that sends $(B,I) \in \mathscr{P}I$ to $B/I.$ This also follows from \cref{move4} and \cref{move5}.
\end{example}{}

\begin{example}\label{example3}We note that ${\mathrm{Un}}(u^*W[F])$ is the functor that sends $(B,I) \in \mathscr{P}I$ to $D_B(I).$ This follows from \cref{example2}.
\end{example}{}

\begin{proposition}\label{lunch}
Let $X \in \mathbb{G}_a^{\mathrm{perf}}\w{--} \mathrm{Mod}_*.$ Let $B$ be a perfect ring. Then $\mathrm{Env}_X (B[x^{1/p^\infty}],x) \simeq \Gamma(X, \mathcal{O}_X) \otimes_A W_A(B)$ as a $W_A(B)$-algebra. The map $\mathrm{Env}_X(B[x^{1/p^\infty}],0) \to \mathrm{Env}_X (B[x^{1/p^\infty}],x)  $ identifies with the map $W_A(B)[x^{1/p^\infty}] \to \Gamma (X, \mathcal{O}_X) \otimes_A W_A(B)$ corresponding to the data of the point $X \to \mathbb{G}_a^{\mathrm{perf}}.$
\end{proposition}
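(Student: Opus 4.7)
The plan is to mimic the proof of \cref{env1} by applying the universal property of $\mathrm{Env}_X$ from \cref{univ2}. Write $B' := B[x^{1/p^\infty}]$; since $x$ is a non-zero-divisor in $B'$, the ideal $(x)$ is a free $B'$-module of rank one with basis $x$. Therefore, for any $W_A(B')$-algebra $R$, evaluation at $x$ identifies $\mathrm{Hom}_{B'}((x), X(R)) \simeq X(R)$ and $\mathrm{Hom}_{B'}((x), R^\flat) \simeq R^\flat$, and \cref{univ2} specializes to
\[
\mathrm{Hom}_{W_A(B')}(\mathrm{Env}_X(B', (x)), R) \;\simeq\; \{\xi \in X(R) : d_R(\xi) = \bar{x} \text{ in } R^\flat\},
\]
where $d_R$ is the map induced by the point $d: X \to \mathbb{G}_a^{\mathrm{perf}}$ and $\bar{x}$ is the image of $x \in B' = W_A(B')^\flat$ in $R^\flat$ (using \cref{lem1}).

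The next step is to pass from $W_A(B')$-algebras to $W_A(B)$-algebras. Since $B'$ is perfect and $B' = B \otimes_{\mathbb{F}_p} \mathbb{F}_p[x^{1/p^\infty}]$, a $W_A(B)$-algebra map $W_A(B') \to R$ is equivalent (via the $W_A \dashv \flat$ adjunction used in the proof of \cref{perfect}) to specifying $\bar{x} \in R^\flat$. Hence
\[
\mathrm{Hom}_{W_A(B)}(\mathrm{Env}_X(B', (x)), R) \;\simeq\; \coprod_{\bar{x} \in R^\flat} \{\xi \in X(R) : d_R(\xi) = \bar{x}\} \;\simeq\; X(R),
\]
where the last bijection absorbs $\bar{x}$ as the value $d_R(\xi)$. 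Since the functor $R \mapsto X(R)$ on $W_A(B)$-algebras is corepresented by $\Gamma(X, \mathcal{O}_X) \otimes_A W_A(B)$, Yoneda yields the desired isomorphism of $W_A(B)$-algebras.

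For the second assertion, by \cref{kle} the map $\mathrm{Env}_X(B', 0) \to \mathrm{Env}_X(B', (x))$ is the $W_A(B')$-algebra structure map $W_A(B') \to \Gamma(X, \mathcal{O}_X) \otimes_A W_A(B)$, and under Yoneda this corresponds to the tautological point $\xi = \mathrm{id}_X \in X(\Gamma(X, \mathcal{O}_X) \otimes_A W_A(B))$. A direct check shows this sends the Teichm\"uller lift $[x^{1/p^n}] \in W_A(B')$ to $t^{1/p^n} \otimes 1$, where $t = d^*(y) \in \Gamma(X, \mathcal{O}_X)$. Precomposing with the natural ring map $W_A(B)[x^{1/p^\infty}] \to W_A(B')$ sending $x^{1/p^n} \mapsto [x^{1/p^n}]$ (well-defined by multiplicativity of Teichm\"uller lifts) recovers precisely the map $W_A(B)[x^{1/p^\infty}] \to \Gamma(X, \mathcal{O}_X) \otimes_A W_A(B)$ obtained by base-changing the point $X \to \mathbb{G}_a^{\mathrm{perf}}$ to $W_A(B)$. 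The main subtlety lies in the passage from $W_A(B')$-algebras to $W_A(B)$-algebras, where we use the perfect-ness of $B'$ and the decomposition $B' = B \otimes_{\mathbb{F}_p} \mathbb{F}_p[x^{1/p^\infty}]$ to freely parametrize $W_A(B')$-algebra extensions by elements of $R^\flat$.
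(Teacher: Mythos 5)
Your proof is correct and is essentially the universal-property dual of the paper's argument. The paper proves the claim directly from \cref{cons4}: since $(x)$ is free of rank one over $B' := B[x^{1/p^\infty}]$, by \cref{sharp} one has $\mathscr{T}_{X_{B'}}((x)) \simeq \Gamma(X_{B'},\mathcal{O})$, and the coequalizer $W_A(B')[y^{1/p^\infty}] \rightrightarrows \Gamma(X,\mathcal{O}_X)\otimes_A W_A(B')$ (one map given by $y^{1/p^n} \mapsto x^{1/p^n}$, the other by the point) visibly yields $\Gamma(X,\mathcal{O}_X)\otimes_A W_A(B)$. You instead characterize $\mathrm{Env}_X(B',(x))$ by what it corepresents via \cref{univ2}, identify that functor as $R \mapsto X(R)$ on $W_A(B)$-algebras after coproducting over all $W_A(B')$-algebra structures, and then invoke Yoneda. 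The two presentations are adjoint to one another and rest on the same two facts: that $(x) \subset B'$ is free of rank one, and that $W_A(B') \simeq W_A(B)[x^{1/p^\infty}]$. One small advantage of your route is that this latter identification is surfaced explicitly through the tensor decomposition $B' = B \otimes_{\mathbb{F}_p} \mathbb{F}_p[x^{1/p^\infty}]$ and the $W_A \dashv (\,\cdot\,)^\flat$ adjunction, whereas the paper uses it tacitly when it rewrites $W_A(B[x^{1/p^\infty}])[y^{1/p^\infty}]$ as $W_A(B)[x^{1/p^\infty}][y^{1/p^\infty}]$. For the second assertion, your argument is a slightly hand-wavy but correct description of the same diagram chase; it would read more cleanly if you noted explicitly that the natural ring map $W_A(B)[x^{1/p^\infty}] \to W_A(B')$ you precompose with is an isomorphism (which you implicitly established in the first part), so that "identifies with" in the statement is fully justified.
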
{}

\begin{proof}We compute using \cref{cons4}. Since $x$ is a non-zero divisor in $B[x^{1/p^\infty}],$ the ideal it generates is free of rank $1.$ Therefore, by using \cref{sharp}, we see that $\mathrm{Env}_X (B[x^{1/p^\infty}],x)$ is computed as a coequalizer of the following diagram $$W_A(B[x^{1/p^\infty}])[y^{1/p^\infty}]\,\, \substack{\longrightarrow \\ \longrightarrow}\,\, \Gamma(X, \mathcal{O}_X) \otimes_A W_A (B[x^{1/p^\infty}]).$$ Here one of the map corresponds to the map $W_A(B)[x^{1/p^\infty}][y^{1/p^\infty}] \to W_A(B) [x^{1/p^\infty}]$ that sends $y ^{1/p^n} \to x^{1/p^n}$ for all $n$ and is a $W_A(B)[x^{1/p^\infty}]$-algebra map. The other map corresponds to the data of the point, i.e., obtained by base changing a map $A[y^{1/p^\infty}] \to \Gamma(X, \mathcal{O}_X).$ Taking the coequalizer we get the desired conclusion. \end{proof}{}

\begin{definition}\label{quasi-ideal}A pointed $\mathbb{G}_a^{\mathrm{perf}}$-module $X$ with the data of the point denoted as $d: X \to \mathbb{G}_a^{\mathrm{perf}}$ will be called a \textit{quasi-ideal in} $\mathbb{G}_a^{\mathrm{perf}}$ if the following diagram commutes.

\begin{center}
\begin{tikzcd}
X \times X \arrow[r, "\mathrm{id}\times d "] \arrow[d, "d \times \mathrm{id} "] & X \times \mathbb G_a^{\mathrm{perf}} \arrow[d, "\mathrm{action}"] \\
\mathbb G_a^{\mathrm{perf}} \times X \arrow[r, "\mathrm{action}"] & X                             
\end{tikzcd}
\end{center}{}
We will denote the category of quasi-ideals in $\mathbb{G}_a^{\mathrm{perf}}$ by $\mathrm{QID}\w{--}\mathbb{G}_a^{\mathrm{perf}}$ which is the full subcategory spanned by quasi-ideals in $\mathbb{G}_a^{\mathrm{perf}}$ inside $\mathbb{G}_a^{\mathrm{perf}}\w{--}\mathrm{Mod}_*.$
\end{definition}{}

\begin{remark}\label{qu}Using the inclusion $\mathrm{QID}\w{--}\mathbb{G}_a^{\mathrm{perf}} \to \mathbb{G}_a^{\mathrm{perf}}\w{--}\mathrm{Mod}_*$ of categories, we can define a (contravariant) functor $\mathrm{QID}\w{--}\mathbb{G}_a^{\mathrm{perf}}  \to \mathrm{Fun}(\mathscr{P}I, \mathrm{Alg}_A)_{\mathfrak{G}/} $ which will again be called unwinding and will be denoted by ${\mathrm{Un}}$.
\end{remark}{}

\begin{proposition}\label{lunch1}Let $B$ be a perfect ring. Let $(f_j)_{j \in \mathscr{J}}$ be a collection of non-zero divisors in $B$ and let $I$ be the ideal generated by them. Let $F$ be the free module over $B$ spanned by $x_j$ for $j \in \mathscr{J}.$ We assume that the $B$-module map $F \to I$ that sends $x_i \to f_i$ has kernel generated by $(f_i x_j - f_j x_i)$ for $i,j \in \mathscr{J}.$ Let $X$ be a quasi-ideal in $\mathbb{G}_a^{\mathrm{perf}}.$ Then the natural map $$\coprod_{j \in \mathscr{J}}\mathrm{Env}_X (B, f_j) \to \mathrm{Env}_X (B,I)$$ is an isomorphism. Here the coproduct is taken in the category of $W_A(B)$-algebras.
\end{proposition}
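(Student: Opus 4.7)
The plan is to mimic the proof of \cref{koszul} in the $\mathbb{G}_a^{\mathrm{perf}}$ setting, using the universal property of the unwinding construction recorded in \cref{univ2}. Concretely, I will show that both sides corepresent the same functor on $W_A(B)$-algebras. For a $W_A(B)$-algebra $S$, \cref{univ2} identifies
\[ \mathrm{Hom}_{W_A(B)}(\mathrm{Env}_X(B,I),S) \simeq \mathrm{Eq}\bigl(\mathrm{Hom}_B(I, X(S)) \rightrightarrows \mathrm{Hom}_B(I, S^\flat)\bigr), \]
where one arrow is postcomposition with $d \colon X(S) \to S^\flat$ induced by the point, and the other is the constant map at the $B$-linear map $I \hookrightarrow B \to S^\flat$. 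Call this functor $H_1$. Precomposing with the surjection $F \to I$ produces a natural transformation to the functor
\[ H_2(S) := \mathrm{Eq}\bigl(\mathrm{Hom}_B(F, X(S)) \rightrightarrows \mathrm{Hom}_B(F, S^\flat)\bigr), \]
and the surjectivity of $F \to I$ makes $H_1(S) \to H_2(S)$ injective.

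The first key step is to identify $H_2$ with the functor corepresented by $\coprod_{j \in \mathscr{J}} \mathrm{Env}_X(B,f_j)$. Since $F$ is free on the generators $x_j$, we have $\mathrm{Hom}_B(F, X(S)) \simeq \prod_j X(S)$ and $\mathrm{Hom}_B(F, S^\flat) \simeq \prod_j S^\flat$, so
\[ H_2(S) \simeq \prod_j \mathrm{Eq}\bigl(X(S) \rightrightarrows S^\flat\bigr), \]
where in the $j$-th factor the two arrows are $d$ and the constant map at $f_j \in S^\flat$. By \cref{univ2} applied to $(B,(f_j))$, each factor is $\mathrm{Hom}_{W_A(B)}(\mathrm{Env}_X(B,f_j), S)$, and taking the product turns into the coproduct of these $W_A(B)$-algebras, as claimed.

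The second and main step is to prove that $H_1(S) \to H_2(S)$ is surjective. Given $u \colon F \to X(S)$ lying in the equalizer, I need to show that $u$ factors through $F \to I$, i.e.~that $u$ vanishes on the generators $f_i x_j - f_j x_i$ of the kernel. Setting $u_i := u(x_i) \in X(S)$, the equalizer condition gives $d(u_i) = f_i$ in $S^\flat$, where $f_i$ is viewed in $S^\flat$ via $B \simeq W_A(B)^\flat \to S^\flat$. The hypothesis that $X$ is a quasi-ideal in $\mathbb{G}_a^{\mathrm{perf}}$ translates, on $S$-valued points, to the identity $d(x) \cdot y = d(y) \cdot x$ for all $x, y \in X(S)$, where $\cdot$ denotes the $S^\flat$-action of $\mathbb{G}_a^{\mathrm{perf}}(S)$ on $X(S)$. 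Applying this with $x = u_i$, $y = u_j$ yields $f_i \cdot u_j = f_j \cdot u_i$ in $X(S)$, i.e.~$u(f_i x_j - f_j x_i) = 0$. Hence $u$ factors through $I$ by the hypothesis on the kernel.

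The easiest potential obstacle is bookkeeping rather than a genuine difficulty: one must be careful that the $B$-module structure on $X(S)$ comes from the $\mathbb{G}_a^{\mathrm{perf}}$-action via the structure map $B \to S^\flat$ (through $B \simeq W_A(B)^\flat$), and that the various maps $I \to S^\flat$ appearing in the equalizers agree with this structure. Once these compatibilities are spelled out, the rest of the argument is a verbatim adaptation of the proof of \cref{koszul}, with $S$ replaced by $S^\flat$ and $B$-algebras replaced by $W_A(B)$-algebras throughout.
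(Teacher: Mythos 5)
Your proof is correct and is precisely the adaptation the paper intends: the paper's own proof simply defers to the proof of \cref{koszul} ``using \cref{univ2}'', and you have correctly carried out that adaptation, replacing $S$ by $S^\flat$, $B$-algebras by $W_A(B)$-algebras, and regarding $X$ as a $B$-module scheme over $W_A(B)$ via \cref{perfect}. The only cosmetic difference is that you identify each factor $\mathrm{Hom}_{W_A(B)}(\mathrm{Env}_X(B,f_j),S)$ by applying \cref{univ2} directly to $(B,(f_j))$ rather than invoking the analogue of \cref{env}, which is an equally valid bookkeeping choice.
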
{}

\begin{proof}Using \cref{univ2}, this follows in a way similar to the proof of \cref{koszul}.\end{proof}{}

\begin{proposition}\label{xx}Let $B$ be a perfect ring. Let $S$ be any set. Let $(B[S^{1/p^\infty}], (S))$ denote the coproduct of $(B[x^{1/p^\infty}], x)$ over $S.$ Let $X$ be a quasi-ideal in $\mathbb{G}_a^{\mathrm{perf}}.$ Then the natural map $$\coprod_S \mathrm{Env}_X (B[x^{1/p^\infty}], x) \to \mathrm{Env}_X (B[S^{1/p^\infty}], S)$$ is an isomorphism of $W_A(B)$-algebras. Here the coproduct is taken in the category of $W_A(B)$-algebras. In particular, we obtain an isomorphism $\coprod_S \Gamma(X, \mathcal{O}_X) \otimes_A W_A(B) \simeq \mathrm{Env}_S (B[S^{1/p^\infty}], S)$ of $W_A(B)$-algebras.

\end{proposition}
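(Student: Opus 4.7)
The argument follows the pattern of \cref{coprod}, substituting \cref{lunch1} and \cref{lunch} for \cref{koszul} and \cref{env1}. I would first apply \cref{lunch1} to split the envelope of $(S) \subset B[S^{1/p^\infty}]$ into principal factors indexed by $S$, then evaluate each principal factor via \cref{lunch}, and finally reshuffle the coproduct to land in $W_A(B)$-algebras.

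To invoke \cref{lunch1} with the generators $f_s = s$ for $s \in S$, one must check that each $s$ is a non-zero divisor in $B[S^{1/p^\infty}]$ and that the kernel of $\bigoplus_{s \in S} B[S^{1/p^\infty}] \cdot x_s \to (S)$, $x_s \mapsto s$, is generated by the relations $s x_t - t x_s$ for $s, t \in S$. The analogous Koszul-regularity over $B[S]$ is classical (as used in \cref{coprod}), and it transfers to $B[S^{1/p^\infty}]$ by faithfully flat base change: $B[S^{1/p^\infty}]$ is a filtered colimit of free $B[S]$-modules, since $B[s_1^{1/p^n}, \ldots, s_k^{1/p^n}]$ has an evident $B[s_1, \ldots, s_k]$-basis given by the monomials $\prod_i s_i^{j_i/p^n}$ with $0 \le j_i < p^n$. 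This produces an isomorphism
\[
\coprod_{s \in S} \mathrm{Env}_X(B[S^{1/p^\infty}], s) \xrightarrow{\;\sim\;} \mathrm{Env}_X(B[S^{1/p^\infty}], (S))
\]
of $W_A(B[S^{1/p^\infty}])$-algebras.

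For each fixed $s \in S$, decompose $B[S^{1/p^\infty}] \cong B[(S\setminus s)^{1/p^\infty}] \otimes_B B[s^{1/p^\infty}]$ and apply \cref{lunch} with $B$ replaced by the perfect base $B[(S \setminus s)^{1/p^\infty}]$. This identifies $\mathrm{Env}_X(B[S^{1/p^\infty}], s)$ with $\Gamma(X, \mathcal{O}_X) \otimes_A W_A(B[(S\setminus s)^{1/p^\infty}])$ as a $W_A(B[(S\setminus s)^{1/p^\infty}])$-algebra, with the additional $W_A(B[s^{1/p^\infty}])$-algebra structure given by sending $s$ to the distinguished element $t \in \Gamma(X, \mathcal{O}_X)$ arising from the point $X \to \mathbb{G}_a^{\mathrm{perf}}$. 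Reshuffling the $S$-indexed coproduct exactly as in \cref{coprod}, the resulting $W_A(B[S^{1/p^\infty}])$-algebra is canonically isomorphic to $\bigl(\coprod_{s \in S} \Gamma(X, \mathcal{O}_X)\bigr) \otimes_A W_A(B)$, with the inner coproduct taken in $A$-algebras; by \cref{lunch} this is precisely $\coprod_S \mathrm{Env}_X(B[x^{1/p^\infty}], x)$ in $W_A(B)$-algebras, establishing the asserted isomorphism.

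I expect the main technical point to be the Koszul-regularity verification for the perfected ring in the first step; the flat base change argument should handle it cleanly, but if it becomes awkward, one can reduce to finite $S$ by writing $B[S^{1/p^\infty}] = \mathrm{colim}_F B[F^{1/p^\infty}]$ over finite $F \subset S$ and using that both sides of the target isomorphism commute with such filtered colimits. As a conceptually cleaner alternative that bypasses the reshuffling bookkeeping entirely, one may compare universal properties directly via \cref{univ2}: both sides represent the same functor on $W_A(B)$-algebras $T$, sending $T$ to the set of pairs consisting of a $W_A(B[S^{1/p^\infty}])$-algebra structure on $T$ extending $W_A(B)$ together with an $S$-tuple $(y_s) \in X(T)^S$ satisfying $d(y_s) = s$ in $T^\flat$, where the quasi-ideal property of $X$ makes the module-theoretic relations among generators of $(S)$ automatic.
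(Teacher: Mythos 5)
Your proof is correct and follows the same route the paper sketches: the paper's own proof simply says "follows from \cref{lunch} and \cref{lunch1} in a way similar to the proof of \cref{coprod}," which is exactly your three-step plan of splitting via the Koszul lemma, evaluating each principal factor, and reshuffling. The extra work you do — verifying the first-syzygy hypothesis of \cref{lunch1} for $B[S^{1/p^\infty}]$ by flat base change from $B[S]$ — is a detail the paper leaves implicit, and your alternative via \cref{univ2} is essentially the paper's own lemmas unwound back to functor-of-points; both are valid.
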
{}
\begin{proof}This follows from \cref{lunch} and \cref{lunch1} in a way similar to the proof of \cref{coprod}. \end{proof}{}

\begin{definition}Let $F \in \mathrm{Fun}(\mathscr{P}I, \mathrm{Alg}_A)_{\mathfrak{G}/}$ be a functor which satisfies the following conditions.\vspace{2mm}

\noindent
\text{1.} The natural map $\mathfrak{G}(B,0) \to F(B,0)$ is an isomorphism for every perfect ring $B.$\vspace{1mm}

\noindent
\text{2.} The natural map $F(B[x^{1/p^\infty}], x) \otimes_{W_A(B)} F(B[x^{1/p^\infty}], x) \to F (B[x^{1/p^\infty}] \otimes_B B[x^{1/p^\infty}], (x \otimes 1 , 1 \otimes x ))$ is an isomorphism.\vspace{1mm}

\noindent
\text{3.} The natural map $F (\mathbb{F}_p [x^{1/p^\infty}], x) \otimes_A W_A(B) \to F (B[x^{1/p^\infty}],x)$ is an isomorphism.\vspace{2mm}

We denote the full subcategory of such functors inside $\mathrm{Fun}(\mathscr{P}I, \mathrm{Alg}_A)_{\mathfrak{G}/}$ as $\mathrm{Fun}(\mathscr{P}I, \mathrm{Alg}_A)_{\mathfrak{G}/}^{\otimes}.$
\end{definition}{}

\begin{remark}\label{acci1}
We note that the unwinding functor ${\mathrm{QID}\w{--} \mathbb{G}_a^{\mathrm{perf}}}^{\mathrm{op}} \to \mathrm{Fun}(\mathscr{P}I, \mathrm{Alg}_A)_{\mathfrak{G}/}$ maps a quasi-ideal inside the full subcategory 
$\mathrm{Fun}(\mathscr{P}I, \mathrm{Alg}_A)_{\mathfrak{G}/}^{\otimes}.$ Indeed, this follows from \cref{lunch} and \cref{xx}. Therefore, the unwinding functor factors to give a functor still denoted as $\mathrm{Un}: {\mathrm{QID}\w{--} \mathbb{G}_a^{\mathrm{perf}}}^{\mathrm{op}} \to \mathrm{Fun}(\mathscr{P}I, \mathrm{Alg}_A)_{\mathfrak{G}/}^{\otimes}.$
\end{remark}{}

\begin{proposition}\label{wp}Let $F \in \mathrm{Fun}(\mathscr{P}I, \mathrm{Alg}_A)_{\mathfrak{G}/}^{\otimes}.$ For every perfect ring $B,$ $\mathrm{Spec}\, F (B[x^{1/p^\infty}], x)$ is naturally a $B$-module scheme over $W_A(B).$ Consequently, we have a (contravariant) functor $r: \mathrm{Fun}(\mathscr{P}I, \mathrm{Alg}_A)_{\mathfrak{G}/}^{\otimes} \to \mathrm{QID}\w{--} \mathbb{G}_a^{\mathrm{perf}}.$
\end{proposition}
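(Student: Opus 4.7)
The plan is to mirror the proof of \cref{restriction}, but with the extra care that pointed $\mathbb{G}_a^{\mathrm{perf}}$-modules are not simply module schemes over the absolute base $A$ but rather, by \cref{perfect}, families of $B$-module schemes over $W_A(B)$ indexed by perfect rings $B$. The strategy is to locate a co-$B$-module object structure on $(B[x^{1/p^\infty}],x)$ in the slice category $\mathscr{P}I_{(B,0)/}$, apply $F$ to translate this into a $B$-module scheme structure on $\mathrm{Spec}\,F(B[x^{1/p^\infty}],x)$ over $W_A(B)$, and then use the three conditions defining $\mathrm{Fun}(\mathscr{P}I,\mathrm{Alg}_A)^{\otimes}_{\mathfrak{G}/}$ to verify that the resulting datum organizes into a pointed $\mathbb{G}_a^{\mathrm{perf}}$-module which is a quasi-ideal.

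First, I would spell out the co-structure on $(B[x^{1/p^\infty}],x)$ in $\mathscr{P}I_{(B,0)/}$: the comultiplication $x^{1/p^n}\mapsto x^{1/p^n}\otimes 1+1\otimes x^{1/p^n}$ makes it a cogroup object, and for each $b\in B$ the endomorphism $x^{1/p^n}\mapsto b^{1/p^n}x^{1/p^n}$ (available because $B$ is perfect) provides the $B$-coaction. Condition 1 in the definition of $\mathrm{Fun}(\mathscr{P}I,\mathrm{Alg}_A)^{\otimes}_{\mathfrak{G}/}$ identifies $F(B,0)$ with $W_A(B)$, and Condition 2 ensures that $F$ sends the pushout $(B[x^{1/p^\infty}],x)\coprod_{(B,0)}(B[x^{1/p^\infty}],x)$ to the tensor product of $F(B[x^{1/p^\infty}],x)$ with itself over $W_A(B)$. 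Applying $F$ therefore turns the cogroup structure into a commutative Hopf structure on the $W_A(B)$-algebra $F(B[x^{1/p^\infty}],x)$, and turns the $B$-coaction into a ring homomorphism from $B$ into the endomorphisms of $\mathrm{Spec}\,F(B[x^{1/p^\infty}],x)$ as a $W_A(B)$-group scheme, making it into a $B$-module scheme over $W_A(B)$.

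Next I would let $B$ vary. For a map $B\to B'$ of perfect rings, functoriality of $F$ combined with Condition 3 (the base-change isomorphism $F(\mathbb{F}_p[x^{1/p^\infty}],x)\otimes_A W_A(B)\xrightarrow{\sim} F(B[x^{1/p^\infty}],x)$) supplies the isomorphisms $\mathrm{res}_B^{B'}$ required by \cref{perfect}, producing a $\mathbb{G}_a^{\mathrm{perf}}$-module over $A$. The data of the point is extracted by applying $F$ to the morphism $(B[x^{1/p^\infty}],0)\to(B[x^{1/p^\infty}],x)$: the target is $F(B[x^{1/p^\infty}],0)\simeq W_A(B[x^{1/p^\infty}])$ by Condition 1, and the proof of \cref{perfect} (formal \'etaleness of $W_A$ of a perfect ring) identifies $\mathrm{Spec}\,W_A(B[x^{1/p^\infty}])$ with $\mathbb{G}_a^{\mathrm{perf}}$ over $W_A(B)$. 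The quasi-ideal condition \cref{quasi-ideal} then follows by applying $F$ to the evident $\mathscr{P}I$-analogue of the commutative diagram \cref{qp}, whose commutativity in $\mathscr{P}I_{(B,0)/}$ is immediate from the definitions.

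The main technical point to be careful with is the identification $\mathrm{Spec}\,W_A(B[x^{1/p^\infty}])\simeq\mathbb{G}_a^{\mathrm{perf}}$ over $\mathrm{Spec}\,W_A(B)$ used to define the point; this is not purely formal and requires invoking \cref{lem1} together with the formal \'etaleness argument from the proof of \cref{perfect}. Beyond that, the bookkeeping—verifying that associativity, unitality and distributivity of all the co-structures are preserved by $F$, and that the resulting data is natural in both $B$ and in $F$—follows the same pattern as in \cref{restriction}, so the construction of the functor $r$ is a formal consequence once the $W_A(B)$-linear structure has been pinned down.
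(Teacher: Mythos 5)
Your proposal follows essentially the same route as the paper's own proof: use the cogroup structure of $(B[x^{1/p^\infty}],x)$ in $\mathscr{P}I_{(B,0)/}$ together with the $B$-coaction $x^{1/p^n}\mapsto b^{1/p^n}x^{1/p^n}$, apply $F$ to get a $B$-module scheme over $W_A(B)$, assemble via \cref{perfect} and Condition 3, extract the point from $(B[x^{1/p^\infty}],0)\to(B[x^{1/p^\infty}],x)$, and get the quasi-ideal condition from the $\mathscr{P}I$ analogue of diagram \cref{qp}. Your flag that the identification $\mathrm{Spec}\,W_A(B[x^{1/p^\infty}])\simeq\mathbb{G}_a^{\mathrm{perf}}_{W_A(B)}$ underlying the point is not purely notational is a helpful gloss on a step the paper leaves implicit (it amounts to the compatibility $\mathfrak{G}\simeq\mathrm{Un}(\mathbb{G}_a^{\mathrm{perf}})$ recorded in \cref{dinner}), but the argument is the same.
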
{}

\begin{proof}We note that $(B[x^{1/p^\infty}], x)$ is a cogroup object of $\mathscr{P}I_{(B,0)/}$. Therefore, it follows from definitions that $\mathrm{Spec}\, F (B[x^{1/p^\infty}], x)$ is a group scheme. The $B$-action on $\mathrm{Spec}\, F (B[x^{1/p^\infty}], x)$ is given by functoriality along the arrows $(B[x^{1/p^\infty}], x) \to (B[x^{1/p^\infty}], x)$ given by $x^{1/p^n} \to b^{1/p^n} x^{1/p^n}$ for all $n \ge 1.$ Therefore, $\mathrm{Spec}\, F (B[x^{1/p^\infty}], x)$ is indeed naturally a $B$-module scheme over $W_A(B).$ \cref{perfect} implies that varying this data over all perfect rings $B$ provides us a $\mathbb{G}_a^{\mathrm{perf}}$-module. Further, functoriality along the maps $(B[x^{1/p^\infty}],0) \to (B[x^{1/p^\infty}],x)$ equips this $\mathbb{G}_a^{\mathrm{perf}}$-module with the structure of a pointed $\mathbb{G}_a^{\mathrm{perf}}$-module. To see that it is a quasi-ideal in $\mathbb{G}_a ^{\mathrm{perf}}$, we use functoriality along the following commutative diagram in $\mathscr{P}I.$

\begin{center}
\begin{tikzcd}
{(\mathbb{F}_p[x^{1/p^\infty}], x) \otimes (\mathbb{F}_p[x^{1/p^\infty}],x)}           &  &  & {(\mathbb{F}_p[x^{1/p^\infty}], x)\otimes (\mathbb{F}_p[x^{1/p^\infty}], 0)} \arrow[lll] \\
{(\mathbb{F}_p[x^{1/p^\infty}], 0)\otimes (\mathbb{F}_p[x^{1/p^\infty}], x)} \arrow[u] &  &  & {(\mathbb{F}_p[x^{1/p^\infty}],x)} \arrow[u, "x^{\frac{1}{p^n}} \to x^{\frac{1}{p^n}} \otimes x^\frac{1}{p^n}   "'] \arrow[lll, "x^{\frac{1}{p^n}}\otimes x^\frac{1}{p^n}  \leftarrow x^{\frac{1}{p^n}}   "]            
\end{tikzcd}    
\end{center}
\end{proof}

\begin{proposition}\label{satu2}The functor $r: \mathrm{Fun}(\mathscr{P}I, \mathrm{Alg}_A)_{\mathfrak{G}/}^{\otimes} \to {\mathrm{QID}\w{--} \mathbb{G}_a^{\mathrm{perf}}}^{\mathrm{op}}$ has a left adjoint given by ${\mathrm{Un}}$ from \cref{acci1}.
\end{proposition}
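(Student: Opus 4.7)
The proof will be structurally parallel to that of \cref{satu1}, with the category $\mathfrak{C}_A$ replaced by $\mathscr{P}I$, the ring $B$ replaced by $W_A(B)$, and the ``free'' objects $(B[x],x)$ replaced by $(B[x^{1/p^\infty}],x)$. The plan is to exhibit, for $F \in \mathrm{Fun}(\mathscr{P}I, \mathrm{Alg}_A)_{\mathfrak{G}/}^{\otimes}$ and $X \in \mathrm{QID}\w{--}\mathbb{G}_a^{\mathrm{perf}}$, a natural bijection
$$\mathrm{Hom}(\mathrm{Un}(X), F) \simeq \mathrm{Hom}(X, rF).$$
The first direction $s\colon \mathrm{Hom}(\mathrm{Un}(X), F) \to \mathrm{Hom}(X, rF)$ is obtained by applying $r$ and using the isomorphism $r\mathrm{Un}(X) \simeq X$, which follows from \cref{lunch} and \cref{xx} (the analogues of \cref{env1} and \cref{coprod} used in the proof of \cref{satu1}).

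For the reverse map, the main task is to construct a natural counit $\mathrm{Un}(rF) \to F$. For this I would begin with the presheaf isomorphism on $\mathscr{P}I_{(B,0)/}$ expressing $(B,I)$ via the equalizer coming from the free presentation of $I$ as a $B$-module built out of $(B[x^{1/p^\infty}],x)$ and $(B[x^{1/p^\infty}],0)$; concretely, for a test object, maps $(B,I)\to\bullet$ in $\mathscr{P}I_{(B,0)/}$ identify with $B$-linear maps from $I$ into $\mathrm{Hom}_{(B,0)}((B[x^{1/p^\infty}],x),\bullet)$ whose two comparisons with $\mathrm{Hom}_{(B,0)}((B[x^{1/p^\infty}],0),\bullet)$ agree. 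Applying $F^{\mathrm{op}}$ to this isomorphism and using conditions (1)--(3) in the definition of $\mathrm{Fun}(\mathscr{P}I, \mathrm{Alg}_A)_{\mathfrak{G}/}^{\otimes}$ to commute $F$ past the equalizer and the free-module presentation (exactly analogous to the lemma inside the proof of \cref{satu1}, but now remembering that targets are $W_A(B)$-algebras and that the base of the $B$-module structure on $F(B[x^{1/p^\infty}],x)$ comes from condition (3)), yields a map from $\mathrm{Hom}_{W_A(B)}(F(B,I),\cdot)$ into the equalizer which, by the universal property in \cref{univ2}, is precisely corepresented by $\mathrm{Env}_{rF}(B,I)$. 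This gives $\mathrm{Un}(rF)(B,I) = \mathrm{Env}_{rF}(B,I) \to F(B,I)$, natural in $(B,I)$.

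Using this, any map $X \to rF$ of pointed $\mathbb{G}_a^{\mathrm{perf}}$-modules produces $\mathrm{Un}(X) \to \mathrm{Un}(rF) \to F$, defining the inverse map $t\colon \mathrm{Hom}(X, rF) \to \mathrm{Hom}(\mathrm{Un}(X), F)$. The composition $s \circ t$ is the identity because $r\mathrm{Un}(X) \simeq X$ identifies the construction on $X$ with the original map. Conversely, given two natural transformations $U, V\colon \mathrm{Un}(X) \to F$ with $rU = rV$, the commutative square
$$\begin{tikzcd}
\mathrm{Un}(r\mathrm{Un}(X)) \arrow[r, "\sim"] \arrow[d] & \mathrm{Un}(X) \arrow[d, shift left, "U"] \arrow[d, shift right, "V"'] \\
\mathrm{Un}(rF) \arrow[r] & F
\end{tikzcd}$$
(in which the top horizontal arrow is an isomorphism by \cref{lunch}, \cref{xx}) forces $U = V$, so $t \circ s = \mathrm{id}$.

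The main technical obstacle is the construction of the counit, specifically verifying that applying $F^{\mathrm{op}}$ to the equalizer presentation of $\mathrm{Hom}_{(B,0)}((B,I),\cdot)$ yields a commutative diagram mapping into the corresponding equalizer for the $W_A(B)$-algebra $F(B[x^{1/p^\infty}],x)$. This is exactly where conditions (2) and (3) in the definition of $\mathrm{Fun}(\mathscr{P}I, \mathrm{Alg}_A)_{\mathfrak{G}/}^{\otimes}$ are used: (2) lets $F$ commute past the products over the indexing set of a free module presentation, and (3) pins down the $B$-action on $F(B[x^{1/p^\infty}],x)$ coming from the scalar maps $x^{1/p^n} \mapsto b^{1/p^n} x^{1/p^n}$. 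Once this diagram is produced, the rest is formal manipulation as in \cref{satu1}.
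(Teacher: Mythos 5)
Your proposal is correct and takes essentially the same route as the paper: the paper's own proof simply states the key presheaf isomorphism $\varphi$ on $\mathscr{P}I_{(B,0)/}$ (the equalizer presentation of $\mathrm{Hom}_{(B,0)}((B,I),\cdot)$ via $(B[x^{1/p^\infty}],x)$ and $(B[x^{1/p^\infty}],0)$) and then declares that the argument of \cref{satu1} goes through, which is exactly the mirroring you carry out.
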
{}

\begin{proof}The proof follows in a similar way to the proof of \cref{satu1} once we note the following statement about the category $\mathscr{P}I$. Let $B$ be a perfect ring so that $(B,0)$ is an object of $\mathscr{P}I.$ Then there is an isomorphism 
$$\varphi: \mathrm{Hom}_{(B,0)}((B,I), \cdot) \simeq \mathrm{Eq} \left(\mathrm{Hom}_{B\w{-Mod}}(I, \mathrm{Hom}_{(B,0)}((B[x^{1/p^\infty}], x), \cdot)) \,\, \substack{\longrightarrow \\ \longrightarrow}\,\ \mathrm{Hom}_{B\w{-Mod}}(I, \mathrm{Hom}_{(B,0)}((B[x^{1/p^\infty}], 0), \cdot)) \right) $$
in $\mathrm{Psh}(\mathscr{P}I_{(B,0)/}^{\mathrm{op}}).$ Here one of the arrows is induced by the map $(B[x^{1/p^\infty}],0) \to (B[x^{1/p^\infty}], x)$ and the other map is obtained by sending everything to the element of $\mathrm{Hom}_{B\mathrm{-Mod}}(I, \mathrm{Hom}_{(B,0)}((B[x^{1/p^\infty}], 0), \cdot))$ corresponding to the map induced by the inclusion $I \subset B$ and the fact that $\mathrm{Hom}_{(B,0)}((B[x^{1/p^\infty}], 0), \cdot)$ is naturally valued in $B$-algebras. \end{proof}{}

\begin{proposition}\label{tr}The functor $${\mathrm{Un}}: {\mathrm{QID}\w{--}\mathbb{G}_a^{\mathrm{perf}}}^{\mathrm{op}} \to \mathrm{Fun}(\mathscr{P}I, \mathrm{Alg}_A)_{\mathfrak{G}/} $$ is fully faithful.
\end{proposition}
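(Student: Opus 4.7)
The plan is to mirror exactly the proof of \cref{high11}: exhibit $\mathrm{Un}$ as part of an adjunction (already provided by \cref{satu2}) and verify that the counit $r\,\mathrm{Un}(X) \to X$ is an isomorphism for every quasi-ideal $X$ in $\mathbb{G}_a^{\mathrm{perf}}$. Once this is in place, for any quasi-ideals $X, Y$ in $\mathbb{G}_a^{\mathrm{perf}}$, the chain of natural bijections
\[
\mathrm{Hom}(\mathrm{Un}(X), \mathrm{Un}(Y)) \;\simeq\; \mathrm{Hom}_{\mathrm{QID}^{\mathrm{op}}}(X, r\,\mathrm{Un}(Y)) \;\simeq\; \mathrm{Hom}_{\mathrm{QID}^{\mathrm{op}}}(X, Y)
\]
established using \cref{satu2} and the counit isomorphism gives full faithfulness.

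So the whole proof reduces to checking that $r\,\mathrm{Un}(X) \simeq X$ naturally in $X$. By definition of $r$ from \cref{wp}, $r\,\mathrm{Un}(X)$ is the pointed $\mathbb{G}_a^{\mathrm{perf}}$-module whose value on a perfect ring $B$ is the $B$-module scheme over $W_A(B)$ given by $\mathrm{Spec}\,\mathrm{Un}(X)(B[x^{1/p^\infty}], x)$, with the pointing coming from functoriality along $(B[x^{1/p^\infty}],0) \to (B[x^{1/p^\infty}],x)$. The computation we need is exactly \cref{lunch}, which identifies
\[
\mathrm{Un}(X)(B[x^{1/p^\infty}], x) \;\simeq\; \Gamma(X, \mathcal{O}_X) \otimes_A W_A(B)
\]
as a $W_A(B)$-algebra, together with the fact that the natural map from $\mathrm{Un}(X)(B[x^{1/p^\infty}], 0) = W_A(B)[x^{1/p^\infty}]$ corresponds to the point $X \to \mathbb{G}_a^{\mathrm{perf}}$ base-changed to $W_A(B)$. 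Taking $\mathrm{Spec}$ recovers $X_B = X \times_{\mathrm{Spec}\, A} \mathrm{Spec}\, W_A(B)$ as a pointed $B$-module scheme over $W_A(B)$; by the equivalent reformulation in \cref{perfect}, assembling this data over all perfect rings $B$ recovers $X$ as a pointed $\mathbb{G}_a^{\mathrm{perf}}$-module.

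There is no real obstacle beyond bookkeeping: the $B$-module scheme structure on $r\,\mathrm{Un}(X)(B)$ comes from functoriality along the arrows $x^{1/p^n} \mapsto b^{1/p^n} x^{1/p^n}$, and under the identification above this corresponds precisely to the $B$-action on $X_B$ by the universal property characterizing $\mathscr{T}_{X_B}$ in \cref{cons1}. The verification that the pointing matches is immediate from the last sentence of \cref{lunch}. Naturality in $X$ is automatic, so we obtain the counit isomorphism and hence the proposition.
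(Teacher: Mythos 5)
Your proposal is correct and mirrors the paper's own argument: the paper's proof of \cref{tr} says exactly ``Follows from \cref{satu2} after noting that $r\,\mathrm{Un}(X) \simeq X$ by \cref{lunch} and \cref{xx},'' and your writeup spells out why those ingredients suffice. One small terminological slip: since $\mathrm{Un}$ is the \emph{left} adjoint, the natural map $X \to r\,\mathrm{Un}(X)$ in ${\mathrm{QID}\w{--}\mathbb{G}_a^{\mathrm{perf}}}^{\mathrm{op}}$ is the \emph{unit}, not the counit (the direction you wrote is the one in $\mathrm{QID}\w{--}\mathbb{G}_a^{\mathrm{perf}}$ before passing to opposites, so the confusion is harmless); also you cite only \cref{lunch}, whereas \cref{xx} is needed (via \cref{acci1}) to ensure $\mathrm{Un}(X)$ lands in the $\otimes$-subcategory so that $r$ may be applied at all — but since you invoke \cref{satu2}, which already packages this, the argument is sound.
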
{}

\begin{proof}Follows from \cref{satu2} after noting that $r {\mathrm{Un}}(X) \simeq X$ by \cref{lunch} and \cref{xx}. \end{proof}

\subsection{Unwinding pointed $\mathbb{G}_a^{\mathrm{perf}}$-modules II}\label{sec3.3}In this section, we record a variant of the construction appearing in the previous section. We will use a pointed $\mathbb{G}_a^{\mathrm{perf}}$-module to produce a functor from QRSP algebras over $\mathbb{F}_p$ to $\mathrm{Alg}_A.$ Our goal is to formulate and prove an analogue of \cref{tr} in this context. We will begin by recalling the definition of QRSP algebras from \cite[Def. 8.8]{BMS19}.

\begin{definition}\label{covid}An $\mathbb{F}_p$-algebra $S$ is said to be semiperfect if the natural map $S^\flat \to S$ is surjecrive. $S$ is called quasiregular semiperfect (QRSP) if $S$ is semiperfect and the cotangent complex $\mathbb{L}_{S/\mathbb{F}_p}$ is a flat $S$-module supported in (homological) degree 1.
\end{definition}{}

\begin{example}The algebra $\mathbb{F}_p[x^{1/p^\infty}]/x$ is an example of a QRSP algebra.
\end{example}{}

\begin{remark}
The condition on the cotangent complex appearing in \cref{covid} is not relevant for the constructions appearing in this section. However, this condition is important when we compare our constructions with de Rham and crystalline cohomology in \cref{sec4}.
\end{remark}{}

\begin{construction}\label{athosp}For a QRSP algebra $S,$ by sending $S \mapsto (S^\flat, \mathrm{Ker}(S^\flat \to S))$ we can define a functor $\mathrm{QRSP} \to \mathscr{P}I.$ Using \cref{cons4}, this produces a (contravariant) functor ${\mathrm{Un}}:\mathbb{G}_a^{\mathrm{perf}}\w{--} \mathrm{Mod}_* \to \mathrm{Fun}(\mathrm{QRSP}, \mathrm{Alg}_A)$ which will again be called the unwinding of a pointed $\mathbb{G}_a^{\mathrm{perf}}$-module when no confusion is likely to occur. 
\end{construction}

\begin{remark}We note that the functor $\mathfrak{G}: \mathscr{P}I \to \mathrm{Alg}_A$ that sends $(R,I) \mapsto W_A(R)$ produces a functor $\mathrm{QRSP} \to \mathrm{Alg}_A $ that sends $S \mapsto W_A(S^\flat)$ which will again be denoted by $\mathfrak{G}.$ It follows from \cref{dinner} that we have actually produced a (contravariant) functor ${\mathrm{Un}}: \mathbb{G}_a^{\mathrm{perf}} \w{--} \mathrm{Mod}_* \to \mathrm{Fun} (\mathrm{QRSP}, \mathrm{Alg}_A)_{\mathfrak{G}/}.$
\end{remark}{}

\begin{example}\label{satu5}The identity functor $\mathrm{QRSP} \to \mathrm{QRSP}$ induces a functor $\mathrm{QRSP} \to \mathrm{Alg}_{\mathbb{F}_p}$ which is the unwinding of the pointed $\mathbb{G}_a^{\mathrm{perf}}$-module corresponding to $\alpha^{\natural}$ over $\mathbb{F}_p.$ This follows from the construction and \cref{satu4}. 
\end{example}{}

\begin{definition}\label{smoke}Let $F \in \mathrm{Fun} (\mathrm{QRSP}, \mathrm{Alg}_A)_{\mathfrak{G}/}$ be a functor that satisfies the following conditions.\vspace{2mm}

\noindent
\text{1.} The natural map $\mathfrak{G}(B) \to F(B)$ is an isomorphism for every perfect ring $B.$\vspace{1mm}

\noindent
\text{2.} The natural map $F(\frac{B[x^{1/p^\infty}]}{x}) \otimes_{W_A(B)}F(\frac{B[x^{1/p^\infty}]}{x}) \to F (\frac{B[x^{1/p^\infty}]}{x} \otimes_B \frac{B[x^{1/p^\infty}]}{x})$ is an isomorphism for every perfect ring $B.$\vspace{1mm}

\noindent
\text{3.} The natural map $F (\frac{\mathbb{F}_p[x^{1/p^\infty}]}{x})\otimes_A W_A(B) \to F (\frac{B[x^{1/p^\infty}]}{x})$ is an isomorphism for every perfect ring $B.$\vspace{2mm}

The full subcategory spanned by such functors inside $\mathrm{Fun} (\mathrm{QRSP}, \mathrm{Alg}_A)_{\mathfrak{G}/}$ will be denoted as $\mathrm{Fun} (\mathrm{QRSP}, \mathrm{Alg}_A)_{\mathfrak{G}/}^\otimes.$
\end{definition}{}

\begin{proposition}\label{smoke1}Let $F \in \mathrm{Fun} (\mathrm{QRSP}, \mathrm{Alg}_A)_{\mathfrak{G}/}^\otimes$. For every perfect ring $B$, $\mathrm{Spec}\, F(\frac{B[x^{1/p^\infty}]}{x}) $ is naturally a $B$-module scheme over $W_A(B).$ Consequently, we have a (contravariant) functor $r:\mathrm{Fun} (\mathrm{QRSP}, \mathrm{Alg}_A)_{\mathfrak{G}/}^\otimes \to \mathrm{QID}\w{--} \mathbb{G}_a^{\mathrm{perf}}.$
\end{proposition}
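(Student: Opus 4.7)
The plan is to mirror the strategy of Proposition \ref{wp}, replacing objects of $\mathscr{P}I$ by QRSP algebras throughout. The first task is to equip the object $B[x^{1/p^\infty}]/x$ of the slice category $\mathrm{QRSP}_{B/}$ with the structure of a cocommutative cogroup object. The cogroup structure is induced by the Hopf structure on $B[x^{1/p^\infty}]$: comultiplication $x^{1/p^n} \mapsto x^{1/p^n} \otimes 1 + 1 \otimes x^{1/p^n}$, counit $x^{1/p^n} \mapsto 0$, and antipode $x^{1/p^n} \mapsto -x^{1/p^n}$. Because we are working in characteristic $p$, each of these ring maps descends modulo $x$, and the relevant pushouts, in particular $B[x^{1/p^\infty}]/x \otimes_B B[x^{1/p^\infty}]/x$, remain QRSP, so the cogroup structure lies within $\mathrm{QRSP}_{B/}$.

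Next, I apply $F$ and invoke the three hypotheses of Definition \ref{smoke}. Condition $1$ identifies $F(B) \cong \mathfrak{G}(B) = W_A(B)$; condition $2$ ensures that $F$ sends the pushout $B[x^{1/p^\infty}]/x \otimes_B B[x^{1/p^\infty}]/x$ to the tensor product $F(B[x^{1/p^\infty}]/x) \otimes_{W_A(B)} F(B[x^{1/p^\infty}]/x)$, so the cogroup structure transports to a commutative group scheme structure on $\mathrm{Spec}\, F(B[x^{1/p^\infty}]/x)$ over $W_A(B)$. The $B$-module structure is then obtained by applying $F$ to the endomorphisms $\mu_b \colon B[x^{1/p^\infty}]/x \to B[x^{1/p^\infty}]/x$ given by $x^{1/p^n} \mapsto b^{1/p^n}\, x^{1/p^n}$ for $b \in B$, where perfectness of $B$ is used to extract $p$-power roots. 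Condition $3$ guarantees that, as the perfect ring $B$ varies, the resulting $B$-module scheme over $W_A(B)$ is canonically the base change of the case $B = \mathbb{F}_p$; this is precisely the compatibility required by Proposition \ref{perfect}, and assembles the family into a $\mathbb{G}_a^{\mathrm{perf}}$-module.

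The point to $\mathbb{G}_a^{\mathrm{perf}}$ is supplied by the natural transformation $\mathfrak{G} \to F$ carried by $F$ as an object of $\mathrm{Fun}(\mathrm{QRSP}, \mathrm{Alg}_A)_{\mathfrak{G}/}$: evaluated at the QRSP algebra $B[x^{1/p^\infty}]/x$, this yields a $W_A(B)$-algebra map $W_A(B[x^{1/p^\infty}]) \to F(B[x^{1/p^\infty}]/x)$, which after precomposition with the Teichmüller map $W_A(B)[y^{1/p^\infty}] \to W_A(B[x^{1/p^\infty}])$, $y^{1/p^n} \mapsto [x^{1/p^n}]$, provides the required morphism $\mathrm{Spec}\, F(B[x^{1/p^\infty}]/x) \to \mathbb{G}_{a,B}^{\mathrm{perf}}$. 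The quasi-ideal axiom follows by applying $F$ to a commutative diagram in $\mathrm{QRSP}$ entirely analogous to the one displayed in the proof of Proposition \ref{wp}, with the $\mathscr{P}I$-objects $(B[x^{1/p^\infty}], x)$ replaced by their QRSP counterparts $B[x^{1/p^\infty}]/x$, and functoriality in $\mathrm{QRSP}_{B/}$ assembles the assignment $F \mapsto r(F)$ into a contravariant functor.

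The step I expect to be most delicate is verifying that the $B$-action and the group law described above are compatible with base change along arbitrary morphisms of perfect rings, so that the output genuinely defines a $\mathbb{G}_a^{\mathrm{perf}}$-module via Proposition \ref{perfect} rather than only an unstructured family of $B$-module schemes. Condition $3$ of Definition \ref{smoke} is the key input, but applying it cleanly requires recognizing that the whole cogroup-plus-$\mu_b$ package on $B[x^{1/p^\infty}]/x$ is itself obtained by base change from $\mathbb{F}_p$ of the analogous structure on $\mathbb{F}_p[x^{1/p^\infty}]/x$, a verification that, while elementary, must be carried out carefully before the functoriality of $F$ can be leveraged to produce the desired pointed quasi-ideal.
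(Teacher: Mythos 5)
Your proof mirrors the paper's argument precisely: $B[x^{1/p^\infty}]/x$ as a cogroup in $\mathrm{QRSP}_{B/}$, conditions 1 and 2 transporting the cogroup to a group scheme over $W_A(B)$, the $B$-action via $x^{1/p^n}\mapsto b^{1/p^n}x^{1/p^n}$, condition 3 plus Proposition \ref{perfect} assembling the family into a $\mathbb{G}_a^{\mathrm{perf}}$-module, the point from functoriality along $B[x^{1/p^\infty}]\to B[x^{1/p^\infty}]/x$, and the quasi-ideal axiom from the square analogous to the one in Proposition \ref{wp}. One small slip: $\mathfrak{G}\bigl(B[x^{1/p^\infty}]/x\bigr)=W_A\bigl(\widehat{B[x^{1/p^\infty}]}\bigr)$, not $W_A\bigl(B[x^{1/p^\infty}]\bigr)$ (see Lemma \ref{smoke3}); this does not affect the argument, since the map you want is obtained equivalently by applying $F$ to $B[x^{1/p^\infty}]\to B[x^{1/p^\infty}]/x$ and invoking condition 1 on the perfect source, after which precomposition with the Teichm\"uller map produces the point exactly as you describe.
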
{}

\begin{proof}This follows in a way similar to the proof of \cref{wp}. We note that $\frac{B[x^{1/p^\infty}]}{x}$ is a cogroup object of $\mathrm{QRSP}_{B/}.$ Therefore, it follows from the definitions that $\mathrm{Spec}\, F (\frac{B[x^{1/p^\infty}]}{x})$ has the structure of a group scheme over $W_A(B)$. The $B$-action on $\mathrm{Spec}\, F (\frac{B[x^{1/p^\infty}]}{x})$ is given by functoriality along the maps $\frac{B[x^{1/p^\infty}]}{x} \to \frac{B[x^{1/p^\infty}]}{x}$ that sends $x^{1/p^n} \to b^{1/p^n} x^{1/p^n}$ for all $n \ge 1.$ Therefore, $\mathrm{Spec}\, F (\frac{B[x^{1/p^\infty}]}{x})$ is indeed naturally a $B$-module scheme over $W_A(B).$ \cref{perfect} implies that varying this data over all perfect rings $B$ provides us a $\mathbb{G}_a^{\mathrm{perf}}$-module. Further, functoriality along the maps $B[x^{1/p^\infty}] \to \frac{B[x^{1/p^\infty}]}{x}$ equips this $\mathbb{G}_a^{\mathrm{perf}}$-module with the structure of a pointed $\mathbb{G}_a^{\mathrm{perf}}$-module. To see that it is a quasi-ideal in $\mathbb{G}_a ^{\mathrm{perf}}$, we use functoriality along the following commutative diagram in $\mathrm{QRSP}.$

\begin{center}
\begin{tikzcd}
\frac{\mathbb F_p[x^{1/p^\infty}]}{x} \otimes \frac{\mathbb F_p[x^{1/p^\infty}]}{x}         &  &  & \frac{\mathbb F_p[x^{1/p^\infty}]}{x}\otimes \mathbb{F}_p[x^{1/p^\infty}] \arrow[lll] \\
\mathbb{F}_p[x^{1/p^\infty}]\otimes \frac{\mathbb F_p[x^{1/p^\infty}]}{x} \arrow[u] &  &  & \frac{\mathbb F_p[x^{1/p^\infty}]}{x} \arrow[u, "x^{\frac{1}{p^n}} \to x^{\frac{1}{p^n}} \otimes x^\frac{1}{p^n}   "'] \arrow[lll, "x^{\frac{1}{p^n}}\otimes x^\frac{1}{p^n}  \leftarrow x^{\frac{1}{p^n}}   "]            
\end{tikzcd}    
\end{center}\end{proof}{}

\begin{remark}Note that we \textit{do not} have a (contravariant) functor $\mathrm{QID}\w{--} \mathbb{G}_a^{\mathrm{perf}} \to \mathrm{Fun} (\mathrm{QRSP}, \mathrm{Alg}_A)_{\mathfrak{G}/}^\otimes$ induced by the unwinding. Indeed, the unwinding of the quasi-ideal $\mathbb{G}_a^{\mathrm{perf}}$ produces the functor that sends a QRSP algebra $S \mapsto S^\flat$ and does not satisfy the last two conditions of \cref{smoke}. One may use \cref{smoke3} to see the latter claim. Roughly speaking, since $S^\flat$ is defined via an inverse limit, one cannot expect the unwinding of an arbitrary quasi-ideal (as in \cref{athosp}) to preserve the pushout diagrams demanded by \cref{smoke}. However, we will work towards rectifying this situation by restricting our attention to a special class of quasi-ideals. In any case, we have the following proposition.
\end{remark}{}

\begin{proposition}\label{stu1}Let $r:\mathrm{Fun} (\mathrm{QRSP}, \mathrm{Alg}_A)_{\mathfrak{G}/}^\otimes \to {\mathrm{QID}\w{--}\mathbb{G}_a^{\mathrm{perf}}}^{\mathrm{op}}$ be the functor from \cref{smoke1}. Let $F \in \mathrm{Fun} (\mathrm{QRSP}, \mathrm{Alg}_A)_{\mathfrak{G}/}^\otimes.$ Then there is a natural transformation ${\mathrm{Un}} (rF) \to F$ in $\mathrm{Fun} (\mathrm{QRSP}, \mathrm{Alg}_A)_{\mathfrak{G}/}.$
\end{proposition}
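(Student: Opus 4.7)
The strategy is to apply the universal property of the enveloping algebra from \cref{univ2}. For a QRSP algebra $S$ with tilt $S^\flat$ and ideal $I := \mathrm{Ker}(S^\flat \to S)$, a $W_A(S^\flat)$-algebra map ${\mathrm{Un}}(rF)(S) = \mathrm{Env}_{rF}(S^\flat, I) \to F(S)$ corresponds to an $S^\flat$-linear map $\psi_S \colon I \to rF(F(S))$ whose composite with the point map $rF(F(S)) \to F(S)^\flat$ equals the canonical composite $I \hookrightarrow S^\flat = W_A(S^\flat)^\flat \to F(S)^\flat$ coming from the natural transformation $\mathfrak{G} \to F$ (using \cref{lem1}). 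The desired natural transformation ${\mathrm{Un}}(rF) \to F$ will be produced by constructing such $\psi_S$ functorially in $S$; compatibility with the $\mathfrak{G}$-algebra structure is then automatic from the universal property.

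To define $\psi_S$, note that each $i \in I$ corresponds to a coherent sequence $(0, s_1, s_2, \ldots)$ with $s_{n+1}^p = s_n$ in $S$, and this data gives an $S^\flat$-algebra map $\phi_i \colon S^\flat[x^{1/p^\infty}]/x \to S$ sending $x^{1/p^n} \mapsto s_n$. Applying $F$ yields a $W_A(S^\flat)$-algebra map $F(\phi_i) \colon F(S^\flat[x^{1/p^\infty}]/x) \to F(S)$, which is exactly an element of $rF(F(S))$ by the description of $rF$ in \cref{smoke1}. We set $\psi_S(i) := F(\phi_i)$.

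The verifications required are $S^\flat$-linearity of $\psi_S$, the equalizer condition, and naturality in $S$ (the last being automatic). Linearity uses that $S^\flat$ is a perfect ring of characteristic $p$: both addition and multiplication on coherent sequences are computed componentwise, and this matches the $\mathbb{G}_a^{\mathrm{perf}}$-module operations on $rF(F(S))$, which (through conditions 2 and 3 of \cref{smoke}) arise from $F$-functoriality along the comultiplication $x^{1/p^n} \mapsto x^{1/p^n} \otimes 1 + 1 \otimes x^{1/p^n}$ and along the scalar dilations $x^{1/p^n} \mapsto t^{1/p^n} x^{1/p^n}$ on $S^\flat[x^{1/p^\infty}]/x$. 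For the equalizer condition, one unwinds the point map of $rF$, which by the proof of \cref{smoke1} is induced by $F$-functoriality along $S^\flat[x^{1/p^\infty}] \to S^\flat[x^{1/p^\infty}]/x$; combined with naturality of $\mathfrak{G} \to F$ at $\phi_i$, this identifies the composite $I \to rF(F(S)) \to F(S)^\flat$ with the map sending $i$ to the $F(S)^\flat$-element whose $n$-th component is the image of the Teichm\"uller lift $[i^{1/p^n}] \in W_A(S^\flat) = \mathfrak{G}(S)$ under $\mathfrak{G}(S) \to F(S)$, which is precisely the canonical inclusion.

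The main technical obstacle is the bookkeeping required for the linearity check, since the group and module operations on $rF(F(S))$ are defined abstractly via pushout diagrams that conditions 2 and 3 of \cref{smoke} ensure $F$ preserves. The argument is strictly parallel to the verifications already carried out in the proofs of \cref{satu1} and \cref{satu2}, so no essentially new idea is required.
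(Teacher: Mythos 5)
Your proof is correct and, at its mathematical core, is the same argument as the paper's: the paper's sketch also hinges on the presheaf isomorphism
\[
\varphi: \mathrm{Hom}_{S^\flat}(S,\,\cdot\,) \simeq \mathrm{Eq}\bigl(\mathrm{Hom}_{S^\flat\text{-Mod}}(I, \mathrm{Hom}_{S^\flat}(S^\flat[x^{1/p^\infty}]/x,\,\cdot\,)) \rightrightarrows \mathrm{Hom}_{S^\flat\text{-Mod}}(I, \mathrm{Hom}_{S^\flat}(S^\flat[x^{1/p^\infty}],\,\cdot\,))\bigr)
\]
and then applies $F^{\mathrm{op}}$ to it, invoking the lemma (parallel to the one in the proof of \cref{satu1}) that conditions 2 and 3 of \cref{smoke} let $F^{\mathrm{op}}$ be pushed past the relevant limit formations. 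What you do differently is to evaluate the adjunction data concretely: instead of working with the presheaf equalizer abstractly, you identify for each $i \in I$ the corresponding map $\phi_i \colon S^\flat[x^{1/p^\infty}]/x \to S$ (which is exactly what $\varphi$ produces when evaluated at $\mathrm{id}_S$) and set $\psi_S(i) = F(\phi_i)$, then check $S^\flat$-linearity and the equalizer condition by hand using the diagram commutativities that $F$ preserves. This buys concreteness and makes the role of conditions 1--3 of \cref{smoke} visible in each verification; the paper's presheaf-equalizer packaging buys brevity by deferring all of these to the already-proven lemma in \cref{satu1}. The two arguments construct literally the same natural transformation. One small point worth flagging: you call naturality in $S$ "automatic," but it does require the routine observation that $\phi_{f^\flat(i)} = f \circ \phi_i$ for a map $f \colon S \to S'$ together with the functoriality of $\mathrm{Env}$; this is no obstacle, but it is a verification rather than a tautology.
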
{}

\begin{proof}This follows in a way similar to the proof of \cref{satu1} once we note the following statement about the category $\mathrm{QRSP}.$ Let $S$ be a QRSP algebra. Let $I:= \mathrm{Ker}(S^\flat \to S)$. Then there is an isomorphism 
$$ \varphi: \mathrm{Hom}_{S^\flat} (S, \cdot) \simeq \mathrm{Eq} \left (\mathrm{Hom}_{S^\flat\mathrm{Mod}}(I, \mathrm{Hom}_{S^\flat}(\frac{S^\flat[x^{1/p^\infty}]}{x},\cdot)) \,\, \substack{\longrightarrow \\ \longrightarrow}\,\ \mathrm{Hom}_{S^\flat\mathrm{Mod}}(I, \mathrm{Hom}_{S^\flat}(S^\flat[x^{1/p^\infty}],\cdot))      \right)$$ in $\mathrm{PSh}(\mathrm{QRSP}_{S^\flat/}^{\mathrm{op}}).$ Here one of the arrows is induced by the map $S^\flat[x^{1/p^\infty}] \to \frac{S^\flat[x^{1/p^\infty}]}{x}$ and the other map is obtained by sending everything to the element of $\mathrm{Hom}_{S^\flat\mathrm{Mod}}(I, \mathrm{Hom}_{S^\flat}(S^\flat[x^{1/p^\infty}],\cdot))$ corresponding to the map induced by the inclusion $I \subset S^\flat$ and the fact that $\mathrm{Hom}_{S^\flat}(S^\flat[x^{1/p^\infty}],\cdot)$ is naturally valued in $S^\flat$-algebras. \end{proof}{}

\begin{definition}[Nilpotent quasi-ideals]\label{nilpotent}Let $X$ be a quasi-ideal in $\mathbb{G}_a^{\mathrm{perf}}$ over $\mathbb{F}_p$. We will call $X$ a \textit{nilpotent} quasi-ideal in $\mathbb{G}_a^{\mathrm{perf}}$ if the graded map $\mathbb{F}_p[x^{1/p^\infty}] \to \Gamma(X, \mathcal{O}_X) $ corresponding to $X \to \mathbb{G}_a^{\mathrm{perf}}$ is zero in large enough degrees. In other words writing $t \in \Gamma(X, \mathcal{O}_X)$ as the image of $x$, we need $t$ to be a nilpotent element. We define $\mathcal{N} \mathrm{QID}\w{--} \mathbb{G}_a^{\mathrm{perf}}$ to be the full subcategory of $\mathrm{QID}\w{--}\mathbb{G}_a^{\mathrm{perf}}$ spanned by nilpotent quasi-ideals.
\end{definition}{}

\begin{example}\label{socs}
The zero section $\w{Spec}\, \mathbb{F}_p \to \mathbb{G}_a^{\w{perf}}$ viewed as a quasi-ideal in $\mathbb{G}_a^{\w{perf}}$ is an example of a nilpotent quasi-ideal; this is also the initial object in the category $\mathcal{N} \mathrm{QID}\w{--} \mathbb{G}_a^{\mathrm{perf}}$. Further, we note that $\alpha^{\natural}$ and $u^*W[F]$ are both examples of nilpotent quasi-ideals in $\mathbb{G}_a^{\mathrm{perf}}$ over $\mathbb{F}_p.$ However, $\mathbb{G}_a^{\mathrm{perf}}$ is \textit{not} an example of a nilpotent quasi-ideal.
\end{example}{}

\begin{remark}We note that for every $\mathbb{F}_p$-algebra $R,$ using the map $X \to \mathbb{G}_a^{\mathrm{perf}}$, one gets a map $X(R) \to R^\flat$ at the level of $R$-valued points. Composing along the map $R ^\flat \to R,$ we get a map $w: X(R) \to R.$ It follows that if $X$ is a nilpotent quasi-ideal in $\mathbb{G}_a^\mathrm{perf},$ then $w(z)$ is a nilpotent element of $R$ for every $z \in X(R).$ One can analogously define a notion of ``nilpotent quasi-ideals" in $\mathbb{G}_a$ as well, which can be though of as an analogue of locally nilpotent ideals at the level of $R$-valued points for every $\mathbb{F}_p$-algebra $R.$ Since we do not use the notion of nilpotent quasi-ideals in $\mathbb{G}_a$, we do not discuss them here.
\end{remark}{}

\begin{remark}\label{nil1}In fact if $X$ is a quasi-ideal in $\mathbb{G}_a^{\mathrm{perf}}$ over $\mathbb{F}_p$ that is not isomorphic to $\mathbb{G}_a^{\mathrm{perf}}$ then $X$ is a nilpotent quasi-ideal. To see this, we note that by writing $X = \mathrm{Spec}\, B$ for a graded Hopf algebra $B$ and $t^i$ for the image of $x^i$ under the map $\mathbb{F}_p[x^{1/p^\infty}] \to B$ (here $i  \in \mathbb{N}[1/p]$), we note that $X$ is a quasi-ideal if and only if $b \otimes t^{\deg b} = t^{\deg b} \otimes b$ in $B \otimes B$ for every homogeneous $b \in B.$ Now $b \otimes t^{\deg b} = t^{\deg b} \otimes b$ implies that $t^{\deg b}$ and $b$ are linearly dependent in the $\mathbb{F}_p$ vector space $B.$ Thus if $X$ is not nilpotent, i.e., if $t^i \ne 0$ for all $i$, then any non-zero homogeneous $b \in B$ is in the linear span of $t^i.$ Thus as a graded algebra $B \simeq \mathbb{F}_p[x^{1/p^\infty}]$ and since the map $\mathbb{F}_p[x^{1/p^\infty}] \to B$ is a map of graded Hopf algebras, it follows that the quasi-ideal $X$ is isomorphic to $\mathbb{G}_a^{\mathrm{perf}}.$
\end{remark}{}

We will now record a lemma.
\begin{lemma}\label{smoke3}For a perfect ring $B$, let $S := B[x_1^{1/p^\infty}, \ldots, x_n^{1/p^\infty}]$ and $I:= (x_1, \ldots, x_n).$ Then $(S/I)^\flat = \widehat{S}$, the $I$-adic completion of $S.$ Further, kernel of the map $(S/I)^\flat \to S/I$ is identified with the ideal $(x_1, \ldots, x_n)$ in $\widehat{S}.$
\end{lemma}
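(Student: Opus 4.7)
The plan is to produce mutually inverse ring homomorphisms
$\phi\colon (S/I)^\flat \to \widehat{S}$ and $\psi\colon \widehat{S} \to (S/I)^\flat$,
exploiting that $S$, and hence $\widehat{S}$, is a perfect ring of characteristic $p$.
Since $B$ is perfect and $S \simeq B\otimes_{\mathbb{F}_p}\mathbb{F}_p[x_1^{1/p^\infty},\dots,x_n^{1/p^\infty}]$,
the Frobenius on $S$ is bijective. To bootstrap perfectness to $\widehat{S}$, I would use the
description of $\widehat{S}$ as formal sums $\sum_{\alpha\in\mathbb{Z}[1/p]^n_{\geq 0}} b_\alpha x^\alpha$
subject to the Cauchy condition that, for every $N$, only finitely many $\alpha$ with
$\sum_j\lfloor\alpha_j\rfloor<N$ satisfy $b_\alpha\neq 0$; the assignment
$\sum b_\alpha x^\alpha\mapsto \sum b_\alpha^{1/p} x^{\alpha/p}$ preserves this condition and
inverts Frobenius on $\widehat{S}$. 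Coupled with the standard identification
$\widehat{S}/I\widehat{S}\simeq S/I$ (valid since $I$ is finitely generated), this sets
the stage.

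Given $(a_k)\in(S/I)^\flat$ with $a_{k+1}^p=a_k$, pick arbitrary lifts
$\tilde a_k\in S$ and observe $\tilde a_{k+1}^p\equiv\tilde a_k\pmod I$.
Raising to the $p^k$-th power and using the freshman's dream in characteristic $p$
yields $\tilde a_{k+1}^{p^{k+1}}\equiv\tilde a_k^{p^k}\pmod{I^{p^k}}$, so
$(\tilde a_k^{p^k})_k$ is Cauchy in the $I$-adic topology on $S$, independent
of the chosen lifts modulo $I^k$; its limit defines $\phi((a_k))\in\widehat{S}$.
Conversely, using perfectness of $\widehat{S}$ to form $a^{1/p^k}\in\widehat{S}$,
set $\psi(a):=(a^{1/p^k}\bmod I\widehat{S})_k$, which lands in $(S/I)^\flat$ via
$\widehat{S}/I\widehat{S}\simeq S/I$.

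The checks $\phi\circ\psi=\mathrm{id}$ and $\psi\circ\phi=\mathrm{id}$ are then routine.
For the first, if $\tilde b_k\in S$ lifts $a^{1/p^k}\in\widehat{S}$, then
$\tilde b_k^{p^k}-a=(\tilde b_k-a^{1/p^k})^{p^k}\in I^{p^k}\widehat{S}$ in
characteristic $p$, so $\tilde b_k^{p^k}\to a$ in $\widehat{S}$. For the second,
$\tilde a_k^{p^{k-j}}\bmod I=a_k^{p^{k-j}}=a_j$ by the tilting compatibility.
Finally, the first projection $(S/I)^\flat\to S/I$, $(a_k)\mapsto a_0$, corresponds
under $\psi$ to the reduction $\widehat{S}\to\widehat{S}/I\widehat{S}\simeq S/I$,
whose kernel is visibly $I\widehat{S}=(x_1,\dots,x_n)$, viewed as an ideal of
$\widehat{S}$. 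The main technical point is the well-definedness of the inverse Frobenius
on $\widehat{S}$; the explicit formal-series picture makes this transparent and
bypasses the more delicate lift-and-choose arguments one would need if working only
with the $I$-adic filtration on $S$.
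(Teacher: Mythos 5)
Your proof is correct, but it takes a genuinely different route from the paper's. The paper proves the more general fact that for \emph{any} perfect ring $S$ and finitely generated ideal $I$, one has $(S/I)^\flat \simeq \widehat{S}$: it introduces the ideal $I^{[p^n]} := \{x^{p^n} : x \in I\}$, observes that the $p^n$-power map gives an isomorphism of inverse systems identifying $(S/I)^\flat = \varprojlim_{x\mapsto x^p} S/I$ with $\varprojlim_n S/I^{[p^n]}$, and then uses finite generation of $I$ to argue that $\{I^{[p^n]}\}$ and $\{I^n\}$ are cofinal (define the same topology), so that the latter limit is $\widehat{S}$. This avoids any explicit model of $\widehat{S}$ and in particular never needs $\widehat{S}$ itself to be perfect.

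Your argument instead builds the two inverse maps by hand. The forward map, sending $(a_k)$ to $\lim_k \tilde a_k^{p^k}$, is essentially a lift-and-Cauchy argument and would work for any perfect $S$ with finitely generated $I$. The inverse map, however, relies on perfectness of $\widehat{S}$ to form $a^{1/p^k}$, and you establish this via the formal-series description of $\widehat{S}$ for this particular $S$ and $I$ (bounded-below sums $\sum b_\alpha x^\alpha$ over $\alpha \in \mathbb{Z}[1/p]^n_{\ge 0}$, with the growth condition controlling the fractional tail). That description is correct — it is worth noting that $x^\alpha \in I^k$ iff $\sum_i \lfloor\alpha_i\rfloor \ge k$, which is what makes your floor-function Cauchy condition the right one and also what makes the inverse-Frobenius assignment $b_\alpha x^\alpha \mapsto b_\alpha^{1/p} x^{\alpha/p}$ stay convergent — and the compatibility checks $\phi\psi = \mathrm{id}$, $\psi\phi = \mathrm{id}$ go through as you indicate, using freshman's dream repeatedly. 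The identification of the kernel of $(S/I)^\flat \to S/I$ with $I\widehat{S}$ then falls out immediately from $\psi$ and the isomorphism $\widehat{S}/I\widehat{S}\simeq S/I$. In summary: your proof buys concreteness (an explicit pair of inverse maps and a transparent model for $\widehat{S}$) at the cost of being tied to this specific $S$ and $I$; the paper's proof is shorter, applies to arbitrary perfect $S$ with finitely generated $I$, and never needs to see inside $\widehat{S}$.
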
{}

\begin{proof}This will follow from the more general fact that if $S$ is a perfect ring and $I$ is a finitely generated ideal then $(S/I)^\flat$ is isomorphic to the $I$-adic completion of $S$ denotes as $\widehat{S}.$ We let $I^{[p^n]}:= \left \{x^{p^n} \mid x\in I  \right \}.$ Since $S$ is perfect, it follows that $I^{[p^n]}$ is an ideal of $S.$ By sending an element to its $p^n$-th power we get an isomorphism $\phi^n: S/I \to S/I^{[p^n]}.$ These maps provide an isomorphism of inverse systems as below. 

\begin{center}
\begin{tikzcd}
\ldots \arrow[r] & S/I \arrow[r, "\phi"] \arrow[d, "\phi^2"] & S/I \arrow[r, "\phi"] \arrow[d, "\phi"] & S/I \arrow[d] \\
\ldots \arrow[r] & {S/I^{[p^2]}} \arrow[r]                   & {S/I^{[p]}} \arrow[r]                   & S/I          
\end{tikzcd}
\end{center}{}
Now since $I$ is finitely generated, $\left \{ I^{[p^n]} \right \}$ and $\left \{ I^n \right \}$ generates the same topology on $S$. This gives the isomorphism $(S/I)^\flat \simeq \widehat{S}.$ \end{proof}{}

\begin{proposition}\label{smoke4}Let $X$ be a nilpotent quasi-ideal. Then ${\mathrm{Un}}(X) (\frac{B[x^{1/p^\infty}]}{x})\simeq \Gamma(X, \mathcal{O}_X)\otimes_{\mathbb{F}_p} B.$
\end{proposition}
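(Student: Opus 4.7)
The plan is to identify $\mathrm{Un}(X)(S)$ for $S := B[x^{1/p^\infty}]/x$ by directly matching universal properties. By \cref{athosp}, $\mathrm{Un}(X)(S) = \mathrm{Env}_X(S^\flat, I)$ with $I = \ker(S^\flat \to S)$. Applying \cref{smoke3} (with $B$ perfect), $S^\flat$ identifies canonically with the $(x)$-adic completion of $B[x^{1/p^\infty}]$, and $I = (x)\cdot S^\flat$. Since $x$ is a non-zero divisor in $S^\flat$, the ideal $I$ is free of rank one as an $S^\flat$-module, so \cref{univ2} (noting $A = \mathbb{F}_p$ gives $W_A(S^\flat) = S^\flat$) expresses $\mathrm{Env}_X(S^\flat, I)$ as the $S^\flat$-algebra corepresenting the functor
\[
T \longmapsto \bigl\{\, u \in X(T) : d_T(u) = x_T \text{ in } T^\flat \,\bigr\}
\]
on $S^\flat$-algebras $T$, where $d_T$ is induced by the point $X \to \mathbb{G}_a^{\mathrm{perf}}$ and $x_T$ is the image of $x$ under $S^\flat = (S^\flat)^\flat \to T^\flat$.

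The next step is to equip $\Gamma(X,\mathcal{O}_X) \otimes_{\mathbb{F}_p} B$ with an $S^\flat$-algebra structure. Let $t \in \Gamma(X,\mathcal{O}_X)$ denote the image of $x$ under the map $\mathbb{F}_p[x^{1/p^\infty}] \to \Gamma(X,\mathcal{O}_X)$ corresponding to the point, so by the nilpotence assumption (\cref{nilpotent}), $t^N = 0$ for some $N$. The natural map
\[
B[x^{1/p^\infty}] \;\longrightarrow\; \Gamma(X,\mathcal{O}_X) \otimes_{\mathbb{F}_p} B, \qquad b\cdot x^{1/p^k} \;\longmapsto\; t^{1/p^k} \otimes b
\]
therefore sends $x^N$ to zero, hence factors through $B[x^{1/p^\infty}]/x^N$. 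Since $S^\flat/x^N$ identifies with $B[x^{1/p^\infty}]/x^N$ (by construction of the completion as an inverse limit), this uniquely extends to an $S^\flat$-algebra structure on $\Gamma(X,\mathcal{O}_X) \otimes_{\mathbb{F}_p} B$.

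Finally, I verify that $\Gamma(X,\mathcal{O}_X) \otimes_{\mathbb{F}_p} B$ corepresents the same functor. Any $S^\flat$-algebra map $\Gamma(X,\mathcal{O}_X) \otimes_{\mathbb{F}_p} B \to T$ amounts to an $\mathbb{F}_p$-algebra map $\phi \colon \Gamma(X,\mathcal{O}_X) \to T$---equivalently, an element $u \in X(T)$---such that the composition $\mathbb{F}_p[x^{1/p^\infty}] \to \Gamma(X,\mathcal{O}_X) \xrightarrow{\phi} T$ coincides with $\mathbb{F}_p[x^{1/p^\infty}] \to S^\flat \to T$. Reading this componentwise, $\phi(t^{1/p^k}) = x^{1/p^k}_T$ for every $k \geq 0$, which is exactly the equality $d_T(u) = x_T$ in $T^\flat$. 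By Yoneda, $\Gamma(X,\mathcal{O}_X) \otimes_{\mathbb{F}_p} B \simeq \mathrm{Env}_X(S^\flat, I) = \mathrm{Un}(X)(S)$, as desired.

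The main obstacle is establishing the $S^\flat$-algebra structure on $\Gamma(X,\mathcal{O}_X) \otimes_{\mathbb{F}_p} B$; it is precisely here that the nilpotence hypothesis on $X$ plays an essential role, allowing the map out of $B[x^{1/p^\infty}]$ to extend to one out of the $(x)$-adic completion $S^\flat$. Without this hypothesis (for instance, for $X = \mathbb{G}_a^{\mathrm{perf}}$), no such natural $S^\flat$-algebra structure exists on $\Gamma(X,\mathcal{O}_X) \otimes_{\mathbb{F}_p} B$ and the conclusion of the proposition fails.
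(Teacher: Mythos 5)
Your argument is correct and rests on the same essential ingredients as the paper's proof: \cref{smoke3} to identify $S^\flat$ with the $(x)$-adic completion $\widehat{B[x^{1/p^\infty}]}$, the fact that $x$ is a non-zero divisor in $S^\flat$ so that $I$ is free of rank one, and the nilpotence of $t$ to collapse the completion down to $B$. The paper carries out the computation directly at the level of the coequalizer presentation of $\mathrm{Env}_X$ from \cref{cons4}, whereas you verify the universal property of \cref{univ2} by constructing the $S^\flat$-algebra structure on $\Gamma(X,\mathcal{O}_X)\otimes_{\mathbb{F}_p}B$ and matching corepresented functors via Yoneda; these are adjoint-dual presentations of the same calculation.
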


\begin{proof}${\mathrm{Un}}(X) (\frac{B[x^{1/p^\infty}]}{x})$ by definition and \cref{smoke3} is $\mathrm{Un}(X) (\widehat{B[x^{1/p^\infty}]}, x).$ Since $x$ is a non-zero divisor, this is computed as coequalizer of the two maps $$ \widehat{B[x^{1/p^\infty}]}[y^{1/p^\infty}] \,\, \substack{\longrightarrow \\ \longrightarrow}\,\, \Gamma(X, \mathcal{O}_X) \otimes_{\mathbb{F}_p} \widehat{B[x^{1/p^\infty}]}$$ where one of the maps is induced by $\mathbb{F}_p[y^{1/p^\infty}] \to \Gamma(X, \mathcal{O}_X)$ corresponding to the data of the point. The other map is the $\widehat{B[x^{1/p^\infty}]}$-algebra map that sends $y^{1/p^n} \to x^{1/p^n}. $ Since the quasi-ideal is nilpotent, a power of $y$ is sent to zero by the first map. Hence we obtain the required isomorphism. \end{proof}{}

\begin{proposition}\label{smoke5}Let $X$ be a nilpotent quasi-ideal. Then $${\mathrm{Un}}(X) \left(\frac{B[x^{1/p^\infty}]}{x} \otimes_B \frac{B[x^{1/p^\infty}]}{x}\right) \simeq {\mathrm{Un}}(X)\left(\frac{B[x^{1/p^\infty}]}{x}\right)\otimes_B {\mathrm{Un}}(X) \left(\frac{B[x^{1/p^\infty}]}{x}\right) \simeq \Gamma(X, \mathcal{O}_X)\otimes_{\mathbb{F}_p} \Gamma(X, \mathcal{O}_X)\otimes_{\mathbb{F}_p}B.$$
\end{proposition}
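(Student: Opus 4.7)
The plan is to mimic the proof of \cref{smoke4} in two variables, and then derive the first isomorphism by combining the second with \cref{smoke4}. Write $S_i := B[x_i^{1/p^\infty}]/x_i$ for $i = 1,2$, so that $S_1 \otimes_B S_2 \simeq B[x_1^{1/p^\infty}, x_2^{1/p^\infty}]/(x_1,x_2)$. By \cref{smoke3} applied with two variables, the tilt $(S_1 \otimes_B S_2)^\flat$ is the $(x_1,x_2)$-adic completion $R := \widehat{B[x_1^{1/p^\infty}, x_2^{1/p^\infty}]}$, and the kernel of $R \to S_1 \otimes_B S_2$ is the ideal $(x_1,x_2) \subset R$. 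By \cref{athosp}, $\mathrm{Un}(X)(S_1 \otimes_B S_2) = \mathrm{Env}_X(R,(x_1,x_2))$.

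The key step is to split this enveloping algebra as a pushout. Rather than verifying the hypotheses of \cref{lunch1} inside the completed ring $R$, I would argue directly from the universal property in \cref{univ2}: for an $R$-algebra $S$, an $R$-linear map $(x_1,x_2) \to X(S)$ is a pair $(u_1, u_2) \in X(S)^2$ constrained by the Koszul relation $x_2 u_1 = x_1 u_2$, and the equalizer condition forces $d(u_i) = x_i$ in $S^\flat$. Since $X$ is a quasi-ideal, $d(u_1) u_2 = d(u_2) u_1$, so the Koszul relation becomes automatic once $d(u_i) = x_i$. Consequently $\mathrm{Env}_X(R, (x_1, x_2))$ corepresents the same functor as $\mathrm{Env}_X(R,x_1) \otimes_R \mathrm{Env}_X(R,x_2)$, giving
$$\mathrm{Env}_X(R, (x_1,x_2)) \simeq \mathrm{Env}_X(R, x_1) \otimes_R \mathrm{Env}_X(R, x_2).$$

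Each factor $\mathrm{Env}_X(R, x_i)$ is computed by the coequalizer argument of \cref{lunch}: it is the quotient of $\Gamma(X, \mathcal{O}_X) \otimes_{\mathbb{F}_p} R$ by the relations $t^{1/p^n} \otimes 1 = 1 \otimes x_i^{1/p^n}$ for $n \ge 0$. Labeling the two copies of $\Gamma(X, \mathcal{O}_X)$ by $\Gamma_1, \Gamma_2$ with distinguished elements $t_1, t_2$, the pushout becomes
$$\Gamma_1 \otimes_{\mathbb{F}_p} \Gamma_2 \otimes_{\mathbb{F}_p} R \,\big/\, \bigl(\, t_i^{1/p^n} \otimes 1 = 1 \otimes x_i^{1/p^n}, \; i = 1,2, \; n \ge 0 \,\bigr).$$
Because $X$ is a nilpotent quasi-ideal, $t_i^N = 0$ in $\Gamma_i$ for some $N$, which forces $x_i^N = 0$ in the quotient. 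Thus the $(x_1,x_2)$-adic completion $R$ collapses to $B[x_1^{1/p^\infty}, x_2^{1/p^\infty}]/(x_1^N, x_2^N)$, and the identifications $x_i^{1/p^n} = t_i^{1/p^n}$ further eliminate the variables $x_i^{1/p^n}$, leaving $\Gamma(X, \mathcal{O}_X) \otimes_{\mathbb{F}_p} \Gamma(X, \mathcal{O}_X) \otimes_{\mathbb{F}_p} B$. This establishes the second isomorphism.

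The first isomorphism then drops out of \cref{smoke4}: since $\mathrm{Un}(X)(S_i) \simeq \Gamma(X, \mathcal{O}_X) \otimes_{\mathbb{F}_p} B$, we have $\mathrm{Un}(X)(S_1) \otimes_B \mathrm{Un}(X)(S_2) \simeq \Gamma(X, \mathcal{O}_X) \otimes_{\mathbb{F}_p} \Gamma(X, \mathcal{O}_X) \otimes_{\mathbb{F}_p} B$, matching. The main obstacle I anticipate is careful bookkeeping in the collapse of the completed-polynomial variables to the nilpotent $t_i$'s; the use of the quasi-ideal identity in place of an explicit Koszul regularity check inside the completion is what enables the cleanest form of the argument.
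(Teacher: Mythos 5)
Your argument follows the paper's route exactly: pass via \cref{smoke3} to $\mathrm{Env}_X$ of the completed ring $\widehat{B[x_1^{1/p^\infty},x_2^{1/p^\infty}]}$ with ideal $(x_1,x_2)$, split this envelope as a tensor product over the completed ring (your ``direct universal-property'' step is precisely the proof of \cref{koszul}/\cref{lunch1} inlined --- you still silently assume the Koszul presentation of $(x_1,x_2)$ in the completed ring, just as the paper does when it invokes regularity), and use nilpotency of $t$ to collapse the completed variables and extract the two copies of $\Gamma(X,\mathcal{O}_X)$. Combining with \cref{smoke4} to get the first isomorphism is likewise how the paper concludes, so this is the same proof.
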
{}

\begin{proof}By definition and \cref{smoke3}, the left hand side is isomorphic to ${\mathrm{Un}}(X) (\widehat{B[x_1 ^{1/p^\infty}, x_2 ^{1/p^\infty}]}, (x_1, x_2)).$ By regularity of $(x_1, x_2)$ as an ideal of $\widehat{B[x_1 ^{1/p^\infty}, x_2 ^{1/p^\infty}]}$ and \cref{lunch1} that is computed as $$ \mathrm{Env}_X(\widehat{B[x_1 ^{1/p^\infty}, x_2 ^{1/p^\infty}]}, x_1) \otimes_{\widehat{B[x_1^{1/p^\infty}, x_2 ^{1/p^\infty}]}}    \mathrm{Env}_X(\widehat{B[x_1 ^{1/p^\infty}, x_2 ^{1/p^\infty}]}, x_2) . $$ By letting $t^{1/p^n}$ denote the image of $y^{1/p^n}$ under the map $\mathbb{F}_p[y^{1/p^\infty}] \to \Gamma(X, \mathcal{O}_X)$ corresponding to the data of the point, we obtain that the above expression is isomorphic to $$\frac{   \widehat{B[x_1^{1/p^\infty}, x_2^{1/p^\infty}]} \otimes \Gamma(X, \mathcal{O}_X)} {(x_1 ^{1/p^n}\otimes 1 - 1 \otimes t^{1/p^n})} \otimes_{\widehat{B[x_1^{1/p^\infty}, x_2^{1/p^\infty}]}}  \frac{   \widehat{B[x_1^{1/p^\infty}, x_2^{1/p^\infty}]} \otimes \Gamma(X, \mathcal{O}_X)} {(x_2 ^{1/p^n}\otimes 1 - 1 \otimes t^{1/p^n})}.$$ Since $X$ is nilpotent, a power of $t$ is zero which along with \cref{smoke4} gives the required conclusion.
\end{proof}{}

\begin{remark}More generally, the proof of \cref{smoke5} shows that for a nilpotent quasi-ideal $X$, ${\mathrm{Un}}(X)$ commutes with finite coproducts of $\frac{B[x^{1/p^\infty}]}{x}.$ 
\end{remark}{}

\begin{proposition}\label{smoke6}The unwinding of a nilpotent quasi-ideal (over $\mathbb{F}_p$) satisfies the properties in \cref{smoke}, i.e., we have a functor $${\mathrm{Un}}: {\mathcal{N}\mathrm{QID}\w{--} \mathbb{G}_a^{\mathrm{perf}}}^{\mathrm{op}} \to \mathrm{Fun} (\mathrm{QRSP}, \mathrm{Alg}_{\mathbb{F}_p})_{\mathfrak{G}/}^\otimes.$$
\end{proposition}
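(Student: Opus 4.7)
The plan is to fix a nilpotent quasi-ideal $X$ over $\mathbb{F}_p$ and verify that $\mathrm{Un}(X)$, already known to live in $\mathrm{Fun}(\mathrm{QRSP}, \mathrm{Alg}_{\mathbb{F}_p})_{\mathfrak{G}/}$, satisfies the three conditions of \cref{smoke}, i.e., lies in the full subcategory $\mathrm{Fun}(\mathrm{QRSP}, \mathrm{Alg}_{\mathbb{F}_p})_{\mathfrak{G}/}^\otimes$. Since all three conditions amount to specializations of lemmas already proved in the preceding pages, the proof is essentially a matter of assembling them; the only thing to check is that the comparison maps in \cref{smoke} agree with the isomorphisms those lemmas produce.

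For condition (1), I would note that the functor $\mathrm{QRSP} \to \mathscr{P}I$ from \cref{athosp} sends a perfect ring $B$ (which is trivially QRSP with $B^\flat = B$) to the pair $(B, 0)$. By \cref{kle}, we have $\mathrm{Un}(X)(B,0) \simeq W_A(B)$ for any pointed $\mathbb{G}_a^{\mathrm{perf}}$-module $X$, and the isomorphism is induced precisely by the canonical natural transformation $\mathfrak{G} \to \mathrm{Un}(X)$. Specialized to $A = \mathbb{F}_p$, this gives the required identification $\mathfrak{G}(B) \xrightarrow{\sim} \mathrm{Un}(X)(B)$.

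For condition (3), I would apply \cref{smoke4} directly: it gives $\mathrm{Un}(X)(\tfrac{B[x^{1/p^\infty}]}{x}) \simeq \Gamma(X, \mathcal{O}_X) \otimes_{\mathbb{F}_p} B$ for every perfect $B$, using in an essential way the hypothesis that $X$ is nilpotent (so that only finitely many powers of the coordinate $y^{1/p^n}$ survive in the coequalizer computation). Setting $B = \mathbb{F}_p$ gives $\mathrm{Un}(X)(\tfrac{\mathbb{F}_p[x^{1/p^\infty}]}{x}) \simeq \Gamma(X, \mathcal{O}_X)$, so the base-change map of condition (3) identifies with the evident isomorphism $\Gamma(X, \mathcal{O}_X) \otimes_{\mathbb{F}_p} B \xrightarrow{\sim} \Gamma(X, \mathcal{O}_X) \otimes_{\mathbb{F}_p} B$. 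For condition (2), \cref{smoke5} already computes both sides as $\Gamma(X, \mathcal{O}_X) \otimes_{\mathbb{F}_p} \Gamma(X, \mathcal{O}_X) \otimes_{\mathbb{F}_p} B$, and inspection of its proof shows the comparison map is the canonical one.

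The main (and only nontrivial) obstacle in this chain is condition (2), which was already dispatched in \cref{smoke5}: it required computing the tilt of $\tfrac{B[x^{1/p^\infty}]}{x} \otimes_B \tfrac{B[x^{1/p^\infty}]}{x}$ using \cref{smoke3}, exploiting Koszul regularity of $(x_1, x_2)$ in the completed perfect polynomial ring via \cref{lunch1}, and then using nilpotence of $t \in \Gamma(X,\mathcal{O}_X)$ to kill the residual completion. With \cref{smoke4} and \cref{smoke5} in hand, the proof of \cref{smoke6} therefore reduces to a brief bookkeeping exercise identifying the structure maps, which is what I would write up.
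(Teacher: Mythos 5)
Your proof is correct and follows exactly the same route as the paper's: condition (1) via \cref{kle}, condition (3) via \cref{smoke4}, and condition (2) via \cref{smoke5}. You spell out the identifications of the comparison maps in a bit more detail than the paper's one-line citation of the three lemmas, but the argument is the same.
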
{}

\begin{proof}Let $X$ be a nilpotent quasi-ideal. By definition, we need to check three properties for the functor $F:={\mathrm{Un}}(X).$ The first one is that the natural map $B \to F(B)$ is an isomorphism for every perfect ring $B$ which follows from \cref{kle}. The other two properties follow from \cref{smoke4} and \cref{smoke5}. \end{proof}{}

\begin{proposition}\label{last}The functor $r: \mathrm{Fun} (\mathrm{QRSP}, \mathrm{Alg}_{\mathbb{F}_p})_{\mathfrak{G}/}^\otimes \to {\mathrm{QID}\w{--} \mathbb{G}_a ^{\mathrm{perf}}}^{\mathrm{op}}$ factors to give a functor $r: \mathrm{Fun} (\mathrm{QRSP}, \mathrm{Alg}_{\mathbb{F}_p})_{\mathfrak{G}/}^\otimes \to {\mathcal{N} \mathrm{QID}\w{--} \mathbb{G}_a^{\mathrm{perf}}}^{\mathrm{op}} $ which admits a left adjoint given by ${\mathrm{Un}}$ from \cref{smoke6}.
 \end{proposition}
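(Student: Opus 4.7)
The proof splits into two steps: first, I verify that $r(F)$ lies in $\mathcal{N}\mathrm{QID}\w{--}\mathbb{G}_a^{\mathrm{perf}}$ for every $F$ in the $\otimes$-category; then I build the adjunction $\mathrm{Un} \dashv r$ formally, following the template of \cref{satu1} and \cref{satu2}.

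For the first step --- the main obstacle --- I appeal to \cref{nil1}, which says that any quasi-ideal in $\mathbb{G}_a^{\mathrm{perf}}$ is nilpotent unless it is isomorphic to $\mathbb{G}_a^{\mathrm{perf}}$ itself. I therefore argue by contradiction: suppose $r(F) \simeq \mathbb{G}_a^{\mathrm{perf}}$, which forces $F(\mathbb{F}_p[x^{1/p^\infty}]/x) \simeq \mathbb{F}_p[x^{1/p^\infty}]$ with the point map being the identity. Combining this with properties (2) and (3) of \cref{smoke} applied to a pair of copies yields $F(\mathbb{F}_p[x^{1/p^\infty}, y^{1/p^\infty}]/(x, y)) \simeq \mathbb{F}_p[x^{1/p^\infty}, y^{1/p^\infty}]$. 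Via \cref{smoke3}, the natural transformation $\mathfrak{G} \to F$ evaluated at this QRSP algebra produces a ring homomorphism $\widehat{\mathbb{F}_p[x^{1/p^\infty}, y^{1/p^\infty}]} \to \mathbb{F}_p[x^{1/p^\infty}, y^{1/p^\infty}]$ from the $(x, y)$-adic completion to the polynomial ring. Naturality applied to the QRSP maps $\mathbb{F}_p[x^{1/p^\infty}] \to \mathbb{F}_p[x^{1/p^\infty}, y^{1/p^\infty}]/(x, y)$ factoring through the respective points forces this homomorphism to send $x \mapsto x$ and $y \mapsto y$. But $1 - x$ is a unit in the completion (with inverse $\sum_{n \ge 0} x^n$) and not a unit in the polynomial ring, so no such ring homomorphism exists --- contradiction.

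With the factorization in hand, the adjunction is formal. The unit $X \xrightarrow{\sim} r(\mathrm{Un}(X))$ is a natural isomorphism coming from \cref{smoke4} and \cref{smoke5}, which together compute $\mathrm{Un}(X)$ at $B[x^{1/p^\infty}]/x$ and at its tensor square over $B$, and thereby recover $X$ as a pointed $\mathbb{G}_a^{\mathrm{perf}}$-module. The counit $\mathrm{Un}(r(F)) \to F$ is the natural transformation of \cref{stu1}, now legitimately living in the $\otimes$-category thanks to the first step of the proof combined with \cref{smoke6}. The adjunction bijection $\mathrm{Hom}(\mathrm{Un}(X), F) \simeq \mathrm{Hom}(X, r(F))$ sends $\mathrm{Un}(X) \to F$ to the composite $X \simeq r(\mathrm{Un}(X)) \to r(F)$, and conversely sends $X \to r(F)$ to $\mathrm{Un}(X) \to \mathrm{Un}(r(F)) \to F$; the verification that these are mutually inverse proceeds exactly as in the proof of \cref{satu1}, by observing that two natural transformations out of $\mathrm{Un}(X)$ which agree after applying $r$ must coincide, in view of the unit isomorphism.
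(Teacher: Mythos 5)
Your proof is correct and follows the same overall strategy as the paper: first use \cref{nil1} to reduce the factorization claim to showing that $\mathbb{G}_a^{\mathrm{perf}}$ cannot lie in the image of $r$, then derive a contradiction from a ring homomorphism that would send a non-unit to a unit, and finally assemble the adjunction from \cref{stu1}, \cref{smoke4}, and \cref{smoke5} exactly as in \cref{satu1}. The only divergence is in the contradiction step: you invoke condition (2) of \cref{smoke} to pass to the two-variable QRSP algebra $\mathbb{F}_p[x^{1/p^\infty}, y^{1/p^\infty}]/(x,y)$ and work with a map $\widehat{\mathbb{F}_p[x^{1/p^\infty}, y^{1/p^\infty}]} \to \mathbb{F}_p[x^{1/p^\infty}, y^{1/p^\infty}]$, whereas the paper stays with a single variable: it factors the quotient map $\mathbb{F}_p[x^{1/p^\infty}] \to \mathbb{F}_p[x^{1/p^\infty}]/x$ through the ($x$-adic) completion $\widehat{\mathbb{F}_p[x^{1/p^\infty}]}$ inside $\mathrm{QRSP}$, applies $F$, and uses only condition (1) of \cref{smoke} to produce a map $\widehat{\mathbb{F}_p[x^{1/p^\infty}]} \to \mathbb{F}_p[x^{1/p^\infty}]$ sending $x \mapsto x$, at which point $1+x$ gives the same contradiction. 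Your version is valid (and condition (3) that you also cite is vacuous for $B = \mathbb{F}_p$), but the extra tensor square is not needed; the one-variable argument is more economical and makes it clearer that the factorization claim really only leans on the isomorphism $\mathfrak{G}(B) \simeq F(B)$ for perfect $B$.
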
{}

\begin{proof}First we prove that we indeed have a factorization which gives the functor $r: \mathrm{Fun} (\mathrm{QRSP}, \mathrm{Alg}_{\mathbb{F}_p})_{\mathfrak{G}/}^\otimes \to \mathcal{N} \mathrm{QID}\w{--} \mathbb{G}_a^{\mathrm{perf}}. $ By \cref{nil1}, it would be enough to prove that the essential image of the functor $r: \mathrm{Fun} (\mathrm{QRSP}, \mathrm{Alg}_{\mathbb{F}_p})_{\mathfrak{G}/}^\otimes \to  \mathrm{QID}\w{--} \mathbb{G}_a^{\mathrm{perf}}$ does not contain $\mathbb{G}_a^{\mathrm{perf}}.$ We assume on the contrary that there is an $F \in \mathrm{Fun} (\mathrm{QRSP}, \mathrm{Alg}_{\mathbb{F}_p})_{\mathfrak{G}/}^\otimes$ such that $rF \simeq \mathbb{G}_a^{\mathrm{perf}}$ as quasi-ideals in $\mathbb{G}_a^{\mathrm{perf}}.$ This implies that the arrow $f:\mathbb{F}_p[x^{1/p^\infty}] \to \mathbb{F}_p[x^{1/p^\infty}]/x$ is sent to an isomorphism by $F.$ The arrow $f$ factors as $\mathbb{F}_p[x^{1/p^\infty}] \to \widehat{\mathbb{F}_p[x^{1/p^\infty}]} \to \mathbb{F}_p[x^{1/p^\infty}]/x.$ Applying $F$ to it and using the first property from \cref{smoke} gives the following maps $$\mathbb{F}_p[x^{1/p^\infty}] \to \widehat{\mathbb{F}_p[x^{1/p^\infty}]} \to F(\mathbb{F}_p[x^{1/p^\infty}]/x)$$ whose composition is an isomorphism. This shows that there are maps $\mathbb{F}_p[x^{1/p^\infty}] \to \widehat{\mathbb{F}_p[x^{1/p^\infty}]} \to \mathbb{F}_p[x^{1/p^\infty}]$ whose composition is the identity. That implies that there is a map $\widehat{\mathbb{F}_p[x^{1/p^\infty}]} \to \mathbb{F}_p[x^{1/p^\infty}]$ that sends $x \to x$. But no such map can exist since $1+x$ is a unit on the source but not on the target of the map. Now the required adjunction follows from \cref{stu1}, \cref{smoke4} and \cref{smoke5} (similar to the proof of \cref{satu1}) by noting the commutative diagram 

\begin{center}
\begin{tikzcd}
{{\mathrm{Un}}(r {\mathrm{Un}}(X))} \arrow[d] \arrow[rr, "\simeq"] &  & { {\mathrm{Un}}(X)}  \arrow[d] \\
{{\mathrm{Un}}(rF)} \arrow[rr]                                   &  & { F }                                          
\end{tikzcd}
\end{center}{}for any natural transformation ${\mathrm{Un}}(X) \to F$ where $X \in \mathcal{N} \mathrm{QID}\w{--} \mathbb{G}_a^{\mathrm{perf}}$ and $F \in \mathrm{Fun} (\mathrm{QRSP}, \mathrm{Alg}_{\mathbb{F}_p})_{\mathfrak{G}/}^\otimes.$\end{proof}{}

\begin{proposition}\label{nil}The functor $${\mathrm{Un}}: {\mathcal{N}\mathrm{QID}\w{--} \mathbb{G}_a^{\mathrm{perf}}}^{\mathrm{op}} \to \mathrm{Fun} (\mathrm{QRSP}, \mathrm{Alg}_{\mathbb{F}_p})_{\mathfrak{G}/}^\otimes$$ defined in \cref{smoke6} is fully faithful.
\end{proposition}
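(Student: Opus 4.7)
The plan is to use the adjunction established in Proposition~\cref{last}: $\mathrm{Un}$ is a left adjoint to $r$. By a standard categorical fact, a left adjoint is fully faithful if and only if the unit of the adjunction is a natural isomorphism. Working with the adjunction $\mathrm{Un}\colon \mathcal{N}\mathrm{QID}\w{--}\mathbb{G}_a^{\mathrm{perf}}{}^{\mathrm{op}} \rightleftarrows \mathrm{Fun}(\mathrm{QRSP},\mathrm{Alg}_{\mathbb{F}_p})_{\mathfrak{G}/}^\otimes \colon r$, the unit at a nilpotent quasi-ideal $X$ corresponds (in $\mathcal{N}\mathrm{QID}\w{--}\mathbb{G}_a^{\mathrm{perf}}$) to a natural map $r\mathrm{Un}(X) \to X$, and it suffices to prove that this map is an isomorphism for every nilpotent quasi-ideal $X$.

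First I would compute $r\mathrm{Un}(X)$ explicitly. By \cref{smoke1}, the underlying $B$-module scheme of $r\mathrm{Un}(X)$ over a perfect ring $B$ is $\mathrm{Spec}\,\mathrm{Un}(X)(B[x^{1/p^\infty}]/x)$. Since $X$ is a nilpotent quasi-ideal, \cref{smoke4} identifies this with $\mathrm{Spec}\,(\Gamma(X,\mathcal{O}_X)\otimes_{\mathbb{F}_p}B) \cong X \times_{\mathrm{Spec}\,\mathbb{F}_p} \mathrm{Spec}\,B$. Thus at the level of underlying schemes we already get $r\mathrm{Un}(X) \cong X$.

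Next I would verify that all the additional structures match: the group scheme structure, the $B$-action (equivalently, the $\mathbb{G}_a^{\mathrm{perf}}$-action), and the point $X \to \mathbb{G}_a^{\mathrm{perf}}$. Following the recipe of \cref{smoke1}, the group law on $r\mathrm{Un}(X)(B)$ is induced by functoriality of $\mathrm{Un}(X)$ along the standard cogroup structure on $B[x^{1/p^\infty}]/x$ in $\mathrm{QRSP}_{B/}$, while the $B$-scalar action comes from functoriality along $x^{1/p^n}\mapsto b^{1/p^n}x^{1/p^n}$, and the point is extracted from functoriality along $B[x^{1/p^\infty}] \to B[x^{1/p^\infty}]/x$. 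Unwinding the definitions in \cref{cons4} (using that $x$ is a nonzerodivisor in $B[x^{1/p^\infty}]$ and that the relevant ideals are generated by a regular sequence), each of these pieces of data on $r\mathrm{Un}(X)$ is obtained by applying the tensor-product functor $(-)\otimes_{\mathbb{F}_p}B$ to the corresponding piece of data on $X$. This checks that the natural map $r\mathrm{Un}(X)\to X$ respects all the structure and is an isomorphism of pointed $\mathbb{G}_a^{\mathrm{perf}}$-modules.

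The main obstacle, as in the analogous statement \cref{tr} for all quasi-ideals in $\mathbb{G}_a^{\mathrm{perf}}$, is not any deep geometric input but rather the bookkeeping of checking that the formally constructed group law, scalar action, and point on $r\mathrm{Un}(X)$ really do come from those on $X$. The key enabling facts are (a) the identification \cref{smoke4} of $\mathrm{Un}(X)(B[x^{1/p^\infty}]/x)$ with $\Gamma(X,\mathcal{O}_X)\otimes_{\mathbb{F}_p}B$, which crucially uses the \emph{nilpotence} of the quasi-ideal $X$ to collapse the coequalizer defining $\mathrm{Env}_X$ into a plain tensor product, and (b) the multiplicativity \cref{smoke5}. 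Once these are in hand, the verification that $\eta_X$ is an isomorphism reduces to tracing through the definitions of $\mathrm{Un}$ and $r$, and the adjunction from \cref{last} then delivers fully faithfulness immediately.
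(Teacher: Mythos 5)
Your proof is correct, and it is exactly the alternative argument the paper itself offers in its final sentence (``Alternatively, this follows from \cref{last} since $r\,{\mathrm{Un}}(X) \simeq X$ for a nilpotent quasi-ideal $X$''), just spelled out in more detail; the paper's main argument instead establishes faithfulness and fullness by hand via a commutative square, but it rests on the same key facts $r\,{\mathrm{Un}}(X)\simeq X$ from \cref{smoke4} and \cref{smoke5}.
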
{}

\begin{proof}We fix two nilpotent quasi-ideals $X$ and $Y.$ By using \cref{smoke4} and \cref{smoke5}, there are natural isomorphisms $r {\mathrm{Un}}(X) \simeq X$ and $r {\mathrm{Un}}(Y) \simeq Y$, where $r$ is the functor from \cref{stu1}. Therefore, \cref{smoke4} and \cref{smoke5} implies that ${\mathrm{Un}}$ is faithful. To show that it is full, it would be enough to prove that if $F$ and $G$ are two natural transformations between ${\mathrm{Un}}(X)$ and ${\mathrm{Un}}(Y)$ such that they are the same transformation $X \to Y$ in ${\mathcal{N}\mathrm{QID}\w{--} \mathbb{G}_a^{\mathrm{perf}}}^{\mathrm{op}}$ after applying $r$, then $F= G.$ For this, we note the following commutative diagram.
\begin{center}
\begin{tikzcd}
{\mathrm{Un}}(r \mathrm{Un}(X)) \arrow[rr, "\simeq"] \arrow[d] &  & {\mathrm{Un}}(X)  \arrow[d, "F"'] \arrow[d, "G"] \\
{\mathrm{Un}}(r \mathrm{Un}(Y)) \arrow[rr, "\simeq"]                  &  & {\mathrm{Un}}(Y)                              
\end{tikzcd}    
\end{center}{}
The diagram above shows that $F= G,$ as desired.\vspace{2mm}

Alternatively, this follows from \cref{last} since $r {\mathrm{Un}}(X) \simeq X$ for a nilpotent quasi-ideal $X.$
\end{proof}{}

\begin{remark}\label{la}More generally, let $X$ and $Y$ be two quasi-ideals in $\mathbb{G}_a^{\mathrm{perf}}$ over an Artinian local ring $A$ with residue field $\mathbb{F}_p$ such that the functors ${\mathrm{Un}}(X)$ and ${\mathrm{Un}}(Y)$ satisfies the three conditions in \cref{smoke} and such that there are natural isomorphisms $r {\mathrm{Un}}(X) \simeq X$ and $r {\mathrm{Un}}(Y) \simeq Y$. Then the above proof shows that there is a natural bijection $\mathrm{Hom}_{\mathrm{QID}\w{--} \mathbb{G}_a^{\mathrm{perf}}}(Y,X) \simeq \mathrm{Hom}({\mathrm{Un}}(X), {\mathrm{Un}}(Y))$ where the latter Hom is computed in $\mathrm{Fun} (\mathrm{QRSP}, \mathrm{Alg}_A)_{\mathfrak{G}/}.$
\end{remark}{}

Now we are ready to make the following definitions.

\begin{definition}We let $\mathrm{Fun}(\mathrm{QRSP}, \mathrm{Alg}_{\mathbb{F}_p})^{\mathcal{N}{\mathrm{Un}}}_{\mathfrak{G}/}$ denote the full subcategory of 
$\mathrm{Fun}(\mathrm{QRSP}, \mathrm{Alg}_{\mathbb{F}_p})^{\otimes}_{\mathfrak{G}/}$ spanned by image of nilpotent quasi-ideals under the functor ${\mathrm{Un}}$ from \cref{smoke6}.
\end{definition}{}
\begin{definition}We let $\mathrm{Fun}(\mathrm{QRSP}, \mathrm{Alg}_{\mathbb{F}_p})^{\mathrm{ rk}=1, \mathcal{N}{\mathrm{Un}}}_{\mathfrak{G}/}$ denote the full subcategory of $\mathrm{Fun}(\mathrm{QRSP}, \mathrm{Alg}_{\mathbb{F}_p})^{\otimes}_{\mathfrak{G}/}$ spanned by the unwinding of the nilpotent quasi-ideals whose underlying pointed $\mathbb{G}_a^{\mathrm{perf}}$-module is of fractional rank $1$ (\cref{satu3}).
\end{definition}{}

\begin{definition}We let $\mathrm{Fun}(\mathrm{QRSP}, \mathrm{Alg}_{\mathbb{F}_p})^{\mathrm{pure~rk}=1, \mathcal{N}{\mathrm{Un}}}_{\mathfrak{G}/}$ denote the full subcategory of $\mathrm{Fun}(\mathrm{QRSP}, \mathrm{Alg}_{\mathbb{F}_p})^{\otimes}_{\mathfrak{G}/}$ spanned by the unwinding of the nilpotent quasi-ideals whose underlying pointed $\mathbb{G}_a^{\mathrm{perf}}$-module is pure of fractional rank $1$ (\cref{pure}).
\end{definition}{}

Thus we obtain the following chain of inclusion of categories $$\mathrm{Fun}(\mathrm{QRSP}, \mathrm{Alg}_{\mathbb{F}_p})^{\mathrm{pure~rk}=1, \mathcal{N}{\mathrm{Un}}}_{\mathfrak{G}/} \subset \mathrm{Fun}(\mathrm{QRSP}, \mathrm{Alg}_{\mathbb{F}_p})^{\mathrm{ rk}=1, \mathcal{N}{\mathrm{Un}}}_{\mathfrak{G}/} \subset \mathrm{Fun}(\mathrm{QRSP}, \mathrm{Alg}_{\mathbb{F}_p})^{\mathcal{N}{\mathrm{Un}}}_{\mathfrak{G}/} \subset \mathrm{Fun}(\mathrm{QRSP}, \mathrm{Alg}_{\mathbb{F}_p})^{\otimes}_{\mathfrak{G}/}$$ which are all full subcategories of $\mathrm{Fun}(\mathrm{QRSP}, \mathrm{Alg}_{\mathbb{F}_p})_{\mathfrak{G}/}$

\begin{proposition}
 The category $\mathrm{Fun}(\mathrm{QRSP}, \mathrm{Alg}_{\mathbb{F}_p})^{\mathcal{N}{\mathrm{Un}}}_{\mathfrak{G}/}$ has a final object given by the functor $(\,\cdot\,)_{\mathrm{perf}}: \mathrm{QRSP} \to \mathrm{Alg}_{\mathbb{F}_p}$ that sends $S \mapsto S_{\mathrm{perf}}:= \mathrm{colim}_{x \mapsto x^p}S.$
\end{proposition}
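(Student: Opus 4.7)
The plan is to leverage the fully faithful (contravariant) embedding
\[
{\mathrm{Un}}: {\mathcal{N}\mathrm{QID}\w{--} \mathbb{G}_a^{\mathrm{perf}}}^{\mathrm{op}} \longrightarrow \mathrm{Fun}(\mathrm{QRSP}, \mathrm{Alg}_{\mathbb{F}_p})^{\mathcal{N}{\mathrm{Un}}}_{\mathfrak{G}/}
\]
established in \cref{nil}. By construction, the target category is (up to equivalence) the essential image of ${\mathrm{Un}}$, so ${\mathrm{Un}}$ is actually an anti-equivalence onto it. Under any contravariant equivalence, final objects on one side correspond to initial objects on the other. Therefore it suffices to exhibit an initial object in $\mathcal{N}\mathrm{QID}\w{--}\mathbb{G}_a^{\mathrm{perf}}$ and compute its unwinding on QRSP algebras.

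For the initial object, I would invoke \cref{socs}: the zero section $0 := (\mathrm{Spec}\,\mathbb{F}_p \to \mathbb{G}_a^{\mathrm{perf}})$, viewed as a pointed $\mathbb{G}_a^{\mathrm{perf}}$-module with the zero map as its point, is a nilpotent quasi-ideal and is initial in $\mathcal{N}\mathrm{QID}\w{--}\mathbb{G}_a^{\mathrm{perf}}$ (the underlying ring is $\mathbb{F}_p$ and $x$ is sent to $0$, so the nilpotency condition of \cref{nilpotent} is automatic; initiality is clear since any $\mathbb{G}_a^{\mathrm{perf}}$-module admits a unique map from the zero module compatible with the points). It remains to compute ${\mathrm{Un}}(0)$ as a functor on $\mathrm{QRSP}$.

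To compute ${\mathrm{Un}}(0)$, I would use \cref{satu4}, which asserts that the functor $\mathscr{P}I \to \mathrm{Alg}_{\mathbb{F}_p}$ sending $(B,I) \mapsto (B/I)_{\mathrm{perf}}$ is precisely the unwinding of the zero pointed $\mathbb{G}_a^{\mathrm{perf}}$-module. By \cref{athosp}, the unwinding on $\mathrm{QRSP}$ is obtained by precomposition with the functor $\mathrm{QRSP} \to \mathscr{P}I$ sending $S \mapsto (S^\flat, \ker(S^\flat \to S))$. Since $S^\flat/\ker(S^\flat \to S) \simeq S$, we find
\[
{\mathrm{Un}}(0)(S) \simeq S_{\mathrm{perf}} = \mathrm{colim}_{x \mapsto x^p} S,
\]
which is exactly the functor $(\,\cdot\,)_{\mathrm{perf}}$ in the statement.

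Combining these: since $0$ is initial in $\mathcal{N}\mathrm{QID}\w{--}\mathbb{G}_a^{\mathrm{perf}}$, every nilpotent quasi-ideal $X$ admits a unique map $0 \to X$, and applying ${\mathrm{Un}}$ yields a unique natural transformation ${\mathrm{Un}}(X) \to {\mathrm{Un}}(0) \simeq (\,\cdot\,)_{\mathrm{perf}}$ in $\mathrm{Fun}(\mathrm{QRSP}, \mathrm{Alg}_{\mathbb{F}_p})^{\mathcal{N}{\mathrm{Un}}}_{\mathfrak{G}/}$. This exhibits $(\,\cdot\,)_{\mathrm{perf}}$ as the final object of this category. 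No step should present serious obstacles; the only point that warrants care is verifying that the natural transformation $\mathfrak{G} \to (\,\cdot\,)_{\mathrm{perf}}$ (given on $S$ by $W_{\mathbb{F}_p}(S^\flat) = S^\flat \to S \to S_{\mathrm{perf}}$) is the one produced by the identification ${\mathrm{Un}}(0) \simeq (\,\cdot\,)_{\mathrm{perf}}$, which is immediate from the universal property of \cref{univ2} applied to the zero quasi-ideal.
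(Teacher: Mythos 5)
Your argument is correct and follows exactly the same route as the paper's one-line proof, which cites the same three ingredients: \cref{socs} (the zero quasi-ideal is nilpotent and initial in $\mathcal{N}\mathrm{QID}\w{--}\mathbb{G}_a^{\mathrm{perf}}$), \cref{satu4} (its unwinding on $\mathscr{P}I$ is $(B,I)\mapsto (B/I)_{\mathrm{perf}}$), and \cref{nil} (full faithfulness of $\mathrm{Un}$, hence an anti-equivalence onto the essential image that turns the initial object into a final one). You have simply spelled out the deduction in detail.
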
{}

\begin{proof}
This follows from the fact that the zero quasi-ideal $\mathrm{Spec}\,\mathbb{F}_p \to \mathbb{G}_a^{\w{perf}}$ is a nilpotent quasi-ideal (\cref{socs}), \cref{satu4} and \cref{nil}.
\end{proof}{}

\begin{proposition}\label{satu6}The category $\mathrm{Fun}(\mathrm{QRSP}, \mathrm{Alg}_{\mathbb{F}_p})^{\mathrm{ rk}=1, \mathcal{N}{\mathrm{Un}}}_{\mathfrak{G}/}$ has a final object given by the functor $\mathrm{id}: \mathrm{QRSP} \to \mathrm{Alg}_{\mathbb{F}_p}$ that sends $S \mapsto S.$
\end{proposition}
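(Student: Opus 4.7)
The plan is a direct consequence of the formal properties of unwinding and the Hodge map that were already established. First I would observe that $\mathrm{id}$ actually lies in the category $\mathrm{Fun}(\mathrm{QRSP}, \mathrm{Alg}_{\mathbb{F}_p})^{\mathrm{rk}=1, \mathcal{N}\mathrm{Un}}_{\mathfrak{G}/}$: by \cref{satu5} we have $\mathrm{id} \simeq \mathrm{Un}(\alpha^{\natural})$; by \cref{socs}, $\alpha^{\natural}$ is a nilpotent quasi-ideal; and by \cref{stilladding??945}, $\alpha^{\natural} \simeq u^*(\mathrm{Spec}\,\mathbb{F}_p)$ so it is of fractional rank $1$. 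Hence $\mathrm{id}$ is a genuine object of this subcategory.

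Next I would reduce the claim of finality to a question about nilpotent quasi-ideals. The unwinding functor $\mathrm{Un}\colon (\mathcal{N}\mathrm{QID}\text{-}\mathbb{G}_a^{\mathrm{perf}})^{\mathrm{op}} \to \mathrm{Fun}(\mathrm{QRSP}, \mathrm{Alg}_{\mathbb{F}_p})^{\otimes}_{\mathfrak{G}/}$ is fully faithful by \cref{nil}. Therefore, for any nilpotent quasi-ideal $X$ whose underlying pointed $\mathbb{G}_a^{\mathrm{perf}}$-module is of fractional rank $1$, we have a natural bijection
\[
\mathrm{Hom}(\mathrm{Un}(X), \mathrm{Un}(\alpha^{\natural})) \;\simeq\; \mathrm{Hom}_{\mathcal{N}\mathrm{QID}\text{-}\mathbb{G}_a^{\mathrm{perf}}}(\alpha^{\natural}, X).
\]
Since $\mathcal{N}\mathrm{QID}\text{-}\mathbb{G}_a^{\mathrm{perf}}$ is a full subcategory of $\mathbb{G}_a^{\mathrm{perf}}\text{-}\mathrm{Mod}_*$, this Hom-set coincides with the Hom-set in the category of pointed $\mathbb{G}_a^{\mathrm{perf}}$-modules.

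Finally I would invoke \cref{Hmap} (the Hodge map), which asserts precisely that for any pointed $\mathbb{G}_a^{\mathrm{perf}}$-module $X$ of fractional rank $1$ there is a unique map $\alpha^{\natural} \to X$ in $\mathbb{G}_a^{\mathrm{perf}}\text{-}\mathrm{Mod}_*$. Combining this uniqueness with the bijection above, one obtains that $\mathrm{Hom}(\mathrm{Un}(X), \mathrm{id})$ is a singleton for every object $\mathrm{Un}(X)$ in the subcategory. This is the desired finality.

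Since every step invokes a previously established result, there is no significant obstacle; the only point that deserves a careful sentence in the written-up proof is checking that $\alpha^{\natural}$ really is a nilpotent quasi-ideal of fractional rank $1$ (so the argument takes place inside the correct subcategory), and that the contravariance of $\mathrm{Un}$ flips ``initial'' on the side of quasi-ideals into ``final'' on the side of functors.
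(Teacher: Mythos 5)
Your proof is correct and follows exactly the same route the paper takes: identify $\mathrm{id}$ with $\mathrm{Un}(\alpha^{\natural})$ via \cref{satu5}, use full faithfulness of $\mathrm{Un}$ on nilpotent quasi-ideals (\cref{nil}) to transfer the finality question to an initiality question about $\alpha^{\natural}$ among pointed $\mathbb{G}_a^{\mathrm{perf}}$-modules of fractional rank $1$, and conclude by the Hodge map \cref{Hmap}. The paper's proof is a one-line citation of these three ingredients; your write-up simply supplies the connective tissue.
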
{}

\begin{proof}This follows from \cref{Hmap}, \cref{satu5} and \cref{nil}. \end{proof}{}

\begin{proposition}\label{univ}The category $\mathrm{Fun}(\mathrm{QRSP}, \mathrm{Alg}_{\mathbb{F}_p})^{\mathrm{pure~rk}=1, \mathcal{N}{\mathrm{Un}}}_{\mathfrak{G}/}$ has a final object given by the functor ${\mathrm{Un}} (u^*W[F]).$
\end{proposition}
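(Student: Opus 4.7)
The strategy is to identify $u^*W[F]$ as an initial object in the opposite of the category of nilpotent quasi-ideals in $\mathbb{G}_a^{\mathrm{perf}}$ whose underlying pointed $\mathbb{G}_a^{\mathrm{perf}}$-module is pure of fractional rank $1$, and then transport this universal property through the fully faithful contravariant embedding $\mathrm{Un}$ of \cref{nil}. Since $\mathrm{Fun}(\mathrm{QRSP}, \mathrm{Alg}_{\mathbb{F}_p})^{\mathrm{pure~rk}=1, \mathcal{N}{\mathrm{Un}}}_{\mathfrak{G}/}$ is by definition the essential image of this subcategory under $\mathrm{Un}$, and since a fully faithful contravariant embedding sends an initial object to a final object in its essential image, this will suffice.

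First I will verify that $u^*W[F]$ belongs to the relevant subcategory. It is pure of fractional rank $1$ by definition (it is the pullback of $W[F]$, which is the pure rank-$1$ pointed $\mathbb{G}_a$-module realizing the initial object of \cref{rem5}), and it is a quasi-ideal as noted in \cref{introex1}. The only thing left is nilpotency of the point. Using the explicit presentation from \cref{rem7}, one has $\Gamma(W[F], \mathcal{O}) \simeq \mathbb{F}_p[x_0, x_1, \ldots]/(x_i^p)$ with $\deg x_i = p^i$, and the map corresponding to the point $W[F] \to \mathbb{G}_a$ sends $x \mapsto x_0$. Pulling back along $u : \mathbb{G}_a^{\mathrm{perf}} \to \mathbb{G}_a$ yields
\[
\Gamma(u^*W[F], \mathcal{O}) \simeq \mathbb{F}_p[x_0^{1/p^\infty}, x_1, x_2, \ldots]/(x_i^p),
\]
and the point $u^*W[F] \to \mathbb{G}_a^{\mathrm{perf}}$ still sends $x$ to $x_0$. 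Because $x_0^p = 0$ in this graded algebra, the image is nilpotent, so $u^*W[F]$ is indeed a nilpotent quasi-ideal.

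Next, \cref{rem6} asserts that $u^*W[F]$ is an initial object in the category of pure fractional rank-$1$ pointed $\mathbb{G}_a^{\mathrm{perf}}$-modules. The subcategory consisting of those pure fractional rank-$1$ pointed $\mathbb{G}_a^{\mathrm{perf}}$-modules that are additionally nilpotent quasi-ideals is a \emph{full} subcategory (nilpotent quasi-ideals form a full subcategory of pointed $\mathbb{G}_a^{\mathrm{perf}}$-modules), and we have just checked that $u^*W[F]$ lies in it. Hence $u^*W[F]$ remains initial in this smaller category.

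Applying the fully faithful contravariant functor $\mathrm{Un}$ of \cref{nil} then transports this initial object to a final object of its essential image, which is precisely $\mathrm{Fun}(\mathrm{QRSP}, \mathrm{Alg}_{\mathbb{F}_p})^{\mathrm{pure~rk}=1, \mathcal{N}{\mathrm{Un}}}_{\mathfrak{G}/}$ by definition. I do not anticipate a serious obstacle: the only substantive verification is the nilpotency of the point on $u^*W[F]$, which is immediate from the explicit description of $\Gamma(W[F], \mathcal{O})$ and the relation $x_0^p = 0$.
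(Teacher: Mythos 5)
Your proof is correct and takes the same route as the paper, which simply cites \cref{rem6} (initiality of $u^*W[F]$ among pure fractional rank-$1$ pointed $\mathbb{G}_a^{\mathrm{perf}}$-modules) and \cref{nil} (full faithfulness of $\mathrm{Un}$). You have merely made explicit the two routine points the paper suppresses: the nilpotency verification (already asserted in \cref{socs} without proof) and the observation that initiality persists when passing to a full subcategory containing the initial object.
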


\begin{proof}This follows from \cref{rem6} and \cref{nil}. \end{proof}

\subsection{The generalized Hodge filtration}\label{sec3.4}Let $X$ be a fixed pointed $\mathbb{G}_a^{\mathrm{perf}}$-module over $\mathbb{F}_p$. The goal of this section is to construct a decreasing filtration on the functor ${\mathrm{Un}}(X)$ defined on QRSP algebras which will be called the ``Hodge filtration". This will be done by explicitly constructing a functorial filtration on ${\mathrm{Un}}(X) (S)=\mathrm{Env}_X (S^\flat, \mathrm{Ker}(S^\flat \to S)).$ We will show that under the assumption that $X$ is a fractional rank-$1$ pointed $\mathbb{G}_a^{\mathrm{perf}}$-module, $\mathrm{gr}^0$ of the filtration on ${\mathrm{Un}}(X)(S)$ identifies with $S.$ This induces a  natural transformation $\mathrm{gr}^0:{\mathrm{Un}}(X) \to \mathrm{id}$ of functors. Under the additional assumption that $X$ is a nilpotent quasi-ideal, this natural transformation is the same as the one coming from \cref{satu6}.

\begin{construction}[Hodge filtration]\label{con}Let $X$ be a pointed $\mathbb{G}_a^{\mathrm{perf}}$-module over $\mathbb{F}_p.$ We will construct a natural decreasing filtration on $\mathrm{Env}_X (B,I)$ for $(B,I) \in \mathscr{P}I.$ By \cref{cons1}, for a fixed $i \in I$, we have a natural map $\Gamma (X_B, \mathcal{O}_{X_B}) \to \coprod_I  \Gamma(X_B, \mathcal{O}_{X_B}) \to \mathscr{T}_{X_B}(I),$ where the first map maps $\Gamma(X_B, \mathcal{O}_{X_B})$ to the $i$-th factor in the coproduct. The composite map is the map $\w{ev}_i : \Gamma (X_B, \mathcal{O}_{X_B}) \to \mathscr{T}_{X_B}(I)$ defined in \cref{bday28}. \vspace{2mm}

By \cref{cons2}, there is a natural surjection $\mathscr{T}_{X_B}(I) \to \mathrm{Env}_X(B,I).$ Composing this with $\w{ev}_i$, we obtain the map $$ [i]: \Gamma(X_B, \mathcal{O}_{X_B}) \to \mathrm{Env}_X(B,I). $$ If $m \in \Gamma(X_B, \mathcal{O}_{X_B})$ is a homogeneous element, we will write $[i]^m:= [i](m)$.\vspace{2mm}

For a nonnegative integer $n$, we let $\mathrm{Fil}^n \mathrm{Env}_X (B,I)$ denote the ideal of $\mathrm{Env}_X (B,I)$ generated by the ``monomials" of the form $[i_1]^{m_1} \cdots [i_k]^{m_k}$ such that $\sum_{u=1}^{k}\deg m_u \ge n,$ for homogeneous elements $m_u \in \Gamma(X_B, \mathcal{O}_{X_B})$ of \textit{integral degree}, $k \ge 1$ and for $i_1, \ldots, i_k \in I.$ This defines a decreasing filtration which will be called the Hodge filtration on $\mathrm{Env}_X(B,I)$. A map $\varphi : (B,I) \to (B',I')$ sends $[i]^m \to [\varphi(i)]^m,$ so the construction is functorial. 
\end{construction}{}

\begin{definition}Let $X$ be a pointed $\mathbb{G}_a^{\mathrm{perf}}$-module over $\mathbb{F}_p$ and let $S$ be a QRSP algebra. The decreasing filtration defined by $$\mathrm{Fil}^n {\mathrm{Un}}(X) (S):= \mathrm{Fil}^n \mathrm{Env}_X (S^\flat, \mathrm{Ker}(S^\flat \to S))$$ will be called the \textit{Hodge filtration} on ${\mathrm{Un}} (X) (S).$
\end{definition}{}

\begin{definition}\label{hdc}Let $\widehat{{\mathrm{Un}}(X) (S)}$ be the completion of ${\mathrm{Un}} (X) (S)$ with respect to the Hodge filtration. Then the functor $\widehat{{\mathrm{Un}}(X)}$ will be called the \textit{Hodge completion} of ${\mathrm{Un}}(X).$ 
\end{definition}{}

\begin{example}[$I$-adic filtration]Let $X = \mathbb{G}_a^{\mathrm{perf}}$ (equipped with the natural structure of a pointed $\mathbb{G}_a^{\mathrm{perf}}$-module). Then $\mathrm{Env}_X (B,I) \simeq B$ by \cref{dinner}. In this case, the Hodge filtration identifies with the $I$-adic filtration on $B.$
\end{example}{}

\begin{example}[Divided power filtration]\label{example4}Let $B$ be an $\mathbb{F}_p$-algebra and $I$ be an ideal of $B.$ Then there is a filtration on $D_B(I)$ given by setting $\mathrm{Fil}^n (D_B(I))$ to be the ideal generated by divided power monomials $[i_1]^{m_1} \cdots [i_k]^{m_k}$ such that $\sum_{u=1}^{k} m_i \ge n$ and $i_1, \ldots, i_k \in I$, which is called the divided power filtration. We note that $\mathrm{Fil}^0 (D_B(I))= D_B(I)$ and $\mathrm{gr}^0 (D_B(I))= B/I.$ If we further assume that $B$ is a perfect ring, then we have an isomorphism $\mathrm{Env}_{u^*W[F]} (B,I) \simeq D_B(I)$ by \cref{example3} which is further a filtered isomorphism when we equip $\mathrm{Env}_{u^*W[F]}(B,I)$ with the Hodge filtration 
from \cref{con}.
\end{example}{}

\begin{remark}\label{bday2}
Let $X$ be a pointed $\mathbb{G}_a$-module over an arbitrary base ring $A.$ Let $(B, I) \in \mathfrak{C}_A.$ Similar to \cref{con}, one has a map 
$$[i]: \Gamma(X_B, \mathcal{O}_{X_B}) \to \mathrm{Env}_X(B,I)$$ obtained as a composition of $\w{ev}_i: \Gamma(X_B, \mathcal{O}_{X_B}) \to \mathscr{T}_{X_B}(I)$ (\cref{bday28}) and $\mathscr{T}_{X_B}(I) \to \mathrm{Env}_X (B,I).$ If $m \in \Gamma(X_B, \mathcal{O}_{X_B})$ is a homogeneous element, we can  again write $[i]^m:= [i](m)$. For a nonnegative integer $n$, we let $\mathrm{Fil}^n \mathrm{Env}_X (B,I)$ denote the ideal of $\mathrm{Env}_X (B,I)$ generated by the ``monomials" of the form $[i_1]^{m_1} \cdots [i_k]^{m_k}$ such that $\sum_{u=1}^{k}\deg m_u \ge n,$ for homogeneous elements $m_u \in \Gamma(X_B, \mathcal{O}_{X_B})$, $k \ge 1$ and for $i_1, \ldots, i_k \in I.$ This defines a decreasing filtration which may again be called the Hodge filtration on $\mathrm{Env}_X(B,I).$ 
\end{remark}{}

\begin{example}
When $X = \mathbb{G}_a$ (equipped with the natural structure of a pointed $\mathbb{G}_a$-module), we have $\mathrm{Env}_X(B,I) \simeq B$ (\cref{presheaf}) and the Hodge filtration on the left hand side constructed in \cref{bday2} identifies with the $I$-adic filtration on $B.$ 
\end{example}{}

\begin{lemma}\label{7up7}
Let $X$ be a pointed $\mathbb{G}_a$-module over $A$ and $(B,I) \in \mathfrak{C}_A.$ There is a natural isomorphism $\mathrm{gr}^0 (\mathrm{Env}_X (B,I)) \simeq B/I,$ where the $\mathrm{gr}^0$ on the left hand side is taken with respect to the filtration constructed in \cref{bday2}.
\end{lemma}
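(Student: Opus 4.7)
The plan is to construct mutually inverse $B$-algebra maps between $B/I$ and $\mathrm{gr}^0 \mathrm{Env}_X(B,I) := \mathrm{Env}_X(B,I)/\mathrm{Fil}^1$. First I would produce the map $\pi \colon B/I \to \mathrm{gr}^0 \mathrm{Env}_X(B,I)$. Let $t_B \in \Gamma(X_B, \mathcal{O}_{X_B})$ denote the degree-$1$ element corresponding to the point $X_B \to \mathbb{G}_{a,B}$ (cf.~\cref{deg1}). The coequalizer description of $\mathrm{Env}_X(B,I)$ in \cref{cons2} forces the identity $i = [i]^{t_B}$ in $\mathrm{Env}_X(B,I)$ for every $i \in I$: the element $i \in I = \mathrm{Sym}^1_B(I)$ is sent by the second map $\mathrm{Sym}_B(I) \to B \to \mathscr{T}_{X_B}(I)$ to the image of $i$ under the structure map, and by the first map (induced by $X \to \mathbb{G}_a$, using the identification $\mathrm{Sym}_B(I) \simeq \mathscr{T}_{\mathbb{G}_{a,B}}(I)$ of \cref{geo}) to $\mathrm{ev}_i(t_B) = [i]^{t_B}$. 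Since $\deg t_B = 1$ we have $[i]^{t_B} \in \mathrm{Fil}^1$, so the structure map $B \to \mathrm{Env}_X(B,I)$ followed by projection to $\mathrm{gr}^0$ kills $I$, yielding $\pi$.

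Next I would verify that $\pi$ is surjective. By \cref{7up3} the $B$-algebra $\mathscr{T}_{X_B}(I)$, and hence its quotient $\mathrm{Env}_X(B,I)$, is generated by the images of the maps $\mathrm{ev}_i$ for $i \in I$, i.e.\ by the elements $[i]^m$ with $m \in \Gamma(X_B, \mathcal{O}_{X_B})$ homogeneous. Such an $[i]^m$ lies in $\mathrm{Fil}^1$ when $\deg m \ge 1$; when $\deg m = 0$, \cref{conn} identifies the degree-zero piece of $\Gamma(X_B, \mathcal{O}_{X_B})$ with $B$, and since $\mathrm{ev}_i$ is a $B$-algebra map one gets $[i]^m = m$ in the image of the structure map $B \to \mathrm{Env}_X(B,I)$. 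Hence $\mathrm{gr}^0 \mathrm{Env}_X(B,I)$ is generated as a $B$-module by the image of $1$, which shows that $\pi$ is surjective.

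For injectivity I would build a one-sided inverse of $\pi$ via the universal property of \cref{univ1}. Taking the $B$-module map $f = 0 \colon I \to X(B/I)$ (the zero section), the required compatibility $(I \xrightarrow{f} X(B/I) \to B/I) = (I \hookrightarrow B \to B/I)$ holds since both compositions are zero. This produces a $B$-algebra map $\phi \colon \mathrm{Env}_X(B,I) \to B/I$. The main step, which I expect to be the key verification, is that $\phi$ annihilates $\mathrm{Fil}^1$: by the functor-of-points description, the composition $\Gamma(X_B, \mathcal{O}_{X_B}) \xrightarrow{\mathrm{ev}_i} \mathscr{T}_{X_B}(I) \xrightarrow{\phi} B/I$ is the $B/I$-valued point of $X$ given by $f(i) = 0$, namely the counit (zero section) of the graded Hopf algebra $\Gamma(X_B, \mathcal{O}_{X_B})$; by \cref{conn} this counit kills every positive-degree element, so $\phi([i]^m) = 0$ whenever $\deg m \ge 1$, and thus $\phi$ kills $\mathrm{Fil}^1$. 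The induced $\bar{\phi} \colon \mathrm{gr}^0 \mathrm{Env}_X(B,I) \to B/I$ satisfies $\bar{\phi} \circ \pi = \mathrm{id}_{B/I}$ (both are $B$-algebra endomorphisms of $B/I$), so together with surjectivity of $\pi$ this proves $\pi$ is an isomorphism, evidently natural in $(B,I)$.
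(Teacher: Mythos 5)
Your proof is correct and rests on the same key ingredients as the paper's: the map to $B/I$ arising from the zero section / universal property, the coequalizer relation identifying $i$ with $[i]^{t_B}$ in $\mathrm{Env}_X(B,I)$, connectedness of the graded Hopf algebra $\mathscr{T}_{X_B}(I)$ (so its counit kills positive degrees), and generation of $\mathrm{Env}_X(B,I)$ as a $B$-algebra by the elements $[i]^m$. The only difference is organizational: you package the argument as mutually inverse maps $\pi$ and $\bar\phi$, while the paper works with the single surjection $\mathrm{Env}_X(B,I) \to B/I$ and computes its kernel directly via a diagram chase, identifying it with $\mathrm{Fil}^1$ by matching both against the ideal generated by images of positive-degree elements of $\mathscr{T}_{X_B}(I)$.
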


\begin{proof}
Note that the zero section $\w{Spec}\, A \to \mathbb{G}_a$ can be thought of as a pointed $\mathbb{G}_a$-module which admits a unique map to $X$ (as a pointed $\mathbb{G}_a$-module). Applying the unwinding construction and using \cref{move4}, we get us a natural map $\mathrm{Env}_{X} (B, I) \to B/I.$ By functoriality of the constructions appearing in \cref{cons1}, \cref{cons2} and using \cref{7up22}, the map $\mathrm{Env}_{X} (B, I) \to B/I$ fits into the following commutative diagram

\begin{center}
    \begin{tikzcd}
\mathscr{T}_{X_B} (I) \arrow[d] \arrow[rr] &  & B \arrow[d] \\
{\mathrm{Env}_X (B,I)} \arrow[rr]          &  & B/I .       
\end{tikzcd}
\end{center}{}
We proceed towards computing the kernel of the map $\mathrm{Env}_{X} (B, I) \to B/I$, which we will denote by $K.$ As noted in \cref{7up3}, $\w{Spec}\, \mathscr{T}_{X_B} (I)$ naturally has the structure of a $\mathbb{G}_a$-module over $B$ and the map $\mathscr{T}_{X_B} (I) \to B$ is the map induced on global sections by the zero section $\w{Spec}\, B \to \w{Spec}\, \mathscr{T}_{X_B} (I).$ We write $\mathscr{T}_{X_B} (I) = \bigoplus_{n \in \mathbb{N}} (\mathscr{T}_{X_B} (I))_n.$ Since $\w{Spec}\, \mathscr{T}_{X_B} (I)$ is a $\mathbb{G}_a$-module, by \cref{conn}, it follows that $(\mathscr{T}_{X_B}(I))_0$ is naturally isomorphic to $B.$ Therefore, it follows that the kernel of the composite map $\mathscr{T}_{X_B} (I) \to B \to B/I$ naturally identifies with $I \oplus (\mathscr{T}_{X_B} (I))_{>0},$ where $(\mathscr{T}_{X_B} (I))_{>0} := \bigoplus_{n>0} (\mathscr{T}_{X_B} (I))_n.$ We observe that all the arrows in the above diagram are surjective. Therefore, by commutativity of the above diagram, the kernel $K$ of the map $\mathrm{Env}_{X} (B, I) \to B/I$ is generated by the image of $I \oplus (\mathscr{T}_{X_B} (I))_{>0}$ under the map $\mathscr{T}_{X_B}(I) \to \mathrm{Env}_X (B,I).$ 
\vspace{2mm}

By \cref{cons2}, the map $\mathscr{T}_{X_B}(I) \to \mathrm{Env}_X (B,I)$ is induced by taking coequalizer of two arrows $\mathrm{Sym}_B(I)\,\, \substack{\longrightarrow \\ \longrightarrow}\,\,\mathscr{T}_{X_B}(I).$ Using their descriptions from \cref{cons2} and writing $\w{Sym}_{B}(I) = \bigoplus_{i \in \mathbb{N}} \w{Sym}^i_{B} (I)$, we see that one of the arrows map $I=\w{Sym}^1_{B} I$ inside $(\mathscr{T}_{X_B}(I))_1$ and the other one maps $I = \w{Sym}^1_{B} I$ inside $(\mathscr{T}_{X_B}(I))_0 = B$ by the natural inclusion $I \subset B.$ Therefore, under the map $\mathscr{T}_{X_B}(I) \to \mathrm{Env}_X (B,I),$ the image of $I \subset (\mathscr{T}_{X_B}(I))_0$ is contained inside the image of $(\mathscr{T}_{X_B}(I))_1.$ This implies that the kernel $K$ of the map $\mathrm{Env}_{X} (B, I) \to B/I$ can be generated by the image of $(\mathscr{T}_{X_B} (I))_{>0}$ under the map $\mathscr{T}_{X_B}(I) \to \mathrm{Env}_X (B,I).$ \vspace{2mm}

Finally, we note that image of $(\mathscr{T}_{X_B} (I))_{>0}$ under the map $\mathscr{T}_{X_B}(I) \to \mathrm{Env}_X (B,I)$ generates the ideal $\mathrm{Fil}^1 \mathrm{Env}_X (B,I)$ as defined in \cref{bday2}. Indeed, the last claim can be seen by considering the map $\w{ev}_i : \Gamma(X_B, \mathcal{O}_{X_B}) \to \mathscr{T}_{X_B}(I)$ and recalling it's properties from \cref{7up3} which imply that the elements $\w{ev}_i (u)$ for all $i \in I$ and all $u \in \Gamma(X_B, \mathcal{O}_{X_B})$ such that $\w{deg}(u) \ge 1$ generate the ideal $(\mathscr{T}_{X_B} (I))_{>0}$ in $\mathscr{T}_{X_B}(I).$ This finishes the proof.
\end{proof}{}

\begin{remark}\label{gax}
The proof of \cref{7up7} shows that the natural transformation $\mathrm{Env}_X(B,I) \mapsto \w{gr}^{0} (\mathrm{Env}_X(B,I))$ $ \simeq B/I$ described in \cref{7up7} is the same as the one obtained by applying the unwinding construction to the unique map from the zero section $\w{Spec}\, A \to \mathbb{G}_a$ (viewed as a pointed $\mathbb{G}_a$-module) to $X.$
\end{remark}{}

\begin{remark}\label{7up8}
Let $X$ be a pointed $\mathbb{G}_a$-module over $\mathbb{F}_p.$ Let $(B,I) \in \mathscr{P}I.$ By \cref{move5}, there is a natural isomorphism
$$\mathrm{Env}_X (B,I) \simeq \mathrm{Env}_{u^*X}(B,I).$$ By construction, it follows that this isomorphism is further a filtered isomorphism, where the filtration on the right hand side is coming from \cref{con} and the one on the left hand side is coming from \cref{bday2}. 
\end{remark}{}

\begin{remark}\label{qui}
In the case when $X$ is a pointed $\mathbb{G}_a$-module over $A,$ analogous to \cref{hdc}, one can define a functor $\widehat{{\mathrm{Un}}(X)}$ by completing with respect to the Hodge filtration on $\mathrm{Env}_X(B,I)$ as constructed in \cref{bday2}. As an example, when $X = \mathbb{G}_a$ (equipped with the structure of a pointed $\mathbb{G}_a$-module), $\widehat{{\mathrm{Un}}(\mathbb{G}_a)}(B,I)$ is simply the $I$-adic completion of $B.$
\end{remark}{}

\begin{remark}
Let $\hat{\mathbb{G}}_a$ be the ``formal affine line" over an arbitrary base ring $A.$ More precisely, for an $A$-algebra $B$, the $B$-valued points of $\hat{\mathbb{G}}_a$ denoted as $\hat{\mathbb{G}}_a(B)$ are defined to be the set of nilpotent elements of $B,$ i.e., the nilradical of $B.$ This equips $\hat{\mathbb{G}}_a$ with the structure of a ``$\mathbb{G}_a$-module". There is also a natural transformation of functors $\hat{\mathbb{G}}_a \to \mathbb{G}_a,$ which, in some sense, equips $\hat{\mathbb{G}}_a$ with the structure of a ``pointed $\mathbb{G}_a$-module". The constructions of our paper do not deal with examples such as $\hat{\mathbb{G}}_a$ that are not representable by an affine scheme and being representable is part of the definition of a $\mathbb{G}_a$-module for us. However, roughly speaking, the functor $\widehat{{\mathrm{Un}}(\mathbb{G}_a)}$ as discussed in \cref{qui} could be thought of as ``unwinding" of $\hat{\mathbb{G}}_a.$ We thank the referee for suggesting to include this remark.
\end{remark}{}

\begin{proposition}\label{move}Let $X$ be a pointed $\mathbb{G}_a^{\mathrm{perf}}$-module over $\mathbb{F}_p$ of fractional rank $1$. Let $S$ be a QRSP algebra. Then the $\mathrm{gr}^0$ of the Hodge filtration on ${\mathrm{Un}}(X)(S)$ is naturally isomorphic to $S.$
\end{proposition}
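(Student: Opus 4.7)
The plan is to reduce immediately to the $\mathbb{G}_a$-module setting via the hypothesis of fractional rank $1$, and then invoke the analogue of the statement that has already been proved there. Since $X$ has fractional rank $1$, by \cref{satu3} there exists a pointed $\mathbb{G}_a$-module $Y$ with $X \simeq u^*Y$; because $u^*$ is fully faithful (\cref{pullback}), $Y$ is determined functorially by $X$ on the essential image of $u^*$.

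Next, I would use \cref{move5} to identify $\mathrm{Env}_X(S^\flat, I) \simeq \mathrm{Env}_Y(S^\flat, I)$ where $I := \mathrm{Ker}(S^\flat \to S)$, and then invoke \cref{7up8} to upgrade this to an isomorphism of \emph{filtered} rings, where the left side carries the Hodge filtration of \cref{con} and the right side carries the Hodge filtration of \cref{bday2}. The compatibility here hinges on the observation that in $\Gamma(u^*Y_{S^\flat}, \mathcal{O}) \simeq \Gamma(Y_{S^\flat}, \mathcal{O}) \otimes_{\mathbb{F}_p[x]} \mathbb{F}_p[x^{1/p^\infty}]$, the subring of elements of integral degree is exactly $\Gamma(Y_{S^\flat}, \mathcal{O})$, so the generators of the two filtrations match under the isomorphism.

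Now, for the $\mathbb{G}_a$-module $Y$, \cref{7up7} gives a natural isomorphism
\[
\mathrm{gr}^0 \mathrm{Env}_Y(S^\flat, I) \simeq S^\flat/I.
\]
Since $S$ is QRSP, it is semiperfect, so the natural map $S^\flat \to S$ is surjective and $S^\flat/I \simeq S$. Composing these identifications yields the desired natural isomorphism $\mathrm{gr}^0 \mathrm{Un}(X)(S) \simeq S$.

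The only point requiring care is naturality in $X$: a morphism $X_1 \to X_2$ of fractional rank $1$ pointed $\mathbb{G}_a^{\mathrm{perf}}$-modules comes (uniquely) from a morphism $Y_1 \to Y_2$ of pointed $\mathbb{G}_a$-modules by full faithfulness of $u^*$, and the isomorphism $\mathrm{gr}^0 \mathrm{Env}_{Y}(S^\flat, I) \simeq S^\flat/I$ constructed in \cref{7up7} was itself produced via applying $\mathrm{Un}$ to the canonical arrow from the zero section into $Y$ (see \cref{gax}), hence is natural in $Y$. Naturality in $S$ is automatic from the construction. I do not expect a serious obstacle here; the only subtlety is matching the two definitions of the Hodge filtration under $u^*$, which is precisely the content of \cref{7up8} and can be checked by tracking generators.
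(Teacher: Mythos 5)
Your proposal is correct and follows essentially the same route as the paper's proof: reduce to the pointed $\mathbb{G}_a$-module $Y$ with $X \simeq u^*Y$, use the filtered isomorphism $\mathrm{Env}_X(S^\flat,I) \simeq \mathrm{Env}_Y(S^\flat,I)$ from \cref{7up8}, apply \cref{7up7} to get $\mathrm{gr}^0 \simeq S^\flat/I$, and conclude via semiperfectness. The paper merely phrases the last step through the intermediate identification with $\mathrm{Env}_{\alpha^{\natural}}(S^\flat,I) \simeq S$ (via \cref{satu4} and \cref{stilladding??945}), but the content and the chain of lemmas are the same.
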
{}

\begin{proof}We write $I = \mathrm{Ker}(S^\flat \to S).$ Since $X$ is of fractional rank $1$ (\cref{satu3}), we have $\mathrm{gr}^0\mathrm{Env}_X (S^\flat, I) \simeq \mathrm{Env}_{\alpha^{\natural}}(S^\flat, I) \simeq S.$ Indeed, the first isomorphism follows from \cref{7up7}, \cref{7up8} and recalling that $u^* (\w{Spec}\, \mathbb{F}_p) \simeq \alpha^{\natural}$ (when $\w{Spec}\, \mathbb{F}_p$ is equipped with the structure of a pointed $\mathbb{G}_a$-module corresponding to the zero section $\w{Spec}\, \mathbb{F}_p \to \mathbb{G}_a)$. Finally, the last isomorphism follows from \cref{satu4}. \end{proof}{}

\begin{example}
We point out that the assumption that $X$ is fractional of rank $1$ is crucial in \cref{move}. Indeed, let us take $X$ to be $\alpha^{\natural} \times \alpha^{\natural}$ considered to be a pointed $\mathbb{G}_a^{\w{perf}}$-module via projection onto the first component $\alpha^{\natural} \times \alpha^{\natural} \to \alpha^{\natural}$ composed with the natural map $\alpha^{\natural} \to \mathbb{G}_a^{\w{perf}}.$ Then $X$ is not fractional of rank $1.$ Further, in this case, one computes directly that $\mathrm{Un}(X) (\frac{\mathbb{F}_p[x^{1/p^\infty}]}{x}) \simeq \frac{\mathbb{F}_p[u^{1/p^\infty}, v^{1/p^\infty}]}{(u,v)}$ and under this identification, $\w{Fil}^1\mathrm{Un}(X) (\frac{\mathbb{F}_p[x^{1/p^\infty}]}{x})$ is the ideal generated by the elements $u^i v^j$ such that $i+j = 1.$ Therefore, $\w{gr}^0\mathrm{Un}(X) (\frac{\mathbb{F}_p[x^{1/p^\infty}]}{x})$ is not isomorphic to $\frac{\mathbb{F}_p[x^{1/p^\infty}]}{x}.$
\end{example}{}

\begin{remark}\label{move1}Let $X$ be a pointed $\mathbb{G}_a^{\mathrm{perf}}$-module over $\mathbb{F}_p$ of fractional rank $1$. The natural transformation $\mathrm{gr}^0: {\mathrm{Un}}(X) \to \mathrm{id}$ induced by \cref{move} is the same as the one obtained via the unwinding functor from the Hodge map defined in \cref{Hmap}. This follows from \cref{gax} and \cref{7up8}.

\end{remark}{}

\begin{proposition}\label{move2}Let $X$ be a nilpotent quasi-ideal over $\mathbb{F}_p$ which is of fractional rank $1$ as a pointed $\mathbb{G}_a^{\mathrm{perf}}$-module. Then the natural transformation $\mathrm{gr}^0: {\mathrm{Un}}(X) \to \mathrm{id}$ is the unique natural transformation between $\mathrm{Un}(X)$ and $\mathrm{id}$ viewed as objects of the category $\mathrm{Fun}(\mathrm{QRSP}, \mathrm{Alg}_{\mathbb{F}_p})_{\mathfrak{G}/}.$
\end{proposition}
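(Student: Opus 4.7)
The plan is to reduce the proposition to the universal property of $\alpha^{\natural}$ established in \cref{Hmap}, by using the full faithfulness of the unwinding functor on nilpotent quasi-ideals.

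First I would recall that by \cref{satu5}, the identity functor $\mathrm{id} : \mathrm{QRSP} \to \mathrm{Alg}_{\mathbb{F}_p}$ is naturally isomorphic to ${\mathrm{Un}}(\alpha^{\natural})$ as an object of $\mathrm{Fun}(\mathrm{QRSP}, \mathrm{Alg}_{\mathbb{F}_p})_{\mathfrak{G}/}$. Next, I would note that $\alpha^{\natural}$ is a nilpotent quasi-ideal in $\mathbb{G}_a^{\mathrm{perf}}$ of fractional rank $1$ (\cref{stilladding??945} and \cref{socs}), and the same is true of $X$ by hypothesis. Hence both ${\mathrm{Un}}(X)$ and ${\mathrm{Un}}(\alpha^{\natural})$ lie in the full subcategory $\mathrm{Fun}(\mathrm{QRSP}, \mathrm{Alg}_{\mathbb{F}_p})_{\mathfrak{G}/}^{\mathcal{N}{\mathrm{Un}}}$, which is (contravariantly) fully faithfully embedded into the category of nilpotent quasi-ideals via \cref{nil}.

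Since $\mathrm{Fun}(\mathrm{QRSP}, \mathrm{Alg}_{\mathbb{F}_p})_{\mathfrak{G}/}^{\otimes}$ is a full subcategory of $\mathrm{Fun}(\mathrm{QRSP}, \mathrm{Alg}_{\mathbb{F}_p})_{\mathfrak{G}/}$, the set of natural transformations ${\mathrm{Un}}(X) \to \mathrm{id} \simeq {\mathrm{Un}}(\alpha^{\natural})$ in the larger category coincides with the Hom set in the smaller category. By \cref{nil}, the latter is in natural bijection with $\mathrm{Hom}_{\mathbb{G}_a^{\mathrm{perf}}\w{--}\mathrm{Mod}_*}(\alpha^{\natural}, X)$. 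But $X$ is of fractional rank $1$, so by the universal property in \cref{Hmap} there is a unique such map, namely the Hodge map.

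Finally, to identify this unique natural transformation with $\mathrm{gr}^0$, I would invoke \cref{move1}, which shows that the natural transformation $\mathrm{gr}^0 : {\mathrm{Un}}(X) \to \mathrm{id}$ induced by the Hodge filtration is precisely the image under unwinding of the Hodge map $\alpha^{\natural} \to X$. There is no real obstacle here—the entire argument is essentially an unpacking of the machinery already developed. The only mild point of care is to verify that one is working in a full subcategory so that morphism sets can be identified across the different categories in play, which is immediate from the definitions.
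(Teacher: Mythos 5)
Your proof is correct and is essentially the same argument the paper gives: the paper's proof cites \cref{satu6} and \cref{move1}, and \cref{satu6} is itself proven from \cref{Hmap}, \cref{satu5}, and \cref{nil}—precisely the ingredients you deploy directly. You have simply unpacked the intermediate proposition rather than invoking it by name.
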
{}

\begin{proof}This follows from \cref{satu6} and \cref{move1}. \end{proof}{}

\begin{remark} As discussed in \cref{bday2}, for a pointed $\mathbb{G}_a$-module $X$ over $\mathbb{F}_p$, one can define a Hodge filtration $\mathrm{Fil}^n$ on $\mathrm{Env}_X (B,I)$ for any $\mathbb{F}_p$-algebra $B$ and an ideal $I \subset B.$ As a possible application, one can define a candidate theory of $\mathcal{D}$-modules for any given pointed $\mathbb{G}_a$-module $X.$ In order to do so, we let $J$ denote the kernel of the diagonal map $B \otimes B \to B.$ One can attempt to define the ring of ``differential operators" of $B$ associated to $X$ as  $$\mathcal{D}_B ^{X}:= \varinjlim_{n} \mathrm{Hom}_{B}( \mathrm{Env}_X (B \otimes B, J)/ \mathrm{Fil}^n, B).$$ In the case when $X$ is the pointed $\mathbb{G}_a$-module given by $W[F],$ we recover the ring of crystalline differential operators \cite{BO78}. In the case when $X$ is the pointed $\mathbb{G}_a$-module given by $\mathbb{G}_a$ itself, we recover Grothendieck's ring of differential operators \cite{Gro67}.
\end{remark}{}

\newpage

\section{Revisiting de Rham and crystalline cohomology via unwinding}\label{sec4}In this section, we will briefly recall the notion of derived de Rham cohomology. Our goal is to provide a different definition of derived de Rham cohomology as the unwinding of a particular quasi-ideal. The point of our definition is that a single object (a quasi-ideal in $\mathbb{G}_a^{\mathrm{perf}}$) can recover the entire theory of derived de rham cohomology via the unwinding functor. Thus many questions about (derived) de Rham cohomology theory can be translated into a question about quasi-ideals and can be approached in a direct manner. We will also provide a definition of crystalline cohomology as the unwinding of some quasi-ideal in $\mathbb{G}_a^{\mathrm{perf}}$.\vspace{2mm}

In order to achieve these goals, there is at least one immediate technical obstruction. The theory of derived de Rham cohomology works with commutative algebra objects in derived categories, whereas the notion of quasi-ideals only work with discrete rings. We are able to overcome this difficulty by crucially relying on the notion of QRSP algebras (\cref{covid}) due to the work of Bhatt, Morrow and Scholze \cite[Def. 8.8]{BMS19}. For our purpose, QRSP algebras are abundant enough such that derived de Rham cohomology can be completely understood by its values on them, and further, $\mathrm{dR}(S)$ is a discrete ring for a QRSP algebra $S.$ These properties make it possible to understand derived de Rham cohomology via the unwinding functor constructed in \cref{sec3.3}. \vspace{2mm}

The following definition is from \cite[Rmk. 2.2]{Bha12}.

\begin{definition}[Derived de Rham cohomology]\label{bye2}Derived de Rham cohomology is a functor denoted as $\mathrm{dR}$ from the $\infty$-category of simplicial commutative $\mathbb{F}_p$-algebras to the $\infty$-category of commutative algebra objects in the derived $\infty$-category $D(\mathbb{F}_p)$ obtained by left Kan extending the classical algebraic de Rham cohomology functor which sends a finitely generated polynomial $\mathbb{F}_p$-algebra $P$ to the algebraic de Rham complex of $P$, i.e., $$0 \to P \to \Omega^1 _{P/\mathbb{F}_p} \to\Omega^2_{P/\mathbb{F}_p}\to \ldots \to $$ 
\end{definition}{}

For a smooth $\mathbb{F}_p$-algebra, derived de Rham cohomology agrees with classical algebraic de Rham cohomology \cite[Cor. 3.10]{Bha12}. By definition, the derived de Rham cohomology functor $\mathrm{dR}$ is determined by its restriction to polynomial algebras over $\mathbb{F}_p.$  Further, since $\mathrm{dR}$ has quasisyntomic descent \cite[Ex. 5.12]{BMS19}, the functor $\mathrm{dR}$ restricted to polynomial algebras can be completely understood by restriction of $\mathrm{dR}$ to QRSP algebras by using descent along the map $P \to P_{\mathrm{perf}}$ for a polynomial algebra $P$ and the fact that each term in the Čech conerve of $P \to P_{\mathrm{perf}}$ is a QRSP algebra.
 
\begin{proposition}\label{citebms}Let $S$ be a QRSP algebra. Then $\mathrm{dR}(S) \simeq D_{S^\flat}(I)$ where $I := \mathrm{Ker}(S^\flat \to S).$
\end{proposition}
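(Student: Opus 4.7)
The plan is to reduce the computation to a relative derived de Rham cohomology over the perfect base $S^\flat$, and then identify the latter with the classical divided power envelope by comparing associated graded pieces of the conjugate filtration.

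First, I would use the fact that $S^\flat$ is a perfect $\mathbb{F}_p$-algebra, which gives $\mathbb{L}_{S^\flat/\mathbb{F}_p} \simeq 0$ and therefore $\mathrm{dR}(S^\flat/\mathbb{F}_p) \simeq S^\flat$ as commutative algebras in $D(\mathbb{F}_p)$. By the transitivity (base-change) property of derived de Rham cohomology along the map $\mathbb{F}_p \to S^\flat \to S$, the natural map $\mathrm{dR}(S/\mathbb{F}_p) \to \mathrm{dR}(S/S^\flat)$ is an equivalence. Thus it suffices to identify $\mathrm{dR}(S/S^\flat)$ with $D_{S^\flat}(I).$

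Second, I would equip $\mathrm{dR}(S/S^\flat)$ with its conjugate (or Hodge-completed) filtration, whose associated graded pieces are computed by derived wedge powers of the (shifted) cotangent complex, namely $\mathrm{gr}^i \simeq \wedge^i \mathbb{L}_{S/S^\flat}[-i]$. The QRSP hypothesis is exactly designed to make this computation tractable: $\mathbb{L}_{S/S^\flat} \simeq (I/I^2)[1]$ with $I/I^2$ a flat $S$-module, and the d\'ecalage identity $\wedge^i (M[1]) \simeq \Gamma^i(M)[i]$ for a flat module $M$ then gives $\mathrm{gr}^i \mathrm{dR}(S/S^\flat) \simeq \Gamma^i_S(I/I^2)$, concentrated in degree zero. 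In particular, $\mathrm{dR}(S/S^\flat)$ is discrete, and its associated graded matches that of the classical PD envelope $D_{S^\flat}(I)$ equipped with its PD filtration.

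Third, I would construct a comparison map $D_{S^\flat}(I) \to \mathrm{dR}(S)$ using universal properties: derived de Rham cohomology in characteristic $p$ carries natural divided power operations on the kernel of its augmentation to $S$, so the defining universal property of $D_{S^\flat}(I)$ produces the desired map. Finally, checking that it is a filtered isomorphism reduces, by the associated graded computation above, to the identification $\mathrm{gr}^i D_{S^\flat}(I) \simeq \Gamma^i_S(I/I^2)$ which is classical once $I$ is quasiregular. The main obstacle is the second step: pinning down the conjugate filtration on $\mathrm{dR}(S/S^\flat)$ and rigorously invoking the d\'ecalage identity in the derived setting, which is where the flatness part of the QRSP condition is essential and where any failure of quasiregularity would introduce higher Tor contributions that destroy the identification with the classical divided power envelope.
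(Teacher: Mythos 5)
The paper proves this by citing \cite[Prop.~8.12]{BMS19}, and your outline correctly reconstructs that cited argument (reduce to $\mathrm{dR}(S/S^\flat)$ via $\mathbb{L}_{S^\flat/\mathbb{F}_p}\simeq 0$, compute graded pieces as $\Gamma^i_S(I/I^2)$ by d\'ecalage on the shifted cotangent complex, then compare with the PD filtration on $D_{S^\flat}(I)$). One spot to tighten: the filtration that yields discreteness of $\mathrm{dR}(S/S^\flat)$ must be the \emph{conjugate} filtration --- increasing and exhaustive, with graded pieces given by a Frobenius twist of $\wedge^i \mathbb{L}_{S/S^\flat}[-i]$, the twist being harmless since $S^\flat$ is perfect --- whereas the Hodge filtration is decreasing, so the same computation of its graded pieces only controls the Hodge-\emph{completed} object, which a priori differs from $\mathrm{dR}(S)$; your phrase ``conjugate (or Hodge-completed)'' conflates two distinct filtrations, and the argument genuinely needs the first.
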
{}

\begin{proof}This is \cite[Prop. 8.12]{BMS19}.\end{proof}{}

\begin{proposition}[Derived de Rham cohomology via unwinding]\label{NW2}As functors from $\mathrm{QRSP} \to \mathrm{Alg}_{\mathbb{F}_p}$, the functor $\mathrm{dR}$ and the functor ${\mathrm{Un}}(u^*W[F])$ are naturally isomorphic. Further, the Hodge filtration on $\mathrm{dR}(S)$ coincides with the Hodge filtration on ${\mathrm{Un}}(u^*W[F]) (S)$ constructed in \cref{sec3.4}.
\end{proposition}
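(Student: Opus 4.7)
The plan is to chain together the identifications that are already available in the excerpt. By construction of the unwinding functor on $\mathrm{QRSP}$ (\cref{athosp}), for a QRSP algebra $S$ we have $\mathrm{Un}(u^*W[F])(S) = \mathrm{Env}_{u^*W[F]}(S^\flat, I)$, where $I := \mathrm{Ker}(S^\flat \to S)$. By \cref{example3}, there is a natural isomorphism $\mathrm{Env}_{u^*W[F]}(B,J) \simeq D_B(J)$ for every $(B,J) \in \mathscr{P}I$. Applied to $(S^\flat, I)$, this gives $\mathrm{Un}(u^*W[F])(S) \simeq D_{S^\flat}(I)$. On the other hand, \cref{citebms} (i.e.\ \cite[Prop.~8.12]{BMS19}) gives $\mathrm{dR}(S) \simeq D_{S^\flat}(I)$. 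Composing these two identifications yields a natural isomorphism $\mathrm{dR}(S) \simeq \mathrm{Un}(u^*W[F])(S)$.

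The only real content beyond quoting prior results is to verify that the whole chain is natural in $S \in \mathrm{QRSP}$. For this, I would observe that each of the constructions involved is manifestly functorial in the pair $(S^\flat, I)$: the tilting functor $S \mapsto (S^\flat, \mathrm{Ker}(S^\flat \to S))$ is a functor $\mathrm{QRSP} \to \mathscr{P}I$; the unwinding $\mathrm{Env}_{u^*W[F]}(-, -)$ is a functor $\mathscr{P}I \to \mathrm{Alg}_{\mathbb{F}_p}$ by \cref{cons4}; the isomorphism $\mathrm{Env}_{u^*W[F]}(B,J) \simeq D_B(J)$ of \cref{example3} is established through its universal property and hence natural in $(B,J)$; and the BMS identification $\mathrm{dR}(S) \simeq D_{S^\flat}(I)$ is natural in $S$ by construction of derived de Rham cohomology. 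Stringing these natural isomorphisms together produces a natural isomorphism of functors $\mathrm{QRSP} \to \mathrm{Alg}_{\mathbb{F}_p}$.

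For the second assertion, I would match the two filtrations term by term. The Hodge filtration on $\mathrm{Un}(u^*W[F])(S) = \mathrm{Env}_{u^*W[F]}(S^\flat, I)$ is defined in \cref{con} as the ideal generated by monomials $[i_1]^{m_1}\cdots [i_k]^{m_k}$ whose total (integral) degree is at least $n$. Since $u^*W[F]$ is a pointed $\mathbb{G}_a^{\mathrm{perf}}$-module pulled back from the pointed $\mathbb{G}_a$-module $W[F]$, \cref{7up8} identifies this filtration with the Hodge filtration on $\mathrm{Env}_{W[F]}(S^\flat, I)$ from \cref{bday2}, under the comparison isomorphism of \cref{move5}. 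Then \cref{example4} (combined with \cref{example2}, which identifies $\mathrm{Env}_{W[F]}(S^\flat, I)$ with $D_{S^\flat}(I)$ via the Berthelot--Ogus description through $\mathrm{exp}$) identifies that filtration with the classical divided power filtration on $D_{S^\flat}(I)$. Finally, the Hodge filtration on $\mathrm{dR}(S)$, under the BMS isomorphism $\mathrm{dR}(S) \simeq D_{S^\flat}(I)$, is also the divided power filtration, since under left Kan extension the classical Hodge filtration on the de Rham complex of a polynomial algebra passes to the divided power filtration on the divided power envelope (this is part of the statement of \cref{citebms}).

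The main obstacle I expect is essentially bookkeeping rather than a genuine difficulty: making sure that the isomorphism produced in \cref{example3} really is the same one that is implicitly used in the BMS identification $\mathrm{dR}(S) \simeq D_{S^\flat}(I)$, and that no sign, grading shift, or choice of divided power generator is lost when translating between $W[F]$, $\mathbb{G}_a^*$, and $\mathrm{exp}$. Given the careful choices in \cref{dri}, \cref{example1}, and \cref{example2}, these compatibilities are already built into the framework of the paper, so the proof itself can be quite short — essentially a diagram of natural isomorphisms, with the filtered statement following from \cref{example4} and \cref{7up8}.
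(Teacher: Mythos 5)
Your proof is correct and follows essentially the same route as the paper: the paper's proof is the one-line chain of \cref{citebms} (BMS Prop.~8.12), \cref{example3}, and \cref{example4}, which is exactly what you invoke, just spelled out in more detail. The extra detour through \cref{move5} and \cref{7up8} for the filtered part is not needed, since \cref{example4} already states the filtered isomorphism directly for $\mathrm{Env}_{u^*W[F]}$.
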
{}

\begin{proof}This follows from \cite[Prop. 8.12]{BMS19}, \cref{example3} and \cref{example4}.\end{proof}{}

\begin{remark}\label{bye666}
We point out that the functor $\w{dR}:\mathrm{QRSP} \to \mathrm{Alg}_{\mathbb{F}_p}$ can be seen as an object of $\mathrm{Fun}(\mathrm{QRSP}, \mathrm{Alg}_{\mathbb{F}_p})^{\otimes}_{\mathfrak{G}/}$. One may see this directly from \cite[Prop. 8.12]{BMS19} or use \cref{NW2}, the fact that $u^* W[F]$ is a nilpotent quasi-ideal in $\mathbb{G}_a^{\w{perf}}$ (\cref{socs}) and \cref{smoke6}.
\end{remark}

\begin{proposition}[Universal property of $\mathrm{dR}$]\label{univprop}As a functor from $\mathrm{QRSP} \to \mathrm{Alg}_{\mathbb{F}_p}$, $\mathrm{dR}$ is the final object of $\mathrm{Fun}(\mathrm{QRSP}, \mathrm{Alg}_{\mathbb{F}_p})_{\mathfrak{G}/}^{\mathrm{pure~rk}=1, \mathcal{N}{\mathrm{Un}}}.$
\end{proposition}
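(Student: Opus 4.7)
The proposition follows essentially by combining two results already established in the paper. The plan is to identify $\mathrm{dR}$ with an unwinding that has already been shown to be a final object.

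First, I would invoke \cref{NW2}, which identifies $\mathrm{dR}$ as a functor from $\mathrm{QRSP}$ to $\mathrm{Alg}_{\mathbb{F}_p}$ with $\mathrm{Un}(u^*W[F])$. In particular, $\mathrm{dR}$ lies in the essential image of the unwinding functor $\mathrm{Un}$ applied to nilpotent quasi-ideals, and since $u^*W[F]$ is a pure fractional rank-$1$ pointed $\mathbb{G}_a^{\mathrm{perf}}$-module (by the very definition of this class, taking $W[F]$ which is pure of rank $1$ by \cref{rem5}, \cref{dri}), we see that $\mathrm{dR}$ indeed belongs to $\mathrm{Fun}(\mathrm{QRSP}, \mathrm{Alg}_{\mathbb{F}_p})_{\mathfrak{G}/}^{\mathrm{pure~rk}=1, \mathcal{N}\mathrm{Un}}$.

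Next, I would quote \cref{univ}, which already asserts that $\mathrm{Un}(u^*W[F])$ is a final object of $\mathrm{Fun}(\mathrm{QRSP}, \mathrm{Alg}_{\mathbb{F}_p})_{\mathfrak{G}/}^{\mathrm{pure~rk}=1, \mathcal{N}\mathrm{Un}}$. The proof of \cref{univ} rests on the fact that $u^*W[F]$ is the \emph{initial} object in the category of pure fractional rank-$1$ pointed $\mathbb{G}_a^{\mathrm{perf}}$-modules (\cref{rem6}), together with the contravariant fully faithful embedding of nilpotent quasi-ideals into $\mathrm{Fun}(\mathrm{QRSP}, \mathrm{Alg}_{\mathbb{F}_p})_{\mathfrak{G}/}^{\otimes}$ given by $\mathrm{Un}$ (\cref{nil}); the contravariance turns an initial object on the side of quasi-ideals into a final object on the side of functors.

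Combining these two steps, the chain of isomorphisms $\mathrm{dR} \simeq \mathrm{Un}(u^*W[F]) \simeq$ (final object) gives the desired conclusion. There is no real obstacle here: the work was done in establishing (i) the comparison between $\mathrm{dR}$ and the divided power envelope functor on QRSP algebras (which fed into \cref{NW2} via \cite[Prop.~8.12]{BMS19}), and (ii) the fully faithfulness of the unwinding construction restricted to nilpotent quasi-ideals, which lets the universal property of $u^*W[F]$ among pure fractional rank-$1$ pointed $\mathbb{G}_a^{\mathrm{perf}}$-modules transfer to a universal property of $\mathrm{dR}$ within the relevant category of functors.
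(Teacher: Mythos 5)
Your proof is correct and takes precisely the same route as the paper: combine \cref{NW2} (identifying $\mathrm{dR}$ with $\mathrm{Un}(u^*W[F])$) with \cref{univ} (which exhibits $\mathrm{Un}(u^*W[F])$ as the final object of the relevant full subcategory). The paper's proof is just the one-line citation of these two results, and your expanded commentary on why each holds accurately reflects the supporting material (\cref{rem6}, \cref{nil}, \cref{dri}, and \cite[Prop.~8.12]{BMS19}).
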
{}

\begin{proof}This follows from \cref{univ} and \cref{NW2}. \end{proof}{}

\begin{proposition}\label{inhosp2}The natural transformation $\mathrm{gr}^0 : \mathrm{dR} \to \mathrm{id}$ coming from the Hodge filtration in derived de Rham cohomology is the unique natural transformation between $\mathrm{dR}$ and $\mathrm{id}$ viewed as objects of the category $\mathrm{Fun}(\mathrm{QRSP}, \mathrm{Alg}_{\mathbb{F}_p})_{\mathfrak{G}/}.$
\end{proposition}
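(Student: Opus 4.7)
The plan is to reduce this to the uniqueness statement for the Hodge map in the category of pointed $\mathbb{G}_a^{\mathrm{perf}}$-modules, using the fact that the unwinding functor on nilpotent quasi-ideals is fully faithful.

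First I would recall that by \cref{NW2}, there is a natural isomorphism $\mathrm{dR} \simeq \mathrm{Un}(u^*W[F])$ in $\mathrm{Fun}(\mathrm{QRSP},\mathrm{Alg}_{\mathbb{F}_p})_{\mathfrak{G}/}$, and that this isomorphism is compatible with the Hodge filtrations on both sides. Similarly, by \cref{satu5}, the identity functor $\mathrm{id}$ is naturally isomorphic to $\mathrm{Un}(\alpha^{\natural})$ in the same slice category. Both $\alpha^{\natural}$ and $u^*W[F]$ are nilpotent quasi-ideals in $\mathbb{G}_a^{\mathrm{perf}}$ (see \cref{socs}), and $u^*W[F]$ is pure of fractional rank $1$, hence in particular of fractional rank $1$.

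Next, I would appeal to \cref{move2}, which precisely asserts that for a nilpotent quasi-ideal $X$ of fractional rank $1$, the natural transformation $\mathrm{gr}^0 : \mathrm{Un}(X) \to \mathrm{id}$ induced by $\mathrm{gr}^0$ of the Hodge filtration is the unique natural transformation in the category $\mathrm{Fun}(\mathrm{QRSP},\mathrm{Alg}_{\mathbb{F}_p})_{\mathfrak{G}/}$. Applying this with $X = u^*W[F]$ gives that $\mathrm{gr}^0 : \mathrm{Un}(u^*W[F]) \to \mathrm{id}$ is the unique such natural transformation. Transporting along the filtered isomorphism $\mathrm{dR} \simeq \mathrm{Un}(u^*W[F])$ of \cref{NW2} yields the uniqueness for $\mathrm{dR}$.

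The only thing that requires verification is that the natural transformation $\mathrm{gr}^0 : \mathrm{dR} \to \mathrm{id}$ arising from the Hodge filtration on derived de Rham cohomology actually matches, under the identifications above, the natural transformation $\mathrm{gr}^0 : \mathrm{Un}(u^*W[F]) \to \mathrm{id} \simeq \mathrm{Un}(\alpha^{\natural})$ of \cref{move2}. This is the potentially subtle point, but it follows from \cref{move1} combined with the filtered comparison in \cref{NW2}: under $\mathrm{dR}(S) \simeq D_{S^{\flat}}(\mathrm{Ker}(S^{\flat}\to S))$ the $\mathrm{gr}^0$ of the Hodge filtration is the quotient $D_{S^{\flat}}(I) \to S^{\flat}/I \simeq S$, which is exactly the map obtained by applying the unwinding functor to the unique Hodge map $\alpha^{\natural} \to u^*W[F]$ of pointed $\mathbb{G}_a^{\mathrm{perf}}$-modules supplied by \cref{Hmap}. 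This completes the proof.
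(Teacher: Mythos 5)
Your proposal is correct and takes exactly the same route as the paper: the paper's one-line proof (``This follows from \cref{move2}'') is implicitly using the filtered identification $\mathrm{dR} \simeq \mathrm{Un}(u^*W[F])$ from \cref{NW2} together with the abstract uniqueness statement \cref{move2} for nilpotent quasi-ideals of fractional rank one. Your only extra observation---that the verification in your final paragraph is in principle redundant, since uniqueness already forces the Hodge $\mathrm{gr}^0$ to coincide with the transformation supplied by \cref{move2}---is sound; including it is fine for exposition but not logically necessary.
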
{}

\begin{proof}This follows from \cref{move2}. \end{proof}{}

\begin{proposition}\label{NW}The functor $\mathrm{dR}$ has no nontrivial endomorphisms as an object of $\mathrm{Fun} (\mathrm{QRSP}, \mathrm{Alg}_{\mathbb{F}_p})_{\mathfrak{G}/}.$
\end{proposition}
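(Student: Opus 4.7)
My plan is to reduce the statement to the already-established rigidity of $u^*W[F]$ among pointed $\mathbb{G}_a^{\mathrm{perf}}$-modules, by running the unwinding/restriction adjunction backwards.

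The first step is to recognize that by \cref{NW2}, we have a natural isomorphism $\mathrm{dR} \simeq \mathrm{Un}(u^*W[F])$ in $\mathrm{Fun}(\mathrm{QRSP}, \mathrm{Alg}_{\mathbb{F}_p})_{\mathfrak{G}/}$, so it suffices to compute the endomorphisms of $\mathrm{Un}(u^*W[F])$ in this category. Since $u^*W[F]$ is a nilpotent quasi-ideal in $\mathbb{G}_a^{\mathrm{perf}}$ (cf.\ \cref{socs}), we are in a position to use the fully faithful embedding of \cref{nil}, or more precisely its extension described in \cref{la}.

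Next, I would verify the hypotheses of \cref{la} for $X = Y = u^*W[F]$. As noted in \cref{bye666}, $\mathrm{Un}(u^*W[F])$ does lie in $\mathrm{Fun}(\mathrm{QRSP}, \mathrm{Alg}_{\mathbb{F}_p})^{\otimes}_{\mathfrak{G}/}$ (using \cref{smoke6} applied to the nilpotent quasi-ideal $u^*W[F]$). Moreover, \cref{smoke4} and \cref{smoke5} give a natural identification $r\,\mathrm{Un}(u^*W[F]) \simeq u^*W[F]$, so both hypotheses of \cref{la} are in place. Applying it produces a natural bijection
\[
\mathrm{End}_{\mathrm{Fun}(\mathrm{QRSP}, \mathrm{Alg}_{\mathbb{F}_p})_{\mathfrak{G}/}}(\mathrm{Un}(u^*W[F])) \;\simeq\; \mathrm{End}_{\mathrm{QID}\text{--}\mathbb{G}_a^{\mathrm{perf}}}(u^*W[F]).
\]

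Finally, the right-hand side is trivial by \cref{new3}, which says that $u^*W[F]$ admits no nontrivial endomorphism as a pointed $\mathbb{G}_a^{\mathrm{perf}}$-module over $\mathbb{F}_p$ (and every endomorphism as a quasi-ideal is in particular one as a pointed $\mathbb{G}_a^{\mathrm{perf}}$-module). Tracing this back through the bijection shows that the identity is the unique endomorphism of $\mathrm{dR}$. There is no serious obstacle here: the work has already been done in setting up the unwinding/restriction adjunction and establishing the rigidity of $u^*W[F]$ in \cref{new3} via its universal property as the initial object among pure fractional rank-$1$ pointed $\mathbb{G}_a^{\mathrm{perf}}$-modules (\cref{rem6}).
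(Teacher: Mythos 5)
Your proof is correct and follows essentially the same route as the paper: the paper's one-line argument cites \cref{univprop} (that $\mathrm{dR}$ is the final object of a full subcategory of $\mathrm{Fun}(\mathrm{QRSP}, \mathrm{Alg}_{\mathbb{F}_p})_{\mathfrak{G}/}$), and that proposition is in turn proved from exactly the ingredients you invoke directly — the identification $\mathrm{dR} \simeq \mathrm{Un}(u^*W[F])$ (\cref{NW2}), the fully faithfulness of $\mathrm{Un}$ on nilpotent quasi-ideals (\cref{nil}, or its extension \cref{la}), and the endomorphism rigidity of $u^*W[F]$ coming from its initial-object property (\cref{rem6}, \cref{new3}). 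You have simply unpacked the final-object packaging into the underlying adjunction argument, which is fine.
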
{}

\begin{proof}By \cref{univprop}, $\mathrm{dR}$ is the final object in a full subcategory of $\mathrm{Fun} (\mathrm{QRSP}, \mathrm{Alg}_{\mathbb{F}_p})_{\mathfrak{G}/}$ which gives the claim. \end{proof}{}

\begin{proposition}\cite[Prop. 10.3.1]{BLM20}\label{bml} If we consider $\mathrm{dR}$ as a functor defined on smooth $\mathbb{F}_p$ algebras, then any endomorphism of $\mathrm{dR}$ that commutes with the $\mathrm{gr}^0$ map of the Hodge filtration $\mathrm{gr}^0:\mathrm{dR} \to \mathrm{id}$ is identity.
 
\end{proposition}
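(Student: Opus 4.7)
The strategy is to transport $\eta$ from smooth $\mathbb{F}_p$-algebras to QRSP algebras, verify that the resulting endomorphism lies in the slice $\mathrm{Fun}(\mathrm{QRSP}, \mathrm{Alg}_{\mathbb{F}_p})_{\mathfrak{G}/}$, and then invoke \cref{NW}. First I would restrict $\eta$ to finitely generated polynomial $\mathbb{F}_p$-algebras. Since $\mathrm{dR}$ is by definition the left Kan extension of its restriction to such algebras (\cref{bye2}), $\eta|_{\mathrm{Poly}}$ extends uniquely to an endomorphism of the derived functor $\mathrm{dR}$ on all simplicial commutative $\mathbb{F}_p$-algebras, whose restriction to $\mathrm{QRSP}$ gives an endomorphism $\tilde{\eta}$ of the discrete-valued functor $\mathrm{dR}|_{\mathrm{QRSP}}$ (\cref{citebms}). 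As $\mathrm{gr}^0 : \mathrm{dR} \to \mathrm{id}$ is defined at the same level of generality, the hypothesis $\mathrm{gr}^0 \circ \eta = \mathrm{gr}^0$ propagates formally to $\mathrm{gr}^0 \circ \tilde{\eta} = \mathrm{gr}^0$ on QRSP.

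The main obstacle is showing that $\tilde{\eta}$ commutes with the canonical structure map $\mathfrak{G} \to \mathrm{dR}$, since the given hypothesis of compatibility with $\mathrm{gr}^0$ is a priori weaker. The bridge is the observation that for every perfect $\mathbb{F}_p$-algebra $B$, one has $\mathbb{L}_{B/\mathbb{F}_p} = 0$, hence $\mathrm{dR}(B) \simeq B$ and $\mathrm{gr}^0$ is the identity under this identification; thus $\tilde{\eta}(B) = \mathrm{gr}^0 \circ \tilde{\eta}(B) = \mathrm{gr}^0 = \mathrm{id}_B$. Now for any QRSP algebra $S$, naturality of $\tilde{\eta}$ along the surjection $S^\flat \to S$ yields the commutative square
\begin{center}
\begin{tikzcd}
\mathrm{dR}(S^\flat) \arrow[r, "\mathrm{id}"] \arrow[d] & \mathrm{dR}(S^\flat) \arrow[d] \\
\mathrm{dR}(S) \arrow[r, "\tilde{\eta}(S)"] & \mathrm{dR}(S)
\end{tikzcd}
\end{center}
in which the vertical maps constitute the canonical $\mathfrak{G}(S) = \mathrm{dR}(S^\flat) = S^\flat \to \mathrm{dR}(S)$ structure. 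Therefore $\tilde{\eta}$ is an endomorphism of $\mathrm{dR}$ in $\mathrm{Fun}(\mathrm{QRSP}, \mathrm{Alg}_{\mathbb{F}_p})_{\mathfrak{G}/}$, and by \cref{NW} we conclude $\tilde{\eta} = \mathrm{id}$.

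Finally I would transport this conclusion back to smooth algebras via quasisyntomic descent. For smooth $R$, the map $R \to R_{\mathrm{perf}}$ is quasisyntomic with \v{C}ech conerve valued in $\mathrm{QRSP}$, and $\mathrm{dR}$ satisfies quasisyntomic descent (\cite[Ex. 5.12]{BMS19}). Since $\tilde{\eta}$ is identity on every QRSP algebra, the induced map on the totalization is the identity, so $\eta(R) = \mathrm{id}_{\mathrm{dR}(R)}$, completing the proof.
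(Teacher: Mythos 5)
Your proof is correct and follows essentially the same route as the paper's own argument: left Kan extend $\eta$ to an endomorphism $\tilde\eta$ of $\mathrm{dR}$ on $\mathrm{QRSP}$, use the fact that $\mathrm{gr}^0$ is identity on perfect rings to force $\tilde\eta|_{\mathrm{perfect}} = \mathrm{id}$, deduce from naturality along $S^\flat \to S$ that $\tilde\eta$ lives in the slice $\mathrm{Fun}(\mathrm{QRSP}, \mathrm{Alg}_{\mathbb{F}_p})_{\mathfrak{G}/}$, invoke \cref{NW}, and descend back via the \v{C}ech conerve of $R \to R_{\mathrm{perf}}$. The only difference is that you spell out the intermediate steps (the $\mathbb{L}_{B/\mathbb{F}_p} = 0$ observation, the explicit commutative square) that the paper compresses into a single sentence.
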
{}

\begin{proof}By left Kan extension, we obtain an endomorphism of $\mathrm{dR}: \mathrm{QRSP} \to \mathrm{Alg}_{\mathbb{F}_p}.$
We check that this endomorphism is an endomorphism in the category $\mathrm{Fun}(\mathrm{QRSP}, \mathrm{Alg}_{\mathbb{F}_p})_{\mathfrak{G}/}.$ This will follow by functoriality of $\mathrm{dR}$ along the arrows $S ^\flat \to S$ for a QRSP algebra $S$ after noting that the endomorphism is identity when restricted to perfect rings. But the latter follows by the hypothesis that the endomorphism commutes with $\mathrm{gr}^0: \mathrm{dR} \to \mathrm{id}.$ Now by \cref{NW} and quasisyntomic descent, we obtain the desired statement. 
\end{proof}{}

Next we study the crystalline situation. Our goal is to prove that the theory of derived crystalline cohomology \cite[8.2]{BMS19} can be entirely recovered from a single quasi-ideal in $\mathbb{G}_a^{\mathrm{perf}}.$ By left Kan extension and quasisyntomic descent, one can again restrict attention to only QRSP algebras. We make the following definitions. 

\begin{definition}Let $S$ be a QRSP algebra. We define $ \mathbb{A}_{\mathrm{crys}}(S)$ to be the $p$-adic completion of the divided power envelope of $W(S^\flat) \to S.$ Here our divided powers are required to be compatible with those on $(p) \subset W(S^\flat).$
\end{definition}{}

\begin{remark}From the first part of \cite[Thm. 8.14]{BMS19}, it follows that $\mathbb{A}_{\mathrm{crys}}(S)$ is flat over $\mathbb{Z}_p$ for a QRSP algebra $S.$
\end{remark}{}

\begin{definition}\label{new}Let $(A, \mathfrak{m})$ be an Artinian local ring with residue field $\mathbb{F}_p$ and $S$ be a QRSP algebra. We will let $R \Gamma_{\mathrm{crys}} (S)_A:= \mathbb{A}_{\mathrm{crys}} (S) \otimes_{\mathbb{Z}_p} A.$
\end{definition}{}

We note that $R \Gamma_{\mathrm{crys}}(S)_A$ is a flat $A$-algebra. Further, $R \Gamma_{\mathrm{crys}}(S)_A \otimes_A \mathbb{F}_p \simeq \mathrm{dR} (S)$ by \cite[Prop. 8.12]{BMS19}. Our goal is to prove the following.

\begin{proposition}[Derived crystalline cohomology via unwinding]\label{NW3}The functor $R \Gamma_{\mathrm{crys}}(\cdot)_A: \mathrm{QRSP} \to \mathrm{Alg}_A$ is the unwinding of a quasi-ideal in $\mathbb{G}_a^{\mathrm{perf}}$ over $A.$
 \end{proposition}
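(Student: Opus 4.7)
The plan is to construct a quasi-ideal $X$ in $\mathbb{G}_a^{\mathrm{perf}}$ over $A$ that deforms $u^*W[F]$ from $\mathbb{F}_p$ to $A,$ and then to identify its unwinding with $R\Gamma_{\mathrm{crys}}(\,\cdot\,)_A.$ The motivation comes from \cref{NW2}, together with the observation that $R\Gamma_{\mathrm{crys}}(S)_A = \mathbb{A}_{\mathrm{crys}}(S) \otimes_{\mathbb{Z}_p} A$ is by construction a natural deformation of $\mathrm{dR}(S)$ along $\mathbb{F}_p \to A.$ Since the map $u:\mathbb{G}_a^{\mathrm{perf}} \to \mathbb{G}_a$ used to define $u^*W[F]$ only exists over $\mathbb{F}_p$ (\cref{ring}), we build $X$ directly through the equivalence of \cref{perfect} by specifying, for each perfect ring $R$ and each $W_A(R)$-algebra $T$, the value
\[
X(T) := \{(w, t^\flat) \in W[F](T) \times T^\flat : w_0 = t^\flat_0 \text{ in } T\},
\]
equipped with the $R$-action induced by the $W_A(R)$-module structure on $W[F]$; this descends to an $R$-action along the Teichm\"uller section $R \to W_A(R)$ because $W[F]$ is annihilated by $p$ and $[a+b]-[a]-[b] \in p\,W_A(R)$. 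Projection to $T^\flat$ provides the point $X \to \mathbb{G}_a^{\mathrm{perf}}.$ Over $A = \mathbb{F}_p$ this recovers $u^*W[F]$ by definition, and the quasi-ideal axiom (\cref{quasi-ideal}) for $X$ follows from the corresponding quasi-ideal structure on $W[F] \to \mathbb{G}_a$ established by Drinfeld in \cite{Dri20}.

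The core of the proof is to compute $\mathrm{Un}(X)(S)$ for a QRSP algebra $S.$ Setting $B := S^\flat$ and $I := \mathrm{Ker}(S^\flat \to S),$ the universal property in \cref{univ2} identifies $\mathrm{Env}_X(B,I)$ as corepresenting the functor which sends a $W_A(B)$-algebra $T$ to the set of $B$-linear maps $\phi : I \to X(T)$ whose composition with the point lifts the natural inclusion $I \hookrightarrow T^\flat.$ Unwinding $X(T)$ and invoking the Berthelot--Ogus identification $W[F](T) \simeq \mathrm{exp}(T)$ from \cref{example1}, together with the theory of divided power envelopes in \cite[Appendix 2]{BO78}, such a $\phi$ is equivalent to specifying divided powers on the image of $J := \mathrm{Ker}(W_A(B) \to S)$ inside $T$ that are compatible with the canonical divided powers on $(p) \subset W_A(B).$ This matches the universal property of the divided power envelope $D_{W_A(B)}(J),$ producing a natural isomorphism $\mathrm{Env}_X(B,I) \simeq D_{W_A(B)}(J).$

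To finish, since $A$ is $p$-power torsion, the $p$-adic completion in the definition of $\mathbb{A}_{\mathrm{crys}}$ becomes trivial after tensoring with $A,$ and PD envelopes commute with the flat base change $\mathbb{Z}_p \to A.$ Combining with \cref{citebms} yields
\[
R\Gamma_{\mathrm{crys}}(S)_A = \mathbb{A}_{\mathrm{crys}}(S) \otimes_{\mathbb{Z}_p} A \simeq D_{W_A(B)}(J) \simeq \mathrm{Env}_X(B,I) = \mathrm{Un}(X)(S),
\]
completing the proof. The main obstacle is the matching of universal properties in the previous paragraph: it requires carefully tracking how the quasi-ideal structure on $W[F]$ over $W_A(R)$-algebras interacts with Teichm\"uller lifts of elements of $I$ and the canonical PD structure on $(p),$ which is essentially a Witt-vector enhancement of the $\mathbb{F}_p$-level calculation appearing in \cref{example3}---one has to keep track of lifts to $W_A(B)$ rather than just reductions modulo $p,$ while ensuring the resulting divided powers are compatible with those on $(p).$
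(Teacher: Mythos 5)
Your strategy---explicitly building a quasi-ideal $X$ over $A$ that ``deforms'' $u^*W[F]$ and then directly matching $\mathrm{Un}(X)$ with $R\Gamma_{\mathrm{crys}}(\,\cdot\,)_A$---is genuinely different from the paper's, but it runs into a serious problem at the very first step. The paper instead \emph{extracts} the quasi-ideal from $R\Gamma_{\mathrm{crys}}$ via the functor $r$ (\cref{smoke1}), obtains the counit $\mathrm{Un}(r(R\Gamma_{\mathrm{crys}})) \to R\Gamma_{\mathrm{crys}}$ from \cref{stu1}, and then proves it is an isomorphism by reducing modulo $\mathfrak{m}$ to the de Rham case (\cref{NW2}, \cref{gax1}) and invoking \cref{new2} together with flatness of $R\Gamma_{\mathrm{crys}}(\,\cdot\,)_A$ over $A$. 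In other words, the paper never needs to describe the quasi-ideal concretely, and this side-steps exactly the issues your construction encounters.

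The central gap: your proposed $X(T) := \{(w, t^\flat) \in W[F](T) \times T^\flat : w_0 = t^\flat_0\}$ is not a group scheme over $W_A(R)$ when $A$ is not of characteristic $p$. You are taking a set-theoretic fiber product along the two projections $W[F](T) \to T$ (a group homomorphism) and $T^\flat \to T$, $t^\flat \mapsto t^\flat_0$. The latter map is multiplicative but \emph{not additive} unless $T$ has characteristic $p$: the addition on $T^\flat$ is defined via the bijection $T^\flat \simeq \varprojlim_{x\mapsto x^p} T/p$, so the zeroth component of a sum is not the sum of zeroth components. This is precisely the obstruction recorded in \cref{ring}---there is no map of ring (or even group) schemes $\mathbb{G}_a^{\mathrm{perf}} \to \mathbb{G}_a$ once $p \neq 0$ in the base---and it is the reason $u^*$ is only defined over $\mathbb{F}_p$. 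Consequently your $X$ is a scheme but not a $\mathbb{G}_a^{\mathrm{perf}}$-module, and the construction collapses. Relatedly, the justification via ``$W[F]$ is annihilated by $p$'' is false over a non-$\mathbb{F}_p$ base: $W[F] \simeq \mathbb{G}_a^*$ over $\mathbb{Z}_p$ is the divided power group scheme, which is flat over $\mathbb{Z}_p$ and has no $p$-torsion; the correct (and needed) statement is that $VW$ annihilates $W[F]$, which gives the $\mathbb{G}_a$-module structure on $W[F]$ but does not produce the pullback you want. Finally, two further problems in the concluding paragraph: $\mathbb{Z}_p \to A$ is \emph{not} flat (since $A$ is $p$-power torsion), so ``PD envelopes commute with the flat base change $\mathbb{Z}_p \to A$'' does not apply; and the ``matching of universal properties'' that you describe as the main obstacle is precisely the part that would have to carry the whole argument and is not actually carried out. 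The paper's argument avoids all of this and is the intended route: extract $r(R\Gamma_{\mathrm{crys}})$, apply the counit, and conclude by the flatness/nilpotence rigidity of \cref{new2}.
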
{}

\begin{proof}There is a functor $\mathfrak{G}:\mathrm{QRSP} \to \mathrm{Alg}_{A}$ sending $S \mapsto W_A(S^\flat).$ By the natural identifications $W_A(S^\flat) \simeq W(S^\flat) \otimes_{\mathbb{Z}_p} A \simeq \mathbb{A}_{\mathrm{crys}}(S^\flat) \otimes_{\mathbb{Z}_p} A = R \Gamma_{\mathrm{crys}} (S^\flat)_A$ and functoriality along $S^\flat \to S,$ we obtain a natural transformation $\mathfrak{G} \to R \Gamma_{\mathrm{crys}}(\cdot)_A.$ Thus we can view $R \Gamma_{\mathrm{crys}}$ as an object of $\mathrm{Fun}(\mathrm{QRSP}, \mathrm{Alg}_A)_{\mathfrak{G}/}.$ From \cref{NW2}, by going modulo $\mathfrak{m}$ and using flatness we can conclude that $R \Gamma_{\mathrm{crys}}$ satisfies the three conditions of \cref{smoke} and thus is an object of $\mathrm{Fun}(\mathrm{QRSP}, \mathrm{Alg}_A)_{\mathfrak{G}/}^{\otimes}.$ Therefore by \cref{smoke1}, $r (R \Gamma_{\mathrm{crys}})$ is a quasi-ideal in $\mathbb{G}_a^{\mathrm{perf}}$ over $A.$ By \cref{stu1}, there is a natural transformation ${\mathrm{Un}}(r (R\Gamma_{\mathrm{crys}})) \to R \Gamma_{\mathrm{crys}}.$ Since $R \Gamma_{\mathrm{crys}} \otimes_A \mathbb{F}_p \simeq \mathrm{dR}$, it follows that $r (R \Gamma_{crys})$ is a deformation of the quasi-ideal in $\mathbb{G}_a^{\mathrm{perf}}$ given by $u^*W[F].$ By \cref{gax1}, ${\mathrm{Un}}(r (R\Gamma_{\mathrm{crys}})) \otimes_A \mathbb{F}_p \simeq {\mathrm{Un}}(u^*W[F]) \simeq \mathrm{dR}.$ Thus the map ${\mathrm{Un}}(r (R\Gamma_{\mathrm{crys}})) \to R \Gamma_{\mathrm{crys}}$ is a natural isomorphism by the following lemma.

\begin{lemma}\label{new2}Let $(A, \mathfrak{m})$ be an Artinian local ring. Let $M \to N$ be a map of $A$-modules where $N$ is flat. Suppose that $M \otimes_A A/\mathfrak{m} \to N \otimes_A A/\mathfrak{m}$ is an isomorphism. Then the map $M \to N$ is an isomorphism. 
\end{lemma}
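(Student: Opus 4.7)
The plan is to prove Lemma~\ref{new2} via a dévissage along the powers of the maximal ideal, reducing to the already given isomorphism modulo $\mathfrak{m}$. Since $A$ is Artinian local, the maximal ideal $\mathfrak{m}$ is nilpotent, so there exists a smallest integer $n \ge 1$ with $\mathfrak{m}^n = 0$. I propose to induct on this $n$. The base case $n=1$ is trivial because then $A = A/\mathfrak{m}$ and the hypothesis is exactly the conclusion.

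For the inductive step, set $A' := A/\mathfrak{m}^{n-1}$, $M' := M \otimes_A A'$, $N' := N \otimes_A A'$. Then $N'$ is a flat $A'$-module, the hypothesis is preserved after this base change, and $A'$ is an Artinian local ring whose maximal ideal has nilpotency index $n-1$. By induction, the map $M' \to N'$ is an isomorphism. The remaining work is then to compare $M$ with $M'$ and $N$ with $N'$ via the ideal $\mathfrak{m}^{n-1}$; note that $\mathfrak{m}^{n-1}$ is annihilated by $\mathfrak{m}$, hence is a $k$-vector space, where $k := A/\mathfrak{m}$.

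The key computation is the following. Since $\mathfrak{m}^{n-1}$ is a $k$-module, there are natural identifications $\mathfrak{m}^{n-1} \otimes_A M \cong \mathfrak{m}^{n-1} \otimes_k (M \otimes_A k)$ and similarly for $N$. By the hypothesis that $M \otimes_A k \to N \otimes_A k$ is an isomorphism, tensoring up gives an isomorphism $\mathfrak{m}^{n-1} \otimes_A M \xrightarrow{\sim} \mathfrak{m}^{n-1} \otimes_A N$. I then propose to set up the commutative diagram with exact rows
\begin{center}
\begin{tikzcd}
\mathfrak{m}^{n-1}\otimes_A M \arrow[r]\arrow[d,"\wr"] & M \arrow[r]\arrow[d,"f"] & M' \arrow[r]\arrow[d,"\wr"] & 0 \\
0 \arrow[r] & \mathfrak{m}^{n-1}\otimes_A N \arrow[r] & N \arrow[r] & N' \arrow[r] & 0
\end{tikzcd}
\end{center}
where the bottom row is exact on the left by flatness of $N$, the right vertical arrow is an isomorphism by induction, and the left vertical arrow is an isomorphism by the computation above.

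Finally, a direct diagram chase on this diagram yields both surjectivity and injectivity of the middle map $f$. For surjectivity, lift any $y \in N$ modulo $\mathfrak{m}^{n-1} N$ via the right isomorphism to some $x \in M$; the difference $y - f(x)$ lies in $\mathfrak{m}^{n-1} N$, which is hit through the left isomorphism by an element of $\mathfrak{m}^{n-1} M$, giving a preimage. For injectivity, an element $x \in \ker f$ maps to $0$ in $N'$, hence (by the right iso) to $0$ in $M'$, so $x$ is the image of some $w \in \mathfrak{m}^{n-1} \otimes_A M$; then $f(x)=0$ together with injectivity of $\mathfrak{m}^{n-1}\otimes_A N \hookrightarrow N$ (this is the only point where flatness of $N$ is essential) and the left vertical isomorphism forces $w = 0$, hence $x = 0$. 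There is no serious obstacle here; the only subtle ingredient is the flatness of $N$, which is used precisely to guarantee the left-exactness of the bottom row and thus the injectivity argument.
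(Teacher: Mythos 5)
Your proof is correct, but it takes a genuinely different route from the paper's. The paper argues directly in two steps: first, the hypothesis plus nilpotence of $\mathfrak{m}$ (a Nakayama-type argument needing no finite generation) gives surjectivity of $M \to N$; then, letting $K = \mathrm{Ker}(M \to N)$, one tensors the short exact sequence $0 \to K \to M \to N \to 0$ with $A/\mathfrak{m}$, where flatness of $N$ guarantees left exactness, so $K/\mathfrak{m}K = 0$ and hence $K = 0$ again by nilpotence. You instead induct on the nilpotency index $n$ of $\mathfrak{m}$, base change along $A \to A/\mathfrak{m}^{n-1}$ to invoke the inductive hypothesis, and then do a diagram chase (effectively a five-lemma argument) through the exact rows built from $\mathfrak{m}^{n-1}$. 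Both use the same two essential ingredients — nilpotence of $\mathfrak{m}$ and flatness of $N$ for left exactness — but the paper collapses the whole filtration into a single Nakayama step whereas you peel it off one layer at a time. Your approach is somewhat longer and requires verifying that the hypothesis and the flatness are preserved under base change (which you do check); the paper's phrasing is shorter but requires being comfortable with the statement of nilpotent Nakayama for non-finitely-generated modules. Substantively both are sound, and there is no gap in your argument.
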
{}

\begin{proof}Since $\mathfrak{m}$ is nilpotent, it follows that $M \to N$ is surjective. Let $K := \mathrm{Ker}(M \to N).$ Then we have an exact sequence $0 \to K \to M \to N \to 0.$ By flatness of $N$, we get an exact sequence $$0 \to K/\mathfrak{m}K \to M/\mathfrak{m}M \to N/\mathfrak{m}N \to 0.$$ By hypothesis, we must have $K /\mathfrak{m}K = 0.$ Again, since $\mathfrak{m}$ is nilpotent, this implies $K=0$, which proves the lemma. \end{proof}{}

Indeed, for every QRSP algebra $S$, we have a map ${\mathrm{Un}}(r (R\Gamma_{\mathrm{crys}}))(S) \to R \Gamma_{\mathrm{crys}}(S)_A$ which is an isomorphism modulo $\mathfrak{m}$ and $R \Gamma_{\mathrm{crys}}(S)_A$ is flat over $A.$
Thus the map must be an isomorphism by the lemma.
\end{proof}{}

\begin{remark}\label{whatmovie?7}
The functor $\w{QRSP} \to D(\mathbb{Z}_p)$ that sends $S \mapsto A_{\w{crys}}(S)$ defines a sheaf for the quasisyntomic topology \cite[Lemma 4.27,~Rmk.~8.15]{BMS19}. We note that if $(A, \mathfrak{m})$ is any Artinian local ring with residue field $\mathbb{F}_p$, then the functor $(\,\cdot\,)\otimes^L_{\mathbb{Z}_p} A : D(\mathbb{Z}_p)\to D(A)$ preserves all limits. Indeed, by using the $\mathfrak{m}$-adic filtration on $A$ (which is finite and the graded pieces are finite dimensional $\mathbb{F}_p$-vector spaces), this boils down to showing that $(\,\cdot\,)\otimes^L_{\mathbb{Z}_p} \mathbb{F}_p$ preserves all limits; the latter claim follows because $\mathbb{F}_p$ is quasi-isomorphic to the two term complex $(\mathbb{Z}_p \xrightarrow{p} \mathbb{Z}_p).$ This implies that the functor $R\Gamma_{\w{crys}} (\,\cdot\,)_A$ from \cref{new} maybe viewed as a quasisyntomic sheaf (with values in $D(A)$). By \cite[Lemma~4.31]{BMS19}, we obtain an extended functor still denoted as $R\Gamma_{\w{crys}}(\,\cdot\,)_A: \w{QSyn}_{\mathbb{F}_p} \to D(A),$ where $\mathrm{QSyn}_{\mathbb{F}_p}$ denotes the category of quasisyntomic $\mathbb{F}_p$-algebras \cite[Def.~1.7]{BMS19}. By \cite[Ex.~5.12,~Prop.~8.12]{BMS19}, it follows that $R\Gamma_{\w{crys}}(\,\cdot\,)_A \otimes^L _A A/\mathfrak{m} \simeq \w{dR}$ as functors from $\w{QSyn}_{\mathbb{F}_p} \to D(A)$. Note that $R\Gamma_{\w{crys}}(\,\cdot\,)_A$ can be naturally enhanced as a functor from $\w{QSyn}_{\mathbb{F}_p} \to \w{CAlg}(D(A))$ and the isomorphism $R\Gamma_{\w{crys}}(\,\cdot\,)_A \otimes^L _A A/\mathfrak{m} \simeq \w{dR}$ remains true at the level of functors from $\w{QSyn}_{\mathbb{F}_p} \to \w{CAlg}(D(A)).$
\end{remark}{}

\begin{remark}By the proof of \cref{NW3}, we also saw that the quasi-ideal in $\mathbb{G}_a^{\mathrm{perf}}$ given by $r (R \Gamma_{\mathrm{crys}})$ is a deformation of $u^*W[F]$ over the ring $A.$ Once we know this description, it is not difficult to describe $r (R \Gamma_{\mathrm{crys}})$ explicitly. On the other hand, according to the proposition, this quasi-ideal recovers derived crystalline cohomology for QRSP algebras via the unwinding functor. Therefore, combining with \cref{whatmovie?7}, this gives a way of defining crystalline cohomology without mentioning divided power structures and using the pointed $\mathbb{G}_a^{\mathrm{perf}}$-module structure instead.
\end{remark}{}

\newpage

\section{Formal \'etaleness of de Rham cohomology}\label{section5}In this final section, our goal is to prove our main theorem that the functor $\mathrm{dR}$ is formally \'etale. More precisely, we prove the following

\begin{theorem}\label{mainthm}Let $$\mathrm{dR} : \mathrm{Alg}^{\mathrm{sm}}_{\mathbb F _p} \to \mathrm{CAlg}(D(\mathbb{F}_p))$$
be the algebraic de Rham cohomology functor defined on the category of smooth $\mathbb{F}_p$-algebras $\mathrm{Alg}^{\mathrm{sm}}_{\mathbb F _p}$. Given an Artinian local ring $(A, \mathfrak{m})$ with residue field $\mathbb{F}_p$, the functor $\mathrm{dR}$ admits a unique deformation $$\mathrm{dR}':  \mathrm{Alg}^{\mathrm{sm}}_{\mathbb F _p} \to \mathrm{CAlg}(D(A)) .$$ Further, the deformation $\mathrm{dR}'$ is unique up to unique isomorphism. Here a deformation is supposed to mean the data of isomorphism of functors $\mathrm{dR}' \otimes_A ^{L} \mathbb{F}_p \simeq \mathrm{dR}.$ More precisely, the space of deformations of $\mathrm{dR}$ (as defined in \cref{bbleave}) is contractible. \textit{cf.}~\cref{hiltra34}.
\end{theorem}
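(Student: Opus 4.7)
The plan is to follow the outline sketched after \cref{def12}, using the unwinding machinery to convert this functorial deformation problem into the concrete rigidity statement about the quasi-ideal $u^*W[F]$ already established in \cref{mainthm1}. First, I would reduce to the case $A = \mathbb{F}_p[\epsilon]/\epsilon^2$: any Artinian local ring with residue field $\mathbb{F}_p$ can be filtered by square-zero extensions, so induction on the length reduces contractibility over a general $A$ to contractibility at each such infinitesimal step.

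Second, I would pass from the $\infty$-categorical formulation on smooth algebras to a purely $1$-categorical one on QRSP algebras. Because $\mathrm{dR}$ satisfies quasisyntomic descent and every polynomial algebra admits a faithfully flat quasisyntomic cover by a QRSP algebra whose \v{C}ech conerve consists of QRSP algebras, the functor $\mathrm{dR}$ on smooth $\mathbb{F}_p$-algebras is determined by its restriction to $\mathrm{QRSP}$, where \cref{citebms} guarantees $\mathrm{dR}(S)$ is discrete. Consequently, after the reductions of \cref{section5.1}, a deformation $\mathrm{dR}'$ amounts to an object of $\mathrm{Fun}(\mathrm{QRSP}, \mathrm{Alg}_A)_{\mathfrak{G}/}^{\otimes}$ (cf.~\cref{bye666}) whose base change along $A \to \mathbb{F}_p$ recovers $\mathrm{dR}$.

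Third, I would extract a quasi-ideal by applying the functor $r$ from \cref{smoke1} to produce $r(\mathrm{dR}')$, which is a flat deformation of $r(\mathrm{dR}) \simeq u^*W[F]$ by \cref{NW2}. Now \cref{mainthm1} guarantees that this deformation is uniquely isomorphic to the trivial deformation $u^*W[F] \otimes_{\mathbb{F}_p} \mathbb{F}_p[\epsilon]/\epsilon^2$. It then remains to show that $\mathrm{dR}'$ is determined by $r(\mathrm{dR}')$, i.e., that the canonical natural transformation $\mathrm{Un}(r(\mathrm{dR}')) \to \mathrm{dR}'$ supplied by \cref{stu1} is an isomorphism. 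This mirrors the crystalline argument of \cref{NW3}: both sides are flat functors over $A$ that agree modulo $\mathfrak{m}$ (both specializing to $\mathrm{dR} \simeq \mathrm{Un}(u^*W[F])$), so \cref{new2} applies objectwise to force an isomorphism. Combined with the trivialization of $r(\mathrm{dR}')$ and \cref{gax1}, this yields a canonical isomorphism $\mathrm{dR}' \simeq \mathrm{dR} \otimes_{\mathbb{F}_p} A$.

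The main obstacle is upgrading this set-theoretic uniqueness to contractibility of the deformation space in the sense of \cref{bbleave}. The construction above produces a canonical isomorphism to the trivial deformation, which handles $\pi_0$ of the deformation space. For the higher homotopy, I would invoke \cref{NW}: since $\mathrm{dR}$ has no nontrivial endomorphisms as an object of $\mathrm{Fun}(\mathrm{QRSP}, \mathrm{Alg}_{\mathbb{F}_p})_{\mathfrak{G}/}$, the automorphism groups of deformations become trivial, and the inductive passage through successive square-zero extensions preserves this rigidity, yielding the desired contractibility.
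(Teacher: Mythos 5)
Your outline tracks the paper's strategy closely in its second, third, and fourth steps: the passage from $\mathrm{Alg}^{\mathrm{sm}}_{\mathbb{F}_p}$ to $\mathrm{QRSP}$ via left Kan extension and descendability, the extraction of $r(\mathrm{dR}')$, the appeal to the rigidity statement \cref{mainthm1} for $u^*W[F]$, the use of \cref{stu1} and \cref{new2} to show the unit map $\mathrm{Un}(r(\mathrm{dR}'))\to\mathrm{dR}'$ is an isomorphism, and the appeal to \cref{NW} for rigidity of automorphisms. These are precisely the ingredients the paper uses to prove \cref{new4} and to run the descent.

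There is, however, a genuine gap in your first step, the reduction ``to $A=\mathbb{F}_p[\epsilon]/\epsilon^2$ by induction on length.'' Filtering $A$ by its $\mathfrak{m}$-adic filtration produces a tower of square-zero extensions $A/\mathfrak{m}^{r+1}\to A/\mathfrak{m}^r$, but the infinitesimal step at stage $r$ is a deformation problem over the base ring $A/\mathfrak{m}^r$, which is not $\mathbb{F}_p$ unless $r=1$, and whose kernel $\mathfrak{m}^r/\mathfrak{m}^{r+1}$ need not be one-dimensional. Your plan (and \cref{mainthm1}) only addresses deformations of $u^*W[F]$ over $\mathbb{F}_p[\epsilon]/\epsilon^2$, so running the unwinding argument directly at an intermediate stage would require an analogue of \cref{mainthm1} over $A/\mathfrak{m}^r$, which is neither stated nor proved. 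The paper instead routes the induction through Illusie's obstruction theory for commutative ring objects in a topos: each square-zero step is controlled by $\mathrm{Ext}^i_{\mathrm{dR}_{A/\mathfrak{m}^r}}\bigl(\mathbb{L}_{\mathrm{dR}_{A/\mathfrak{m}^r}},\,\mathfrak{m}^r/\mathfrak{m}^{r+1}\otimes_{A/\mathfrak{m}^r}\mathrm{dR}_{A/\mathfrak{m}^r}\bigr)$ for $i\in\{0,1\}$; flat base change of the cotangent complex and flatness of $\mathrm{dR}_A$ bring this down to $\mathrm{Ext}^i_{\mathrm{dR}}\bigl(\mathbb{L}_{\mathrm{dR}},\,\mathfrak{m}^r/\mathfrak{m}^{r+1}\otimes_{\mathbb{F}_p}\mathrm{dR}\bigr)$, and finite-dimensionality of $\mathfrak{m}^r/\mathfrak{m}^{r+1}$ over $\mathbb{F}_p$ makes this equivalent to vanishing of $\mathrm{Ext}^i_{\mathrm{dR}}(\mathbb{L}_{\mathrm{dR}},\mathrm{dR})$, which \emph{is} the $\mathbb{F}_p[\epsilon]/\epsilon^2$ case. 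It is exactly this linearization that you are implicitly relying on but not supplying; without it, ``induction on length'' does not land you at $\mathbb{F}_p[\epsilon]/\epsilon^2$. You also need the prior existence of a flat deformation (crystalline cohomology, \cref{new}/\cref{whatmovie?7}) to kill the Illusie obstruction class in $\mathrm{Ext}^2$; this is assumed but not mentioned in your sketch.
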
{}

\begin{remark}\label{bbleave}
In this remark, we clarify the $\infty$-categorical technicalities underlying the notion of the ``space of deformations" that appears in \cref{mainthm}. Note that we have a functor between $\infty$-categories $$ \w{Fun}(\w{Alg}_{\mathbb{F}_p}^{\w{sm}}, \w{CAlg}(D(A))) \to \w{Fun}(\w{Alg}_{\mathbb{F}_p}^{\w{sm}}, \w{CAlg}(D(\mathbb{F}_p)))$$ obtained by using the functor $(\, \cdot \,) \otimes_A^{L} \mathbb{F}_p.$ Now $\w{dR}$ can be seen as an object of $\w{Fun}(\w{Alg}_{\mathbb{F}_p}^{\w{sm}}, \w{CAlg}(D(\mathbb{F}_p))).$ We define an $\infty$-category $$\mathcal{D}\w{ef}(\mathrm{dR}):= \w{Fun}(\w{Alg}_{\mathbb{F}_p}^{\w{sm}}, \w{CAlg}(D(A))) \times_{\w{Fun}(\w{Alg}_{\mathbb{F}_p}^{\w{sm}}, \w{CAlg}(D(\mathbb{F}_p)))} \left \{\w{dR} \right \},$$ where the fiber product is taken in the $\infty$-category $\widehat{\mathcal{C}\w{at}}_{\infty}$ of (not necessarily small) $\infty$-categories \cite[Def. 5.5.3.1]{Luuu}. An object of $\mathcal{D}\w{ef}(\mathrm{dR})$ is a functor $\mathrm{dR}':  \mathrm{Alg}^{\mathrm{sm}}_{\mathbb F _p} \to \mathrm{CAlg}(D(A))$ equipped with the data of an isomorphism of functors $\mathrm{dR}' \otimes_A ^{L} \mathbb{F}_p \simeq \mathrm{dR}.$ One notes that every morphism in $\mathcal{D}\w{ef}(\mathrm{dR})$ is an equivalence. Thus $\mathcal{D}\w{ef}(\mathrm{dR})$ is an $\infty$-groupoid, which we informally call ``the space of deformations" of $\w{dR}$ (see \cite[Def. 1.2.5.1, Rmk. 1.2.5.2  ]{Luuu}). \cref{mainthm} asserts that $\mathcal{D}\w{ef}(\mathrm{dR}) \simeq \left \{ * \right \}.$ Note that the existence of crystalline cohomology (\textit{cf.}~\cref{whatmovie?7}) assures that $\mathcal{D}\w{ef}(\mathrm{dR})$ is not the empty $\infty$-groupoid.
\end{remark}{}

\begin{remark}\label{bye1}
We note that the functor $\mathrm{dR} : \mathrm{Alg}^{\mathrm{sm}}_{\mathbb F _p} \to \mathrm{CAlg}(D(\mathbb{F}_p))$ is left Kan extended from its restriction to $\w{Poly}_{\mathbb{F}_p},$ where the latter denotes the category of finitely generated polynomial $\mathbb{F}_p$-algebras; this was observed in \cite[Cor. 3.10]{Bha12} and is a consequence of the derived Cartier isomorphism \cite[Prop. 3.5]{Bha12}. In fact, according to Def. 2.1 and Rmk. 2.2 loc. cit. when we consider derived de Rham cohmology, as a functor from the $\infty$-category of simplicial commutative $\mathbb{F}_p$-algebras to the $\infty$-category $\mathrm{CAlg}(D(\mathbb{F}_p)),$ it preserves all colimits. Concretely, the last fact implies that derived de Rham cohomology functor sends a Tor independent pushout diagram of ordinary rings to pushout diagrams of $\mathbb{E}_{\infty}$-algebras.
\end{remark}{}

\subsection{First proof using deformation theory of $u^*W[F]$}\label{section5.1}
First we make some adjustments so that the values taken by $\mathrm{dR}$ are discrete rings as opposed to commutative algebra objects in a derived category. As noted in \cref{bye1}, the functor $\mathrm{dR} : \mathrm{Alg}^{\mathrm{sm}}_{\mathbb F _p} \to \mathrm{CAlg}(D(\mathbb{F}_p))$ is left Kan extended from its restriction to $\w{Poly}_{\mathbb{F}_p}.$ If $\mathrm{dR}':  \mathrm{Alg}^{\mathrm{sm}}_{\mathbb F _p} \to \mathrm{CAlg}(D(A))$ is a deformation of $\mathrm{dR}$ as in \cref{mainthm}, by going derived modulo $\mathfrak{m},$ one sees that the same property holds for $\mathrm{dR}'.$ Thus, by considering left Kan extensions, in order to prove the statement regarding unique deformation in \cref{mainthm}, it would be enough to prove the same statement for the functor $$\mathrm{dR}: \mathrm{Poly}_{\mathbb{F}_p} \to \mathrm{CAlg}(D(\mathbb{F}_p)) .$$ Let $\mathrm{dR}': \mathrm{Poly}_{\mathbb{F}_p} \to \mathrm{CAlg}(D(A))$ be a deformation of $\mathrm{dR}.$ To prove that $\mathrm{dR}'$ is unique up to unique isomorphism, we can again do a left Kan extension to obtain a functor $\mathrm{dR}': \mathrm{QSyn}_{\mathbb{F}_p} \to \mathrm{CAlg}(D(A))$
which extends $\mathrm{dR}'$ from polynomial algebras, where $\mathrm{QSyn}_{\mathbb{F}_p}$ denotes the category of quasisyntomic $\mathbb{F}_p$-algebras \cite[Def.~1.7]{BMS19}. By construction, $\mathrm{dR}'$ is a deformation of the derived de Rham cohomology functor $\mathrm{dR}: \mathrm{QSyn}_{\mathbb{F}_p} \to \mathrm{CAlg}(D(\mathbb{F}_p)).$
Now the category of quasisyntomic $\mathbb{F}_p$-algebras contains all the QRSP algebras. Therefore one can try to recover the functor $\mathrm{dR}'$ restricted to finitely generated polynomial algebras $P$ via descent along the (faithfully flat) map $P \to P_{\mathrm{perf}}.$ Here $P_{\mathrm{perf}}:= \mathrm{colim}_{x \to x^p} P.$ The following lemma guarantees that this is possible by using the notion of descendability \cite[Def. 3.18]{Mat16}.

\begin{lemma}\label{aroma}Let $P \in \mathrm{Poly}_{\mathbb{F}_p}$. Then the map $\mathrm{dR}'(P) \to \mathrm{dR}'(P_{\mathrm{perf}})$ is descendable.
\end{lemma}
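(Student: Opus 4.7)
The strategy is to separate the deformation-theoretic content from the characteristic $p$ content. The first step is a general stability principle: if $A \to A_0$ is a surjection of commutative rings with nilpotent kernel and $R \to S$ is a morphism in $\mathrm{CAlg}(D(A))$ whose base change $R \otimes_A^L A_0 \to S \otimes_A^L A_0$ is descendable, then $R \to S$ is itself descendable. This is a standard consequence of Mathew's formulation \cite{Mat16}: the ``potent'' filtration witnessing nilpotence of the augmentation ideal lifts from the residue through a nilpotent thickening by an easy induction on powers of the kernel, since at each step the obstruction lives in the derived category of the associated graded, which is a module category over $A_0$. Since $A$ is Artinian with $\mathfrak{m}$ nilpotent and since $\mathrm{dR}'$ is by hypothesis a deformation of $\mathrm{dR}$ (left Kan extension commutes with base change, so both $\mathrm{dR}'(P) \otimes_A^L \mathbb{F}_p \simeq \mathrm{dR}(P)$ and $\mathrm{dR}'(P_{\mathrm{perf}}) \otimes_A^L \mathbb{F}_p \simeq \mathrm{dR}(P_{\mathrm{perf}})$), it suffices to establish descendability of $\mathrm{dR}(P) \to \mathrm{dR}(P_{\mathrm{perf}})$ over $\mathbb{F}_p$.

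For the base case, I would factor the morphism as the sequential colimit of the Frobenius tower $P \to P^{1/p} \to P^{1/p^2} \to \cdots$, each stage of which is finite locally free (of rank $p^{\dim P}$) and hence descendable with the tautological descent index. Applying $\mathrm{dR}$ and using that $\mathrm{dR}(P_{\mathrm{perf}}) \simeq P_{\mathrm{perf}}$ (because $P_{\mathrm{perf}}$ is perfect, so its cotangent complex vanishes), one obtains a tower in $\mathrm{CAlg}(D(\mathrm{dR}(P)))$ whose colimit is $P_{\mathrm{perf}}$. The conjugate filtration (equivalently, the derived Cartier decomposition of \cite[Prop.~3.5]{Bha12}) on each $\mathrm{dR}(P^{1/p^k})$ has length $\dim P + 1$ with graded pieces given by Frobenius twists of $\Omega^i$. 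One then extracts a uniform bound: the descent datum on $\mathrm{dR}(P) \to \mathrm{dR}(P^{1/p^k})$ assembled from this filtration has index at most $\dim P + 1$ independently of $k$, and this bound is preserved under the colimit, giving the descendability of $\mathrm{dR}(P) \to \mathrm{dR}(P_{\mathrm{perf}})$.

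The principal obstacle is precisely the uniformity of the descendability index along the Frobenius tower: descendability is not stable under arbitrary filtered colimits, so a priori the index could grow without bound. Overcoming this requires exploiting the specific structure of derived de Rham cohomology in characteristic $p$, namely that the conjugate filtration has bounded length equal to $\dim P + 1$ independent of the stage $k$, so that the descent witnessed at each finite level of the Frobenius tower is controlled by a filtration of the same, fixed length. Once this finite-filtration bookkeeping is carried out, the nilpotent-thickening reduction of the first paragraph immediately upgrades the statement to an arbitrary Artinian $A$, completing the proof.
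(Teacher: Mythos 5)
Your reduction to the residue field via nilpotence of $\mathfrak{m}$ matches the paper's approach, which invokes \cite[Props.~3.24, 3.35]{Mat16} for this step. Where you diverge is the base case: the paper simply cites \cite[Lemma~8.6]{BS19} for the descendability of $\mathrm{dR}(P) \to \mathrm{dR}(P_{\mathrm{perf}})$ over $\mathbb{F}_p$, whereas you attempt to reprove it from the Frobenius tower and the conjugate filtration. This is a genuinely different route and, if carried out, would give a more self-contained argument, but your sketch has two real gaps.

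First, the conjugate filtration does not split the descent problem the way you suggest. The map $\mathrm{dR}(P) \to \mathrm{dR}(P^{1/p^k})$ does not respect the conjugate graded pieces by base change --- in fact the induced map on $H^i$ is \emph{zero} for every $i \geq 1$, since $dx_j \mapsto d\bigl((x_j^{1/p^k})^{p^k}\bigr) = 0$ in characteristic $p$ --- and moreover the conjugate filtration is a filtration of $\mathrm{dR}(P^{1/p^k})$ as a filtered $\mathbb{E}_\infty$-ring in $D(\mathbb{F}_p)$, not by $\mathrm{dR}(P)$-submodules, so it does not directly control the $\otimes$-nilpotence index of $\mathrm{fib}\bigl(\mathrm{dR}(P) \to \mathrm{dR}(P^{1/p^k})\bigr)$ in $\mathrm{Mod}_{\mathrm{dR}(P)}$. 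Your assertion that ``the descent datum assembled from this filtration has index at most $\dim P + 1$'' is therefore not yet a precise statement, let alone a proved one. Second, even granting a uniform bound $N$ on the index of each $\mathrm{dR}(P) \to \mathrm{dR}(P^{1/p^k})$, passing to the filtered colimit is not automatic: writing $I = \mathrm{fib}\bigl(\mathrm{dR}(P) \to \mathrm{dR}(P_{\mathrm{perf}})\bigr) = \mathrm{colim}_k I_k$, the map $I^{\otimes N} \to \mathrm{dR}(P)$ is classified by a point of $\lim_k \mathrm{Map}\bigl(I_k^{\otimes N}, \mathrm{dR}(P)\bigr)$ whose image in each $\pi_0$ vanishes, but there is a $\lim^1$ obstruction --- the nullhomotopies of the individual maps $I_k^{\otimes N} \to \mathrm{dR}(P)$ are non-canonical and must be chosen compatibly. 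You flag the colimit issue but then assert that the uniform bound resolves it without giving or citing an argument. Citing \cite[Lemma~8.6]{BS19}, as the paper does, sidesteps both difficulties.
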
{}

\begin{proof}This follows from the fact that $\mathrm{dR}(P) \to \mathrm{dR}(P_{\mathrm{perf}})$ is descendable \cite[Lemma 8.6]{BS19} and the fact that descendability can be checked (derived) modulo $\mathfrak{m}$ since $\mathfrak{m}$ is nilpotent. The latter claim follows from \cite[Prop 3.24]{Mat16} and \cite[Prop 3.35]{Mat16}. The fact that descendability implies descent along $P \to P_{\mathrm{perf}}$ for $\mathrm{dR}'$ follows from the fact that $\mathrm{dR}'$ sends Tor independent pushout diagrams in $\mathrm{QSyn}_{\mathbb{F}_p}$ to pushouts in $\w{CAlg}(D(A))$ and \cite[Prop. 3.20]{Mat16}. The former fact can be seen by using the same property for $\w{dR}$ as noted in \cref{bye1} and going derived modulo $\mathfrak{m}.$ \end{proof}{}

By using the above lemma and the fact that each term in the Čech conerve of $P \to P_{\mathrm{perf}}$ is a QRSP algebra, in order to prove \cref{mainthm}, it is enough to prove the following statement.

\begin{itemize}
 \item Let $\mathrm{dR} :\mathrm{QRSP} \to \mathrm{CAlg}(D(\mathbb{F}_p))$ be the derived de Rham cohomology functor. Given an Artinian local ring $(A, \mathfrak{m})$ with residue field $\mathbb{F}_p,$ the functor $\mathrm{dR}$ admits a deformation $\mathrm{dR}':\mathrm{QRSP} \to \mathrm{CAlg}(D(A))$ which is unique up to unique isomorphism.
\end{itemize}

Now we note again that $\mathrm{dR}(S)$ is a discrete ring for a QRSP algebra $S.$ Thus $\mathrm{dR}'$ is also a discrete ring which is \textit{flat} over $A$ by \cite[Tag 051H]{SP}. Thus it is enough to prove the following statement (\cref{why?}). 

\begin{itemize}
\item Let $\mathrm{dR} :\mathrm{QRSP} \to \mathrm{Alg}_{\mathbb{F}_p}$ be the derived de Rham cohomology functor. Given an Artinian local ring $(A, \mathfrak{m})$ with residue field $\mathbb{F}_p,$ the functor $\mathrm{dR}$ admits a deformation $\mathrm{dR}':\mathrm{QRSP} \to \mathrm{Alg}_A$ which is unique up to unique isomorphism.
\end{itemize}

Now we appeal to the general theory of deformations of (commutative) ring objects in a topos due to Illusie \cite{Ill71} to reduce the problem further. Let $\mathcal{X}$ denote the topos $\mathrm{PShv}(\mathrm{QRSP}^{\mathrm{op}})$ of presheaves of sets on $\mathrm{QRSP}^{\mathrm{op}}.$ Then $\mathrm{dR}$ can be viewed as an $\mathbb{F}_p$-algebra object in $\mathcal{X}$ and we are trying to understand deformations $\mathrm{dR}'$ of $\mathrm{dR}$ which are flat $A$-algebra objects. Since $(A, \mathfrak{m})$ is an Artinian local ring, $\mathfrak{m}^n=0$ for some $n>0.$ By considering the $\mathfrak{m}$-adic filtration, we have a tower $A= A/\mathfrak{m}^n \to A/\mathfrak{m}^{n-1} \to \ldots \to A/\mathfrak{m}^2 \to A/\mathfrak{m} = \mathbb{F}_p,$ where kernel of each map is a square-zero ideal. Note that by \cref{new}, we already know that there exists a deformation of $\mathrm{dR}$ over $A$ given by $\mathrm{dR}_A:= R \Gamma_{\mathrm{crys}}(\,\cdot\,)_A$ (which is a flat $A$-algebra object). Therefore, in order to prove that $\w{dR}_A$ is unique up to unique isomorphism as a deformation of $\w{dR},$ it is enough to prove that $\w{dR}_{A/\mathfrak{m}^{r+1}}:= \w{dR}_A \otimes_A A/\mathfrak{m}^{r+1}$ is unique up to unique isomorphism as a deformation of $\w{dR}_{A/\mathfrak{m}^{r}}:= \w{dR}_A \otimes_A A/\mathfrak{m}^{r}$ for all $1 \le r \le n-1.$ \vspace{2mm}

To this end, we will study the problem of deforming $\w{dR}_{A/\mathfrak{m}^{r}}$ (considered as an $A/\mathfrak{m}^{r}$-algebra object) along the morphism $A/\mathfrak{m}^{r+1} \to A/\mathfrak{m}^{r}$. By construction, $\w{dR}_{A/\mathfrak{m}^{r+1}}$ is already one such deformation; therefore, the obstruction class of deforming $\w{dR}_{A/\mathfrak{m}^{r}}$ as in \cite[Cor. 2.1.3.3~(i)]{Ill71} vanishes. In order to prove that $\w{dR}_{A/\mathfrak{m}^{r+1}}$ is unique up to unique isomorphism as a deformation, by \cite[Prop. 2.1.2.3,~Cor. 2.1.3.3~(ii),(iii)]{Ill71}, it is equivalent to show that \begin{equation}\label{shapiro1219}
    \w{Ext}^i_{\w{dR}_{A/\mathfrak{m}^{r}}} (\mathbb{L}_{\w{dR}_{A/\mathfrak{m}^{r}}}, \mathfrak{m}^r/ \mathfrak{m}^{r+1} \otimes_{A/\mathfrak{m}^r} \w{dR}_{A/\mathfrak{m}^{r}})=0
\end{equation}for $i \in \left \{ 0,1  \right \};$ here $\mathbb{L}_{\w{dR}_{A/\mathfrak{m}^{r}}}$ denotes the contangent complex of the $A/\mathfrak{m}^{r}$-algebra object $\w{dR}_{A/\mathfrak{m}^{r}}$ relative to $A/\mathfrak{m}^{r}$ as considered in  \cite[Cor. 2.1.3.3]{Ill71}, which can be viewed as an object in the derived category of $\w{dR}_{A/\mathfrak{m}^{r}}$-module objects. Now we note that $\mathfrak{m}^r/ \mathfrak{m}^{r+1}$ is naturally an $A/\mathfrak{m}$-module, and $\mathfrak{m}^r/ \mathfrak{m}^{r+1} \otimes_{A/\mathfrak{m}^r} \w{dR}_{A/\mathfrak{m}^{r}} \simeq \mathfrak{m}^r/ \mathfrak{m}^{r+1} \otimes^{L} _{A/\mathfrak{m}^r} \w{dR}_{A/\mathfrak{m}^{r}} \simeq \mathfrak{m}^r/ \mathfrak{m}^{r+1} \otimes^{L} _{A/\mathfrak{m}} A/\mathfrak{m} \otimes^{L} _{A/\mathfrak{m}^r} \w{dR}_{A/\mathfrak{m}^{r}} \simeq \mathfrak{m}^r/ \mathfrak{m}^{r+1} \otimes^{L} _{A/\mathfrak{m}}\w{dR}$ (where we can switch between the derived and nonderived tensor product by using flatness of $\w{dR}_A$ as an $A$-algebra object). Therefore, we have
$$\w{Ext}^i_{\w{dR}_{A/\mathfrak{m}^{r}}} (\mathbb{L}_{\w{dR}_{A/\mathfrak{m}^{r}}}, \mathfrak{m}^r/ \mathfrak{m}^{r+1} \otimes^L _{A/\mathfrak{m}^r} \w{dR}_{A/\mathfrak{m}^{r}}) \simeq \w{Ext}^i_{\w{dR}} (\mathbb{L}_{\w{dR}_{A/\mathfrak{m}^{r}}}\otimes^L _{\w{dR}_{A/\mathfrak{m}^{r}}} \w{dR}, \mathfrak{m}^r/ \mathfrak{m}^{r+1} \otimes^L _{A/\mathfrak{m}} \w{dR}).$$ Using $\mathbb{L}_{\w{dR}}$ to denote $\mathbb{L}_{\w{dR}_{A/\mathfrak{m}}}$ defined above, by the base change formula \cite[Prop. 2.2.1]{Ill71}, we have $\mathbb{L}_{\w{dR}_{A/\mathfrak{m}^{r}}}\otimes^L _{\w{dR}_{A/\mathfrak{m}^{r}}}\w{dR} \simeq \mathbb{L}_{\w{dR}}.$ In order to prove the vanishing in equation \cref{shapiro1219}, equivalently, we need to prove that \begin{equation}\label{shapir29}
    \w{Ext}^i_{\w{dR}} (\mathbb{L}_{\w{dR}}, \mathfrak{m}^r/ \mathfrak{m}^{r+1} \otimes_{\mathbb{F}_p} \w{dR})=0
\end{equation}{}for $i \in \left \{ 0,1  \right \}.$ Since $A$ is an artinian local ring with residue field $\mathbb{F}_p,$ it follows that $\mathfrak{m}^r/\mathfrak{m}^{r+1}$ is a finite dimensional $\mathbb{F}_p$-vector space. Therefore, to prove the vanishing in \cref{shapir29}, it is enough to show that $\w{Ext}^i_{\w{dR}} (\mathbb{L}_{\w{dR}}, \w{dR})=0$ for $i \in \left \{ 0,1  \right \}.$ Equivalently, it is enough to prove that equation \cref{shapir29} holds for $i \in \left \{ 0,1  \right \}$ in the special case when $A = \mathbb{F}_p[\epsilon]/\epsilon^2,$ $\mathfrak{m} = (\epsilon)$ and $r=1.$ Finally, arguing backwards and using the reductions we have made so far, we see that in order to prove \cref{mainthm}, it is enough to prove the following proposition.

\begin{proposition}\label{new4} Let $\mathrm{dR} :\mathrm{QRSP} \to \mathrm{Alg}_{\mathbb{F}_p}$ be the derived de Rham cohomology functor. Then $\mathrm{dR}$ has no nontrivial deformation to $\mathbb{F}_p[\epsilon]:=\mathbb{F}_p [\epsilon]/\epsilon^2.$ Further, the deformation is unique up to unique isomorphism. (Here the trivial deformation is given by tensoring up to $\mathbb{F}_p [\epsilon].$)
 \end{proposition}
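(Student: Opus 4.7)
Given any deformation $\mathrm{dR}'$ of $\mathrm{dR}$ to $\mathbb{F}_p[\epsilon],$ the plan is to extract a quasi-ideal $r(\mathrm{dR}')$ via \cref{smoke1}, observe that it is a deformation of $u^*W[F],$ apply \cref{mainthm1}, and finally reconstruct $\mathrm{dR}'$ by means of the unwinding--rewinding adjunction of \cref{stu1,last}. The first task is to canonically upgrade $\mathrm{dR}'$ to an object of $\mathrm{Fun}(\mathrm{QRSP},\mathrm{Alg}_{\mathbb{F}_p[\epsilon]})^{\otimes}_{\mathfrak{G}/}.$ For a perfect ring $B$ the reduction $\mathrm{dR}'(B)\otimes_{\mathbb{F}_p[\epsilon]}\mathbb{F}_p \cong \mathrm{dR}(B)=B$ is itself perfect; since $\mathbb{L}_{B/\mathbb{F}_p}=0$ and $\mathrm{dR}'(B)$ is flat over $\mathbb{F}_p[\epsilon],$ formal \'etaleness forces $\mathrm{dR}'(B)\cong B[\epsilon]=W_{\mathbb{F}_p[\epsilon]}(B)=\mathfrak{G}(B)$ uniquely. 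Combining this with functoriality along $S^\flat\to S$ for $S\in\mathrm{QRSP}$ produces the desired natural transformation $\mathfrak{G}\to\mathrm{dR}'.$ The three conditions of \cref{smoke} hold for $\mathrm{dR}$ by \cref{bye666}, and by flatness of the relevant algebras over $\mathbb{F}_p[\epsilon]$ together with \cref{new2}, they propagate to $\mathrm{dR}'.$

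Applying the functor $r$ from \cref{smoke1} produces a quasi-ideal $X:=r(\mathrm{dR}')$ in $\mathbb{G}_a^{\mathrm{perf}}$ over $\mathbb{F}_p[\epsilon].$ Because $r$ is defined by evaluation on the object $B[x^{1/p^\infty}]/x$ for perfect rings $B$ and therefore commutes with the base change $(\,\cdot\,)\otimes_{\mathbb{F}_p[\epsilon]}\mathbb{F}_p,$ its reduction modulo $\epsilon$ is $r(\mathrm{dR})\cong u^*W[F]$ by \cref{NW2}. Hence $X$ is a pointed $\mathbb{G}_a^{\mathrm{perf}}$-module deformation of $u^*W[F]$ to $\mathbb{F}_p[\epsilon],$ and \cref{mainthm1} identifies it uniquely with the trivial deformation $u^*W[F]\otimes_{\mathbb{F}_p}\mathbb{F}_p[\epsilon].$ Now the counit $\mathrm{Un}(X)\to\mathrm{dR}'$ from \cref{stu1}, combined with the base-change identity $\mathrm{Un}(u^*W[F]\otimes\mathbb{F}_p[\epsilon])\cong\mathrm{dR}\otimes\mathbb{F}_p[\epsilon]$ supplied by \cref{gax1}, yields a map $\mathrm{dR}\otimes\mathbb{F}_p[\epsilon]\to\mathrm{dR}'$ which reduces mod $\epsilon$ to the given isomorphism; by flatness and \cref{new2} it is itself an isomorphism. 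This establishes existence of an isomorphism to the trivial deformation.

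For uniqueness up to unique isomorphism, the argument runs in parallel. An automorphism $\phi$ of $\mathrm{dR}\otimes\mathbb{F}_p[\epsilon]$ whose reduction mod $\epsilon$ is the identity descends under $r$ to an endomorphism of $u^*W[F]\otimes\mathbb{F}_p[\epsilon]$ which must be the identity by the uniqueness-of-endomorphism statement embedded in the proof of \cref{mainthm1} (ultimately the endomorphism clause of \cref{ten2} passed to the colimit). Faithfulness of the unwinding construction on the relevant category---\cref{la}, whose hypotheses are verified for $u^*W[F]\otimes\mathbb{F}_p[\epsilon]$ by the arguments of the preceding paragraph---then forces $\phi=\mathrm{id}.$ I expect the main subtlety to lie precisely in verifying that $r$ and $\mathrm{Un}$ interact cleanly with base change along the torsion map $\mathbb{F}_p[\epsilon]\to\mathbb{F}_p,$ i.e.\ in running the flatness-plus-\cref{new2} reductions carefully at each stage; once those formal compatibilities are in place, all the essential geometry has already been concentrated, via the reductions of \cref{section5.1} and the machinery of \cref{section3}, into the single rigidity statement \cref{mainthm1}.
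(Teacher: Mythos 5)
Your proposal is correct and follows essentially the same route as the paper's proof: upgrade $\mathrm{dR}'$ to an object of $\mathrm{Fun}(\mathrm{QRSP},\mathrm{Alg}_{\mathbb{F}_p[\epsilon]})^{\otimes}_{\mathfrak{G}/}$ via the vanishing of $\mathbb{L}_{B/\mathbb{F}_p}$ for perfect $B$, apply $r$ to extract a deformation of $u^*W[F]$, trivialize it by \cref{mainthm1}, and transport back along the counit of \cref{stu1} using \cref{new2}. The only cosmetic divergence is in the uniqueness step, where the paper cites \cref{new3} (the universal-property rigidity of $u^*W[F]$ over an arbitrary $\mathbb{F}_p$-algebra, in particular over $\mathbb{F}_p[\epsilon]$) rather than the endomorphism clause of \cref{ten2}; both are sound, though \cref{new3} is the more direct reference.
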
{}

\begin{proof}We note that there is a natural transformation $\mathfrak{G} \to \mathrm{dR}$ where $\mathfrak{G}(S) = S^\flat.$ Thus $\mathrm{dR}$ can be viewed as an object of $\mathrm{Fun}(\mathrm{QRSP}, \mathrm{Alg}_{\mathbb{F}_p})_{/\mathfrak{G}}.$ Since cotangent complex of a perfect ring vanishes, the maps $S^\flat \to \mathrm{dR}(S)$ lifts uniquely to maps $S^\flat[\epsilon] \to \mathrm{dR}'(S)$ for any deformation $\mathrm{dR}'$ of $\mathrm{dR}.$ It follows that any deformation of $\mathrm{dR}$ and any endomorphism of $\mathrm{dR}$ as a deformation can be studied as deformations and endomorphisms in the category $\mathrm{Fun}(\mathrm{QRSP}, \mathrm{Alg}_{\mathbb{F}_p[\epsilon]})_{/\mathfrak{G}}.$ Further, as noted in \cref{bye666}, $\w{dR}$ can be viewed as an object of $\mathrm{Fun}(\mathrm{QRSP}, \mathrm{Alg}_{\mathbb{F}_p})^{\otimes}_{\mathfrak{G}/}.$ Therefore, by going modulo $\epsilon$ and using flatness, we see that any deformation of $\w{dR}$ satisfies the three conditions of \cref{smoke} and can be viewed as an object of $\mathrm{Fun}(\mathrm{QRSP}, \mathrm{Alg}_{\mathbb{F}_p[\epsilon]})_{\mathfrak{G}/}^{\otimes}.$ Thus we can also work inside the category $\mathrm{Fun}(\mathrm{QRSP}, \mathrm{Alg}_{\mathbb{F}_p[\epsilon]})_{/\mathfrak{G}}^\otimes.$ Writing $\mathrm{dR}'$ for a deformation of $\mathrm{dR},$ by using \cref{smoke1} and \cref{NW2} we see that $r (\mathrm{dR}')$ is a deformation of $u^*W[F]$ as a quasi-ideal in $\mathbb{G}_a^{\mathrm{perf}}$. By \cref{stu1}, there is a natural transformation ${\mathrm{Un}}(r (\mathrm{dR}')) \to \mathrm{dR}'.$ Going modulo $\epsilon$ and using \cref{new2}, this natural transformation is actually a natural isomorphism. By \cref{la} it would be enough to show that any deformation of $u^*W[F]$ to $\mathbb{F}_p [\epsilon]$ is uniquely isomorphic to the trivial deformation. This follows from \cref{new3} and \cref{mainthm1}. \end{proof}{}

\begin{remark}\label{hiltra34}
We note that the statement of \cref{mainthm} remains valid when $\mathbb{F}_p$ is replaced by any perfect field of characteristic $p.$ Indeed, the key calculation (involving deformations) that goes into the proof above relies on \cref{mainthm1}, which, as noted in \cref{chmv981}, remains valid over any perfect field of characteristic $p.$ Furthermore, the theory of $\mathbb{G}_a^{\w{perf}}$-modules developed in this paper and the inputs we use from \cite{BMS19} (such as \cite[Prop. 8.12]{BMS19}) remain valid when the base is an arbitrary perfect field as well. We thank the referee for pointing this out.
\end{remark}{}

\subsection{Second proof using deformation theory of $W[F]$}\label{sec5.2}
The goal of this subsection is to provide a slightly different proof of \cref{new4} that avoids the use of deformation theory of $u^*W[F]$, i.e., \cref{mainthm1}. Instead, we would use the deformation theory of $W[F]$ which is much easier to understand by universal properties (\cref{mad}) and the rigidity of the Hodge map (\cref{hodgestability}).

\begin{lemma}\label{d1}Let $\mathrm{dR}: \mathrm{QRSP} \to \mathrm{Alg}_{\mathbb{F}_p} $ be the derived de Rham cohomology functor. Let $\mathrm{dR}': \mathrm{QRSP} \to \mathrm{Alg}_{\mathbb{F}_p[\epsilon]}$ be a deformation of $\mathrm{dR}.$ Then there exists a functor $$\mathrm{dR}'' : \mathrm{Poly}_{\mathbb F_p} \to \mathrm{CAlg}(D(\mathbb{F}_p[\epsilon]))$$ which is a deformation of $\mathrm{dR}$ such that $\mathrm{dR}'$ is the restriction of left Kan extension of $\mathrm{dR}''$ to QRSP algebras.
\end{lemma}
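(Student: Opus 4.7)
The plan is to construct $\mathrm{dR}''$ from $\mathrm{dR}'$ by descent along $P \to P_{\mathrm{perf}}$ and then verify both required properties. For each $P \in \mathrm{Poly}_{\mathbb F_p}$, the map $P \to P_{\mathrm{perf}}$ is a faithfully flat quasisyntomic cover by a perfect (hence QRSP) ring, and each term $P_{\mathrm{perf}}^{\otimes_P (n+1)}$ of its \v{C}ech conerve again lies in $\mathrm{QRSP}$. I would set
\[
  \mathrm{dR}''(P) \;:=\; \mathrm{Tot}\bigl(\mathrm{dR}'(P_{\mathrm{perf}}^{\otimes_P (\bullet+1)})\bigr) \;\in\; \mathrm{CAlg}(D(\mathbb{F}_p[\epsilon])),
\]
which is manifestly functorial in $P$ because a morphism $P \to Q$ in $\mathrm{Poly}_{\mathbb F_p}$ induces a morphism of \v{C}ech conerves inside $\mathrm{QRSP}$.

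To see that $\mathrm{dR}''$ is a deformation of $\mathrm{dR}|_{\mathrm{Poly}_{\mathbb F_p}}$, I would pass to $(\,\cdot\,)\otimes^{L}_{\mathbb{F}_p[\epsilon]}\mathbb{F}_p$. Since $\mathbb{F}_p$ is represented in $D(\mathbb{F}_p[\epsilon])$ by the two-term complex $\mathbb{F}_p[\epsilon] \xrightarrow{\epsilon} \mathbb{F}_p[\epsilon]$, this base change is a fiber in $D(\mathbb{F}_p[\epsilon])$ and hence commutes with the totalization defining $\mathrm{dR}''$. Combined with the flatness of $\mathrm{dR}'(S)$ over $\mathbb{F}_p[\epsilon]$ for $S \in \mathrm{QRSP}$, this yields
\[
  \mathrm{dR}''(P) \otimes^{L}_{\mathbb{F}_p[\epsilon]} \mathbb{F}_p \;\simeq\; \mathrm{Tot}\bigl(\mathrm{dR}(P_{\mathrm{perf}}^{\otimes_P (\bullet+1)})\bigr).
\]
Descendability of $\mathrm{dR}(P) \to \mathrm{dR}(P_{\mathrm{perf}})$ (\cref{aroma}, building on \cite[Lemma 8.6]{BS19}) together with quasisyntomic descent for derived de Rham cohomology then identifies this totalization with $\mathrm{dR}(P)$.

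The harder half is to verify that the restriction to $\mathrm{QRSP}$ of the left Kan extension of $\mathrm{dR}''$ along $\mathrm{Poly}_{\mathbb F_p} \hookrightarrow \mathrm{SCR}_{\mathbb F_p}$ recovers $\mathrm{dR}'$. Given $S \in \mathrm{QRSP}$, I would fix a simplicial polynomial resolution $P^{\bullet} \twoheadrightarrow S$, so that the LKE evaluated at $S$ is $|\mathrm{dR}''(P^{\bullet})|$ in $\mathrm{CAlg}(D(\mathbb{F}_p[\epsilon]))$. Modulo $\epsilon$ this geometric realization computes $|\mathrm{dR}(P^{\bullet})| \simeq \mathrm{dR}(S)$, since derived de Rham cohomology on $\mathrm{SCR}_{\mathbb{F}_p}$ is by definition the left Kan extension from $\mathrm{Poly}_{\mathbb F_p}$ (\cref{bye1}). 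To upgrade this to an identification with $\mathrm{dR}'(S)$, I would use the square-zero fiber sequence $\mathrm{dR} \xrightarrow{\epsilon} \mathrm{dR}' \to \mathrm{dR}$ of functors on $\mathrm{QRSP}$: this propagates quasisyntomic descent from $\mathrm{dR}$ to $\mathrm{dR}'$ by a 2-out-of-3 argument, so $\mathrm{dR}'(S)$ itself is computed by a totalization of $\mathrm{dR}'$ along a QRSP cover of $S$ factoring through $P^{\bullet}_{\mathrm{perf}}$, matching $|\mathrm{dR}''(P^{\bullet})|$ after commuting the realization past the defining totalization of $\mathrm{dR}''$.

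The principal obstacle is precisely this last commutation of a sifted colimit with a cosimplicial totalization. I expect it to follow from descendability, which uniformly truncates the cosimplicial diagrams in question, together with conservativity of reduction mod $\epsilon$ on flat deformations (\cref{new2}): the comparison map can be checked after reducing mod $\epsilon$, where it collapses to the classical computation for $\mathrm{dR}$ itself, which is part of the hypothesis.
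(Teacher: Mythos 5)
Your definition of $\mathrm{dR}''$ as the totalization of $\mathrm{dR}'$ along the \v{C}ech conerve of $P \to P_{\mathrm{perf}}$ matches the paper's construction, but the argument you give for the first half contains a factual error, and your route for the second half leaves a genuine gap that the paper avoids by a different construction.

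The error in the first half: you assert that $\mathbb{F}_p$ is represented in $D(\mathbb{F}_p[\epsilon])$ by the two-term complex $\mathbb{F}_p[\epsilon] \xrightarrow{\epsilon} \mathbb{F}_p[\epsilon]$, so that $(\,\cdot\,)\otimes^L_{\mathbb{F}_p[\epsilon]}\mathbb{F}_p$ is a finite limit and commutes with the totalization. This is false. Since $\epsilon$ is a zerodivisor in $\mathbb{F}_p[\epsilon]/\epsilon^2$, that two-term complex has $H^{-1} = \ker(\epsilon) = (\epsilon) \cong \mathbb{F}_p \neq 0$ and is not a resolution of $\mathbb{F}_p$; in fact $\mathbb{F}_p$ has infinite Tor-amplitude over $\mathbb{F}_p[\epsilon]$, so this base change does \emph{not} commute with arbitrary totalizations. (You may be thinking of the analogous and correct statement over $\mathbb{Z}_p$, where $p$ is a non-zerodivisor; this is used in \cref{whatmovie?7} and nowhere else.) The paper's substitute is pro-constancy: descendability of $\mathrm{dR}(P)\to\mathrm{dR}(P_{\mathrm{perf}})$ makes $\{\mathrm{Tot}_n(\mathrm{dR}(C^\bullet(P)))\}$ pro-constant, and \cref{below} lifts pro-constancy through the nilpotent ideal to conclude $\{\mathrm{Tot}_n(\mathrm{dR}'(C^\bullet(P)))\}$ is pro-constant; only because the limit is thereby essentially finite does the base change commute, yielding the deformation property.

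For the second half you yourself flag the principal obstacle --- commuting the geometric realization $|\mathrm{dR}''(P^\bullet)|$ past the totalizations defining $\mathrm{dR}''$ --- and you leave it as an expectation. The paper sidesteps this entirely. Rather than fixing a simplicial polynomial resolution of $S$ and trying to interchange a sifted colimit with a cosimplicial limit, it constructs the comparison map $\mathrm{dR}''(P)\to\mathrm{dR}'(S)$ directly for each $P\to S$: one takes the relative \v{C}ech conerve $C^\bullet(S/P)$ of $S\to S\otimes_P P_{\mathrm{perf}}$, which consists of QRSP algebras, and observes a map of cosimplicial QRSP algebras $C^\bullet(P)\to C^\bullet(S/P)$; applying $\mathrm{dR}'$ and totalizing gives the desired map once one knows $\mathrm{Tot}(\mathrm{dR}'(C^\bullet(S/P)))\simeq\mathrm{dR}'(S)$, which follows because $\mathrm{dR}(S)\to\mathrm{dR}(S\otimes_P P_{\mathrm{perf}})$ is descendable (a base change of a descendable map) and descendability can be checked modulo the nilpotent $\epsilon$. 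The canonical map from the left Kan extension is then an isomorphism because it is so mod $\epsilon$. Your 2-out-of-3 argument for propagating the sheaf condition from $\mathrm{dR}$ to $\mathrm{dR}'$ is sound, but it does not by itself give the interchange you need; if you wish to keep your outline you would still need to reproduce something like the paper's relative-conerve comparison to close the gap.
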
{}

\begin{proof}Let $P$ be a finitely generated polynomial algebra over $\mathbb{F}_p.$ Let $C^\bullet (P)$ denote the Čech conerve of the map $P \to P_{\mathrm{perf}}.$ We note that $C^\bullet (P)$ is a cosimplicial object in the category $\mathrm{QRSP}$. Applying $\mathrm{dR}'$ to $C^\bullet(P)$ we obtain a cosimplicial $\mathbb{F}_p[\epsilon]$-algebra and we define $\mathrm{dR}'' (P)$ to be the totalization of this cosimplicial algebra as an object of $\mathrm{CAlg}(D(\mathbb{F}_p[\epsilon])).$ This defines the functor $$\mathrm{dR}'' : \mathrm{Poly}_{\mathbb F_p} \to \mathrm{CAlg}(D(\mathbb{F}_p[\epsilon])).$$
First, we check that $\mathrm{dR}''$ defined as above is indeed a deformation of $\mathrm{dR}.$ By definition $\mathrm{dR}'' (P) = \mathrm{Tot} (\mathrm{dR}'(C^\bullet(P)))$ which is an inverse limit of the pro-object $ \left \{\mathrm{Tot}_{n}(\mathrm{dR}'(C^\bullet(P)))\right \}.$ Since $\mathrm{dR}(P) \to \mathrm{dR}(P_{\mathrm{perf}})$ is descendable \cite[Lemma 8.6]{BS19}, it follows that $\left \{\mathrm{Tot}_{n}(\mathrm{dR}(C^\bullet(P)))\right \}$ is a pro-constant pro-object. Therefore, by \cref{below}, $\left \{\mathrm{Tot}_{n}(\mathrm{dR}'(C^\bullet(P)))\right \}$ is a pro-constant pro-object. This proves that $\mathrm{Tot}(\mathrm{dR}'(C^\bullet(P))) \otimes_{\mathbb{F}_p[\epsilon]} \mathbb{F}_p \simeq \mathrm{Tot} (\mathrm{dR} (C^\bullet (P)))$; thus $\mathrm{dR}''$ is indeed a deformation of $\mathrm{dR}.$\vspace{2mm}

Next, we need to check that the left Kan extension of $\mathrm{dR}''$ is naturally isomorphic to $\mathrm{dR}'$ on QRSP algebras. To do so, we will first construct a natural map $\mathrm{dR}''(P) \to \mathrm{dR}'(S)$ for a map $P \to S$ where $P$ is a finitely generated polynomial $\mathbb{F}_p$-algebra and $S$ is a QRSP algebra. Let $C^\bullet(P)$ denote the Čech conerve of $P \to P_{\mathrm{perf}}$ and $C^\bullet (S/P)$ denote the Čech conerve of $S \to S\otimes_P P_{\mathrm{perf}}.$ There is a natural map of cosimplicial rings $C^\bullet (P) \to C^\bullet (S/P).$ Applying $\mathrm{dR}'$ we obtain a map $\mathrm{dR}' (C^\bullet (P)) \to \mathrm{dR}' (C^\bullet (S/P)).$ Thus in order to construct the map $\mathrm{dR}''(P) \to \mathrm{dR}'(S)$, it would be enough to show that $\mathrm{Tot} (\mathrm{dR}' (C^\bullet (S/P))) \simeq \mathrm{dR}'(S)$, since by definition $\mathrm{Tot}(\mathrm{dR}'(C^\bullet(P)) )\simeq \mathrm{dR}''(P).$ For that, it would be enough to show that $\mathrm{dR}' (S) \to \mathrm{dR}'(S \otimes_P P_{\mathrm{perf}})$ is descendable. But that follows (by going modulo $\epsilon$) since $\mathrm{dR} (S) \to \mathrm{dR}(S \otimes_P P_{\mathrm{perf}})$ is descendable; this is true because the latter map is a base change of the descendable map $\mathrm{dR}(P) \to \mathrm{dR}(P_{\mathrm{perf}}).$ Going back, this constructs the required map $\mathrm{dR}'' (P) \to \mathrm{dR}'(S).$ Now, writing $\mathrm{dR}''(S)$ for the left Kan extension of $\mathrm{dR}''$ evaluated at $S$, we obtain a natural map $\mathrm{dR}''(S) \to \mathrm{dR}'(S).$ By construction, this map is an isomorphism after going (derived) modulo $\epsilon$ and thus must be an isomorphism.
\end{proof}{}

The following lemma was used in the above proof.

\begin{lemma}\label{below}Let $R$ be a ring and $I$ be a nilpotent ideal. Let $\left \{B_n\right \}$  be a pro-object in $D(R)$. If $\left \{B_n \otimes_R R/I\right \}$ is a pro-constant pro-object then $\left \{B_n \right \}$ is pro-constant itself.
\end{lemma}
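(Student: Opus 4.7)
The plan is to leverage the $I$-adic filtration on $R$ to reduce the pro-constancy of $\{B_n\}$ to the hypothesis modulo $I$. First I would fix a nilpotency index $k$ with $I^k = 0$ and consider the finite filtration
\[
0 = I^k \subset I^{k-1} \subset \cdots \subset I \subset R,
\]
whose successive quotients $I^j/I^{j+1}$ are naturally $R/I$-modules since $I \cdot I^j \subset I^{j+1}$. Derived-tensoring this filtration with each $B_n \in D(R)$ would produce a finite, $k$-step filtration of $B_n$ in $D(R)$ whose graded pieces are $(I^j/I^{j+1}) \otimes^L_R B_n$, and the next step is to show that each of these, viewed as a pro-object in $n$, is pro-constant.

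The main computational step is to rewrite each graded piece via base change: because $I^j/I^{j+1}$ is an $R/I$-module, associativity of derived tensor gives
\[
(I^j/I^{j+1}) \otimes^L_R B_n \simeq (I^j/I^{j+1}) \otimes^L_{R/I} (R/I \otimes^L_R B_n).
\]
Now the functor $(I^j/I^{j+1}) \otimes^L_{R/I} (\,\cdot\,)$ carries constant pro-objects of $D(R/I)$ to constant pro-objects; applying it to the pro-constant $\{R/I \otimes^L_R B_n\}_n$ supplied by the hypothesis produces a pro-constant pro-object in $D(R/I)$, and hence in $D(R)$ by restriction of scalars.

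This would realize $\{B_n\}$ in $\mathrm{Pro}(D(R))$ as an iterated fiber sequence built from $k$ pro-constant pieces. The final and most subtle step is to verify that constancy is preserved under such extensions: given a fiber sequence $A \to B \to C$ in $\mathrm{Pro}(D(R))$ with $A$ and $C$ constant, I would argue that the connecting map $C \to A[1]$ lies in $\mathrm{Hom}_{D(R)}(C, A[1])$ by full faithfulness of the constant embedding $D(R) \hookrightarrow \mathrm{Pro}(D(R))$, so that its fiber computed in $D(R)$ recovers $B$ up to pro-isomorphism. This closure of the constant objects under extensions is the step I expect to require the most care, though it follows from the stability of both $D(R)$ and $\mathrm{Pro}(D(R))$ combined with the full faithfulness and exactness of the constant embedding. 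Iterating this closure property $k$ times then yields that $\{B_n\}$ is itself pro-constant.
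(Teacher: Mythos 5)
Your proof is correct and is an explicit unwinding of the same argument the paper gives: the paper phrases it as ``the set $C$ of objects $u$ with $\{B_n \otimes_R u\}$ pro-constant is a thick tensor-ideal containing $R/I$,'' then invokes the $I$-adic filtration on $R$, and your filtration, base-change identity $(I^j/I^{j+1}) \otimes^L_R B_n \simeq (I^j/I^{j+1}) \otimes^L_{R/I}(R/I \otimes^L_R B_n)$, and closure-under-extensions argument for constant pro-objects are exactly what that one-line proof is encoding. If anything, your version makes explicit a point the paper's terse phrasing glosses over: tensor-ideality alone gives $R/I \otimes^L_R v \in C$ for $v \in D(R)$, and to conclude that an arbitrary $R/I$-module (such as $I^j/I^{j+1}$) lies in $C$ one needs precisely the base-change step you supply.
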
{}

\begin{proof}
Let $C$ be the collection of objects $u$  in $D(R)$ such that $\left \{B_n \otimes_R u\right \}$ is a pro-constant pro-system. Then $C$ is a thick tensor-ideal which contains $R/I$ by assumption. Since $R$ has a finite filtration whose graded pieces are $R/I$-modules (i.e., the $I$-adic filtration) it follows that $R \in C$. Since $R$ is the unit under tensor this gives that $\left \{B_n\right \}$ is pro-constant. 
\end{proof}{}

\begin{proof}[Second proof of \cref{new4}]We follow the notations and the strategy from the first proof of \cref{new4}. Instead of invoking \cref{mainthm1}, by \cref{mad}, it would be enough to show that the pointed $\mathbb{G}_a^{\mathrm{perf}}$-module $r(\mathrm{dR}')$ over $\mathbb{F}_p [\epsilon]$ is isomorphic to $u^*X$ where $X$ is a pointed $\mathbb{G}_a$-module which is a deformation of $W[F].$ Here the functor $u^*$ is from \cref{pullback}.\vspace{2mm}

We note that for derived de Rham cohomology, there is a natural transformation $\mathrm{gr}^0: \mathrm{dR}(A) \to A$ obtained from taking $\mathrm{gr}^0$ of the Hodge filtration. By left Kan extension from the case of polynomial algebras, one sees that the relative derived de Rham cohomology satisfies $\mathrm{dR}_{B/A} \simeq \mathrm{dR}(B)\otimes_{\mathrm{dR}(A)} A$, where the tensor product is taken using the $\mathrm{gr}^0$ map. This isomorphism also appears in \cite[Prop. 3.11]{GL20}. Using \cref{hodgestability} and applying the unwinding functor, one obtains a natural transformation $\mathrm{dR}' \to \mathrm{id}[\epsilon]$ as functors from $\mathrm{QRSP} \to \mathrm{Alg}_{\mathbb{F}_p[\epsilon]}$ deforming the $\mathrm{gr}^0: \mathrm{dR} \to \mathrm{id}$ transformation. Using the construction as in \cref{d1} and left Kan extension, this extends to a natural transformation $\mathrm{dR}'' \to \mathrm{id}[\epsilon]$ as functors from $\mathrm{Alg}_{\mathbb{F}_p} \to \mathrm{CAlg}(D(\mathbb{F}_p [\epsilon])).$\vspace{2mm}

We will define $\mathrm{dR}'''$ on the category of arrows $(A \to B)$ of $\mathbb{F}_p$-algebras. We define $\mathrm{dR}'''(A \to B):= \mathrm{dR}'' (B) \otimes_{\mathrm{dR}'' (A)} A[\epsilon]$ where we use the map $\mathrm{dR}''(A) \to A[\epsilon].$ Now this takes values in $\mathrm{CAlg}(D(\mathbb{F}_p [\epsilon]))$ but we will only use it in case of objects $(A \to B)$ where it would give a discrete ring as output. We note that $\mathrm{dR}'''(A \to A) \simeq A[\epsilon]$ and for a QRSP algebra $S$, $\mathrm{dR}'''(S^\flat \to S) \simeq \mathrm{dR}'' (S) \simeq \mathrm{dR}'(S),$ where the last isomorphism comes from \cref{d1}. Further $\mathrm{dR}''' (\mathbb{F}_p [x] \to \mathbb{F}_p)$ is a discrete ring as it is so (derived) modulo $\epsilon$ by using \cite[Lemma 3.29]{Bha12}. It also follows that $\mathrm{dR}''' (\mathbb{F}_p [x] \to \mathbb{F}_p)$ is a flat algebra over $\mathbb{F}_p[\epsilon].$ We record two lemmas.

\begin{lemma}$\mathrm{Spec}\,\mathrm{dR}'''(\mathbb{F}_p[x]\to \mathbb{F}_p)$ has the structure of a pointed $\mathbb{G}_a$-module over $\mathbb{F}_p[\epsilon]$ and it is a deformation of $W[F]$ as a pointed $\mathbb{G}_a$-module.
\end{lemma}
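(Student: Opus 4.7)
The plan is to transfer the pointed $\mathbb{G}_a$-module cogroup structure on the arrow $(\mathbb{F}_p[x] \to \mathbb{F}_p)$ in the arrow category of $\mathbb{F}_p$-algebras to $\mathrm{Spec}\,\mathrm{dR}'''(\mathbb{F}_p[x]\to\mathbb{F}_p)$ via covariant functoriality of $\mathrm{dR}'''$. The key technical input will be a tensor-compatibility: for pairs of polynomial arrows $(A \to B)$ and $(A' \to B')$, I claim a natural isomorphism
\[
\mathrm{dR}'''(A \otimes A' \to B \otimes B') \;\simeq\; \mathrm{dR}'''(A \to B) \otimes_{\mathbb{F}_p[\epsilon]} \mathrm{dR}'''(A' \to B').
\]
Unwinding the definition $\mathrm{dR}'''(A\to B) := \mathrm{dR}''(B)\otimes_{\mathrm{dR}''(A)} A[\epsilon]$, this reduces to the statement that $\mathrm{dR}''$ preserves coproducts of finitely generated polynomial algebras, i.e.\ $\mathrm{dR}''(P\otimes P') \simeq \mathrm{dR}''(P)\otimes^L_{\mathbb{F}_p[\epsilon]} \mathrm{dR}''(P')$, which I would verify by going derived modulo $\epsilon$ (using flatness and \cref{new2}) and invoking the corresponding statement for $\mathrm{dR}$ itself.

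Granting this, I would produce the structure maps as follows. The comultiplication on $\mathrm{dR}'''(\mathbb{F}_p[x]\to\mathbb{F}_p)$ comes from applying $\mathrm{dR}'''$ to the morphism of arrows $(\mathbb{F}_p[x] \to \mathbb{F}_p) \to (\mathbb{F}_p[x,y] \to \mathbb{F}_p)$ given by $x \mapsto x+y$ on top and the identity on the bottom. The $\mathbb{G}_a$-coaction comes from applying $\mathrm{dR}'''$ to $(\mathbb{F}_p[x] \to \mathbb{F}_p) \to (\mathbb{F}_p[x,t] \to \mathbb{F}_p[t])$, $x \mapsto xt$ on top and $\mathbb{F}_p \hookrightarrow \mathbb{F}_p[t]$ on the bottom; since $\mathrm{dR}'''(\mathbb{F}_p[t] \to \mathbb{F}_p[t]) = \mathbb{F}_p[t][\epsilon]$, this together with the tensor-compatibility above yields the desired coaction. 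The point is the natural structure map $\mathbb{F}_p[x][\epsilon] \to \mathrm{dR}'''(\mathbb{F}_p[x]\to\mathbb{F}_p)$ built into the definition of $\mathrm{dR}'''$. All compatibilities, including associativity, the module axiom, and the $\mathbb{G}_a$-action/ring-scheme compatibility of diagram \cref{hungry990}, will follow from functoriality of $\mathrm{dR}'''$ applied to the corresponding commutative diagrams of arrows.

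To verify that this pointed $\mathbb{G}_a$-module is a deformation of $W[F]$, I would reduce modulo $\epsilon$ to identify $\mathrm{dR}'''(\mathbb{F}_p[x]\to\mathbb{F}_p)\otimes_{\mathbb{F}_p[\epsilon]}\mathbb{F}_p$ with the relative derived de Rham cohomology $\mathrm{dR}(\mathbb{F}_p)\otimes_{\mathrm{dR}(\mathbb{F}_p[x])}\mathbb{F}_p[x]$, which by \cite[Lemma 3.29]{Bha12} is the divided power envelope $D_{\mathbb{F}_p[x]}(x)$. Tracing through the construction, the induced comultiplication sends $x^{[n]} \mapsto \sum_{i+j=n} x^{[i]}\otimes y^{[j]}$, the $\mathbb{G}_a$-coaction sends $x^{[n]} \mapsto t^n\cdot x^{[n]}$, and the point is $x \mapsto x = x^{[1]}$; these are precisely the structure maps of $W[F]$ as a pointed $\mathbb{G}_a$-module over $\mathbb{F}_p$ (\textit{cf.} \cref{dri}, \cref{example2}).

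The main obstacle will be the tensor-compatibility in the first paragraph, since $\mathrm{dR}''$ was constructed in \cref{d1} via a descent/totalization procedure rather than directly as a left Kan extension from polynomial algebras; verifying that it preserves coproducts of polynomial algebras requires commuting a totalization with a tensor product. I expect this to go through by using descendability of $P \to P_{\mathrm{perf}}$ (\cref{aroma}) to rewrite $\mathrm{dR}''(P)$ via its \v{C}ech conerve, combined with the flat deformation argument over $\mathbb{F}_p[\epsilon]$ sketched above.
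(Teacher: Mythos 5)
Your proposal is correct and takes essentially the same route as the paper: transfer the cogroup/comodule structure on the arrow $(\mathbb{F}_p[x] \to \mathbb{F}_p)$ (which the paper phrases via corepresentability — it corepresents $(A\to B)\mapsto\mathrm{Ker}(A\to B)$ — while you write down the explicit co-addition $x\mapsto x+y$, coaction $x\mapsto xt$, and co-point) through the covariant functor $\mathrm{dR}'''$, then identify the result mod $\epsilon$ with $W[F]$ using \cite[Lemma 3.29]{Bha12} and \cref{dri}. The one point where you add useful detail is in explicitly flagging and reducing the tensor-compatibility $\mathrm{dR}''(P\otimes P')\simeq\mathrm{dR}''(P)\otimes^L\mathrm{dR}''(P')$, which the paper elides under ``applying $\mathrm{dR}'''$ to the same diagrams''; your mod-$\epsilon$ argument should invoke conservativity of derived reduction (as in \cref{below}) rather than \cref{new2} since the objects are complexes, and the reference for the divided-power Hopf structure on $W[F]$ should be \cref{final} rather than \cref{example2}.
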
{}

\begin{proof}We note that in the arrow category of $\mathbb{F}_p$-algebras, the object $(\mathbb{F}_p[x] \to \mathbb{F}_p[x])$ is a coring, i.e., it corepresents the functor $(A\to B) \to A$ which is naturally valued in rings. Further the object $(\mathbb{F}_p[x] \to \mathbb{F}_p)$ is a a comodule over $(\mathbb{F}_p[x] \to \mathbb{F}_p[x])$ since $(\mathbb{F}_p[x] \to \mathbb{F}_p)$ corepresents the functor $(A \to B) \to \mathrm{Ker}(A \to B)$ which is naturally an $A$-module and further admits a map of $A$-modules $\mathrm{Ker}(A \to B) \to A.$ This provides a map $(\mathbb{F}_p[x] \to \mathbb{F}_p[x]) \to (\mathbb{F}_p[x] \to \mathbb{F}_p)$ of $(\mathbb{F}_p[x] \to \mathbb{F}_p[x])$-comodules. Applying the de Rham cohomology functor $\mathrm{dR}$ to this yields a map $\mathbb{F}_p[x] \to \mathrm{dR}(\mathbb{F}_p[x] \to \mathbb{F}_p)$ of $\mathbb{F}_p[x]$-comodules. Using the fact that $\mathrm{dR}(\mathbb{F}_p[x] \to \mathbb{F}_p) \simeq \mathbb{F}_p \langle x \rangle $ \cite[Lemma 3.29]{Bha12} and \cref{dri} we see that $\mathrm{Spec}\, \mathrm{dR}(\mathbb{F}_p[x] \to \mathbb{F}_p)$ is isomorphic to $W[F]$ as a pointed $\mathbb{G}_a$-module. Now the lemma follows from applying $\mathrm{dR}'''$ to the same diagrams and going (derived) modulo $\epsilon.$\end{proof}{}

\begin{lemma}The pullback $u^* \mathrm{Spec}\,\mathrm{dR}'''(\mathbb{F}_p[x] \to \mathbb{F}_p)$ is isomorphic to $r (\mathrm{dR}')$ as a pointed $\mathbb{G}_a^{\mathrm{perf}}$-module.
 \end{lemma}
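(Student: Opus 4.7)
The plan is to verify the claimed isomorphism pointwise on perfect rings $B$ using the explicit formula for $r$ given in \cref{smoke1} combined with the left Kan extension characterization of $\mathrm{dR}'$ from \cref{d1}. By \cref{smoke1}, $r(\mathrm{dR}')$ evaluated at $B$ is the $B$-module scheme $\mathrm{Spec}\,\mathrm{dR}'(\tfrac{B[x^{1/p^\infty}]}{x})$ over $W_{\mathbb{F}_p[\epsilon]}(B)$. On the other hand, writing $X := \mathrm{Spec}\,\mathrm{dR}'''(\mathbb{F}_p[x] \to \mathbb{F}_p)$ and unwinding the definition of $u^*$ (\cref{pullback}) via \cref{perfect}, one sees that $u^*X$ evaluated at $B$ is represented by the $W_{\mathbb{F}_p[\epsilon]}(B)$-algebra $\mathrm{dR}'''(\mathbb{F}_p[x] \to \mathbb{F}_p) \otimes_{\mathbb{F}_p[\epsilon][x]} W_{\mathbb{F}_p[\epsilon]}(B)[x^{1/p^\infty}]$.

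To identify these two rings, I would rewrite $\tfrac{B[x^{1/p^\infty}]}{x}$ as the Tor-independent pushout $\mathbb{F}_p \otimes^L_{\mathbb{F}_p[x]} B[x^{1/p^\infty}]$ (Tor-independence holds since $B[x^{1/p^\infty}]$ is flat over $\mathbb{F}_p[x]$) and then apply the colimit-preserving left Kan extension of $\mathrm{dR}''$ provided by \cref{d1}. This yields
$$\mathrm{dR}'\bigl(\tfrac{B[x^{1/p^\infty}]}{x}\bigr) \simeq \mathrm{dR}''(\mathbb{F}_p) \otimes^L_{\mathrm{dR}''(\mathbb{F}_p[x])} \mathrm{dR}''(B[x^{1/p^\infty}]).$$
Since $B[x^{1/p^\infty}]$ is perfect (hence QRSP) and $\mathrm{dR}'$ satisfies condition 1 of \cref{smoke} (as used in the proof of \cref{new4}), we have $\mathrm{dR}''(B[x^{1/p^\infty}]) \simeq W_{\mathbb{F}_p[\epsilon]}(B)[x^{1/p^\infty}]$ and $\mathrm{dR}''(\mathbb{F}_p) \simeq \mathbb{F}_p[\epsilon]$. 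Plugging the defining formula $\mathrm{dR}'''(\mathbb{F}_p[x] \to \mathbb{F}_p) = \mathrm{dR}''(\mathbb{F}_p) \otimes^L_{\mathrm{dR}''(\mathbb{F}_p[x])} \mathbb{F}_p[\epsilon][x]$ into the expression for $u^*X(B)$ obtained above then produces a canonical isomorphism between the two underlying $W_{\mathbb{F}_p[\epsilon]}(B)$-algebras.

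The main obstacle will be upgrading this ring isomorphism to an isomorphism of pointed $\mathbb{G}_a^{\w{perf}}$-modules: one must verify that the $B$-module scheme structure, the $\mathbb{G}_a^{\w{perf}}$-action, and the distinguished point to $\mathbb{G}_a^{\w{perf}}$ all agree on both sides. This should follow from functoriality, since both constructions arise by applying (the left Kan extension of) $\mathrm{dR}''$ to the $\mathbb{G}_a$-comodule structure on $(\mathbb{F}_p[x] \to \mathbb{F}_p)$ over the coring $(\mathbb{F}_p[x] \to \mathbb{F}_p[x])$ used in the preceding lemma, followed by base change along $\mathbb{F}_p[x] \to \mathbb{F}_p[x^{1/p^\infty}]$; however, careful diagrammatic bookkeeping is needed to trace all three pieces of structure through the Tor-independent pushout identification.
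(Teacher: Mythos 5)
Your calculation of the underlying rings is sound, and the Tor-independent pushout $\tfrac{B[x^{1/p^\infty}]}{x} \simeq \mathbb{F}_p \otimes^L_{\mathbb{F}_p[x]} B[x^{1/p^\infty}]$ is the right mechanism — it is in fact the same decomposition the paper uses, re-encoded as a componentwise pushout in the arrow category. But the ring isomorphism alone is not the content of the lemma. The lemma asserts an isomorphism of \emph{pointed $\mathbb{G}_a^{\w{perf}}$-modules}, and you explicitly defer the verification of the group scheme structure, the $\mathbb{G}_a^{\w{perf}}$-action, and the distinguished point, calling it ``careful diagrammatic bookkeeping.'' That bookkeeping is the actual content of the proof, and it is not routine: each of these three pieces of structure comes from a different cogroup/comodule/arrow datum (on the $r(\mathrm{dR}')$ side, from the cogroup object $\tfrac{B[x^{1/p^\infty}]}{x}$ and the $\mathbb{G}_a^{\w{perf}}$-action arrows in $\mathrm{QRSP}$; on the $u^*X$ side, from the Hopf structure of $\mathrm{dR}'''(\mathbb{F}_p[x]\to\mathbb{F}_p)$ and the fibre product over $\mathbb{G}_a$), and each must be transported compatibly through your identification.

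The paper's proof never separates the ring computation from the structure: it works entirely in the arrow category of $\mathbb{F}_p$-algebras, identifies $(\mathbb{F}_p[x^{1/p^\infty}]\to\mathbb{F}_p[x^{1/p^\infty}])$ as the coring corepresenting $(A\to B)\mapsto A^\flat$ and $(\mathbb{F}_p[x^{1/p^\infty}]\to \mathbb{F}_p[x^{1/p^\infty}]/x)$ as a comodule over it, and expresses the whole statement as a pushout of comodule maps along $f\colon (\mathbb{F}_p[x]\to\mathbb{F}_p[x])\to(\mathbb{F}_p[x^{1/p^\infty}]\to\mathbb{F}_p[x^{1/p^\infty}])$. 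Applying $\mathrm{dR}'''$ then transports the coring/comodule structure — hence the $\mathbb{G}_a^{\w{perf}}$-module and pointed structure — automatically by functoriality, and the needed colimit identifications are checked modulo $\epsilon$. So: your route would work if finished, but the ``main obstacle'' you flag \emph{is} the lemma; you should either carry out the comodule-level check in earnest or adopt the paper's reformulation, where the structure is never lost and therefore never needs to be reassembled.
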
{}
 
\begin{proof}We again look at the arrow category of $\mathbb{F}_p$-algebras. The object $(\mathbb{F}_p[x^{1/p^\infty}] \to \mathbb{F}_p[x^{1/p^\infty}])$ is a coring as it corepresents the functor $(A \to B) \to A^\flat.$ The object $(\mathbb{F}_p[x^{1/p^\infty}] \to \mathbb{F}_p[x^{1/p^\infty}]/x)$ is a comodule over $(\mathbb{F}_p[x^{1/p^\infty}] \to \mathbb{F}_p[x^{1/p^\infty}])$ as it corepresents the functor $(A \to B) \to \mathrm{Ker}(A^\flat \to B).$ Further, this produces a map $(\mathbb{F}_p[x^{1/p^\infty}] \to \mathbb{F}_p[x^{1/p^\infty}]) \to (\mathbb{F}_p[x^{1/p^\infty}] \to \mathbb{F}_p[x^{1/p^\infty}]/x)$ of $(\mathbb{F}_p[x^{1/p^\infty}] \to \mathbb{F}_p[x^{1/p^\infty}])$-comodules. Lastly, we have a map $f:(\mathbb{F}_p[x^{}] \to \mathbb{F}_p[x^{}]) \to (\mathbb{F}_p[x^{1/p^\infty}] \to \mathbb{F}_p[x^{1/p^\infty}])$. By taking pushout of the map $$(\mathbb{F}_p[x^{}] \to \mathbb{F}_p[x^{}]) \to (\mathbb{F}_p[x^{}]\to \mathbb{F}_p)$$ of $(\mathbb{F}_p[x^{}]\to \mathbb{F}_p[x^{}])$-comodules along the map $f$ we obtain the map $$(\mathbb{F}_p[x^{1/p^\infty}] \to \mathbb{F}_p[x^{1/p^\infty}]) \to (\mathbb{F}_p[x^{1/p^\infty}] \to \mathbb{F}_p[x^{1/p^\infty}]/x)$$ of $(\mathbb{F}_p[x^{1/p^\infty}] \to \mathbb{F}_p[x^{1/p^\infty}])$-comodules. The rest follows from applying $\mathrm{dR}'''$ to all the diagrams. Indeed, the statement of the lemma now depends upon certain natural colimit diagrams being isomorphisms, which holds since they are known to be isomorphisms after going (derived) modulo $\epsilon.$ \end{proof}{}
Now the two lemmas above show that $r (\mathrm{dR}')$ is indeed a pullback of a deformation of $W[F]$ as a pointed $\mathbb{G}_a$-module, which finishes the proof.\end{proof}{}

\begin{remark}The proof in \cref{sec5.2} somewhat formally reduces the study of deformations of the pointed $\mathbb{G}_a^{\mathrm{perf}}$-module underlying $u^*W[F]$ by showing that any such deformation over $\mathbb{F}_p [\epsilon]/\epsilon^2$ must appear as a pullback of a deformation of $W[F]$ as a pointed $\mathbb{G}_a$-module along the map of ring schemes $u:\mathbb{G}_a^\mathrm{perf} \to \mathbb{G}_a.$ This phenomenon seems to occur more generally under suitable conditions. We formulate and sketch a proof of the following proposition which is motivated by Drinfeld's construction of taking the cone of a quasi-ideal in \cite{Dri20}. We note that the construction of taking the cone is valid for any pointed $\mathbb{G}_a$-module $X$ or any pointed $\mathbb{G}_a^\mathrm{perf}$-module $Y$. However, the cone in this generality only has the structure of a group stack and not a ring stack.
\end{remark}{}

\begin{proposition}\label{lastprop}Let $X$ be a pointed $\mathbb{G}_a$-module which is full of rank $1$ (\textit{cf.} \cref{whot}). Let $M$ be a deformation of $u^*X$ as a pointed $\mathbb{G}_a^{\mathrm{perf}}$-module over $\mathbb{F}_p[\epsilon]/\epsilon^2.$ Then $M \simeq u^*X'$ where $X'$ is a deformation of $X$ over $\mathbb{F}_p[\epsilon]/\epsilon^2$ as a pointed $\mathbb{G}_a$-module. Here $u^*$ is the functor constructed in \cref{pullback}.
 \end{proposition}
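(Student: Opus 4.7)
The plan is to construct $X'$ as the ``integer-degree subscheme'' of $M$, generalising the observation that for $u^*X$ the integer-degree subalgebra of the ring of functions naturally recovers $\Gamma(X,\mathcal{O}_X)$. Write $X = \mathrm{Spec}\,U$ with $U$ an $\mathbb{N}$-graded Hopf algebra over $\mathbb{F}_p$, so that $u^*X = \mathrm{Spec}\,V$ with $V = U \otimes_{\mathbb{F}_p[x]} \mathbb{F}_p[x^{1/p^\infty}]$ graded by $\mathbb{N}[1/p]$ (see \cref{gradingdet}). Because $\mathbb{F}_p[x^{1/p^\infty}]$ is free over $\mathbb{F}_p[x]$ on the basis $\{x^k : k \in [0,1)\cap \mathbb{N}[1/p]\}$, one computes $V_n = U_n$ for every $n \in \mathbb{N}$, whereas $V_{n+k}$ for $k \in (0,1) \cap \mathbb{N}[1/p]$ equals the shift $U_n \cdot x^k$. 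Write $M = \mathrm{Spec}\,V'$ with $V'$ a graded Hopf algebra flat over $\mathbb{F}_p[\epsilon]$ satisfying $V'/\epsilon \simeq V$, and set $U' := \bigoplus_{n \in \mathbb{N}} V'_n$. Each $V'_n$ is flat over $\mathbb{F}_p[\epsilon]$ as a graded summand of $V'$, so $U'$ is a flat $\mathbb{F}_p[\epsilon]$-module deforming $U$; it is automatically a graded subalgebra of $V'$ since $\mathbb{N}$ is closed under addition inside $\mathbb{N}[1/p]$.

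The core of the argument is the \emph{key claim}: $U'$ is closed under the comultiplication $\Delta$ of $V'$, and therefore forms a graded sub-Hopf algebra. Granting the claim, $X' := \mathrm{Spec}\,U'$ acquires a graded group scheme structure over $\mathbb{F}_p[\epsilon]$, hence a $\mathbb{G}_a$-module structure via \cref{omgwat}; the pointing map $\mathbb{F}_p[\epsilon][x^{1/p^\infty}] \to V'$ restricts on integer degrees to a graded Hopf algebra map $\mathbb{F}_p[\epsilon][x] \to U'$, endowing $X'$ with a pointed $\mathbb{G}_a$-module structure. The natural $\mathbb{F}_p[\epsilon][x]$-algebra map
\begin{equation*}
U' \otimes_{\mathbb{F}_p[\epsilon][x]} \mathbb{F}_p[\epsilon][x^{1/p^\infty}] \longrightarrow V'
\end{equation*}
reduces modulo $\epsilon$ to the identity isomorphism $U \otimes_{\mathbb{F}_p[x]} \mathbb{F}_p[x^{1/p^\infty}] \simeq V$, and is therefore itself an isomorphism by flatness over $\mathbb{F}_p[\epsilon]$; this yields $u^*X' \simeq M$ as pointed $\mathbb{G}_a^{\mathrm{perf}}$-modules. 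Reducing modulo $\epsilon$ also shows $X' \otimes_{\mathbb{F}_p[\epsilon]} \mathbb{F}_p \simeq X$, so $X'$ is the sought deformation.

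The hard part is the key claim, and this is where the fullness hypothesis becomes indispensable---\cref{unstable} gives a counterexample when fullness is dropped. For $b \in V'_n$ with $n \in \mathbb{N}$, the fractional bidegree components of $\Delta(b)$ vanish modulo $\epsilon$ (since $U \subset V$ is already a sub-Hopf algebra), so they lie in $\epsilon \cdot (V \otimes V)_n$; the task is to rule them out. The unique lift $\alpha^{\natural}[\epsilon] \to M$ of the Hodge map supplied by \cref{hodgestability} identifies $V'$ in degrees $<1$ with $\mathbb{F}_p[\epsilon][x^{1/p^\infty}]/x$; combined with the fact that the pointing map $\mathbb{F}_p[\epsilon][x^{1/p^\infty}] \to V'$ is a graded Hopf algebra map (so $\Delta(t'_k) = t'_k \otimes 1 + 1 \otimes t'_k$ for the image $t'_k$ of $x^k$), this pins down the Hopf structure of $V'$ in a large range of low degrees; in particular the claim holds immediately in degree $1$ whenever $V'_1 \ne 0$, as $V'_1$ is then generated by $t'_1 \in U'_1$. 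The proposed strategy for higher degrees is a degree-by-degree induction mirroring the proof of \cref{mainthm1}: the coassociativity of $\Delta$, together with its compatibility with the $\mathbb{G}_a^{\mathrm{perf}}$-coaction $\rho: V' \to V' \otimes \mathbb{F}_p[\epsilon][x^{1/p^\infty}]$ (which encodes that $\rho$ is a coalgebra map), generates enough relations among the putative fractional bidegree components of $\Delta(b)$ to force them to vanish. Carrying out this inductive analysis in full generality---paralleling the explicit degree-by-degree computations of \cref{ten2} and \cref{ten1} in the case $X = W[F]$---is expected to be the main technical obstacle.
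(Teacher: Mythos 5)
Your proposal constructs $X'$ directly as the integer-degree subscheme of $M$, but it has a genuine gap at the core: you correctly identify the key claim --- that $U' := \bigoplus_{n \in \mathbb{N}} V'_n$ is closed under the comultiplication of $V'$ --- as the decisive step, yet you do not prove it. You verify it in the lowest degrees using the Hodge map lift from \cref{hodgestability}, then gesture toward a ``degree-by-degree induction,'' but you explicitly flag this as ``the main technical obstacle'' and leave it unresolved. Without the key claim the rest of the construction does not get off the ground, so as written this is not a proof. It is worth noting that the key claim is in fact true a posteriori --- the proposition's conclusion $M \simeq u^*X'$ forces the integer-degree subalgebra of $V'$ to coincide with $\Gamma(X', \mathcal{O}_{X'})$ --- but proving exactly that is the point of the proposition, and you cannot assume it.

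The paper sidesteps the graded Hopf-algebra computation entirely by working with group stacks, following Drinfeld's cone construction. It forms the cone of $M \to \mathbb{G}_a^{\mathrm{perf}}$ to obtain a flat map of group stacks $f : \mathbb{G}_a^{\mathrm{perf}} \to [\mathbb{G}_a^{\mathrm{perf}}/M]$. The fullness hypothesis enters exactly where it does in your sketch, via \cref{hodgestability}: it supplies the Hodge-map lift $\alpha^{\natural}[\epsilon] \to M$, and taking cones of both sides (using that the cone of $\alpha^\natural \to \mathbb{G}_a^{\mathrm{perf}}$ is $\mathbb{G}_a$) yields a factorization of $f$ through a map $\mathbb{G}_a \to [\mathbb{G}_a^{\mathrm{perf}}/M]$, which is flat because $u : \mathbb{G}_a^{\mathrm{perf}} \to \mathbb{G}_a$ is faithfully flat. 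Then $X'$ is simply the kernel of $\mathbb{G}_a \to [\mathbb{G}_a^{\mathrm{perf}}/M]$; since kernel formation commutes with base change and $M = \mathrm{Ker}(f)$, one gets $u^*X' \simeq M$ and $X' \otimes \mathbb{F}_p \simeq X$ for free, with no graded bookkeeping. If you want to complete your elementary approach, you would need to push the coassociativity and $\mathbb{G}_a^{\mathrm{perf}}$-coaction relations through all degrees in the generality of an arbitrary full-of-rank-1 $X$, in the spirit of \cref{ten2} and \cref{ten1}; that may well be feasible, but it is not done here, and it replicates substantial work that the stacky argument avoids.
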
{}

\begin{proof}One can take the cone of the map $M \to \mathbb{G}_a^{\mathrm{perf}}$ to obtain a flat map $f: \mathbb{G}_a^{\mathrm{perf}} \to [\mathbb{G}_a^{\mathrm{perf}}/M].$ Here $[\mathbb{G}_a^{\mathrm{perf}}/M]$ has the structure of a group stack and the map $f$ is additionally a map of group stacks. Formation of the cone commutes with base change and therefore the cone of $u^*X \to \mathbb{G}_a^\mathrm{perf}$ is given by $\mathbb{G}_a^\mathrm{perf} \to [\mathbb{G}_a^{\mathrm{perf}}/M] \times \mathrm{Spec}\, \mathbb{F}_p.$ Since $X$ is full of rank $1$, by definition $u^*X$ is full of fractional rank $1$ and hence by \cref{hodgestability}, there is a map $\alpha^{\natural} \to M  $ of pointed $\mathbb{G}_a^{\mathrm{perf}}$-modules. By taking cones, we obtain a map of group stacks $\mathbb{G}_a \to [\mathbb{G}_a^{\mathrm{perf}}/M] $ which factors the map $f$ along the map of ring schemes $\mathbb{G}_a^\mathrm{perf} \to \mathbb{G}_a.$ Since the map $\mathbb{G}_a^\mathrm{perf} \to \mathbb{G}_a$ is faithfully flat, it follows that the map $\mathbb{G}_a \to [\mathbb{G}_a^{\mathrm{perf}}/M]$ is flat. By taking the kernel of the map $\mathbb{G}_a \to [\mathbb{G}_a^{\mathrm{perf}}/M],$ we obtain a pointed $\mathbb{G}_a$-module $X'$ which is flat over $\mathrm{Spec}\, \mathbb{F}_p[\epsilon]/\epsilon^2.$ Since $M$ is the kernel of $f$ by construction, it follows that $u^*X' \simeq {M}.$ Since taking kernel commutes with base change, it follows that $X' \times \mathrm{Spec}\, \mathbb{F}_p$ is the kernel of $\mathbb{G}_a \to [\mathbb{G}_a^{\mathrm{perf}}/M] \times \mathrm{Spec}\, \mathbb{F}_p.$ Since $u^*X$ is the kernel of $\mathbb{G}_a^{\mathrm{perf}} \to [\mathbb{G}_a^{\mathrm{perf}}/M] \times \mathrm{Spec}\, \mathbb{F}_p$ it follows that $u^*X \simeq u^* (X' \times \mathrm{Spec}\, \mathbb{F}_p)$. This descends to a natural isomorphism $X \simeq X' \times \mathrm{Spec}\, \mathbb{F}_p$ by \cref{pullback}. Thus $X'$ is indeed a deformation of $X$ and it meets the necessary requirements of the proposition.
\end{proof}{}

\begin{remark}\label{table} We make an informal remark about some of the constructions that appear in this paper. Using the constructions in \cref{section3}, for any quasi-ideal $X$ in $\mathbb{G}_a$ over $\mathbb{F}_p$, one can define a ``cohomology theory" $\mathcal{U}_X$ (equipped with a Hodge filtration) as a functor $\mathcal{U}_X: \mathrm{QSyn}_{\mathbb{F}_p} \to \mathrm{CAlg}(D(\mathbb{F}_p))$. To do so, let us assume that the quasi-ideal $X$ is such that the functor $\mathrm{Un}(u^* X): \mathrm{QRSP} \to \mathrm{CAlg}(D(\mathbb{F}_p))$ is a sheaf \cite[Lemma~4.27]{BMS19}. Then, by \cite[Prop.~4.31]{BMS19}, we get an extended functor that we may call $\mathcal{U}_X: \mathrm{QSyn}_{\mathbb{F}_p} \to \mathrm{CAlg}(D(\mathbb{F}_p)).$ We point out that it would be interesting to isolate a certain class of quasi-ideals for which the sheaf property above is always true. For example, it holds true when $X = W[F]$ or zero. Even if the sheaf property is not true, one can always right Kan extend the functor $\mathrm{Un}(u^* X)$ along $\mathrm{QRSP} \to \mathrm{QSyn}_{\mathbb{F}_p}$ to obtain a functor $\mathcal{U}_X$ as above. In what follows, we let $[\mathbb{G}_a/X]$ denote the ring stack obtained by considering the cone of the quasi-ideal $X.$  \vspace{2mm} 

In \cref{Tab:Tcr} below, we make a list of rough analogies and comparisons that explain the constructions appearing in \cref{lastprop} from a cohomological perspective. The left hand side of the table consists of certain geometric objects such as quasi-ideals or stacks and some natural maps between them. The right hand side of the table consists of values of the cohomology theory $\mathcal{U}_X$ evaluated at certain $\mathbb{F}_p$-algebras and some maps between them. In principle, (whenever $u^*X$ is a nilpotent quasi-ideal) one should expect to obtain the objects on the right hand side by applying the derived global section (of the structure sheaf) functor $R\Gamma (\, \cdot \, , \mathcal{O})$ to the objects on the left hand side, i.e., $\mathcal{U}_X (\mathbb{F}_p[x]) \simeq R\Gamma ([\mathbb{G}_a/X], \mathcal{O})$, $\mathcal{U}_X (\mathbb{F}_p[x^{1/p^\infty}]/x) \simeq R\Gamma(u^* X, \mathcal{O})$ etc. 
\vspace{2mm}

In the case when $X = W[F],$ the functor $\mathcal{U}_X:\w{QSyn}_{\mathbb{F}_p} \to \mathrm{CAlg}(D(\mathbb{F}_p))$ is $S \mapsto \w{dR}(S)$ and the stack $[\mathbb{G}_a/X]$ is $(\mathbb{A}^1_{\mathbb{F}_p})^{\w{dR}}$ \cite[1.5.1]{Dri20}. The fact that the objects on the right hand side of the table are obtained by taking derived global sections of the corresponding objects on the left can be seen by noting some facts from the stacky approach to de Rham cohomology \cite[1.5.1, 3.5.1]{Dri20}, as well as some results on derived de Rham cohomology \cite[Prop. 8.12]{BMS19}. This also gives a rough comparison between the stacky approach and the
more explicit approach taken in our paper. When $X$ is the zero quasi-ideal, then the functor $\mathcal{U}_X:\w{QSyn}_{\mathbb{F}_p} \to \mathrm{CAlg}(D(\mathbb{F}_p))$ is simply $S \mapsto S$ and the stack $[\mathbb{G}_a/X]$ is simply $\mathbb{G}_a.$

\begin{center}
\captionof{table}{Quasi-ideals and cohomology theories\label{Tab:Tcr}}
\bgroup
\def\arraystretch{1.5}
\begin{tabular}{ | C{18em} | C{18em}| } 
\hline
\textbf{Quasi-ideals or stacks} & \textbf{Cohomology theory} \\ 

\hline
$[\mathbb{G}_a/X]$ & $\mathcal{U}_X (\mathbb{F}_p[x])$ \\ 
\hline
$\mathbb{G}_a \to [\mathbb{G}_a/X]$ & $\mathrm{gr}^0_{\mathrm{Hodge}}: \mathcal{U}_X (\mathbb F _p [x]) \to \mathbb{F}_p[x]$\\
\hline
$X = \mathrm{Ker} (\mathbb{G}_a \to [\mathbb{G}_a/X])$ & $\mathcal{U}_X (\mathbb{F}_p[x] \to \mathbb{F}_p) = \mathbb{F}_p \otimes_{\mathcal{U}_X (\mathbb{F}_p[x])} \mathbb{F}_p[x]$\\
\hline
$u^*X$ & $\mathcal{U}_X (\mathbb{F}_p[x^{1/p^\infty}]/x)$\\
\hline
$\alpha^{\natural} \to u^* X$& $\mathrm{gr}^0_{\mathrm{Hodge}}: \mathcal{U}_X (\mathbb{F}_p[x^{1/p^\infty}]/x) \to \mathbb{F}_p[x^{1/p^\infty}]/x$\\
\hline
\end{tabular}
\egroup
\end{center}
\end{remark}{}

\begin{remark}Antieau and Mathew asked us if the statement of \cref{mainthm} remains valid if $A$ is an animated Artinian local ring with residue field $\mathbb{F}_p$. The answer in this case does not seem to follow directly from the results in our paper. One can hope to carry out the strategy of the proof in the case where $A$ is a discrete ring, but for that one would have to talk about structures such as pointed $\mathbb{G}_a$-module or $\mathbb{G}_a^{\mathrm{perf}}$-module over an animated Artinian local ring $A.$ We hope to address this in future.
\end{remark}{}

\begin{remark}\label{lt}It seems to be an interesting question to classify all quasi-ideals in $\mathbb{G}_a$ or $\mathbb{G}_a^{\mathrm{perf}}.$ In particular, motivated by Drinfeld's construction of prismatization, it seems interesting to study the moduli stack $\mathcal{Q}$ of all quasi-ideals in $\mathbb{G}_a$. There is a canonical ring stack $\mathcal{R}$ over $\mathcal{Q}$ obtained by taking the cone of each quasi-ideal in $\mathcal{Q}.$ By using the stacky approach and the version of the unwinding explained in \cref{introex}, it seems plausible to construct a ``universal cohomology theory" $\mathcal{U}$ for varieties over $\mathbb{F}_p$ which would specialize to $\mathcal{U}_X$ from \cref{table} for a quasi-ideal $X \in \mathcal{Q}.$
\end{remark}{}

\newpage

\bibliographystyle{amsalpha}
\bibliography{main}

\end{document}